\renewcommand{\abovecaptionskip}{0pt}
\renewcommand{\belowcaptionskip}{6pt}
\renewcommand{\@makecaption}[2]{
\vspace{\abovecaptionskip}%
\sbox{\@tempboxa}{#1. #2}%
\global\@minipagefalse \hbox to \hsize {{\scshape \hfil #1.
#2\hfil}} \vspace{\belowcaptionskip}}
\newcommand{\ad}{\mathrm{ad}}
\DeclareMathOperator{\Supp}{\mathrm{Supp}}
\DeclareMathOperator{\Hom}{\mathrm{Hom}}
\DeclareMathOperator{\ord}{\mathrm{ord}}
\DeclareMathOperator{\Quot}{\mathrm{Quot}}
\DeclareMathOperator{\Spec}{\mathrm{Spec}}
\DeclareMathOperator{\Aut}{\mathrm{Aut}}
\newcommand{\Ker}{\operatorname{Ker}}
\newcommand{\codim}{\operatorname{codim}}
\newcommand{\Der}{\operatorname{Der}}
\newcommand{\pr}{\mathrm{p}}
\newcommand{\sat}{\mathrm{sat}}
\newcommand{\quot}{/\hspace{-0.5ex}/}
\newcommand{\ZZ}{\mathbb Z}
\newcommand{\QQ}{\mathbb Q}
\newtheorem{theorem}{Theorem}
\newtheorem{proposition}[theorem]{Proposition}
\newtheorem{lemma}[theorem]{Lemma}
\newtheorem{corollary}[theorem]{Corollary}
\newtheorem{question}[theorem]{Question}
\theoremstyle{definition}
\newtheorem{definition}[theorem]{Definition}
\theoremstyle{remark}
\newtheorem{remark}[theorem]{Remark}
\numberwithin{equation}{section}
\begin{document}

\sloppy

\renewcommand{\proofname}{Proof}
\renewcommand{\abstractname}{Abstract}
\renewcommand{\refname}{References}
\renewcommand{\figurename}{Figure}
\renewcommand{\tablename}{Table}

\title[New and old results on spherical varieties]
{New and old results on spherical varieties\\ via moduli theory}

\author{Roman Avdeev and St\'ephanie Cupit-Foutou}


\address{%
{\bf Roman Avdeev} \newline\indent National Research University
``Higher School of Economics'', Moscow, Russia}

\email{suselr@yandex.ru}

\address{%
{\bf St\'ephanie Cupit-Foutou}
\newline\indent Ruhr-Universit\"at Bochum, NA 4/67,
D-44797 Bochum, Germany}

\email{stephanie.cupit@rub.de}


\subjclass[2010]{14M27, 14D22, 20G05}

\keywords{Algebraic group, multiplicity-free variety, spherical
variety, moduli scheme}

\begin{abstract}
Given a connected reductive algebraic group $G$ and a finitely generated monoid $\Gamma$ of dominant weights of~$G$, in 2005 Alexeev and Brion constructed a moduli scheme $\mathrm M_\Gamma$ for multiplicity-free affine $G$-varieties with weight monoid~$\Gamma$. This scheme is equipped with an action of an `adjoint torus' $T_\ad$ and has a distinguished $T_\ad$-fixed point~$X_0$. In this paper, we obtain a complete description of the $T_\ad$-module structure in the tangent space of $\mathrm M_\Gamma$ at~$X_0$ for the case where $\Gamma$ is saturated. Using this description, we prove that the root monoid of any affine spherical $G$-variety is free. As another application, we obtain new proofs of uniqueness results for affine spherical varieties and spherical homogeneous spaces first proved by Losev in 2009. Furthermore, we obtain a new proof of Alexeev and Brion's finiteness result for multiplicity-free affine $G$-varieties with a prescribed weight monoid. At last, we prove that for saturated $\Gamma$ all the irreducible components of $\mathrm M_\Gamma$, equipped with their reduced subscheme structure, are affine spaces.
\end{abstract}

\maketitle

\section*{Introduction}

All objects considered in this paper are defined over an algebraically closed field $\Bbbk$ of characteristic~$0$.

Let $G$ be a connected reductive algebraic group. A $G$-variety (that is, an algebraic variety equipped with a regular action of~$G$) is called \textit{spherical} if it is normal and contains a dense orbit for the induced action of a Borel subgroup $B \subset G$. Famous examples of spherical varieties are toric varieties, flag varieties, and symmetric varieties. Due to a combination of numerous works and methods, the  structure theory of spherical varieties is now well understood and has recently led to a full classification of these objects; see \cite[Chapter~5]{Tim} for a review.

In this paper we obtain new results and also recover a number of already known facts on spherical varieties via one single approach---that of moduli theory, which does not involve any classification results in the theory of spherical varieties. Specifically, we are concerned with the moduli theory developed by Alexeev and Brion in~\cite{AB} for affine spherical $G$-varieties and more generally for multiplicity-free affine $G$-varieties.

An affine $G$-variety $X$ is said to be \textit{multiplicity-free} if $X$ is irreducible and the algebra $\Bbbk [X]$ of regular functions on~$X$, regarded as a $G$-module, contains every simple $G$-module with multiplicity at most~$1$. By a theorem of Vinberg and Kimelfeld~\cite{VK78}, an irreducible affine $G$-variety is multiplicity-free if and only if it possesses a dense $B$-orbit. In particular, affine spherical $G$-varieties are characterized as normal multiplicity-free affine $G$-varieties.

Given a multiplicity-free affine $G$-variety~$X$, the $G$-module structure of $\Bbbk[X]$ is encoded in the \textit{weight monoid}~$\Gamma_X$ of~$X$, consisting of all dominant weights $\lambda$ of~$G$ for which $\Bbbk[X]$ contains a simple $G$-submodule $\Bbbk[X]_\lambda$ with highest weight~$\lambda$. This monoid is known to be finitely generated. Besides, $X$ is normal if and only if $\Gamma_X$ is \textit{saturated}, that is, $\Gamma_X$ is the intersection of a lattice with a cone.

One more invariant of a multiplicity-free affine $G$-variety~$X$ is its \textit{root monoid}~$\Xi_X$, which arises from the ring structure of~$\Bbbk[X]$. By definition, $\Xi_X$ is generated by all expressions $\lambda + \mu - \nu$ with $\lambda, \mu, \nu \in \Gamma_X$ such that the linear span of $\Bbbk[X]_\lambda \cdot \Bbbk[X]_\mu$ contains~$\Bbbk[X]_\nu$. Let $\Xi_X^\sat$ denote the saturation of~$\Xi_X$, that is, the intersection of the lattice generated by $\Xi_X$ with the cone spanned by~$\Xi_X$. An important property of the root monoid was discovered by Knop in~\cite{Kn96}, who proved that the monoid $\Xi_X^\sat$ is free.

In~\cite{AB}, Alexeev and Brion constructed and studied a moduli scheme $\mathrm{M}_\Gamma$ for mul\-ti\-plicity-free affine $G$-varieties with prescribed weight monoid~$\Gamma$. This scheme is affine and of finite type; it is equipped with an action of an adjoint torus $T_\ad$ (the quotient of a maximal torus of $G$ by the center of $G$) in such a way that the $T_\ad$-orbits of $\mathrm{M}_\Gamma$ bijectively correspond to the $G$-isomorphism classes of multiplicity-free affine $G$-varieties with weight monoid~$\Gamma$. Various examples of moduli schemes $\mathrm M_\Gamma$ were further studied under different assumptions on the monoid~$\Gamma$. The case of monoids generated by a single element was worked out in~\cite{Jan}; the paper~\cite{BraCu08} dealt with free monoids that are $G$-saturated (the latter means that the monoid consists of all dominant weights of~$G$ lying in a fixed lattice);
several other special instances of free monoids were studied in~\cite{Cu, PvS12, PvS16}. In all these cases, $\mathrm M_\Gamma$ was shown to be an affine space (as a scheme). Finally, in~\cite{BvS} it was proved that for an arbitrary free monoid~$\Gamma$ all the irreducible components of~$\mathrm M_\Gamma$, equipped with their reduced subscheme structure, are affine spaces.

Given an arbitrary finitely generated monoid $\Gamma$ of dominant weights of~$G$, there always exists a multiplicity-free affine $G$-variety $X_0 = X_0(\Gamma)$ with weight monoid~$\Gamma$ such that the linear span of $\Bbbk[X_0]_\lambda \cdot \Bbbk[X_0]_\mu$ coincides with $\Bbbk[X_0]_{\lambda + \mu}$ for all $\lambda, \mu \in \Gamma$. Such varieties were first considered and studied by Vinberg and Popov in~\cite{VP72}. It is known from \cite{AB} that the $T_\ad$-orbit in~$\mathrm M_\Gamma$ corresponding to~$X_0$ is just a $T_\ad$-fixed closed point (still denoted by~$X_0$), hence the tangent space $T_{X_0} \mathrm M_\Gamma$ of $\mathrm M_\Gamma$ at $X_0$ is naturally equipped with the structure of a $T_\ad$-module.

One of the main achievements of this paper is a complete description of the $T_\ad$-module structure of $T_{X_0} \mathrm M_\Gamma$ purely in terms of $\Gamma$ in the case where $\Gamma$ is saturated (see Theorem~\ref{thm_tangent_space}). In particular, we show that $T_{X_0} \mathrm M_\Gamma$ is a multiplicity-free $T_\ad$-module whose weights, up to a sign, belong to a certain finite set $\overline \Sigma(G)$ depending only on~$G$. The set $\overline \Sigma(G)$ turns out to be a subset of the set of spherical roots of~$G$ that is well known in the theory of spherical varieties.

As a first application of our description of $T_{X_0}\mathrm{M}_\Gamma$, we show that, given an arbitrary affine spherical $G$-variety~$X$, every indecomposable element of the monoid $\Xi_X$ is primitive in the lattice $\ZZ \Xi_X$ (see Proposition~\ref{prop_indecomposable}(\ref{prop_indecomposable_b})). Combining this with the above-mentioned result of Knop on the freeness of~$\Xi_X^\sat$, we derive that the monoid $\Xi_X$ itself is free (see Theorem~\ref{thm_root_monoid_is_free}).

As a second application, we obtain a new proof of the following uniqueness result for affine spherical $G$-varieties: up to a $G$-isomorphism, every affine spherical $G$-variety $X$ is uniquely determined by the pair $(\Gamma_X, \Sigma_X)$, where $\Sigma_X$ is the set of \textit{spherical roots} of~$X$, that is, primitive elements of the lattice $\ZZ \Gamma_X$ lying on extremal rays of the cone spanned by~$\Xi_X$ (see Corollary~\ref{crl_uniqueness1}). This fact was first proved by Losev in~\cite{Lo09b}. It is worth noticing that the above-mentioned uniqueness result easily extends to arbitrary multiplicity-free affine $G$-varieties (see Corollary~\ref{crl_uniqueness1_mf}).

As a third application, we derive a new proof of a rule that enables one to determine the set $\overline \Sigma_X$ of free generators of the monoid $\Xi_X^\sat$ of an affine spherical $G$-variety $X$ starting from the set $\Sigma_X$ of spherical roots (see Theorem~\ref{thm_overline_sigma}). This rule was first obtained by Losev in~\cite{Lo09a}. (In fact, Losev's result deals with a much more general situation.)

We point out that in all the three above-mentioned applications our proofs easily reduce to checking certain combinatorial properties of the set of weights of the $T_\ad$-module $T_{X_0} \mathrm M_\Gamma$.

Using elementary additional material on spherical varieties, from the uniqueness result for affine spherical $G$-varieties we derive the uniqueness property for spherical homogeneous spaces first obtained by Losev in~\cite{Lo09a}; see our Theorem~\ref{thm_uniqueness2} for a precise statement.

We note that Losev's proofs of the above-mentioned uniqueness results for affine spherical varieties and spherical homogeneous spaces use Lie-theoretical methods; the already known classification of affine spherical homogeneous spaces comes into play in his approach. It is also worth mentioning that one more independent proof of the uniqueness property for spherical homogeneous spaces follows from a combination of Luna's paper~\cite{Lu01} and Cupit-Foutou's one~\cite{Cu}, the latter dealing with more complicated aspects of moduli theory of affine spherical varieties than in this paper.

Making use of the uniqueness property for affine spherical $G$-varieties, we recover the following result first obtained by Alexeev and Brion in~\cite{AB}: there are only finitely many $G$-isomorphism classes of multiplicity-free affine $G$-varieties with prescribed weight monoid~$\Gamma$ (see Corollary~\ref{crl_finiteness_MF}); equivalently, $\mathrm M_\Gamma$ contains only finitely many $T_\ad$-orbits. The initial proof of this fact given in~\cite{AB} used a vanishing theorem of Knop~\cite{Kn94}.

At last, combining some of the above-mentioned results, we establish the following property suspected by Brion in~\cite{Br13}: for saturated~$\Gamma$, all the irreducible components of~$\mathrm M_\Gamma$, equipped with their reduced subscheme structure, are affine spaces (see Corollary~\ref{crl_irr_comp_AS}). In the above statement, considering the reduced subscheme structure of the irreducible components of~$\mathrm M_\Gamma$ is essential: using the results of the present paper, in~\cite[\S\,7.6]{ACF18} we construct examples of saturated monoids~$\Gamma$ such that $\mathrm M_\Gamma$ is a non-reduced point.

This paper is organized as follows. In \S\,\ref{sect_notation}, we fix notation and conventions used in this paper. In \S\,\ref{sect_preliminaries} we gather some basic facts on multiplicity-free affine $G$-varieties and moduli schemes~$\mathrm M_\Gamma$. In \S\,\ref{sect_tangent_space} we obtain our description of the $T_\ad$-module structure in the tangent space of $\mathrm M_\Gamma$ at $X_0$ whenever $\Gamma$ is saturated. Applications of this description are presented in \S\,\ref{sect_applications}. Appendix~\ref{app_structure_constants} lists sign conventions for Chevalley bases of simple Lie algebras used in~\S\,\ref{subsec_step_2}. In appendix~\ref{app_invariants} we present some information on invariants of spherical homogeneous spaces; this material is needed in \S\S\,\ref{subsec_mf_var}--\ref{subsec_uniqueness2}.

\subsection*{Acknowledgements}

We thank the referees for their valuable comments and suggestions on a previous version of this paper.

The first author was supported by Dmitry Zimin's ``Dynasty'' Foundation and the Guest Program of the Max-Planck Institute for Mathematics in Bonn; he also thanks the Institute for Fundamental Science in Moscow for providing excellent working conditions.

The second author was supported by the SFB/TR 12 of the German
Research Foundation (DFG).

\section{Notation and conventions}
\label{sect_notation}

Throughout this paper, all topological terms relate to the Zariski topology. All subgroups of algebraic groups are assumed to be closed. The Lie algebras of algebraic groups denoted by capital Latin letters are denoted by the corresponding small Gothic letters. A~\textit{variety} is a separated reduced scheme of finite type. A~\textit{$K$-variety} is a variety equipped with a regular action of an algebraic group~$K$. A~\textit{$K$-isomorphism} of two $K$-varieties is a $K$-equivariant isomorphism.

$\ZZ^+ = \lbrace z \in \ZZ \mid z \ge 0 \rbrace$;

$\QQ^+ = \lbrace q \in \QQ \mid q \ge 0 \rbrace$;

$\Bbbk^\times$ is the multiplicative group of the field~$\Bbbk$;

$|X|$ is the cardinality of a finite set~$X$;

$\langle \cdot\,, \cdot \rangle$ is the natural pairing between $\Hom_\ZZ(L, \QQ)$ and~$L$, where $L$ is a lattice;

$V^*$ is the dual of a vector space~$V$;

$K^0$ is the connected component of the identity of an algebraic group~$K$;

$K_x$ is the stabilizer of a point $x$ under an action of a group~$K$;

$\mathfrak X(K)$ is the character group of a group~$K$ (in additive notation);

$k^\chi$ is the value of a character $\chi \in \mathfrak X(K)$ at an element $k$ of a group~$K$;

$Z(K)$ is the center of a group~$K$;

$N_L(K)$ is the normalizer of a subgroup $K$ in a group~$L$;

$\overline Y$ is the closure of a subset $Y$ of a scheme~$X$;

$\Bbbk[X]$ is the algebra of regular functions on a variety~$X$;

$\Bbbk(X)$ is the field of rational functions on an irreducible variety~$X$;

$\Quot A$ is the field of fractions of a commutative algebra $A$ with no zero divisors;

$\Der A$ is the space of derivations of a commutative algebra~$A$;

$[\mathfrak l, \mathfrak l]$ is the derived subalgebra of a Lie algebra~$\mathfrak l$;

$\mathcal O_X$ is the structure sheaf of a scheme~$X$;

$T_x X$ is the tangent space of a scheme $X$ at a closed point $x \in X$;

$G$ is a connected reductive algebraic group;

$B \subset G$ is a fixed Borel subgroup;

$T \subset B$ is a fixed maximal torus;

$U \subset B$ is the unipotent radical of~$B$;

$T_\ad = T / Z(G)$ is the adjoint torus;

$(\,\cdot\,, \cdot)$ is a fixed inner product on~$\mathfrak X(T) \otimes_\ZZ \QQ$ invariant with respect to the Weyl group~$N_G(T) / T$;

$\Delta \subset \mathfrak X(T)$ is the root system of $G$ with respect to~$T$;

$\Delta^+ \subset \Delta$ is the set of positive roots with respect to~$B$;

$\Pi \subset \Delta^+$ is the set of simple roots;

$\alpha^\vee \in \Hom_\ZZ(\mathfrak X(T), \ZZ)$ is the dual root corresponding to a root $\alpha \in \Delta$;

$\Lambda^+ \subset \mathfrak X(T)$ is the monoid of dominant weights with respect to~$B$;

$V(\lambda)$ is the simple $G$-module with highest weight $\lambda \in \Lambda^+$;

$U(\mathfrak g)$ is the universal enveloping algebra of~$\mathfrak g$.

The lattices $\mathfrak X(B)$ and $\mathfrak X(T)$ are identified via restricting characters from~$B$ to~$T$.

The lattice $\mathfrak X(T_\ad)$ is canonically identified with $\ZZ \Pi$.

Highest weight vectors and lowest weight vectors of all simple $G$-modules are considered with respect to~$B$.

For every $\lambda \in \mathfrak X(T)$, we set $\lambda^* = -w_0 \lambda$ where $w_0$ is the longest element of the Weyl group $N_G(T)/T$.

If $V$ is a vector space equipped with an action of a group~$K$, then the notation $V^K$ stands for the subspace of $K$-invariant vectors and, for every character $\chi$ of $K$, the notation $V^{(K)}_\chi$ stands for the subspace of $K$-semi-invariant vectors of weight~$\chi$.

Let $K$ be a group and let $K_1, K_2$ be subgroups of~$K$. We write $K = K_1 \leftthreetimes K_2$ if $K$ is a semidirect product of $K_1, K_2$ with $K_1$ being a normal subgroup of~$K$.

Let $\sigma \in \ZZ \Pi$ and consider the expression $\sigma = \sum \limits_{\alpha \in \Pi} k_\alpha \alpha$, where $k_\alpha \in \ZZ$ for all $\alpha \in \Pi$. The \textit{support} of $\sigma$ is the set $\Supp \sigma = \lbrace \alpha \in \Pi \mid \nobreak k_\alpha \ne \nobreak 0 \rbrace$. The \textit{type} of $\sigma$ is the type of the Dynkin diagram of the set $\Supp \sigma$. When the Dynkin diagram of $\Supp \sigma$ is connected, we number the simple roots in $\Supp \sigma$ as in~\cite{Bo} and denote the $i$th simple root by $\alpha_i$.

For every $\sigma \in \ZZ \Pi \setminus \lbrace 0 \rbrace$, the root subsystem of $\Delta$ with set of simple roots $\Supp \sigma$ is denoted by~$\Delta_\sigma$.

For every subset $F \subset \mathfrak X(T)$, we set $F^\perp = \lbrace \alpha \in \Pi \mid \langle \alpha^\vee, \lambda \rangle = 0 \text{ for all } \lambda \in F \rbrace$. By abuse of notation, for a single element $\lambda \in \mathfrak X(T)$ we write $\lambda^\perp$ instead of $\lbrace \lambda \rbrace^\perp$.

For every $\alpha \in \Delta$, the image of~$\alpha^\vee$ in~$\mathfrak t$ is denoted by~$h_\alpha$.

For every $\alpha \in \Delta$, we fix a nonzero root vector $e_\alpha \in \mathfrak g$ of weight $\alpha$ with respect to the adjoint action of~$T$. Moreover, we assume that the set $\lbrace h_\alpha \mid \alpha \in \Pi \rbrace \cup \lbrace e_\alpha \mid \alpha \in \Delta \rbrace$ is a Chevalley basis of the semisimple Lie algebra $[\mathfrak g, \mathfrak g]$ (for details on Chevalley bases see~\cite[\S\S\,4.1--4.2]{Ca89}).

For every $\alpha, \beta \in \Delta$ with $\alpha + \beta \in \Delta$ we let $N_{\alpha, \beta} \in \lbrace \pm1, \pm2, \pm3, \pm4 \rbrace$ be the corresponding structure constant so that $[e_\alpha, e_\beta] = N_{\alpha, \beta} e_{\alpha + \beta}$. One has $|N_{\alpha, \beta}| = p + 1$ where $p$ is the largest integer such that $\beta - p\alpha \in \Delta$.

Let $Q$ be a finite-dimensional vector space over~$\QQ$.

A subset $\mathcal C \subset Q$ is called a (finitely generated convex) \textit{cone} if there are finitely many elements $q_1, \ldots, q_s \in Q$ such that $\mathcal C = \QQ^+ q_1 + \ldots + \QQ^+ q_s$.

The \textit{dimension} of a cone is the dimension of its linear span.

The \textit{dual cone} of a cone $\mathcal C \subset Q$ is the cone
\[
\mathcal C^\vee = \lbrace \xi \in Q^* \mid \xi(q) \ge 0 \text{\;for all\;} q \in \mathcal C \rbrace.
\]
One always has $(\mathcal C^\vee)^\vee = \mathcal C$.

A \textit{face} of a cone $\mathcal C \subset Q$ is a subset $\mathcal F \subset \mathcal C$ of the form
\[
\mathcal F = \mathcal C \cap \lbrace q \in Q \mid \xi(q) = 0 \rbrace
\]
for some $\xi \in \mathcal C^\vee$. Each face of $\mathcal C$ is again a cone.

An \textit{extremal ray} of a cone $\mathcal C$ is a face of dimension~$1$ having the form $\QQ^+ q$ for some $q \in Q \setminus \lbrace 0 \rbrace$.


\section{Basic material}
\label{sect_preliminaries}

In this section, we collect basic material on multiplicity-free affine $G$-varieties and on Alexeev and Brion's moduli schemes.

\subsection{Spherical $G$-varieties and multiplicity-free affine $G$-varieties}

\begin{definition}
A $G$-variety $X$ is said to be \textit{spherical} if $X$ is normal and possesses a dense (and hence open) $B$-orbit.
\end{definition}

It follows from the definition that every spherical $G$-variety is irreducible.

Given a $G$-variety~$X$, the algebra $\Bbbk[X]$ is naturally equipped with the $G$-module structure given by $(gf)(x) = f(g^{-1}x)$ for all $g \in G$, $f \in \Bbbk[X]$, and $x \in X$.

\begin{definition}
An affine $G$-variety $X$ is said to be \textit{multiplicity-free} if $X$ is irreducible and every simple $G$-module occurs in $\Bbbk[X]$ with multiplicity at most~$1$.
\end{definition}

\begin{theorem}[{\cite[Theorem~2]{VK78}}]
Let $X$ be an irreducible affine $G$-variety. The following conditions are equivalent:
\begin{enumerate}[label=\textup{(\arabic*)},ref=\textup{\arabic*}]
\item
$X$ is multiplicity-free.

\item
$X$ possesses a dense $B$-orbit.
\end{enumerate}
\end{theorem}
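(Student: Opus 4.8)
The plan is to translate both conditions into statements about the algebra of $U$-invariants $\Bbbk[X]^U$ and the field $\Bbbk(X)^B$, and then compare them. Write $\Bbbk[X] = \bigoplus_{\lambda \in \Lambda^+} \Bbbk[X]_{(\lambda)}$ for the decomposition into isotypic components, so that the multiplicity $m_\lambda$ of $V(\lambda)$ in $\Bbbk[X]$ is the number of copies of $V(\lambda)$ in $\Bbbk[X]_{(\lambda)}$. Since for a simple module the space $V(\lambda)^U$ is one-dimensional and consists of $T$-eigenvectors of weight~$\lambda$, passing to $U$-invariants and extracting the weight-$\lambda$ component yields $\dim \Bbbk[X]^{(B)}_\lambda = m_\lambda$, where $\Bbbk[X]^{(B)}_\lambda$ is exactly the weight-$\lambda$ space of the $T$-module $\Bbbk[X]^U$. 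Hence condition (1) is equivalent to the assertion that $\dim \Bbbk[X]^{(B)}_\lambda \le 1$ for every $\lambda \in \Lambda^+$, i.e. that every $T$-weight space of $\Bbbk[X]^U$ is at most one-dimensional.

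To handle the implication (2) $\Rightarrow$ (1), I would first invoke Rosenlicht's theorem: for the action of $B$ on the irreducible variety $X$, the transcendence degree of $\Bbbk(X)^B$ equals the codimension of a generic $B$-orbit, so the existence of a dense $B$-orbit is equivalent to $\Bbbk(X)^B = \Bbbk$ (using that $\Bbbk$ is algebraically closed). Granting a dense $B$-orbit, any two nonzero $f_1, f_2 \in \Bbbk[X]^{(B)}_\lambda$ have $B$-invariant ratio $f_1/f_2 \in \Bbbk(X)^B = \Bbbk$, hence are proportional; therefore $\dim \Bbbk[X]^{(B)}_\lambda \le 1$ for all $\lambda$ and $X$ is multiplicity-free.

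For the converse (1) $\Rightarrow$ (2), the crux is to control $\Bbbk(X)^B$ by means of $\Bbbk[X]^U$. The key lemma I would establish is that $\Bbbk(X)^U = \Quot(\Bbbk[X]^U)$, i.e. every $U$-invariant rational function is a ratio of $U$-invariant regular functions. To prove it, take $f \in \Bbbk(X)^U$, write $f = p/q$ with $p, q \in \Bbbk[X]$, and let $W \subset \Bbbk[X]$ be the finite-dimensional linear span of the $U$-orbit of~$q$. Because $f$ is $U$-fixed, one checks that $f \cdot (u \cdot q) = u \cdot p \in \Bbbk[X]$ for all $u \in U$, so $fW \subseteq \Bbbk[X]$ is a finite-dimensional $U$-stable subspace; as $U$ is unipotent it contains a nonzero $U$-fixed vector $h = f w_0$, and then $w_0 = h/f$ is itself $U$-invariant, giving $f = h/w_0 \in \Quot(\Bbbk[X]^U)$. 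Consequently $\Bbbk(X)^B = \bigl(\Quot(\Bbbk[X]^U)\bigr)^T$.

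It then remains to show, assuming (1), that $\bigl(\Quot(\Bbbk[X]^U)\bigr)^T = \Bbbk$. Given a $T$-invariant $f = P/Q$ with $P, Q \in \Bbbk[X]^U$, I would decompose $P = \sum_\mu P_\mu$ and $Q = \sum_\mu Q_\mu$ into $T$-weight components; comparing weights in $P = fQ$ and using that $f$ has weight zero forces $P_\mu = f Q_\mu$ for each $\mu$. Choosing $\mu$ with $Q_\mu \ne 0$, both $P_\mu$ and $Q_\mu$ lie in $\Bbbk[X]^{(B)}_\mu$, which is at most one-dimensional by (1); hence $f = P_\mu / Q_\mu \in \Bbbk$. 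Thus $\Bbbk(X)^B = \Bbbk$, and Rosenlicht's theorem produces a dense $B$-orbit. The main obstacle is the key lemma $\Bbbk(X)^U = \Quot(\Bbbk[X]^U)$: everything else is bookkeeping with weight decompositions, whereas this step genuinely uses the unipotency of~$U$ (through the existence of fixed vectors in finite-dimensional $U$-modules) and is precisely what transfers information from regular semi-invariants to rational invariants.
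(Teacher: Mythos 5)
Your argument is correct. The paper gives no proof of this statement---it is quoted directly from Vinberg--Kimelfeld \cite{VK78}---so there is nothing internal to compare against; but your proof is the standard one, and every step checks out: the identification $m_\lambda=\dim\Bbbk[X]^{(B)}_\lambda$, the easy direction via constancy of $B$-invariant rational functions on a dense orbit, the key lemma $\Bbbk(X)^U=\Quot(\Bbbk[X]^U)$ proved via a fixed vector of the unipotent group $U$ in the finite-dimensional $U$-module $fW$, and the reduction of $(\Quot\Bbbk[X]^U)^T=\Bbbk$ to one-dimensionality of the weight spaces. It is worth noting that your key lemma is exactly the fact the paper itself invokes later (in the proof of Proposition~4.23, citing \cite[Theorem~3.3]{PV}), so your route is consistent with the toolkit used elsewhere in the text. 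Two points you pass over quickly but which are harmless: the weight comparison $P_\mu=fQ_\mu$ takes place in $\Bbbk(X)$ rather than $\Bbbk[X]$, so it rests on linear independence of nonzero $T$-eigenvectors of distinct weights in the function field (standard); and one should observe that every weight $\mu$ of $\Bbbk[X]^U$ is dominant, so that hypothesis (1), stated for $\lambda\in\Lambda^+$, actually applies to the chosen $\mu$.
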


\begin{corollary}
Let $X$ be an affine $G$-variety. The following conditions are equivalent:
\begin{enumerate}[label=\textup{(\arabic*)},ref=\textup{\arabic*}]
\item
$X$ is spherical.

\item
$X$ is multiplicity-free and normal.
\end{enumerate}
\end{corollary}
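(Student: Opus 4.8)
The plan is to read off the equivalence directly from the Vinberg--Kimelfeld theorem quoted above, by unwinding the two definitions and, in each direction, verifying the irreducibility hypothesis that lets one invoke that theorem. The key preliminary remark is that both conditions already entail irreducibility: condition~(2) includes multiplicity-freeness, which by definition presupposes that $X$ is irreducible, while condition~(1) presupposes irreducibility too, since it was observed right after the definition of sphericity that every spherical $G$-variety is irreducible. Hence in both directions $X$ is an irreducible affine $G$-variety and the theorem applies.

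For the implication (1)$\Rightarrow$(2) I would argue as follows. If $X$ is spherical, then by definition $X$ is normal and possesses a dense $B$-orbit; being irreducible and affine, $X$ falls under the Vinberg--Kimelfeld theorem, and its implication (2)$\Rightarrow$(1) shows that $X$ is multiplicity-free. Together with the normality built into sphericity, this gives~(2). For the converse (2)$\Rightarrow$(1), suppose $X$ is multiplicity-free and normal. Multiplicity-freeness already makes $X$ irreducible (and $X$ is affine by hypothesis), so the implication (1)$\Rightarrow$(2) of the theorem produces a dense $B$-orbit on~$X$; combined with normality, this is exactly the definition of sphericity.

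There is no genuine obstacle here: the entire content is the combination of the two definitions with the cited theorem. The only point demanding attention is the bookkeeping of irreducibility, because the Vinberg--Kimelfeld theorem is stated only for irreducible affine $G$-varieties; one must therefore record in each direction why $X$ is irreducible before quoting it, which is precisely what the preliminary remark supplies.
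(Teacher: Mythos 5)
Your proposal is correct and is exactly the argument the paper intends (the corollary is stated without proof as an immediate consequence of the Vinberg--Kimelfeld theorem): combine the definition of sphericity (normal plus dense $B$-orbit) with the theorem's equivalence of multiplicity-freeness and possession of a dense $B$-orbit, checking irreducibility in each direction so that the theorem applies. Your bookkeeping of the irreducibility hypothesis is precisely the right point to make explicit.
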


\subsection{The weight monoid}
\label{subsec_weight monoid}

Let $X$ be a multiplicity-free affine $G$-variety.

\begin{definition}
The \textit{weight monoid} of $X$, denoted by~$\Gamma_X$, is the set of all $\lambda \in \Lambda^+$ such that $\Bbbk[X]$ contains a simple $G$-submodule isomorphic to~$V(\lambda)$.
\end{definition}

\begin{remark}
As $\Bbbk[X]$ is an integral domain, the product of two highest weight vectors in $\Bbbk[X]$ is nonzero and hence again a highest weight vector. It follows that $\Gamma_X$ is indeed a submonoid in~$\Lambda^+$.
\end{remark}

For every $\lambda \in \Gamma_X$, we let $\Bbbk[X]_\lambda$ denote the simple $G$-submodule of $\Bbbk[X]$ isomorphic to~$V(\lambda)$, so that
\[
\Bbbk[X] = \bigoplus \limits_{\lambda \in \Gamma_X} \Bbbk[X]_\lambda.
\]

Given a submonoid $\Gamma \subset \mathfrak X(T)$, let $\Bbbk[\Gamma]$ denote the ``semigroup algebra'' of~$\Gamma$, that is, the algebra with basis $\lbrace u_\lambda \mid \lambda \in \Gamma \rbrace$ and multiplication given by $u_\lambda u_\mu = u_{\lambda + \mu}$ for all $\lambda, \mu \in \Gamma$. We equip $\Bbbk[\Gamma]$ with an action of $T$ given by the formula $t \cdot u_\lambda = t^\lambda u_\lambda$ for all $t \in T$ and $\lambda \in \Gamma$. Clearly, the multiplication of $\Bbbk[\Gamma]$ is $T$-equivariant.

\begin{proposition}[{\cite[Theorem~2]{Po86}}] \label{prop_SGalgebra}
There is a $T$-equivariant isomorphism $\Bbbk[X]^U \simeq \Bbbk[\Gamma_X]$.
\end{proposition}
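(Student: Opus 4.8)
The plan is to realise $\Bbbk[X]^U$ as a commutative algebra graded by $\Gamma_X$ with one-dimensional homogeneous components and nondegenerate multiplication, and then to identify such an algebra with the semigroup algebra $\Bbbk[\Gamma_X]$ after a suitable rescaling of the natural basis.

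First I would analyse $\Bbbk[X]^U$ as a $T$-module. Since $U$ acts by algebra automorphisms and $T$ normalises $U$, the subspace $\Bbbk[X]^U$ is a $T$-stable subalgebra of $\Bbbk[X]$. Taking $U$-invariants commutes with the decomposition $\Bbbk[X] = \bigoplus_{\lambda \in \Gamma_X} \Bbbk[X]_\lambda$, and for each $\lambda$ the space $(\Bbbk[X]_\lambda)^U = V(\lambda)^U$ is the one-dimensional highest weight line, spanned by a $T$-eigenvector $f_\lambda$ of weight~$\lambda$. Hence $\Bbbk[X]^U = \bigoplus_{\lambda \in \Gamma_X} \Bbbk f_\lambda$, so that $\Bbbk[X]^U$ and $\Bbbk[\Gamma_X]$ are already isomorphic as $T$-modules via $u_\lambda \mapsto f_\lambda$; it remains only to match the multiplicative structures.

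Next I would examine products. For $\lambda, \mu \in \Gamma_X$ the element $f_\lambda f_\mu$ is $U$-invariant, is a $T$-eigenvector of weight $\lambda + \mu$, and is nonzero because $\Bbbk[X]$ is an integral domain. By the one-dimensionality established above, this forces $\lambda + \mu \in \Gamma_X$ and $f_\lambda f_\mu = c_{\lambda, \mu} f_{\lambda + \mu}$ for a unique scalar $c_{\lambda, \mu} \in \Bbbk^\times$. Associativity and commutativity of $\Bbbk[X]^U$ translate into the cocycle identity $c_{\mu, \nu}\, c_{\lambda, \mu + \nu} = c_{\lambda, \mu}\, c_{\lambda + \mu, \nu}$ together with the symmetry $c_{\lambda, \mu} = c_{\mu, \lambda}$; in other words, $(\lambda, \mu) \mapsto c_{\lambda, \mu}$ is a symmetric $2$-cocycle on the monoid $\Gamma_X$ with values in $\Bbbk^\times$. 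The map $u_\lambda \mapsto f_\lambda$ is an algebra homomorphism precisely when this cocycle is trivial, so everything comes down to rescaling the $f_\lambda$.

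I expect the trivialisation of this cocycle to be the main (and essentially the only genuine) obstacle. Concretely, I would seek scalars $t_\lambda \in \Bbbk^\times$ satisfying $t_{\lambda + \mu} = c_{\lambda, \mu}\, t_\lambda t_\mu$; then replacing each $f_\lambda$ by $t_\lambda f_\lambda$ makes all structure constants equal to~$1$, and $u_\lambda \mapsto t_\lambda f_\lambda$ becomes the desired $T$-equivariant algebra isomorphism $\Bbbk[\Gamma_X] \simeq \Bbbk[X]^U$. To produce such $t_\lambda$, I would localise $\Bbbk[X]^U$ by inverting all the $f_\lambda$, thereby extending $c$ to a symmetric $2$-cocycle on the lattice $M = \ZZ \Gamma_X \subset \mathfrak X(T)$. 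Since $M$ is a finitely generated free abelian group, one has $\operatorname{Ext}^1_\ZZ(M, \Bbbk^\times) = 0$; equivalently, every symmetric $2$-cocycle on $M$ valued in the divisible group $\Bbbk^\times$ is a coboundary. Hence the extended cocycle splits, and restricting the splitting function to $\Gamma_X$ yields the required $t_\lambda$. The point where care is needed is verifying that the cocycle indeed extends to all of $M$ (using that the weight-zero part of $\Bbbk[X]^U$ is just $\Bbbk$, so each weight space of the localisation is one-dimensional) and that the weight bookkeeping is consistent throughout; once this is in place the proof is complete.
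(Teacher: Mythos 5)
The paper does not actually prove this proposition: it is quoted from \cite[Theorem~2]{Po86}, so there is no internal proof to compare against. Your argument is correct and complete, and it follows the standard route. The decomposition $\Bbbk[X]^U = \bigoplus_{\lambda \in \Gamma_X} \Bbbk f_\lambda$ and the observation that $f_\lambda f_\mu = c_{\lambda,\mu} f_{\lambda+\mu}$ with $c_{\lambda,\mu} \in \Bbbk^\times$ (nonvanishing because $\Bbbk[X]$ is a domain) are exactly right, and you correctly isolate the one genuinely nontrivial point, namely that the symmetric $2$-cocycle $c$ splits. Since $\Gamma_X$ need not be a free monoid, one cannot simply prescribe the rescaling factors $t_\lambda$ on a generating set, and your device of localising at the $f_\lambda$ (legitimate because $\Bbbk[X]^U$ is a domain) so as to extend $c$ to a symmetric $2$-cocycle on the free abelian group $M = \ZZ\Gamma_X$, where $\operatorname{Ext}^1_{\ZZ}(M,\Bbbk^\times)=0$ forces every symmetric $2$-cocycle to be a coboundary, handles this correctly; restricting the resulting splitting to $\Gamma_X$ gives the required $t_\lambda$, and $T$-equivariance is automatic since $t_\lambda f_\lambda$ is still a $T$-eigenvector of weight $\lambda$. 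Two minor remarks: divisibility of $\Bbbk^\times$ is not needed once $M$ is free; and the one-dimensionality of the weight spaces of the localisation does follow as you indicate, since two homogeneous fractions of the same weight have cross-products lying in a single one-dimensional weight space of $\Bbbk[X]^U$.
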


\begin{corollary}
The monoid $\Gamma_X$ is finitely generated.
\end{corollary}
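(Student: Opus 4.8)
The plan is to deduce finite generation of $\Gamma_X$ directly from Proposition~\ref{prop_SGalgebra}, which furnishes a $T$-equivariant isomorphism $\Bbbk[X]^U \simeq \Bbbk[\Gamma_X]$. The key observation is that the algebra of $U$-invariants $\Bbbk[X]^U$ is itself a finitely generated $\Bbbk$-algebra, so that $\Gamma_X$ must be finitely generated as a monoid.

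First I would establish that $\Bbbk[X]^U$ is finitely generated. Since $X$ is an affine $G$-variety, $\Bbbk[X]$ is a finitely generated $\Bbbk$-algebra on which the reductive group $G$ acts rationally. The unipotent radical $U$ of the Borel subgroup $B$ is a subgroup of the reductive group $G$, and by a classical theorem of Hadziev (an analogue of Hilbert's finiteness theorem for the maximal unipotent subgroup of a reductive group), the algebra of $U$-invariants $\Bbbk[X]^U$ is finitely generated. Alternatively, one can invoke the general finite generation result for invariants under a \emph{Grosshans subgroup}: $U$ is a Grosshans subgroup of $G$ precisely because $G/U$ is quasi-affine and $\Bbbk[G]^U$ is finitely generated, whence $\Bbbk[X]^U = (\Bbbk[X] \otimes \Bbbk[G/U])^G$ inherits finite generation from the reductivity of~$G$ via Hilbert's theorem.

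Next I would transfer this finiteness to the monoid $\Gamma_X$. Under the isomorphism of Proposition~\ref{prop_SGalgebra}, the algebra $\Bbbk[\Gamma_X]$ has $\Bbbk$-basis $\lbrace u_\lambda \mid \lambda \in \Gamma_X \rbrace$ and is $\mathfrak X(T)$-graded with multiplication $u_\lambda u_\mu = u_{\lambda + \mu}$. Choose finitely many algebra generators of $\Bbbk[\Gamma_X]$; since the grading is by $\Gamma_X \subset \mathfrak X(T)$ and each graded component $\Bbbk u_\lambda$ is one-dimensional, I may assume each chosen generator is a scalar multiple of some $u_{\lambda_i}$, for finitely many weights $\lambda_1, \ldots, \lambda_m \in \Gamma_X$. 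Any element $u_\lambda$ is then a $\Bbbk$-linear combination of products of these generators; comparing gradings, $\lambda$ must equal a nonnegative integer combination $\sum n_i \lambda_i$. Hence every $\lambda \in \Gamma_X$ lies in the submonoid generated by $\lambda_1, \ldots, \lambda_m$, so $\Gamma_X$ is finitely generated.

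The main obstacle is the finite generation of $\Bbbk[X]^U$, which rests on the nontriviality of invariant theory for the nonreductive group~$U$; the descent from algebra generators to monoid generators is then a routine bookkeeping argument using the one-dimensionality of the graded pieces of $\Bbbk[\Gamma_X]$. I expect this descent step to be entirely formal once the algebra is known to be finitely generated.
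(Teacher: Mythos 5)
Your proposal is correct and follows essentially the same route as the paper: the paper also deduces finite generation of $\Bbbk[X]^U$ from Hadziev's theorem (the cited \cite[Theorem~3.1]{Ha67}) and then applies Proposition~\ref{prop_SGalgebra}. Your explicit bookkeeping step passing from algebra generators of $\Bbbk[\Gamma_X]$ to monoid generators of $\Gamma_X$ is exactly the content the paper leaves implicit in ``it remains to apply Proposition~\ref{prop_SGalgebra}.''
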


\begin{proof}
As the algebra $\Bbbk[X]$ is finitely generated, so is $\Bbbk[X]^U$ by~\cite[Theorem~3.1]{Ha67} (see also \cite[Corollary~4 of Theorem~4]{Po86}). It remains to apply Proposition~\ref{prop_SGalgebra}.
\end{proof}

\begin{proposition} \label{prop_normality_criterion}
The algebra $\Bbbk[X]$ is integrally closed if and only if so is $\Bbbk[X]^U$.
\end{proposition}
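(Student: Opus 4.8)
The plan is to compare $X$ with its normalization. Set $A = \Bbbk[X]$ and let $\overline A$ be the integral closure of $A$ in $\Quot A = \Bbbk(X)$, so that $\overline X := \Spec \overline A \to X$ is the normalization, a finite birational morphism. Since the normalization is canonical and $G$ is connected, the $G$-action lifts to $\overline A$, making $A \subseteq \overline A$ an inclusion of $G$-subalgebras; moreover $\overline X$ is normal and, being birational to $X$ over the (smooth, hence normal) dense $B$-orbit of $X$, again carries a dense $B$-orbit. Thus $\overline X$ is multiplicity-free, with weight monoid $\overline\Gamma \supseteq \Gamma_X$, and by Proposition~\ref{prop_SGalgebra} one has $\overline A{}^U \simeq \Bbbk[\overline\Gamma]$ and $A^U \simeq \Bbbk[\Gamma_X]$.

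The first, easy, step is the observation that for a multiplicity-free $G$-algebra $C$ that is integrally closed, $C^U$ is integrally closed: indeed $C^U = C \cap \Quot(C)^U$ is the intersection of the integrally closed domain $C$ with a subfield of $\Quot C$, hence integrally closed. Applying this to $A$ settles the implication ``$\Bbbk[X]$ integrally closed $\Rightarrow \Bbbk[X]^U$ integrally closed'', and applying it to $\overline A$ shows that $\overline A{}^U$ is integrally closed. For the converse I would reduce everything to the claim that $\overline A{}^U$ is exactly the integral closure of $A^U$ in $\Quot(A^U)$. Granting this, if $A^U$ is integrally closed then $A^U = \overline A{}^U$; comparing $T$-weights in the isomorphisms above forces $\Gamma_X = \overline\Gamma$; and since in a multiplicity-free $G$-algebra each $\lambda$ contributes a unique copy of $V(\lambda)$, equality of weight monoids upgrades to $A_\lambda = \overline A_\lambda$ for every $\lambda$, whence $A = \overline A$ and $A$ is integrally closed.

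The claim in turn splits into two assertions: that $\overline A{}^U$ is integral over $A^U$, and that $\Quot(A^U) = \Quot(\overline A{}^U)$; together with the already noted integral closedness of $\overline A{}^U$ these identify $\overline A{}^U$ with the integral closure of $A^U$. The equality of fraction fields I would deduce from birationality: $\Bbbk(\overline X) = \Bbbk(X)$ gives $\Bbbk(\overline X)^U = \Bbbk(X)^U$, and for a multiplicity-free affine variety the field of $U$-invariant rational functions is the fraction field of its $U$-invariant regular functions, so both $\Quot(A^U)$ and $\Quot(\overline A{}^U)$ coincide with $\Bbbk(X)^U$.

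The remaining assertion, that $\overline A{}^U$ is integral over $A^U$, is where the real work lies and is the step I expect to be the main obstacle. A naive weight-space projection of an integral equation $u^m + \sum_{i<m} a_i u^i = 0$, with $u \in \overline A{}^U$ and $a_i \in A$, does not land inside $A^U$, because multiplying by the highest-weight vector $u$ mixes in lower $G$-isotypic components. The clean way around this is Grosshans' transfer principle, which realizes $C^U$ as $(C \otimes \Bbbk[G]^U)^G$ for the reductive group $G$; base change preserves integrality, so $\overline A \otimes \Bbbk[G]^U$ is integral over $A \otimes \Bbbk[G]^U$, and for a reductive group an integral extension of $G$-algebras induces an integral extension of $G$-invariants (apply the Reynolds operator, which is linear over the invariant subring, to an integral equation). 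This yields integrality of $\overline A{}^U$ over $A^U$ and completes the argument.
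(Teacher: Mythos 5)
Your argument is correct, but it is genuinely different from what the paper does: the paper disposes of this proposition in one line by citing Vust's normality criterion \cite[\S\,1.2, Theorem~1]{Vu76} (see also \cite[Corollary of Theorem~6]{Po86}), whereas you give a self-contained proof. Your proof is essentially the standard argument behind that criterion, specialized to the multiplicity-free setting, and the specialization pays off at the end: once $\Gamma_X = \Gamma_{\widetilde X}$, the fact that each isotypic component is a \emph{single} copy of $V(\lambda)$ immediately upgrades equality of weight monoids to $\Bbbk[X] = \Bbbk[\widetilde X]$, a step that in the general (non-multiplicity-free) setting requires more care. All the individual ingredients check out: the easy direction via $C^U = C \cap \Quot(C)^U$; the identification $\Quot(\Bbbk[X]^U) = \Bbbk(X)^U$ (which is \cite[Theorem~3.3]{PV}, invoked by the paper itself in the proof of Proposition~\ref{prop_mfav}); and the Reynolds-operator/transfer-principle derivation of the integrality of $\Bbbk[\widetilde X]^U$ over $\Bbbk[X]^U$. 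For that last step, which you correctly flag as the crux, you could also have cited \cite[Corollary~2 of Theorem~4]{Po86} directly --- the paper does exactly that later, in the proof of Proposition~\ref{prop_mfav}(\ref{prop_mfav_a}) --- but your derivation of it from Grosshans' transfer principle plus base change is sound and makes the write-up independent of that reference.
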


\begin{proof}
This is a particular case of Vust's normality criterion  \cite[\S\,1.2, Theorem~1]{Vu76} (see also \cite[Corollary of Theorem~6]{Po86}).
\end{proof}

\begin{definition}
A monoid $\Gamma \subset \mathfrak X(T)$ is said to be \textit{saturated} if it satisfies the equality $\Gamma = \ZZ \Gamma \cap \QQ^+ \Gamma$ in $\mathfrak X(T) \otimes_\ZZ \QQ$.
\end{definition}

\begin{proposition} \label{prop_normality_saturatedness}
The following conditions are equivalent:
\begin{enumerate}[label=\textup{(\arabic*)},ref=\textup{\arabic*}]
\item
$X$ is normal \textup(and hence spherical\textup).

\item
$\Gamma_X$ is saturated.
\end{enumerate}
\end{proposition}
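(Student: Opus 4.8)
The plan is to reduce the statement to a purely combinatorial fact about semigroup algebras, for which the two preceding propositions supply exactly the ingredients needed. First I would recall that, $X$ being irreducible, $\Bbbk[X]$ is an integral domain, and that by definition $X$ is normal if and only if $\Bbbk[X]$ is integrally closed in $\Bbbk(X)$. By Proposition~\ref{prop_normality_criterion} this is equivalent to $\Bbbk[X]^U$ being integrally closed, and then the $T$-equivariant isomorphism of Proposition~\ref{prop_SGalgebra} lets me replace $\Bbbk[X]^U$ by the semigroup algebra $\Bbbk[\Gamma_X]$ (integral closedness being preserved under isomorphism of domains). Hence everything comes down to the following assertion: for a finitely generated submonoid $\Gamma$ of a lattice $M = \ZZ\Gamma$ of rank~$r$, the algebra $\Bbbk[\Gamma]$ is integrally closed if and only if $\Gamma$ is saturated, that is, $\Gamma = M \cap \QQ^+\Gamma$. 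Here $\Bbbk[\Gamma]$ is a domain whose fraction field equals $\Quot \Bbbk[M]$, since $M = \Gamma - \Gamma$.

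The easy direction is ``only if''. If $\Gamma$ is not saturated, I choose $\lambda \in (M \cap \QQ^+\Gamma) \setminus \Gamma$. As $\lambda \in \QQ^+\Gamma$, there is an integer $n \ge 1$ with $n\lambda \in \Gamma$, so $u_\lambda^n = u_{n\lambda} \in \Bbbk[\Gamma]$ and $u_\lambda$ is integral over $\Bbbk[\Gamma]$; but $u_\lambda$ is a single monomial of $\Bbbk[M]$ whose exponent $\lambda$ is not in~$\Gamma$, so $u_\lambda \notin \Bbbk[\Gamma]$. Thus $\Bbbk[\Gamma]$ is not integrally closed.

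For the converse I would argue with the cone $\mathcal C = \QQ^+\Gamma$, which is a rational polyhedral cone because $\Gamma$ is finitely generated. Writing $\mathcal C = \bigcap_{j=1}^s \lbrace x \in M \otimes_\ZZ \QQ \mid \langle \psi_j, x \rangle \ge 0 \rbrace$ with each $\psi_j \in \Hom_\ZZ(M, \ZZ)$ primitive (the inward normals of the facets), saturatedness gives $\Gamma = M \cap \mathcal C = \bigcap_j \Gamma_j$, where $\Gamma_j = \lbrace \lambda \in M \mid \langle \psi_j, \lambda \rangle \ge 0 \rbrace$. Comparing supports of elements of $\Bbbk[M]$ shows immediately that $\Bbbk[\Gamma] = \bigcap_{j=1}^s \Bbbk[\Gamma_j]$ as subalgebras of $\Bbbk[M]$, all sharing the fraction field $\Quot \Bbbk[M]$. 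Finally, each $\Bbbk[\Gamma_j]$ is integrally closed: choosing a basis of $M$ in which $\psi_j$ is the last coordinate functional (possible as $\psi_j$ is primitive) identifies $\Gamma_j$ with $\ZZ^{r-1} \times \ZZ^+$, whose semigroup algebra is a Laurent polynomial ring in $r-1$ variables over a polynomial ring in one variable, and is therefore integrally closed. Since an intersection of integrally closed domains with a common fraction field is integrally closed, so is $\Bbbk[\Gamma]$, which proves the key fact and hence the proposition. The parenthetical ``(and hence spherical)'' is then immediate from the earlier characterization of spherical affine $G$-varieties as the normal multiplicity-free ones.

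I expect the genuine content to lie entirely in the converse direction, specifically in the identification $\Bbbk[\Gamma] = \bigcap_j \Bbbk[\Gamma_j]$ and in checking that the half-space algebras $\Bbbk[\Gamma_j]$ are integrally closed; the reduction via Propositions~\ref{prop_normality_criterion} and~\ref{prop_SGalgebra} together with the ``only if'' direction is formal.
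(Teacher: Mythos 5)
Your proof is correct and follows the same route as the paper: reduce via Propositions~\ref{prop_normality_criterion} and~\ref{prop_SGalgebra} to the assertion that $\Bbbk[\Gamma_X]$ is integrally closed if and only if $\Gamma_X$ is saturated. The paper simply cites this standard fact about semigroup algebras in one line, whereas you supply the (correct) standard toric argument for it --- the half-space decomposition and the intersection of normal domains --- so the only difference is the level of detail, not the approach.
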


\begin{proof}
By~\cite[Ch.~I, \S\,1, Lemma~1]{KKMS73}, the algebra $\Bbbk[\Gamma_X]$ is integrally closed if and only if $\Gamma_X$ is saturated. Now the claim follows from Propositions~\ref{prop_SGalgebra} and~\ref{prop_normality_criterion}.
\end{proof}

\subsection{The root monoid and related invariants}
\label{subsec_root_monoid}

Let $X$ be a multiplicity-free affine $G$-variety.

\begin{definition} \label{dfn_root_monoid}
The \textit{root monoid} of~$X$, denoted by~$\Xi_X$, is the monoid in $\mathfrak X(T)$ generated by all expressions $\lambda + \mu - \nu$ with $\lambda, \mu, \nu \in \Gamma_X$ such that the linear span of $\Bbbk[X]_\lambda \cdot \Bbbk[X]_\mu$ contains~$\Bbbk[X]_\nu$.
\end{definition}

It follows from the definition that $\Xi_X$ is a submonoid of~$\ZZ^+ \Pi$. It is known that $\Xi_X$ is finitely generated, see \cite[Proposition~2.13]{AB}.

Let $\Xi^\sat_X$ denote the saturation of~$\Xi_X$, that is, $\Xi^\sat_X = \ZZ \Xi_X \cap \QQ^+ \Xi_X$.

\begin{theorem}[{see~\cite[Theorem 1.3]{Kn96}}]
\label{thm_saturation_is_free} The monoid $\Xi^\sat_X$ is free.
\end{theorem}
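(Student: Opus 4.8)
The plan is to reformulate freeness as a statement about the cone and lattice generated by $\Xi_X$, and then to feed in the structure theory of the valuation cone. Recall that a saturated submonoid $M$ of a lattice, satisfying $M = \ZZ M \cap \QQ^+ M$, is free if and only if the cone $\QQ^+ M$ is simplicial and the primitive generators of $\ZZ M$ along its extremal rays form a $\ZZ$-basis of $\ZZ M$. Thus I would set $\mathcal C = \QQ^+ \Xi_X \subseteq \ZZ^+ \Pi \otimes_\ZZ \QQ$ and $L = \ZZ \Xi_X$, and aim to prove these two properties for the pair $(\mathcal C, L)$. As a preliminary reduction, note that $\mathcal C$ and the saturation $\Xi^\sat_X$ depend only on the field $\Bbbk(X)$ and the open $G$-orbit; passing from $X$ to its normalization changes neither, so it suffices to treat the spherical case.

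For simpliciality I would invoke the structure of the valuation cone $\mathcal V \subseteq \Hom_\ZZ(\ZZ \Gamma_X, \QQ)$ of the spherical variety $X$. By the Brion--Knop theorem, $\mathcal V$ is a fundamental domain --- a Weyl chamber --- for a finite crystallographic reflection group $W_X$, the little Weyl group of $X$. Dualizing, the cone $\mathcal C$ is identified (up to sign) with the cone spanned by a set of simple roots of the root system attached to $W_X$, namely the spherical roots of $X$. Since simple roots of any root system are linearly independent, $\mathcal C$ is simplicial, and its extremal rays are exactly the rays spanned by the spherical roots.

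The remaining and genuinely harder point is the integrality condition: the primitive generators of $L$ on the extremal rays of $\mathcal C$ must form a $\ZZ$-basis of $L = \ZZ \Xi_X$. Simpliciality alone does not give this --- a simplicial cone may well meet a lattice in a non-free monoid --- so this is where the real work lies. I would prove that for each extremal ray the corresponding spherical root already lies in $\Xi_X$ (not merely a positive multiple of it), by exhibiting it as a ``covering'' generator $\lambda + \mu - \nu$, where $\nu$ is a maximal element strictly below $\lambda + \mu$ in the dominance order with $\Bbbk[X]_\lambda \cdot \Bbbk[X]_\mu \supseteq \Bbbk[X]_\nu$, and then checking that $\ZZ \Xi_X$ is precisely the root lattice of $W_X$ with the spherical roots as a basis, so that no lattice point lies strictly inside the fundamental parallelepiped. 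Granting this, $\Xi^\sat_X$ is the free monoid on the spherical roots, as claimed.

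I expect the identification of $\ZZ \Xi_X$ with the root lattice of $W_X$, together with the primitivity and normalization of the spherical roots relative to $\ZZ \Xi_X$, to be the main obstacle; controlling exactly this is what makes Knop's original argument delicate. It is worth noting that the present paper circumvents the obstacle entirely: once the $T_\ad$-module $T_{X_0} \mathrm{M}_\Gamma$ is shown to be multiplicity-free with weights forming a prescribed independent set, freeness of the root monoid can be read off directly, which is the route taken in the sequel.
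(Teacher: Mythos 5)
First, a point of orientation: the paper does not prove this statement. It is quoted from Knop \cite[Theorem~1.3]{Kn96} and then used as an input --- the set $\overline \Sigma_X$ of free generators of $\Xi^\sat_X$ is \emph{defined} by appealing to it, and the proof of Theorem~\ref{thm_root_monoid_is_free} (the strengthening $\Xi_X = \Xi^\sat_X$) uses the linear independence of $\overline \Sigma_X$. So your closing remark that the paper ``circumvents the obstacle entirely'' is not accurate: the sequel relies on Knop's theorem rather than reproving it, and what you are really attempting is a reconstruction of Knop's external argument.

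On the substance, the first half of your outline is sound: your reformulation of freeness for a saturated monoid is correct, and simpliciality of $\QQ^+ \Xi_X$ does follow from the duality with $-\mathcal V_X$ (Proposition~\ref{prop_dual_cones}) together with the Brion--Knop theorem that the valuation cone is a Weyl chamber of the little Weyl group. But the second half, which you correctly identify as the real content, is not an argument --- it is a restatement of the theorem, and one of its intermediate claims is actually false. Concretely: (1) the assertion that ``for each extremal ray the corresponding spherical root already lies in $\Xi_X$ (not merely a positive multiple of it)'' fails if ``spherical root'' means an element of $\Sigma_X$, i.e.\ the primitive generator of the ray in $\ZZ \Gamma_X$: by Theorem~\ref{thm_overline_sigma} the free generator $\overline\sigma$ of $\Xi^\sat_X$ on the ray $\QQ^+\sigma$ equals $2\sigma$ in several explicitly listed situations, and then $\sigma \notin \Xi^\sat_X \supseteq \Xi_X$. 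If instead you mean the primitive generator in $\ZZ \Xi_X$, then ``lies in $\Xi_X$'' is precisely Theorem~\ref{thm_root_monoid_is_free}, which is strictly stronger than Knop's statement and not available as a lemma here. (2) ``Checking that $\ZZ \Xi_X$ is precisely the root lattice of $W_X$ with the spherical roots as a basis'' presupposes a normalization of the roots of $W_X$ inside $\ZZ \Xi_X$ and \emph{is} the integrality assertion to be proven; no mechanism for establishing it is offered. A smaller gap: your reduction to the normal case is justified only for the cone $\QQ^+ \Xi_X$, which is a birational invariant by Proposition~\ref{prop_dual_cones}; the lattice $\ZZ \Xi_X$, and hence the saturation $\Xi^\sat_X = \ZZ\Xi_X \cap \QQ^+\Xi_X$, is not obviously unchanged under normalization, so that step also requires an argument.
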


According to Theorem~\ref{thm_saturation_is_free}, let $\overline \Sigma_X \subset \ZZ^+ \Pi$ be the set of free generators of the monoid~$\Xi_X^\sat$, that is, the linearly independent set such that
\[
\Xi^\sat_X = \ZZ^+\overline \Sigma_X.
\]

Along with the set $\overline \Sigma_X$, we shall also consider the set~$\Sigma_X$ consisting of primitive elements $\sigma$ of the lattice $\ZZ \Gamma_X$ such that $\QQ^+\sigma$ is an extremal ray of the cone $\QQ^+ \Xi_X \subset \ZZ \Gamma_X \otimes_\ZZ \QQ$. Elements of $\Sigma_X$ are called \textit{spherical roots} of~$X$.

\subsection{The $G$-variety $X_0$}
\label{subsec_X_0}

From now on until the end of \S\,\ref{subsec_characterizations}, $\Gamma \subset \Lambda^+$ is an arbitrary finitely generated monoid.

Fix an arbitrary finite generating system $\mathrm E \subset \Gamma$ and consider the $G$-module
\[
V = V(\mathrm E) = \bigoplus \limits_{\lambda \in \mathrm E}
V(\lambda)^*.
\]
For every $\lambda \in \mathrm E$, fix a lowest weight vector $v_\lambda \in V(\lambda)^*$. Put
\[
x_0 = \sum \limits_{\lambda \in \mathrm E} v_\lambda, \quad O = Gx_0, \quad \text{ and } \quad X_0 = \overline O \subset V.
\]

\begin{theorem}[{\cite[Theorem~6]{VP72}}]
\label{thm_VP}
The following assertions hold:
\begin{enumerate}[label=\textup{(\alph*)},ref=\textup{\alph*}]
\item
up to a $G$-isomorphism, the $G$-variety $X_0$ is independent of the choice of~$\mathrm E$;

\item
$X_0$ is a multiplicity-free affine $G$-variety;

\item
$\Gamma_{X_0} = \Gamma$;

\item \label{thm_VP_d}
$\Xi_{X_0} = \lbrace 0 \rbrace$, that is, the linear span of $\Bbbk[X_0]_\lambda \cdot \Bbbk[X_0]_\mu$ coincides with $\Bbbk[X_0]_{\lambda + \mu}$ for all $\lambda, \mu \in \Gamma_X$.
\end{enumerate}
\end{theorem}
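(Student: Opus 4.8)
The plan is to prove all four assertions simultaneously by computing the coordinate algebra $\Bbbk[X_0]$ explicitly as a subalgebra of $\Bbbk[G]$. First note the easy structural facts: $X_0 = \overline O$ is irreducible, being the closure of the orbit $O = Gx_0$ (the image of the irreducible $G$), and it is affine as a closed subvariety of the vector space~$V$. Since $X_0$ is closed in~$V$, its coordinate algebra is generated by the restrictions of the linear functions on~$V$, i.e.\ by the images of $V^* = \bigoplus_{\lambda \in \mathrm E} V(\lambda)$. The orbit map $\pi \colon G \to X_0$, $g \mapsto gx_0$, is dominant, so the comorphism $\pi^*$ is injective and identifies $\Bbbk[X_0]$ with the $G$-stable subalgebra of $\Bbbk[G]$ generated by the pullbacks of these linear functions.

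I would then compute these pullbacks. For $\xi \in V(\lambda) \subset V^*$ the corresponding function restricts on $X_0$ to $g \mapsto \langle \xi, gx_0 \rangle = \langle \xi, gv_\lambda \rangle$, a matrix coefficient $c_{\xi, v_\lambda}$ of $V(\lambda)^*$ taken against the lowest weight vector~$v_\lambda$. Since $v_\lambda$ is fixed by the opposite unipotent radical $U^-$ and has $T$-weight $-\lambda$, the image $M_\lambda := \{ c_{\xi, v_\lambda} \mid \xi \in V(\lambda) \}$ consists of functions that are right-$U^-$-invariant and of right $T$-weight $-\lambda$. By the Peter--Weyl decomposition $\Bbbk[G] = \bigoplus_\mu V(\mu) \otimes V(\mu)^*$, the subspace of right-$U^-$-invariants of right $T$-weight $-\gamma$ is, for $\gamma \in \Lambda^+$, exactly one copy of $V(\gamma)$, arising from the unique $U^-$-fixed line in $V(\gamma)^*$; denote it $M_\gamma$. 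Thus each $M_\gamma$ is a single copy of $V(\gamma)$, and the $M_\gamma$ are linearly independent as $\gamma$ varies.

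The crux is the multiplicative behaviour. For $\gamma_1, \gamma_2 \in \Gamma$ the product $c_{\xi_1, v_{\gamma_1}} c_{\xi_2, v_{\gamma_2}}$ equals the matrix coefficient $c_{\xi_1 \otimes \xi_2,\, v_{\gamma_1} \otimes v_{\gamma_2}}$ of $V(\gamma_1)^* \otimes V(\gamma_2)^*$. The key observation is that $v_{\gamma_1} \otimes v_{\gamma_2}$ is again $U^-$-fixed and of the extreme $T$-weight $-(\gamma_1 + \gamma_2)$, hence it spans the lowest weight line of the Cartan component $V(\gamma_1 + \gamma_2)^*$ inside $V(\gamma_1)^* \otimes V(\gamma_2)^*$; consequently the product lies in $M_{\gamma_1 + \gamma_2}$. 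As the multiplication map $M_{\gamma_1} \otimes M_{\gamma_2} \to M_{\gamma_1 + \gamma_2}$ is $G$-equivariant with image a nonzero submodule of the irreducible $M_{\gamma_1 + \gamma_2} \cong V(\gamma_1 + \gamma_2)$ (nonzero because $\Bbbk[G]$ is a domain), it is surjective, so $M_{\gamma_1} \cdot M_{\gamma_2} = M_{\gamma_1 + \gamma_2}$. Since $\mathrm E$ generates $\Gamma$, the subalgebra generated by $\{M_\lambda \mid \lambda \in \mathrm E\}$ is therefore $\bigoplus_{\gamma \in \Gamma} M_\gamma$, and we conclude $\Bbbk[X_0] \cong \bigoplus_{\gamma \in \Gamma} V(\gamma)$ with $\Bbbk[X_0]_\gamma = M_\gamma$. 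This yields (b), that $X_0$ is multiplicity-free; (c), that $\Gamma_{X_0} = \Gamma$; and (d), since the identity $M_{\gamma_1} M_{\gamma_2} = M_{\gamma_1 + \gamma_2}$ says precisely $\Bbbk[X_0]_\lambda \cdot \Bbbk[X_0]_\mu = \Bbbk[X_0]_{\lambda + \mu}$, forcing $\Xi_{X_0} = \{0\}$. For (a), the subalgebra $\bigoplus_{\gamma \in \Gamma} M_\gamma \subset \Bbbk[G]$ manifestly depends only on~$\Gamma$, not on the choice of $\mathrm E$ nor of the vectors $v_\lambda$ (which are unique up to scaling, merely rescaling coordinates), so distinct generating systems produce $G$-isomorphic varieties.

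I expect the main obstacle to be the Cartan-component statement of the third paragraph: that multiplying matrix coefficients taken against lowest weight vectors produces no lower isotypic components. This single fact is what simultaneously forces multiplicity-freeness to persist under multiplication and makes the root monoid trivial; everything else is bookkeeping with the Peter--Weyl decomposition. I would therefore isolate and prove it first, phrasing it as the claim that $v_{\gamma_1} \otimes v_{\gamma_2}$ generates the Cartan component of $V(\gamma_1)^* \otimes V(\gamma_2)^*$ under the action of $U(\mathfrak g)$.
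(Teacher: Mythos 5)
Your proof is correct. There is nothing in the paper to compare it against: the statement is quoted with a citation to \cite[Theorem~6]{VP72} and no proof is given, and your argument is essentially the classical one from that reference --- identify $\Bbbk[X_0]$ via the comorphism of the orbit map with the subalgebra $\bigoplus_{\gamma \in \Gamma} M_\gamma \subset \Bbbk[G]$ of matrix coefficients against lowest weight vectors, and read off all four assertions from the multiplication rule $M_{\gamma_1} \cdot M_{\gamma_2} = M_{\gamma_1 + \gamma_2}$. One remark: the Cartan-component lemma you single out at the end as the main obstacle is already contained in your own Peter--Weyl bookkeeping and need not be proved separately. Once $M_\gamma$ is identified as the \emph{full} space of right-$U^-$-invariant functions of right $T$-weight $-\gamma$ (a single copy of $V(\gamma)$ for dominant $\gamma$, and zero for non-dominant $\gamma$), the inclusion $M_{\gamma_1} \cdot M_{\gamma_2} \subset M_{\gamma_1 + \gamma_2}$ is immediate: a product of right-$U^-$-invariants is right-$U^-$-invariant and the right $T$-weights add, while $\gamma_1 + \gamma_2$ is again dominant. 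Surjectivity then follows, exactly as you say, from the irreducibility of $M_{\gamma_1+\gamma_2}$ and the fact that $\Bbbk[G]$ is a domain; the statement that $v_{\gamma_1} \otimes v_{\gamma_2}$ generates the Cartan component of $V(\gamma_1)^* \otimes V(\gamma_2)^*$ then comes out as a corollary rather than serving as a prerequisite. The only other points worth making explicit in a written version are that the subspaces $M_\gamma$, and hence the subalgebra $\bigoplus_{\gamma \in \Gamma} M_\gamma$, are independent of the choice of the vectors $v_\lambda$ (each is determined by $\gamma$ alone as a subspace of $\Bbbk[G]$), which is what makes assertion (a) clean, and that for (d) the equality $\Bbbk[X_0]_\lambda \cdot \Bbbk[X_0]_\mu = \Bbbk[X_0]_{\lambda+\mu}$ forces every generator $\lambda + \mu - \nu$ of $\Xi_{X_0}$ to vanish because the only $\nu$ with $\Bbbk[X_0]_\nu \subset M_{\lambda + \mu}$ is $\nu = \lambda + \mu$.
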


\subsection{The definition of $\mathrm M_\Gamma$}
\label{subsec_definition_of_M_Gamma}

Consider the $G$-module
\begin{equation} \label{eqn_A}
A_\Gamma = \bigoplus\limits_{\lambda \in \Gamma} V(\lambda).
\end{equation}
For every $\lambda \in \Gamma$, fix a highest weight vector $u_\lambda \in V(\lambda)$. Then $A_\Gamma^U = \bigoplus \limits_{\lambda \in \Gamma} \Bbbk u_\lambda.$
We equip $A_\Gamma^U$ with an algebra structure by setting
\begin{equation} \label{eqn_ml_on_AU}
u_\lambda \cdot u_\mu = u_{\lambda + \mu} \; \text{ for all } \; \lambda, \mu \in \Gamma.
\end{equation}
Thus we get a canonical identification
\begin{equation} \label{eqn_can_alg_iso}
A_\Gamma^U \simeq \Bbbk[\Gamma].
\end{equation}

Every scheme $S$ is naturally equipped with the sheaf of $\mathcal O_S$-$G$-modules $\mathcal O_S \otimes_\Bbbk A_\Gamma$.

We consider the contravariant functor
\[
\mathcal M_\Gamma \colon \text{(Schemes)} \to \text{(Sets)}
\]
assigning to each scheme $S$ the set of $\mathcal O_S$-$G$-algebra structures on the sheaf $\mathcal O_S \otimes_\Bbbk A_\Gamma$ that extend the multiplication~(\ref{eqn_ml_on_AU}) on~$A_\Gamma^U$. By~\cite[Proposition~2.10]{AB}, this definition of $\mathcal M_\Gamma$ agrees with that given in~\cite[Definition~1.11]{AB}, see also \cite[\S\,4.3]{Br13}.

The following result is a consequence of \cite[Theorems~1.12 and~2.7]{AB}, see also~\cite[\S\,4.3]{Br13}.

\begin{theorem} \label{thm_M_Gamma}
The functor $\mathcal M_\Gamma$ is represented by an affine scheme $\mathrm M_\Gamma$ of finite type.
\end{theorem}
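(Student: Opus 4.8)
The plan is to realize $\mathcal M_\Gamma$ as a closed subfunctor of the functor represented by an affine space of ``structure constants'' and then invoke representability of closed subfunctors. First I would analyze the data parametrized by $\mathcal M_\Gamma(S)$. An $\mathcal O_S$-$G$-algebra structure on $\mathcal O_S \otimes_\Bbbk A_\Gamma$ is an $\mathcal O_S$-bilinear, $G$-equivariant, commutative and associative multiplication $m_S$ extending~(\ref{eqn_ml_on_AU}). By $G$-equivariance it decomposes into components $m^\nu_{\lambda, \mu} \in \Hom_G(V(\lambda) \otimes V(\mu), V(\nu))$ indexed by $\lambda, \mu, \nu \in \Gamma$. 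Comparing $T$-weights of highest weight vectors shows that $m^\nu_{\lambda,\mu} = 0$ unless $\lambda + \mu - \nu \in \ZZ^+ \Pi$; moreover the ``leading'' component, with $\nu = \lambda + \mu$, is completely pinned down by the requirement that $m_S$ extend~(\ref{eqn_ml_on_AU}), since the product $u_\lambda \cdot u_\mu$ and the vector $u_{\lambda+\mu}$ both span the space of $U$-invariants of weight $\lambda+\mu$. For a fixed pair $(\lambda,\mu)$ only finitely many $\nu \in \Gamma$ satisfy $\lambda+\mu-\nu \in \ZZ^+\Pi$, and each $\Hom_G(V(\lambda)\otimes V(\mu), V(\nu))$ is finite-dimensional; hence the product of any single pair of isotypic components is governed by finitely many parameters, each carrying a well-defined $T_\ad$-weight $\lambda+\mu-\nu \in \ZZ\Pi = \mathfrak X(T_\ad)$.

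The crux is to reduce to finitely many such parameters globally, since $\Gamma$ may be infinite. Here I would use finite generation of $\Gamma$: fix generators $\lambda_1, \dots, \lambda_r$ of $\Gamma$. Filtering the deformed algebra by leading term and passing to the associated graded recovers the standard algebra $\Bbbk[X_0]$, whose multiplication is $\Gamma$-graded with $V(\lambda)\cdot V(\mu) = V(\lambda+\mu)$; this is precisely the assertion $\Xi_{X_0} = \lbrace 0 \rbrace$ in Theorem~\ref{thm_VP}(\ref{thm_VP_d}). Since $A_\Gamma^U \cong \Bbbk[\Gamma]$ (see~(\ref{eqn_can_alg_iso})) is generated by $u_{\lambda_1}, \dots, u_{\lambda_r}$ and each $V(\lambda)$ is generated from its highest weight vector under the $G$-action, the associated graded algebra is generated by the finite-dimensional $G$-submodule $M = V(\lambda_1) \oplus \dots \oplus V(\lambda_r)$; a standard filtration argument then shows that $m_S$ itself makes $\mathcal O_S \otimes A_\Gamma$ a $G$-algebra generated by $\mathcal O_S \otimes M$. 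Consequently $m_S$ is determined by the finitely many structure-constant components of the products $V(\lambda_i) \cdot V(\lambda_j)$, and by the previous paragraph these range over a fixed finite-dimensional vector space $W$. This exhibits a monomorphism of functors $\mathcal M_\Gamma \hookrightarrow \Hom(-, \mathbb A(W))$, where $\mathbb A(W)$ denotes the affine space associated with $W$.

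Finally I would check that this monomorphism is a closed immersion of functors. Commutativity and associativity of $m_S$, together with compatibility with~(\ref{eqn_ml_on_AU}), are expressed by the vanishing of $G$-equivariant maps built polynomially from the structure constants, and these conditions are cut out by finitely many polynomial equations in the coordinates of $\mathbb A(W)$. Thus $\mathcal M_\Gamma$ is a closed subfunctor of the representable functor $\Hom(-, \mathbb A(W))$, hence represented by the corresponding closed subscheme $\mathrm M_\Gamma \subset \mathbb A(W)$, which is affine and of finite type.

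I expect the main obstacle to be the global finiteness step of the second paragraph: verifying rigorously that, for every deformed multiplication and in arbitrary families over $S$, the algebra is generated by the image of $\mathcal O_S \otimes M$, so that the entire multiplication is recoverable from the structure constants on generator pairs. This is exactly where the finite generation of $\Gamma$ and the structure of $X_0$ enter, and where care is needed to ensure that associativity propagates the finitely many generator-level constants to a well-defined global product without introducing new parameters.
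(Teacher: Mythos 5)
The paper offers no proof of Theorem~\ref{thm_M_Gamma}: it is imported wholesale from Alexeev and Brion \cite[Theorems~1.12 and~2.7]{AB}, so there is no in-text argument to compare yours against. What you propose is, in substance, a reconstruction of the Alexeev--Brion argument rather than an alternative route: the decomposition of a multiplication law into components $m^\nu_{\lambda,\mu}\in\Hom_G(V(\lambda)\otimes V(\mu),V(\nu))$, the observation that only $\nu$ with $\lambda+\mu-\nu\in\ZZ^+\Pi$ can occur and that the Cartan component is pinned down by compatibility with the fixed multiplication on $A_\Gamma^U$, and the use of finite generation of $\Gamma$ to pass to a finite-dimensional space of structure constants are exactly the ingredients of their proof, and all are sound. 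The step you flag as the main obstacle is indeed the only one requiring real work: one must show (i) that a multiplication law is uniquely determined by its components on generator pairs, and (ii) that the locus of generator-level data extending to a genuine commutative, associative, $G$-equivariant multiplication on all of $\mathcal O_S\otimes_\Bbbk A_\Gamma$ is cut out by finitely many polynomial equations, even though associativity is a priori an infinite family of conditions indexed by triples in $\Gamma$. Both points are settled by induction on the length of an expression of an element of $\Gamma$ in the chosen generators, using that the associated graded for the dominance-order filtration is $\mathcal O_S\otimes_\Bbbk\Bbbk[X_0]$ and hence generated by $\bigoplus_i V(\lambda_i)$; this is precisely what Alexeev and Brion carry out. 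So your outline is correct and would close up into a complete proof, but it reproduces the standard argument of the cited source rather than offering a genuinely different one.
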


Let $\mathrm{ML}(A_\Gamma)$ denote the set of all $G$-equivariant multiplication laws on $A_\Gamma$ extending the multiplication~(\ref{eqn_ml_on_AU}) on~$A_\Gamma^U$.

\begin{corollary} \label{crl_cp=ml}
The set of closed points of $\mathrm M_\Gamma$ is in bijection with the set $\mathrm{ML}(A_\Gamma)$.
\end{corollary}

\subsection{Relation of $\mathrm M_\Gamma$ to multiplicity-free affine $G$-varieties with weight monoid~$\Gamma$}

Consider a multiplicity-free affine $G$-variety $X$ with weight monoid~$\Gamma$. In view of Proposition~\ref{prop_SGalgebra}, there is a $T$-equivariant algebra isomorphism
\begin{equation} \label{eqn_isom_T-alg}
\tau \colon \Bbbk[X]^U \xrightarrow{\sim} \Bbbk[\Gamma].
\end{equation}
Identifying $\Bbbk[\Gamma]$ with $A_\Gamma^U$ via~(\ref{eqn_can_alg_iso}), we get a $T$-equivariant isomorphism $\Bbbk[X]^U \xrightarrow{\sim} A_\Gamma^U$. Clearly, the latter isomorphism uniquely extends to a $G$-module isomorphism
\begin{equation} \label{eqn_isom_G-mod}
\Bbbk[X] \xrightarrow{\sim} A_\Gamma.
\end{equation}
Transferring the algebra structure from $\Bbbk[X]$ to $A_\Gamma$ via isomorphism~(\ref{eqn_isom_G-mod}), we obtain a $G$-equivariant multiplication law on $A_\Gamma$ extending the multiplication of~$A_\Gamma^U$.

Let $X_1, X_2$ be two multiplicity-free affine $G$-varieties with weight monoid~$\Gamma$ and fix $T$-equivariant isomorphisms $\tau_i \colon \Bbbk[X_i]^U \xrightarrow{\sim} \Bbbk[\Gamma]$ ($i = 1,2$). We say that the pairs $(X_1, \tau_1)$ and $(X_2, \tau_2)$ are equivalent if there is a $G$-equivariant isomorphism $\Bbbk[X_1] \xrightarrow{\sim} \Bbbk[X_2]$ such that the induced $T$-equivariant isomorphism $\Bbbk[X_1]^U \xrightarrow{\sim} \Bbbk[X_2]^U$ fits into a commutative diagram
\[
\begin{tikzcd}[column sep=0pt]
\Bbbk[X_1]^U \arrow{rr}{\sim} \arrow[swap]{dr}{\tau_1} & & \Bbbk[X_2]^U \arrow{dl}{\tau_2} & \\
 & \Bbbk[\Gamma] &
\end{tikzcd}
\]

Combining the above material with Corollary~\ref{crl_cp=ml}, we get

\begin{proposition} \label{prop_cp=pairs}
The closed points of $\mathrm M_\Gamma$ are in bijection with the equivalence classes of pairs $(X, \tau)$, where $X$ is a multiplicity-free affine $G$-variety with weight monoid~$\Gamma$ and $\tau \colon \Bbbk[X]^U \xrightarrow{\sim} \Bbbk[\Gamma]$ is a $T$-equivariant algebra isomorphism.
\end{proposition}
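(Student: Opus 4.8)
The statement will follow once I establish a bijection between the equivalence classes of pairs $(X,\tau)$ and the set $\mathrm{ML}(A_\Gamma)$, since Corollary~\ref{crl_cp=ml} identifies the latter with the closed points of $\mathrm M_\Gamma$. In the direction from pairs to multiplication laws the map is already built in the discussion preceding the statement: it sends $(X,\tau)$ to the law obtained by transporting the product of $\Bbbk[X]$ along the $G$-module isomorphism~(\ref{eqn_isom_G-mod}). The single fact underpinning everything is a uniqueness lemma that I would record first: if $A$ and $B$ are multiplicity-free $G$-algebras whose isotypic components are indexed by $\Gamma$, then a $T$-equivariant isomorphism $A^U\xrightarrow{\sim}B^U$ necessarily matches the highest-weight line of weight $\lambda$ in $A$ with that in $B$ (the weights being pairwise distinct), and hence by Schur's lemma extends to a $G$-module isomorphism $A\xrightarrow{\sim}B$ in exactly one way.

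Granting this lemma, the two formal checks are routine. Denote by $\psi_i\colon\Bbbk[X_i]\xrightarrow{\sim}A_\Gamma$ the isomorphism extending $\tau_i$. If $(X_1,\tau_1)$ and $(X_2,\tau_2)$ are equivalent through a $G$-algebra isomorphism $\phi$, then $\psi_1$ and $\psi_2\circ\phi$ agree on $U$-invariants by the commuting triangle defining equivalence, so they coincide by uniqueness; since $\phi$ is multiplicative the two induced laws on $A_\Gamma$ are equal, proving well-definedness. Conversely, if two pairs yield the same law $m$, then $\psi_2^{-1}\circ\psi_1$ is a $G$-algebra isomorphism $\Bbbk[X_1]\xrightarrow{\sim}\Bbbk[X_2]$ whose restriction to $U$-invariants equals $\tau_2^{-1}\circ\tau_1$, which is exactly the triangle witnessing equivalence; this gives injectivity.

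The substance of the proof is surjectivity. Given $m\in\mathrm{ML}(A_\Gamma)$, the $G$-action on $A_\Gamma$ is locally finite and, as $m$ is $G$-equivariant, by algebra automorphisms, so I would set $X=\Spec(A_\Gamma,m)$ with its induced regular $G$-action and take $\tau$ to be the canonical identification~(\ref{eqn_can_alg_iso}). Granting that $(A_\Gamma,m)$ is a finitely generated integral domain, $X$ is an irreducible affine $G$-variety with $\Bbbk[X]=A_\Gamma$; the decomposition~(\ref{eqn_A}) shows at once that $X$ is multiplicity-free with $\Gamma_X=\Gamma$, and unwinding the definitions shows that $(X,\tau)$ maps back to $m$. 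Finite generation is easy: the $G$-subalgebra generated by the finite-dimensional submodule $\bigoplus_{\lambda\in\mathrm E}V(\lambda)$ is $G$-stable and, because $u_\lambda u_\mu=u_{\lambda+\mu}$, contains $u_\lambda$ for every $\lambda\in\Gamma$, hence all of $V(\lambda)=U(\mathfrak g)\,u_\lambda$, so it exhausts $A_\Gamma$.

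The main obstacle is to show that $(A_\Gamma,m)$ is an integral domain, and here I would pass to an associated graded algebra. Fix a linear functional $L$ on $\mathfrak X(T)\otimes_\ZZ\QQ$ that is strictly positive on every simple root, so that $L$ is nonnegative on $\ZZ^+\Pi$ and vanishes there only at~$0$. As $m$ is $G$-equivariant, the product $V(\lambda)\cdot V(\mu)$ lies in the sum of those $V(\nu)$ with $\lambda+\mu-\nu\in\ZZ^+\Pi$, whence $L(\nu)\le L(\lambda)+L(\mu)$; thus the subspaces $F_{\le t}=\bigoplus_{L(\lambda)\le t}V(\lambda)$ form an algebra filtration. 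In the associated graded only the components with $L(\nu)=L(\lambda)+L(\mu)$ survive, forcing $\nu=\lambda+\mu$, and together with the normalization $u_\lambda u_\mu=u_{\lambda+\mu}$ this identifies $\gr(A_\Gamma,m)$, as a $G$-algebra, with $\bigoplus_{\lambda\in\Gamma}V(\lambda)$ carrying the multiplication $\Bbbk[X_0]_\lambda\cdot\Bbbk[X_0]_\mu=\Bbbk[X_0]_{\lambda+\mu}$, that is, with $\Bbbk[X_0]$ of Theorem~\ref{thm_VP}. Since $X_0$ is a variety, $\Bbbk[X_0]$ is a domain, and a filtered algebra whose associated graded is a domain is itself a domain. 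Hence $(A_\Gamma,m)$ has no zero divisors, which completes the verification that $X=\Spec(A_\Gamma,m)$ is a multiplicity-free affine $G$-variety and thereby the construction of the inverse map and the bijection.
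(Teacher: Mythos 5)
Your argument is correct, but it is considerably more self-contained than what the paper actually does. The paper's ``proof'' of Proposition~\ref{prop_cp=pairs} is a single sentence: it combines the construction of a multiplication law from a pair $(X,\tau)$ given just before the statement with Corollary~\ref{crl_cp=ml}, and all the genuine content (that every $m\in\mathrm{ML}(A_\Gamma)$ actually arises from a multiplicity-free affine $G$-variety, and that the correspondence is one-to-one on equivalence classes) is delegated to the cited results of Alexeev--Brion, in particular \cite[Theorem~1.12 and Lemma~2.2]{AB}. You instead prove everything from scratch: the uniqueness lemma extending a $T$-equivariant isomorphism of $U$-invariants to a $G$-module isomorphism via Schur's lemma, the two routine checks of well-definedness and injectivity, and---the real substance---surjectivity, where you show $(A_\Gamma,m)$ is a finitely generated integral domain by filtering with a linear functional positive on $\Pi$ and identifying the associated graded with $\Bbbk[X_0]$ of Theorem~\ref{thm_VP}. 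This degeneration-to-$X_0$ argument is essentially the Vinberg--Popov contraction that underlies the Alexeev--Brion results being cited, so your route recovers the delegated material rather than contradicting it; what it buys is a proof readable without opening \cite{AB}, at the cost of length. The filtration step is sound: $G$-equivariance forces $m^{\nu}_{\lambda,\mu}=0$ unless $\lambda+\mu-\nu\in\ZZ^+\Pi$, the normalization $u_\lambda u_\mu=u_{\lambda+\mu}$ pins down the Cartan components, and comparing top-degree parts of two nonzero elements shows the product is nonzero because $\Bbbk[X_0]$ is a domain.
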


\subsection{Basic facts on the action of $T_\ad$ on~$\mathrm M_\Gamma$}
\label{subsec_Tad_action}

Let $A_\Gamma$ be as in~\S\,\ref{subsec_definition_of_M_Gamma}. In view of~(\ref{eqn_A}), every multiplication law $m \in \mathrm{ML}(A_\Gamma)$ can be expressed as the sum
\[
m = \sum \limits_{\lambda, \mu, \nu \in \Gamma} m_{\lambda, \mu}^\nu
\]
where each component $m_{\lambda, \mu}^\nu \colon V(\lambda) \otimes V(\mu) \to V(\nu)$ is a $G$-module homomorphism.

\begin{proposition}[{\cite[Proposition~2.11]{AB}}] \label{prop_action_on_M_Gamma}
Modulo the identification of Corollary~\textup{\ref{crl_cp=ml}}, the action of $T_\ad$ on the set of closed points of $\mathrm M_\Gamma$ is described as follows:
\begin{equation} \label{eqn_action_of_Tad_on_M}
(t \cdot m)_{\lambda, \mu}^\nu = t^{\nu -\lambda - \mu} m_{\lambda, \mu}^\nu \;\; \text{ for all } \;\; t \in T_\ad, m \in \mathrm{ML}(A_\Gamma).
\end{equation}
\end{proposition}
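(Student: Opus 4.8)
The plan is to realize the $T_\ad$-action as coming from the natural action of the maximal torus $T$ on $A_\Gamma$ by $G$-module automorphisms, and then to read off the effect on multiplication laws by transport of structure.

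First I would introduce, for each $t \in T$, the endomorphism $\psi_t$ of $A_\Gamma$ acting on the summand $V(\lambda)$ by the scalar $t^\lambda$. Since each $V(\lambda)$ is a simple $G$-module, Schur's lemma guarantees that $\psi_t$ is $G$-equivariant; moreover $\psi_t$ preserves $A_\Gamma^U = \bigoplus_\lambda \Bbbk u_\lambda$ and fixes the multiplication~(\ref{eqn_ml_on_AU}) on it, because its leading component $u_\lambda \cdot u_\mu = u_{\lambda+\mu}$ gets rescaled by $t^{(\lambda+\mu)-\lambda-\mu} = 1$. Hence $\psi_t$ carries $\mathrm{ML}(A_\Gamma)$ into itself by transport of structure, $m \mapsto \psi_t \circ m \circ (\psi_t^{-1} \otimes \psi_t^{-1})$, and this gives an action of $T$ on the set of closed points of $\mathrm M_\Gamma$ via Corollary~\ref{crl_cp=ml}. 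To obtain a morphism of schemes rather than merely an action on $\Bbbk$-points, I would run the identical construction over an arbitrary base scheme $S$, applying $\psi_t$ fibrewise to $\mathcal O_S \otimes_\Bbbk A_\Gamma$.

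The computation on components is then immediate. Restricting the transport-of-structure formula to a component $m_{\lambda,\mu}^\nu \colon V(\lambda) \otimes V(\mu) \to V(\nu)$ and using that $\psi_t$, resp. $\psi_t^{-1}$, is multiplication by $t^\nu$, resp. $t^{-\lambda}$ and $t^{-\mu}$, on the three summands, I obtain $(t \cdot m)_{\lambda,\mu}^\nu = t^{\nu - \lambda - \mu}\, m_{\lambda,\mu}^\nu$, which is exactly~(\ref{eqn_action_of_Tad_on_M}). It remains to explain why this descends to $T_\ad$ and why the exponent is legitimate there. The decisive observation is that a nonzero $G$-homomorphism $m_{\lambda,\mu}^\nu$ forces $V(\nu)$ to occur in $V(\lambda) \otimes V(\mu)$, whence $\lambda + \mu - \nu \in \ZZ^+ \Pi$ and in particular $\nu - \lambda - \mu \in \ZZ\Pi = \mathfrak X(T_\ad)$. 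Thus every occurring factor $t^{\nu - \lambda - \mu}$ depends only on the image of $t$ in $T_\ad = T/Z(G)$; concretely, for $z \in Z(G)$ one has $z^{\nu - \lambda - \mu} = 1$, so $Z(G)$ acts trivially and the $T$-action factors through $T_\ad$, with~(\ref{eqn_action_of_Tad_on_M}) now read for $t \in T_\ad$.

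I expect the main obstacle to lie not in this computation but in matching the action just described with the one attached to $\mathrm M_\Gamma$ in~\cite{AB}. One must check that the transport-of-structure recipe is natural in the base scheme $S$, so that it upgrades from an action on point sets to an action of the group scheme $T$ on the representing scheme $\mathrm M_\Gamma$, and that the induced $T_\ad$-action coincides with Alexeev and Brion's rather than merely agreeing on underlying point sets. Granting this functorial identification, the two inputs that do the real work are purely formal: the transport-of-structure identity, and the membership $\nu - \lambda - \mu \in \ZZ\Pi = \mathfrak X(T_\ad)$ for every nonzero component, which simultaneously renders the exponent in~(\ref{eqn_action_of_Tad_on_M}) meaningful over $T_\ad$ and forces $Z(G)$ to act trivially, thereby producing the sought descent from $T$ to $T_\ad$.
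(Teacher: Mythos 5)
The paper offers no proof of this proposition: it is quoted verbatim from \cite[Proposition~2.11]{AB}, so there is no internal argument to compare against. Judged on its own terms, your derivation is correct and is essentially the standard one. The transport-of-structure computation giving the factor $t^{\nu-\lambda-\mu}$ is right; the observation that a nonzero component forces $\lambda+\mu-\nu\in\ZZ^+\Pi$ (since $V(\nu)$ must occur in $V(\lambda)\otimes V(\mu)$) is exactly what makes the exponent a character of $T_\ad=T/Z(G)$ and kills the $Z(G)$-action; and the preservation of the fixed multiplication on $A_\Gamma^U$ follows because the $(\lambda,\mu,\lambda+\mu)$-component is rescaled by $t^0=1$. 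The one step you defer --- identifying your transport-of-structure action with the $T_\ad$-action that Alexeev and Brion actually attach to $\mathrm M_\Gamma$ --- is genuinely where the content of the cited result sits, but it is essentially definitional: in \cite{AB} the adjoint-torus action on the moduli functor is induced by precisely the isotypic rescaling $\psi_t$ you introduce (compare the action $a_\ad^*$ recalled in \S\,\ref{subsec_characterizations} of this paper), applied fibrewise to $\mathcal O_S\otimes_\Bbbk A_\Gamma$ as you indicate; one only has to watch the $t$ versus $t^{-1}$ convention to land on the sign in~(\ref{eqn_action_of_Tad_on_M}). So the proposal is sound, and correctly isolates the only point requiring reference to the source.
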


\begin{corollary} \label{crl_the_same_Tad-orbit}
Modulo the identification of Proposition~\textup{\ref{prop_cp=pairs}}, suppose that \textup(the equivalence classes of\textup) two closed points $(X_1, \tau_1)$ and $(X_2, \tau_2)$ of $\mathrm M_\Gamma$ lie in the same $T_\ad$-orbit. Then $X_1$ and $X_2$ are $G$-isomorphic.
\end{corollary}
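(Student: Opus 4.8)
The plan is to convert the hypothesis into a statement about multiplication laws and then write down an explicit $G$-equivariant algebra isomorphism. By Proposition~\ref{prop_cp=pairs} together with Corollary~\ref{crl_cp=ml}, the two points correspond to multiplication laws $m_1, m_2 \in \mathrm{ML}(A_\Gamma)$, and lying in a single $T_\ad$-orbit means precisely that $m_2 = t \cdot m_1$ for some $t \in T_\ad$. Since the $G$-algebra $(A_\Gamma, m_i)$ is by construction $G$-equivariantly isomorphic to $\Bbbk[X_i]$ (see~\eqref{eqn_isom_G-mod}), it will suffice to produce a $G$-equivariant algebra isomorphism $(A_\Gamma, m_1) \xrightarrow{\sim} (A_\Gamma, m_2)$; applying $\Spec$ then yields the desired $G$-isomorphism $X_1 \simeq X_2$.

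To construct this isomorphism I would first lift $t$ to an element $\tilde t \in T$, and then define a linear map $\varphi \colon A_\Gamma \to A_\Gamma$ acting on each summand $V(\lambda)$ as multiplication by the scalar $\tilde t^{\,\lambda} \in \Bbbk^\times$. Being a nonzero scalar on every isotypic component, $\varphi$ is automatically an invertible $G$-module homomorphism; the only thing to verify is that it intertwines $m_1$ and $m_2$.

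For the verification I would compare, for fixed $\lambda, \mu, \nu \in \Gamma$, the $V(\nu)$-components of $\varphi \circ m_1$ and $m_2 \circ (\varphi \otimes \varphi)$ on $V(\lambda) \otimes V(\mu)$. Using the component formula~\eqref{eqn_action_of_Tad_on_M}, the first contributes the scalar $\tilde t^{\,\nu}$ while the second contributes $\tilde t^{\,\lambda}\tilde t^{\,\mu}\, t^{\nu - \lambda - \mu}$, so the two agree exactly when $\tilde t^{\,\nu - \lambda - \mu} = t^{\nu - \lambda - \mu}$. The crucial observation that makes this work is that the component $m^\nu_{\lambda, \mu}$ can be nonzero only when $V(\nu)$ occurs in $V(\lambda) \otimes V(\mu)$, which forces $\nu - \lambda - \mu \in \ZZ\Pi = \mathfrak X(T_\ad)$; on such characters $t$ and any lift $\tilde t$ take the same value, so the required identity holds and $\varphi$ is a $G$-algebra isomorphism.

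The one point that genuinely requires care---and the step I would single out as the heart of the argument---is this passage from $T_\ad$ to $T$. The individual scalars $\tilde t^{\,\lambda}$ are not intrinsic to $t$ (a different lift multiplies them by the corresponding values of a central character), but this ambiguity is harmless: replacing $\tilde t$ by $\tilde t z$ with $z \in Z(G)$ merely replaces $\varphi$ by its composition with the action of $z$, which is again a $G$-algebra automorphism of $(A_\Gamma, m_2)$. Everything else reduces to bookkeeping of the component decomposition of multiplication laws, and no deeper input is needed.
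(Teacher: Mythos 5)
Your argument is correct and is exactly the intended one: the paper leaves this corollary as an immediate consequence of the action formula~\eqref{eqn_action_of_Tad_on_M}, and the implicit proof is precisely your construction of the $G$-module automorphism of $A_\Gamma$ scaling $V(\lambda)$ by $\tilde t^{\,\lambda}$, which intertwines $m_1$ and $t\cdot m_1$ because $\nu-\lambda-\mu\in\ZZ\Pi=\mathfrak X(T_\ad)$ whenever $m^\nu_{\lambda,\mu}\ne 0$. Your handling of the choice of lift $\tilde t\in T$ is also correct and is the only point needing care.
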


\begin{theorem}[{\cite[Theorem~1.12 and Lemma~2.2]{AB}}]
\label{thm_bijection}
The $G$-isomorphism classes of multiplicity-free affine $G$-varieties with weight monoid~$\Gamma$ are in bijection with the $T_\ad$-orbits in~$\mathrm M_\Gamma$.
\end{theorem}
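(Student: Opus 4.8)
The plan is to exhibit the asserted bijection explicitly as the map $\Psi$ that sends the $T_\ad$-orbit of a closed point of $\mathrm M_\Gamma$ to the $G$-isomorphism class of its underlying variety, and then to verify that $\Psi$ is well defined, surjective, and injective. By Proposition~\ref{prop_cp=pairs} a closed point is the equivalence class of a pair $(X, \tau)$ with $X$ a multiplicity-free affine $G$-variety of weight monoid $\Gamma$; forgetting $\tau$ yields a map to the set of $G$-isomorphism classes. Corollary~\ref{crl_the_same_Tad-orbit} shows that two points in one $T_\ad$-orbit have $G$-isomorphic underlying varieties, so this map factors through $T_\ad$-orbits and produces a well-defined $\Psi$. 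Surjectivity is immediate: given any such $X$, Proposition~\ref{prop_SGalgebra} furnishes a $T$-equivariant algebra isomorphism $\tau \colon \Bbbk[X]^U \to \Bbbk[\Gamma]$, whence a pair $(X, \tau)$ mapping to the class of $X$.

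The substance lies in injectivity. Suppose $(X_1, \tau_1)$ and $(X_2, \tau_2)$ have $G$-isomorphic underlying varieties; the goal is to produce $t \in T_\ad$ moving one point to the other. Via Corollary~\ref{crl_cp=ml} I pass to the corresponding multiplication laws $m_1, m_2 \in \mathrm{ML}(A_\Gamma)$, obtained by transporting the algebra structures of $\Bbbk[X_i]$ to $A_\Gamma$ along the $G$-module isomorphisms $\Bbbk[X_i] \xrightarrow{\sim} A_\Gamma$ that extend $\tau_i$. A $G$-equivariant algebra isomorphism $\Phi \colon \Bbbk[X_1] \to \Bbbk[X_2]$ then transports to a $G$-module automorphism $\Theta$ of $A_\Gamma$ intertwining $m_1$ and $m_2$. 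Since $A_\Gamma = \bigoplus_{\lambda \in \Gamma} V(\lambda)$ is multiplicity-free, Schur's lemma forces $\Theta$ to act on each $V(\lambda)$ by a scalar $d_\lambda \in \Bbbk^\times$, and the intertwining condition reads $(m_2)_{\lambda, \mu}^\nu = d_\nu d_\lambda^{-1} d_\mu^{-1}\,(m_1)_{\lambda, \mu}^\nu$ on every component.

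The key observation is that the scalars $d_\lambda$ are multiplicative. Indeed, $\Theta$ preserves the subspace $A_\Gamma^U = \Bbbk[\Gamma]$ of $U$-invariants, on which both $m_1$ and $m_2$ restrict to the \emph{same} fixed multiplication~(\ref{eqn_ml_on_AU}); hence $\Theta$ restricts to an algebra automorphism of $\Bbbk[\Gamma]$ sending $u_\lambda \mapsto d_\lambda u_\lambda$, which forces $d_{\lambda + \mu} = d_\lambda d_\mu$. Thus $d$ is a monoid homomorphism $\Gamma \to \Bbbk^\times$ and extends to a homomorphism $\ZZ\Gamma \to \Bbbk^\times$. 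Now whenever $(m_1)_{\lambda, \mu}^\nu \neq 0$ one has $V(\nu) \subset V(\lambda) \otimes V(\mu)$, so $\nu - \lambda - \mu$ lies in the root lattice $\ZZ\Pi = \mathfrak X(T_\ad)$. Since $\Bbbk^\times$ is divisible, the restriction of $d$ to the subgroup of $\ZZ\Pi$ generated by these elements extends to a character $t \in T_\ad$; by Proposition~\ref{prop_action_on_M_Gamma} we then get $(t \cdot m_1)_{\lambda, \mu}^\nu = t^{\nu - \lambda - \mu}(m_1)_{\lambda, \mu}^\nu = d_\nu d_\lambda^{-1} d_\mu^{-1}(m_1)_{\lambda, \mu}^\nu = (m_2)_{\lambda, \mu}^\nu$, so $t \cdot m_1 = m_2$ and the two points lie in a common $T_\ad$-orbit.

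I expect the main obstacle to be precisely the multiplicativity of the $d_\lambda$: a bare $G$-module automorphism of $A_\Gamma$ scales each $V(\lambda)$ by an unconstrained scalar, and it is the compatibility of $\Phi$ with the fixed $U$-invariant algebra structure (carried by the $\tau_i$) that rigidifies these scalars into a character. The subsequent extension step is what pins down $T_\ad$ as exactly the right group: the differences $\nu - \lambda - \mu$ always land in $\mathfrak X(T_\ad) = \ZZ\Pi$, and divisibility of $\Bbbk^\times$ guarantees the required character exists.
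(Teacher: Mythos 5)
Your proof is correct; the paper itself offers no argument here, citing \cite[Theorem~1.12 and Lemma~2.2]{AB} instead, and your reconstruction is essentially the argument of that source: Schur's lemma gives scalars $d_\lambda$ on the isotypic components, the fixed multiplication on $A_\Gamma^U$ forces $\lambda \mapsto d_\lambda$ to be a monoid homomorphism, and divisibility of $\Bbbk^\times$ lets you extend it from the relevant subgroup of $\ZZ\Pi$ to a character of $T_\ad$, i.e.\ to a point of $T_\ad$ carrying $m_1$ to $m_2$.
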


The following result is a consequence of Corollary~\ref{crl_the_same_Tad-orbit} and Theorem~\ref{thm_bijection}.

\begin{corollary} \label{crl_orbit_of_X_in_M}
Suppose that $X$ is a multiplicity-free affine $G$-variety with weight monoid~$\Gamma$. Then, modulo the identification of Proposition~\textup{\ref{prop_cp=pairs}}, the closed points of the $T_\ad$-orbit in $\mathrm M_\Gamma$ corresponding to~$X$ are all \textup(equivalence classes of\textup) pairs of the form $(X, \tau)$.
\end{corollary}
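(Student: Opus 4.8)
The plan is to derive the statement directly from Theorem~\ref{thm_bijection} and Corollary~\ref{crl_the_same_Tad-orbit}, combined with the explicit description of the equivalence relation on pairs $(X, \tau)$ underlying Proposition~\ref{prop_cp=pairs}. Write $\mathcal O$ for the $T_\ad$-orbit in $\mathrm M_\Gamma$ attached to $X$ by the bijection of Theorem~\ref{thm_bijection}; recall that this bijection sends each orbit to the common $G$-isomorphism class of the underlying varieties of its points, which is well defined precisely by Corollary~\ref{crl_the_same_Tad-orbit}. There are two inclusions to check: that every closed point of $\mathcal O$ is represented by a pair of the form $(X, \tau)$, and that conversely every such pair represents a point of $\mathcal O$.

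For the first inclusion, I would take an arbitrary closed point of $\mathcal O$, which by Proposition~\ref{prop_cp=pairs} is represented by some pair $(Y, \tau_Y)$, and observe that $Y$ is $G$-isomorphic to $X$ by the very definition of $\mathcal O$. Fixing a $G$-equivariant algebra isomorphism $\varphi \colon \Bbbk[Y] \to \Bbbk[X]$, its restriction to $U$-invariants is a $T$-equivariant isomorphism $\psi \colon \Bbbk[Y]^U \to \Bbbk[X]^U$, and setting $\tau = \tau_Y \circ \psi^{-1}$ makes the triangle in the definition of the equivalence relation commute. Hence $(Y, \tau_Y)$ is equivalent to $(X, \tau)$, so the chosen point is indeed of the required form.

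For the converse inclusion, given any $T$-equivariant algebra isomorphism $\tau \colon \Bbbk[X]^U \to \Bbbk[\Gamma]$, the pair $(X, \tau)$ is a closed point of $\mathrm M_\Gamma$ by Proposition~\ref{prop_cp=pairs}, with underlying variety $X$. Under Theorem~\ref{thm_bijection}, the orbit through $(X, \tau)$ corresponds to the $G$-isomorphism class of $X$, which is the class attached to $\mathcal O$; by injectivity of the bijection, this orbit must be $\mathcal O$ itself, so $(X, \tau)$ lies in $\mathcal O$.

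The argument is essentially a repackaging of the two cited results, so no serious obstacle is expected. The only point requiring care is the bookkeeping in the first inclusion---verifying that the $\tau$ produced from $\varphi$ satisfies the commutativity condition of the equivalence relation---but this is a routine check once the correct $\tau$ is written down.
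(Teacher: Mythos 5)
Your proposal is correct and follows exactly the route the paper intends: the paper gives no written proof, stating only that the corollary is a consequence of Corollary~\ref{crl_the_same_Tad-orbit} and Theorem~\ref{thm_bijection}, and your argument is a careful filling-in of precisely that deduction (including the routine check that $\tau = \tau_Y \circ \psi^{-1}$ makes the equivalence triangle commute).
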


According to Theorem~\ref{thm_bijection}, for every multiplicity-free affine $G$-variety $X$ with weight monoid~$\Gamma$ we let $C_X$ denote the closure of the $T_\ad$-orbit in~$\mathrm M_\Gamma$ corresponding to the $G$-isomorphism class of~$X$.

Since $\Xi_{X_0} = \lbrace 0 \rbrace$ by Theorem~\ref{thm_VP}(\ref{thm_VP_d}), it follows from (\ref{eqn_action_of_Tad_on_M}) and Corollary~\ref{crl_orbit_of_X_in_M} that the $T_\ad$-orbit in $\mathrm M_\Gamma$ corresponding to~$X_0$ is a $T_\ad$-fixed (closed) point. In what follows, by abuse of notation, we denote this point by~$X_0$. In particular, $C_{X_0} = \lbrace X_0 \rbrace$.

\begin{theorem}[{\cite[Theorem~2.7]{AB}}] \label{thm_X_0_fixed_point}
The $T_\ad$-fixed point $X_0 \in \mathrm M_\Gamma$ is the unique closed $T_\ad$-orbit in~$\mathrm M_\Gamma$. Equivalently, $X_0$ is contained in each $T_\ad$-orbit closure in~$\mathrm M_\Gamma$.
\end{theorem}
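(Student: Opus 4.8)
The plan is to exhibit, for every closed point $m$ of $\mathrm M_\Gamma$, a one-parameter subgroup of $T_\ad$ along which the orbit of $m$ degenerates to $X_0$; the two assertions then follow from the facts that $\lbrace X_0 \rbrace$ is itself an orbit (being a fixed point) and that it sits in every orbit closure.

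First I would analyze the weights of the $T_\ad$-action. By Corollary~\ref{crl_cp=ml} a closed point of $\mathrm M_\Gamma$ is a multiplication law $m = \sum_{\lambda,\mu,\nu} m_{\lambda,\mu}^\nu \in \mathrm{ML}(A_\Gamma)$, realized inside the affine space whose coordinates are the homomorphisms $m_{\lambda,\mu}^\nu$, with $\mathrm M_\Gamma$ a $T_\ad$-stable closed subscheme of this affine space. A component can be nonzero only when $V(\nu)$ occurs in $V(\lambda) \otimes V(\mu)$, which forces $\lambda + \mu - \nu \in \ZZ^+ \Pi$; hence by~(\ref{eqn_action_of_Tad_on_M}) the $T_\ad$-weight $\nu - \lambda - \mu$ of each coordinate lies in $-\ZZ^+ \Pi$. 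Moreover the coordinates fixed by the normalization~(\ref{eqn_ml_on_AU}), namely those with $\nu = \lambda + \mu$, are precisely the ones of weight $0$; and $X_0$ is exactly the law for which all remaining components vanish, by Theorem~\ref{thm_VP}(\ref{thm_VP_d}).

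Next I would choose a cocharacter $\gamma$ of $T_\ad$ with $\langle \gamma, \alpha \rangle > 0$ for every $\alpha \in \Pi$; this exists because $\Pi$ is a $\ZZ$-basis of $\mathfrak X(T_\ad) = \ZZ \Pi$. For any coordinate of weight $\chi = \nu - \lambda - \mu \in -\ZZ^+ \Pi$ one then has $\langle \gamma, \chi \rangle \le 0$, with equality exactly when $\chi = 0$. By~(\ref{eqn_action_of_Tad_on_M}) the action of $\gamma(s)$ scales $m_{\lambda,\mu}^\nu$ by $s^{\langle \gamma, \chi \rangle}$; substituting $s = t^{-1}$ makes every coordinate a polynomial in $t$, so the orbit map extends to a morphism $\mathbb A^1 \to \mathrm M_\Gamma$ (the image of $\mathbb A^1 \setminus \lbrace 0 \rbrace$ lies in the closed $T_\ad$-stable subscheme $\mathrm M_\Gamma$, hence so does the whole image). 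Its value at $t = 0$ leaves the weight-$0$ coordinates unchanged and sends every other coordinate to $0$; this limit is therefore $X_0$. Thus $\lim_{s \to \infty} \gamma(s) \cdot m = X_0$ for every $m$, and in particular $X_0$ lies in every $T_\ad$-orbit closure.

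Finally I would deduce the statement. Since $X_0$ is a $T_\ad$-fixed point, $\lbrace X_0 \rbrace$ is a closed orbit. If $\mathcal O$ is any closed $T_\ad$-orbit, then $X_0 \in \overline{\mathcal O} = \mathcal O$ by the previous step; as $X_0$ is fixed and lies in the single orbit $\mathcal O$, this forces $\mathcal O = \lbrace X_0 \rbrace$, giving uniqueness, while the inclusion of $X_0$ in every orbit closure is precisely what was shown. The step requiring the most care is the middle one: one must ensure the one-parameter limit is genuinely taken inside $\mathrm M_\Gamma$ and converges to $X_0$, which rests on the explicit weight computation together with the facts that $\mathrm M_\Gamma$ is a $T_\ad$-stable closed subscheme of an affine space and that its only $T_\ad$-invariant structure constants are the Cartan components fixed by~(\ref{eqn_ml_on_AU}).
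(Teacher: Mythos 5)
Your argument is correct: the weight computation $\nu-\lambda-\mu\in-\ZZ^{+}\Pi$ with the zero-weight coordinates pinned down by~(\ref{eqn_ml_on_AU}), followed by contraction along a dominant regular cocharacter, is exactly the standard mechanism behind this statement. The paper itself gives no proof but cites \cite[Theorem~2.7]{AB}, whose argument proceeds in essentially the same way (realizing $\mathrm M_\Gamma$ as a $T_\ad$-stable closed subscheme of a space of multiplication laws with only nonpositive weights off the fixed Cartan components), so there is nothing to add beyond noting that the finite-type statement of Theorem~\ref{thm_M_Gamma} lets you work with finitely many coordinates when extending the orbit map over $0\in\mathbb A^1$.
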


\begin{theorem}[{\cite[Proposition~2.13]{AB}}]
\label{thm_orbit_closure_of_X}
Let $X$ be a multiplicity-free affine $G$-variety with weight monoid~$\Gamma$. The $T_\ad$-orbit closure $C_X \subset \mathrm M_\Gamma$, equipped with its reduced subscheme structure, is a multiplicity-free affine $T_\ad$-variety whose weight monoid is~$\Xi_X$.
\end{theorem}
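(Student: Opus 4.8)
The plan is to recognize $C_X$ as the closure of a single orbit of the torus $T_\ad$ acting \emph{linearly} on an ambient space, and then to apply the standard description of torus orbit closures by monoid algebras.

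First I would make the ambient linear structure explicit. By Corollary~\ref{crl_cp=ml} the closed points of $\mathrm M_\Gamma$ are the $G$-equivariant multiplication laws $\mathrm{ML}(A_\Gamma)$, sitting inside the $T_\ad$-module
$$
W = \bigoplus_{\lambda, \mu, \nu \in \Gamma} \Hom_G\bigl(V(\lambda) \otimes V(\mu), V(\nu)\bigr),
$$
in which, by Proposition~\ref{prop_action_on_M_Gamma}, the summand indexed by $(\lambda, \mu, \nu)$ is a weight space of weight $\nu - \lambda - \mu \in \ZZ\Pi = \mathfrak X(T_\ad)$. Since the conditions defining $\mathrm M_\Gamma$ inside $W$ (associativity, commutativity, and agreement with~(\ref{eqn_ml_on_AU}) on~$A_\Gamma^U$) are closed and $T_\ad$-stable, $\mathrm M_\Gamma$ is a closed $T_\ad$-subscheme of $W$; invoking the finite-type statement of Theorem~\ref{thm_M_Gamma} (a law is determined by its components for $\lambda, \mu, \nu$ ranging in a fixed finite set arising from a generating system $\mathrm E$ of~$\Gamma$), I may take $W$ finite-dimensional. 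Fixing an isomorphism $\Bbbk[X] \xrightarrow{\sim} A_\Gamma$ as in~(\ref{eqn_isom_G-mod}) turns the multiplication of $\Bbbk[X]$ into a law $m^X \in \mathrm{ML}(A_\Gamma)$, and by Corollary~\ref{crl_orbit_of_X_in_M} its orbit $T_\ad \cdot m^X$ is the $T_\ad$-orbit attached to~$X$. As the closure of a subset of the closed subscheme $\mathrm M_\Gamma$ coincides with its closure in~$W$, we get $C_X = \overline{T_\ad \cdot m^X}$ with reduced structure.

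The key step is the orbit-closure computation. For a torus acting linearly on $W = \bigoplus_\chi W_\chi$ and a point $w = \sum_\chi w_\chi$, set $\mathcal S = \lbrace \chi : w_\chi \ne 0 \rbrace$. The orbit map $t \mapsto t \cdot w$ induces a comorphism $\Bbbk[W] \to \Bbbk[T_\ad] \cong \Bbbk[\mathfrak X(T_\ad)]$ which sends a coordinate function dual to a weight-$\chi$ vector (the function itself having weight $-\chi$) to a nonzero scalar multiple of the group-algebra element $e^\chi$. Its image---the coordinate ring of the reduced orbit closure---is therefore the semigroup subalgebra $\Bbbk[\ZZ^+ \mathcal S] \subset \Bbbk[\mathfrak X(T_\ad)]$. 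This ring is a domain whose $T_\ad$-weight spaces are spanned by the distinct monomials $e^\xi$ ($\xi \in \ZZ^+\mathcal S$), so $\overline{T_\ad \cdot w}$ is a multiplicity-free affine $T_\ad$-variety with weight monoid $\ZZ^+\lbrace -\chi : \chi \in \mathcal S \rbrace$ (note that the coordinate functions, not the weight vectors, carry the relevant characters). It remains to match this monoid with~$\Xi_X$: under $\Bbbk[X] \cong A_\Gamma$, the component $(m^X)^\nu_{\lambda, \mu}$ is the composite of multiplication $\Bbbk[X]_\lambda \otimes \Bbbk[X]_\mu \to \Bbbk[X]$ with projection onto the $V(\nu)$-isotypic component; since $\Bbbk[X]$ is multiplicity-free, a $G$-submodule contains $\Bbbk[X]_\nu$ exactly when its projection to that isotypic component is nonzero, so $(m^X)^\nu_{\lambda, \mu} \ne 0$ if and only if $\Bbbk[X]_\lambda \cdot \Bbbk[X]_\mu \supset \Bbbk[X]_\nu$. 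Hence the support $\mathcal S$ of $m^X$ consists precisely of the weights $\nu - \lambda - \mu$ over the triples of Definition~\ref{dfn_root_monoid}, and $\ZZ^+\lbrace -\chi : \chi \in \mathcal S \rbrace = \ZZ^+\lbrace \lambda + \mu - \nu \rbrace = \Xi_X$ by the definition of the root monoid. Together these steps show that $C_X$, with reduced structure, is a multiplicity-free affine $T_\ad$-variety with weight monoid~$\Xi_X$.

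The main obstacle is bookkeeping rather than conceptual. One must (i) justify the reduction to a finite-dimensional ambient module so that the orbit-closure lemma applies as stated; (ii) keep straight the two opposite sign conventions---the weight $\nu - \lambda - \mu$ carried by a component direction versus the weight $\lambda + \mu - \nu$ carried by the corresponding coordinate function, the latter being what reproduces $\Xi_X \subset \ZZ^+\Pi$ exactly; and (iii) check that passing to the reduced orbit closure does not enlarge the weight monoid, which is automatic here because $\Bbbk[\ZZ^+\mathcal S]$ is already reduced.
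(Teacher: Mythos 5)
The paper gives no proof of this statement, importing it verbatim from Alexeev--Brion (their Proposition~2.13), so there is nothing internal to compare against; your argument is correct and is essentially the standard one used there: linearize the $T_\ad$-action on multiplication laws, identify the reduced closure of the orbit through $m^X$ with $\Spec$ of the semigroup algebra on the weights in its support, and translate the support into the generators of $\Xi_X$. In particular you handle the two delicate points properly: the sign flip between the weight $\nu-\lambda-\mu$ carried by a component of the law and the weight $\lambda+\mu-\nu$ carried by the corresponding coordinate function (which is what makes the weight monoid come out as $\Xi_X$ rather than $-\Xi_X$), and the use of multiplicity-freeness of $\Bbbk[X]$ to show that $(m^X)^\nu_{\lambda,\mu}\ne 0$ exactly for the triples appearing in the definition of the root monoid.
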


\begin{corollary} \label{crl_ind_elements}
Under the hypotheses of Theorem~\textup{\ref{thm_orbit_closure_of_X}}, the tangent space $T_{X_0} C_X$ is a multiplicity-free $T_\ad$-module whose set of weights is
\[
\lbrace -\tau \mid \tau \text{ is an indecomposable element of\, } \Xi_X \rbrace.
\]
\end{corollary}

\begin{proof}
By Theorem~\ref{thm_orbit_closure_of_X}, $C_X$ is a multiplicity-free affine $T_\ad$-variety with weight monoid~$\Xi_X$, so that $\Bbbk[C_X] = \bigoplus \limits_{\xi \in \Xi_X} \Bbbk[C_X]^{(T_\ad)}_\xi$. Recall from Theorem~\ref{thm_X_0_fixed_point} that $X_0$ is the unique $T_\ad$-fixed closed point in~$\mathrm M_{\Gamma_X}$ (and hence in~$C_X$), therefore it corresponds to the maximal ideal
\[
I = \bigoplus \limits_{\xi \in \Xi_X \setminus \lbrace 0 \rbrace} \Bbbk[C_X]^{(T_\ad)}_\xi \subset \Bbbk[C_X].
\]
Now the assertion follows from the $T_\ad$-equivariant isomorphism $T_{X_0} C_X \simeq (I / I^2)^*$.
\end{proof}

\subsection{Characterizations of $T_{X_0} \mathrm M_\Gamma$}
\label{subsec_characterizations}

In this subsection we present general facts on the $T_\ad$-module structure of the tangent space $T_{X_0} \mathrm M_\Gamma$.

Let $\mathrm E$, $V$, $v_\lambda$ ($\lambda \in \mathrm E$), $x_0$, $O$, and $X_0$ be as in~\S\,\ref{subsec_X_0}. Let $a^*_G \colon (g,v) \mapsto g*v$ be the natural action of $G$ on~$V$. Given $t \in T$, let $\overline t$  denote the image of $t$ in~$T_\ad$.

According to~\cite[\S\,2.1]{AB}, we define an action $a^*_\ad \colon (\overline t, v) \mapsto \overline t * v$ of $T_\ad$ on~$V$ in the following way. For every $\lambda \in \mathrm E$ and $v \in V(\lambda)^*$, we set
\[
\overline t * v = t^{-\lambda}(t^{-1}*v),
\]
and then extend the action to the whole~$V$. Note that
\begin{equation} \label{eqn_T_ad-inv_in_V}
V^{T_\ad} = \bigoplus \limits_{\lambda \in \mathrm E} \Bbbk v_\lambda.
\end{equation}

We introduce the semi-direct product $\widetilde G = G \leftthreetimes T_\ad$ given by $\overline t g \overline t^{-1} = t^{-1}gt$ for all $t \in T$ and $g \in G$. Then the actions $a^*_G$ and $a^*_\ad$ extend to an action of~$\widetilde G$ on~$V$, which will be denoted by~$a^*$. Observe that $x_0 \in V^{T_\ad}$ by~(\ref{eqn_T_ad-inv_in_V}), and so the orbit $Gx_0$ is $T_\ad$-stable and hence $\widetilde G$-stable. It follows that $X_0$ is $\widetilde G$-stable.

All actions of $\widetilde G$ (resp.~$T_\ad$) that will be considered in the remaining part of this subsection are induced by the action~$a^*$ (resp.~$a_\ad^*$) on~$V$ and its restriction to~$X_0$.

Let $\Omega^1_V$ (resp.~$\Omega^1_{X_0}$) denote the sheaf of differential $1$-forms on~$V$ (resp.~$X_0$). Consider the canonical closed immersion $i \colon X_0 \hookrightarrow V$ and let $\mathcal I$ be the corresponding ideal sheaf. By \cite[Proposition~II.8.12]{Ha77} or \cite[Proposition~6.1.24(d)]{Li02}, there is an exact sequence of coherent sheaves of $\mathcal O_{X_0}$-modules
\[
i^* (\mathcal I/\mathcal I^2) \to i^*\Omega^1_V \to \Omega^1_{X_0} \to 0.
\]
We note that $i^*\Omega^1_V \simeq \mathcal O_{X_0} \otimes_{\Bbbk} V^*$ as $\mathcal O_{X_0}$-modules. Applying $Hom_{\mathcal O_{X_0}}(-, \mathcal O_{X_0})$ to the above exact sequence, we obtain an exact sequence of coherent sheaves of $\mathcal O_{X_0}$-modules
\begin{equation} \label{eqn_sheaves}
0 \to \mathcal T_{X_0} \to \mathcal O_{X_0} \otimes_\Bbbk V \to \mathcal N_{X_0},
\end{equation}
where
\[
\mathcal T_{X_0} = Hom_{\mathcal O_{X_0}}(\Omega^1_{X_0}, \mathcal O_{X_0})
\]
is the tangent sheaf of $X_0$ (that is, the sheaf of $\Bbbk$-derivations of the sheaf~$\mathcal O_{X_0}$) and
\[
\mathcal N_{X_0} = Hom_{\mathcal O_{X_0}}(i^*(\mathcal I/\mathcal I^2), \mathcal O_{X_0})
\]
is the normal sheaf of $X_0$ in~$V$. Taking global sections in~(\ref{eqn_sheaves}) yields an exact sequence of $\Bbbk[X_0]$-$\widetilde G$-modules
\begin{equation} \label{eqn_global_sections}
0 \to H^0(X_0, \mathcal T_{X_0}) \to H^0(X_0, \mathcal O_{X_0} \otimes_\Bbbk V) \to H^0(X_0, \mathcal N_{X_0}) \to T^1(X_0) \to 0,
\end{equation}
where $T^1(X_0)$, called the \textit{space of infinitesimal deformations of~$X_0$}, is by definition the cokernel of the map $H^0(X_0, \mathcal O_{X_0} \otimes_\Bbbk V) \to H^0(X_0, \mathcal N_{X_0})$ in~(\ref{eqn_global_sections}) (see \cite[Exercise~III.9.8]{Ha77}).

The following characterization of the tangent space $T_{X_0} \mathrm M_\Gamma $, which is implicitly contained in~\cite{AB}, has already been mentioned in~\cite[Subsection~4.3]{Br13}. For the reader's convenience, we provide it together with a proof.

\begin{proposition} \label{prop_T1}
There is a $T_\ad$-module isomorphism $T_{X_0} \mathrm M_\Gamma \simeq T^1(X_0)^G$.
\end{proposition}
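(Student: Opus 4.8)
The plan is to compute $T_{X_0}\mathrm M_\Gamma$ directly from the functor $\mathcal M_\Gamma$ that $\mathrm M_\Gamma$ represents (Theorem~\ref{thm_M_Gamma}), and then match the answer against the cocycle description of $T^1(X_0)$. Since $\mathrm M_\Gamma$ represents $\mathcal M_\Gamma$, the tangent space at the $\Bbbk$-point $X_0$ is the fibre over $X_0$ of the reduction map $\mathcal M_\Gamma(\Bbbk[\varepsilon]) \to \mathcal M_\Gamma(\Bbbk)$, where $\varepsilon^2=0$. Unwinding the definition of $\mathcal M_\Gamma$ on $S = \Spec\Bbbk[\varepsilon]$, such a lift is a $\Bbbk[\varepsilon]$-bilinear, $G$-equivariant, commutative and associative multiplication on $A_\Gamma\otimes_\Bbbk\Bbbk[\varepsilon]$ extending the multiplication~(\ref{eqn_ml_on_AU}) on $A_\Gamma^U$ and reducing modulo $\varepsilon$ to the multiplication $m_0$ of $X_0$ (the one with only Cartan components, by Theorem~\ref{thm_VP}\,(\ref{thm_VP_d})). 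By $\Bbbk[\varepsilon]$-bilinearity it has the form $m_0+\varepsilon m_1$, so I first identify $T_{X_0}\mathrm M_\Gamma$ with the space $Z$ of symmetric $G$-module maps $m_1\colon A_\Gamma\otimes A_\Gamma\to A_\Gamma$ such that \emph{(i)} $m_1$ satisfies the first-order associativity identity with respect to $m_0$, i.e.\ it is a symmetric (Harrison/Andr\'e--Quillen) $2$-cocycle, and \emph{(ii)} $m_1$ vanishes on $A_\Gamma^U\otimes A_\Gamma^U$. The $T_\ad$-action on $Z$ is read off from~(\ref{eqn_action_of_Tad_on_M}): the component of $m_1$ sending $V(\lambda)\otimes V(\mu)$ into $V(\nu)$ has $T_\ad$-weight $\nu-\lambda-\mu$.

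On the deformation side, the cokernel $T^1(X_0)$ in~(\ref{eqn_global_sections}) is, by standard deformation theory of affine schemes, the space of isomorphism classes of first-order deformations of the commutative algebra $\Bbbk[X_0]$, which is the quotient of the symmetric $2$-cocycles by the coboundaries $d\phi$, where $\phi$ ranges over $\Bbbk$-linear endomorphisms and $(d\phi)(a,b)=m_0(a,\phi b)-\phi(m_0(a,b))+m_0(\phi a,b)$. The whole cochain complex, as well as the sequence~(\ref{eqn_global_sections}), is $\widetilde G$-equivariant; since $G$ is reductive and $\operatorname{char}\Bbbk=0$, the invariants functor $(-)^G$ is exact, so $T^1(X_0)^G$ is the quotient of the $G$-equivariant symmetric $2$-cocycles (which, after transport along the fixed $G$-module isomorphism~(\ref{eqn_isom_G-mod}) for $X=X_0$, contain $Z$ as a subspace) by the $G$-equivariant coboundaries. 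The comparison map is then the natural one: to $m_0+\varepsilon m_1\in Z$ one assigns the first-order deformation of $X_0$ it defines, i.e.\ the class of $m_1$ in $T^1(X_0)^G$. By the weight computation above and the $T_\ad$-equivariance of all the constructions, this is a homomorphism of $T_\ad$-modules.

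It remains to prove this map is an isomorphism, and this is exactly where condition~\emph{(ii)} enters. Because $A_\Gamma=\bigoplus_{\lambda\in\Gamma}V(\lambda)$ is multiplicity-free, every $G$-equivariant $\phi$ acts on $V(\lambda)$ by a scalar $c_\lambda$, and a direct computation gives $(d\phi)(a,b)=(c_\lambda+c_\mu-c_{\lambda+\mu})\,m_0(a,b)$ for $a\in V(\lambda)$, $b\in V(\mu)$; in particular $(d\phi)(u_\lambda,u_\mu)=(c_\lambda+c_\mu-c_{\lambda+\mu})u_{\lambda+\mu}$. For injectivity: if $m_1=d\phi$ vanishes on highest weight vectors, then $c_\lambda+c_\mu-c_{\lambda+\mu}=0$ for all $\lambda,\mu$, and the displayed formula then forces $d\phi=0$, so $m_1=0$. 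For surjectivity I must show that every $G$-equivariant symmetric $2$-cocycle $m_1$ can be made to vanish on $A_\Gamma^U\otimes A_\Gamma^U$ after subtracting a $G$-equivariant coboundary; writing $m_1(u_\lambda,u_\mu)=b_{\lambda,\mu}u_{\lambda+\mu}$, this amounts to solving $c_\lambda+c_\mu-c_{\lambda+\mu}=b_{\lambda,\mu}$, i.e.\ to showing that the symmetric monoid $2$-cocycle $(b_{\lambda,\mu})$ on $\Gamma$ is a coboundary.

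The main obstacle is precisely this last vanishing statement, which I would settle as follows. The cocycle $b$ defines a commutative monoid structure on $E=\Gamma\times\Bbbk$ via $(\lambda,s)+(\mu,t)=(\lambda+\mu,\,s+t+b_{\lambda,\mu})$, and a solution $c$ is the same as a splitting monoid homomorphism $\Gamma\to E$. Since $\Gamma$ is cancellative, being a submonoid of the lattice $\mathfrak X(T)$, the monoid $E$ is cancellative as well, and passing to group completions yields an extension of abelian groups $0\to\Bbbk\to E^{\mathrm{gp}}\to\ZZ\Gamma\to0$. As $\ZZ\Gamma$ is free abelian we have $\operatorname{Ext}^1_\ZZ(\ZZ\Gamma,\Bbbk)=0$, so this extension splits; restricting the splitting along $\Gamma\hookrightarrow\ZZ\Gamma$ lands in $E$ (the first coordinate stays in $\Gamma$) and produces the required function $c$. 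The remaining ambiguity in $c$ is an additive homomorphism $\Gamma\to\Bbbk$, which by the displayed formula yields the zero coboundary, consistently with the injectivity already proved. This gives the bijection $Z\xrightarrow{\ \sim\ }T^1(X_0)^G$ and completes the identification of $T_\ad$-modules $T_{X_0}\mathrm M_\Gamma\simeq T^1(X_0)^G$.
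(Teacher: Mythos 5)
Your proof is correct in substance but takes a genuinely different route from the paper's. The paper's argument is short because it delegates the real work to \cite{AB}: it quotes the five-term exact sequence of \cite[Proposition~2.8]{AB} and the vanishing $T^1(X_0 \quot U)^T = 0$ of \cite[Proposition~1.15(ii)]{AB}, after which the only thing left to check is that the restriction map $\Der^G(\Bbbk[X_0]) \to \Der^T(\Bbbk[X_0]^U)$ is surjective, which is done via the induction isomorphism $\Bbbk[X_0] \simeq (\Bbbk[G] \otimes_\Bbbk \Bbbk[X_0]^U)^B$. You instead compute both sides from scratch as spaces of symmetric $2$-cocycles and match them by hand, and in doing so you essentially reprove the two cited inputs in this special case: your Schur-lemma identity $(d\phi)(a,b) = (c_\lambda + c_\mu - c_{\lambda+\mu})\,m_0(a,b)$ is exactly why every $T$-equivariant derivation of $\Bbbk[\Gamma]$ (an additive function $\lambda \mapsto c_\lambda$) extends to a $G$-equivariant derivation of $\Bbbk[X_0]$, and your splitting of the symmetric monoid $2$-cocycle $(b_{\lambda,\mu})$ via $\operatorname{Ext}^1_{\ZZ}(\ZZ\Gamma, \Bbbk) = 0$ is precisely the rigidity statement $T^1(X_0 \quot U)^T = 0$. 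Your version is more self-contained and makes the tangent vectors concrete as normalized multiplication laws $m_0 + \varepsilon m_1$; the price is length and one step you should make explicit: in the injectivity argument you take $m_1 \in Z$ that is a coboundary $d\phi$ and then compute assuming $\phi$ is $G$-equivariant, but the cochain groups $\Hom_\Bbbk(A_\Gamma^{\otimes i}, A_\Gamma)$ are not rational $G$-modules, so exactness of $(-)^G$ does not apply to the cochain complex verbatim. The standard repair is to note that $g \mapsto g\phi g^{-1} - \phi$ is an algebraic $1$-cocycle of $G$ with values in the rational $G$-module $\Der(\Bbbk[X_0]) = H^0(X_0, \mathcal T_{X_0})$ and to invoke $H^1(G, -) = 0$ for reductive $G$ in characteristic zero (equivalently, to quote the equivariant deformation theory of \cite{AB} directly). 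With that point supplied, your proof is complete.
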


\begin{proof}
Applying~\cite[Proposition~2.8]{AB}, we obtain an exact sequence of $T_\ad$-modules
\[
0 \to \mathrm{Der}^G(\Bbbk[X_0]) \to \mathrm{Der}^T(\Bbbk[X_0]^U) \to T_{X_0} \mathrm M_\Gamma \to T^1(X_0)^G \to T^1(X_0 \quot U)^T \rightarrow 0,
\]
where $X_0 \quot U = \Spec \Bbbk[X_0]^U$. By~\cite[Proposition~1.15(ii)]{AB}, $T^1(X_0 \quot U)^T$ is trivial. Therefore it remains to prove that the map $\Der^G(\Bbbk[X_0]) \to \Der^T(\Bbbk[X_0]^U)$, given by restricting derivations from $\Bbbk[X_0]$ to $\Bbbk[X_0]^U$, is surjective (and hence an isomorphism). To this end, let $B$ act on $\Bbbk[G]$ by right multiplication and on $\Bbbk[X_0]^U$ in such a way that each $T$-eigenvector of weight~$\lambda$ is multiplied by the character $-\lambda^*$. Then there is a $G$-equivariant isomorphism of algebras
\begin{equation} \label{eqn_isomorphism}
\Bbbk[X_0] \simeq (\Bbbk[G] \otimes_\Bbbk \Bbbk[X_0]^U)^B,
\end{equation}
where $B$-invariants are taken with respect to the diagonal action of~$B$ and the action of $G$ on the right-hand side is induced by that on $\Bbbk[G]$ by left multiplication. It is clear from~(\ref{eqn_isomorphism}) that every $T$-equivariant derivation of $\Bbbk[X_0]^U$ extends to a $G$-equivariant derivation of~$\Bbbk[X_0]$.
\end{proof}

\begin{corollary} \label{crl_exact_sequence_X0}
There is an exact sequence of $T_\ad$-modules
\[
0 \to H^0(X_0, \mathcal T_{X_0})^G \to H^0(X_0, \mathcal O_{X_0} \otimes_\Bbbk V)^G \to H^0(X_0, \mathcal N_{X_0})^G \to T_{X_0} \mathrm M_\Gamma \to 0.
\]
\end{corollary}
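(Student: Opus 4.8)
The plan is to obtain the asserted sequence directly by passing to $G$-invariants in the exact sequence~(\ref{eqn_global_sections}) and then re-identifying the rightmost term by means of Proposition~\ref{prop_T1}. Recall that~(\ref{eqn_global_sections}) is already an exact sequence of $\widetilde G$-modules, where $\widetilde G = G \leftthreetimes T_\ad$. Thus everything reduces to checking that the functor of $G$-invariants is exact on the modules in question and that it is compatible with the residual $T_\ad$-action.

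First I would record the $T_\ad$-equivariance. Since $G$ is a normal subgroup of~$\widetilde G$, for every $\widetilde G$-module $M$ the subspace $M^G$ is stable under $T_\ad$: indeed, for $m \in M^G$, $g \in G$ and $t \in T_\ad$ one has $g(tm) = t\bigl((t^{-1}gt)m\bigr) = tm$ because $t^{-1}gt \in G$. Consequently every $\widetilde G$-equivariant map restricts to a $T_\ad$-equivariant map between the corresponding spaces of $G$-invariants. Hence it suffices to produce an exact sequence of vector spaces, its $T_\ad$-module structure being automatic.

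Next I would invoke the exactness of $(-)^G$. As $G$ is reductive and $\mathrm{char}\,\Bbbk = 0$, every rational $G$-module $M$ decomposes functorially as $M = M^G \oplus M_+$, where $M_+$ is the sum of all nontrivial isotypic components; hence $M \mapsto M^G$ is an exact functor on the category of rational $G$-modules. All four terms of~(\ref{eqn_global_sections}) are rational $G$-modules: the middle term is $H^0(X_0, \mathcal O_{X_0} \otimes_\Bbbk V) \simeq \Bbbk[X_0] \otimes_\Bbbk V$, on which the $G$-action is visibly locally finite, and the remaining terms are $G$-submodules and $G$-quotients thereof. Applying $(-)^G$ to~(\ref{eqn_global_sections}) therefore yields the exact sequence
$$
0 \to H^0(X_0, \mathcal T_{X_0})^G \to H^0(X_0, \mathcal O_{X_0} \otimes_\Bbbk V)^G \to H^0(X_0, \mathcal N_{X_0})^G \to T^1(X_0)^G \to 0
$$
of $T_\ad$-modules. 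It then remains to replace the last term $T^1(X_0)^G$ by $T_{X_0}\mathrm M_\Gamma$ via the $T_\ad$-module isomorphism of Proposition~\ref{prop_T1}, which gives the claim.

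I do not expect a genuine obstacle here: the only point that needs care is the verification that the four modules appearing in~(\ref{eqn_global_sections}) are rational (equivalently, locally finite) $G$-modules, so that complete reducibility and hence exactness of $G$-invariants apply. This is immediate for $\Bbbk[X_0] \otimes_\Bbbk V$ and passes to its $G$-submodules and $G$-quotients, so the argument goes through with no further work.
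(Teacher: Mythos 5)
Your proposal is correct and follows exactly the paper's own route: the paper's proof is the one-line "take $G$-invariants in (\ref{eqn_global_sections}) and apply Proposition~\ref{prop_T1}," and you have simply made explicit the standard justifications (exactness of $(-)^G$ for reductive $G$ in characteristic $0$ on rational modules, and $T_\ad$-stability of $G$-invariants since $G$ is normal in $\widetilde G$) that the paper leaves implicit.
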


\begin{proof}
This follows by taking $G$-invariants in~(\ref{eqn_global_sections}) and applying Proposition~\ref{prop_T1}.
\end{proof}

Given a smooth open subset $Y \subset X_0$, the restrictions to $Y$ of all the sheaves appearing in~(\ref{eqn_sheaves}) are well known to be locally free, hence they may be regarded as the sheaves of sections of vector bundles on~$Y$. More precisely, $\left. \mathcal T_{X_0} \right|_Y$ (resp. $\left. (\mathcal O_{X_0} \otimes_{\Bbbk} V)\right|_Y$, $\left. \mathcal N_{X_0} \right|_Y$) will be regarded as the sheaf of sections of the tangent bundle of~$Y$ (resp. trivial bundle $Y \times V$, normal bundle of $Y$ in~$V$). If, in addition, $Y$ is $G$-stable, then the three vector bundles are $G$-linearized in a natural way.

\begin{proposition} \label{prop_exact_sequence_O}
The exact sequence of $T_\ad$-modules
\[
0 \to H^0(O, \left. \mathcal T_{X_0}\right|_O)^G \to H^0(O, \left. (\mathcal O_{X_0} \otimes_{\Bbbk} V)\right|_O)^G \to H^0(O, \left. \mathcal N_{X_0} \right|_O)^G
\]
identifies with
\[
0 \to (\mathfrak g x_0)^{G_{x_0}} \to V^{G_{x_0}} \to (V/\mathfrak gx_0)^{G_{x_0}}.
\]
\end{proposition}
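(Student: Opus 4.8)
The plan is to restrict the sheaf sequence~(\ref{eqn_sheaves}) to the open orbit $O = Gx_0 \cong G/G_{x_0}$, where all three sheaves become sheaves of sections of $\widetilde G$-linearized vector bundles, and then to compute the $G$-invariant global sections of these homogeneous bundles as the $G_{x_0}$-invariants of their fibres over~$x_0$.

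First I would invoke the discussion preceding the proposition: over the smooth $\widetilde G$-stable subset $O$, the sheaf $\left.\mathcal T_{X_0}\right|_O$ is the tangent bundle of~$O$, the sheaf $\left.(\mathcal O_{X_0}\otimes_\Bbbk V)\right|_O$ is the trivial bundle $O \times V$, and $\left.\mathcal N_{X_0}\right|_O$ is the normal bundle of $O$ in~$V$. Since both $O$ and $V$ are smooth, the restriction of~(\ref{eqn_sheaves}) to $O$ is the short exact sequence of vector bundles
$$
0 \to \left.\mathcal T_{X_0}\right|_O \to \left.(\mathcal O_{X_0}\otimes_\Bbbk V)\right|_O \to \left.\mathcal N_{X_0}\right|_O \to 0
$$
attached to the smooth subvariety $O \subset V$. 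Applying the left-exact functor $H^0(O,-)$ and then taking $G$-invariants gives the upper row of the proposition.

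Second, I would use the standard description of invariant sections of a homogeneous vector bundle: for any $G$-linearized vector bundle $E$ on $O = G/G_{x_0}$, evaluation at $x_0$ yields a canonical isomorphism $H^0(O,E)^G \xrightarrow{\sim} (E_{x_0})^{G_{x_0}}$, where the fibre $E_{x_0}$ is regarded as a $G_{x_0}$-module. Indeed, a $G$-invariant section is determined by its value at $x_0$, which must be $G_{x_0}$-fixed, and conversely each $G_{x_0}$-fixed vector in $E_{x_0}$ extends uniquely to a regular $G$-invariant section. The fibres over $x_0$ are, as $G_{x_0}$-modules, $T_{x_0}O = \mathfrak g x_0$ for the tangent bundle, $V$ for the trivial bundle, and $T_{x_0}V/T_{x_0}O = V/\mathfrak g x_0$ for the normal bundle; this produces the lower row. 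As evaluation at $x_0$ is natural in $E$, it intertwines the maps of the two rows, yielding the asserted identification of sequences.

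The step requiring the most care is to verify that these are isomorphisms of $T_\ad$-modules and not merely of vector spaces. For this I would use that $x_0$ is $T_\ad$-fixed and that, inside $\widetilde G$, the torus $T_\ad$ normalizes~$G$---so it acts on the $G$-invariant sections---and normalizes the stabilizer $G_{x_0}$ as well: for $g \in G_{x_0}$ and $t \in T$ one has $(t^{-1}gt)*x_0 = \overline t\, g\, \overline t^{-1}*x_0 = x_0$, so $T_\ad$ acts on $(E_{x_0})^{G_{x_0}}$. Since $x_0$ is $T_\ad$-fixed, the evaluation map $H^0(O,E)^G \to E_{x_0}$ is $T_\ad$-equivariant, hence so is the induced isomorphism onto $(E_{x_0})^{G_{x_0}}$. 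Carrying the $T_\ad$-actions through the three bundles then upgrades the identification to one of exact sequences of $T_\ad$-modules.
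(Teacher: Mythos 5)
Your proof is correct and follows essentially the same route as the paper: the paper's entire argument is the observation that for a $G$-linearized vector bundle $p \colon F \to O$ over the homogeneous space $O$, the $G$-invariant sections are canonically identified with $(p^{-1}(x_0))^{G_{x_0}}$, applied to the three bundles. Your additional verification of $T_\ad$-equivariance (using that $x_0$ is $T_\ad$-fixed and that conjugation by $T$ preserves $G_{x_0}$) is a detail the paper leaves implicit, and it is carried out correctly.
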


\begin{proof}
As $O$ is $G$-homogeneous, for every $G$-linearized vector bundle $p \colon F \to O$ the space of its $G$-invariant sections is canonically isomorphic to $(p^{-1}(x_0))^{G_{x_0}}$. Applying this to our three vector bundles yields the claim.
\end{proof}

By~\cite[Lemma~3.9]{Br13}, for every coherent sheaf $\mathcal F$ on $X_0$ the restriction map $H^0(X_0, \mathcal G) \to H^0(O, \left. \mathcal G \right|_O)$ is injective, where $\mathcal G = Hom_{\mathcal O_{X_0}}(\mathcal F, \mathcal O_{X_0}$). Combining this with Corollary~\ref{crl_exact_sequence_X0} and Proposition~\ref{prop_exact_sequence_O} we obtain the following result.

\begin{proposition} \label{prop_large_cd}
There is a commutative diagram of\, $T_\ad$-modules
\begin{equation} \label{eqn_commutative_diagram}
\begin{tikzcd}[column sep = 0.9em, row sep = small]
0 \arrow{r} & H^0(X_0, \mathcal T_{X_0})^G \arrow{r} \arrow{d} & H^0(X_0, \mathcal O_{X_0} \otimes_\Bbbk V)^G \arrow{r} \arrow{d} & H^0(X_0, \mathcal N_{X_0})^G \arrow{r} \arrow{d} & T_{X_0} \mathrm M_\Gamma \arrow{r} & 0 \\
0 \arrow{r} & (\mathfrak gx_0)^{G_{x_0}} \arrow{r} & V^{G_{x_0}} \arrow{r} & (V / \mathfrak g x_0)^{G_{x_0}} & &
\end{tikzcd}
\end{equation}
where the rows are exact and the vertical arrows are injective maps.
\end{proposition}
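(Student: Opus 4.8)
The plan is to build the diagram out of the two exact sequences already at our disposal, taking the vertical arrows to be restriction of sections from $X_0$ to its open orbit $O$. The top row is provided verbatim by Corollary~\ref{crl_exact_sequence_X0}, and the bottom row is Proposition~\ref{prop_exact_sequence_O} after the identification made there. Thus nothing remains to be checked about the rows themselves, and the whole task reduces to producing the three vertical maps and verifying the two claimed properties, commutativity and injectivity.

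First I would define the vertical arrows. For each of the three sheaves $\mathcal F \in \lbrace \mathcal T_{X_0},\ \mathcal O_{X_0} \otimes_\Bbbk V,\ \mathcal N_{X_0} \rbrace$ appearing in the sequence~(\ref{eqn_sheaves}), restriction of sections to the open subset $O \subset X_0$ furnishes a morphism $H^0(X_0, \mathcal F) \to H^0(O, \left.\mathcal F\right|_O)$. Because $O = Gx_0$ is $\widetilde G$-stable (its base point $x_0$ being $T_\ad$-fixed), this restriction map is $\widetilde G$-equivariant, hence in particular $G$- and $T_\ad$-equivariant; passing to $G$-invariants then yields a $T_\ad$-module homomorphism $H^0(X_0, \mathcal F)^G \to H^0(O, \left.\mathcal F\right|_O)^G$. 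Applying this to the three sheaves produces the three vertical arrows of the diagram, with targets identified as in Proposition~\ref{prop_exact_sequence_O}.

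Commutativity of the two squares is then formal. By construction in Proposition~\ref{prop_exact_sequence_O}, the horizontal maps of the bottom row are precisely the restrictions to $O$ of the sheaf morphisms in~(\ref{eqn_sheaves}) that induce the horizontal maps of the top row; since restriction of sections is a functor, it intertwines these maps, and taking $G$-invariants preserves the resulting commutative squares. For injectivity of the vertical arrows I would invoke \cite[Lemma~3.9]{Br13}, which guarantees that each restriction map $H^0(X_0, \mathcal F) \to H^0(O, \left.\mathcal F\right|_O)$ is injective, and then use that, because $G$ is reductive and the ground field has characteristic zero, the functor of $G$-invariants is exact and hence preserves injections.

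The argument is essentially a matter of gluing already-established pieces, so I do not anticipate a genuine obstacle. The only points demanding care are the $\widetilde G$-equivariance of the restriction maps---needed so that passing to $G$-invariants is legitimate and yields $T_\ad$-equivariant arrows---and the exactness of the $G$-invariants functor; both rest on the reductivity of $G$ in characteristic zero together with the $G$-stability of~$O$.
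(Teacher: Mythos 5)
Your proposal is correct and follows essentially the same route as the paper: the rows are Corollary~\ref{crl_exact_sequence_X0} and Proposition~\ref{prop_exact_sequence_O}, the vertical arrows are restriction of sections to the open orbit $O$, and their injectivity is exactly the cited \cite[Lemma~3.9]{Br13} (note that injectivity is automatically inherited by the subspaces of $G$-invariants, so the appeal to exactness of the invariants functor is not even needed there). The only addition you make beyond the paper's one-line argument is spelling out the $\widetilde G$-equivariance of restriction and the functoriality giving commutativity, which is a harmless elaboration.
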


\begin{proposition} \label{prop_tangent_space_characterization}
There is an exact sequence of\, $T_\ad$-modules
\[
0 \to H^0(X_0, \mathcal N_{X_0})^{\widetilde G} \to H^0(X_0, \mathcal N_{X_0})^G \to T_{X_0} \mathrm M_\Gamma \to 0.
\]
\end{proposition}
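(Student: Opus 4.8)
The plan is to deduce the statement from the four-term exact sequence of Corollary~\ref{crl_exact_sequence_X0}, which exhibits $T_{X_0}\mathrm M_\Gamma$ as the cokernel of the middle map $\varphi\colon H^0(X_0,\mathcal O_{X_0}\otimes_\Bbbk V)^G\to H^0(X_0,\mathcal N_{X_0})^G$. Since the inclusion $H^0(X_0,\mathcal N_{X_0})^{\widetilde G}\hookrightarrow H^0(X_0,\mathcal N_{X_0})^G$ is automatic and the surjectivity onto $T_{X_0}\mathrm M_\Gamma$ is already part of Corollary~\ref{crl_exact_sequence_X0}, proving the proposition reduces to the single identity $\operatorname{Im}\varphi=H^0(X_0,\mathcal N_{X_0})^{\widetilde G}$, which I would establish by two opposite inclusions.

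For the inclusion $\operatorname{Im}\varphi\subseteq H^0(X_0,\mathcal N_{X_0})^{\widetilde G}$ I would show that the source $H^0(X_0,\mathcal O_{X_0}\otimes_\Bbbk V)^G$ carries the trivial $T_\ad$-action; then, $\varphi$ being $T_\ad$-equivariant, its image is $T_\ad$-fixed and, being already $G$-fixed, is $\widetilde G$-fixed. A $G$-invariant section of the trivial bundle $\mathcal O_{X_0}\otimes_\Bbbk V$ is the same as a $G$-equivariant morphism $X_0\to V$, so this space is $(\Bbbk[X_0]\otimes_\Bbbk V)^G$; decomposing $\Bbbk[X_0]=\bigoplus_{\mu\in\Gamma}\Bbbk[X_0]_\mu$ and $V=\bigoplus_{\lambda\in\mathrm E}V(\lambda)^*$ and applying Schur's lemma, it equals $\bigoplus_{\lambda\in\mathrm E}(\Bbbk[X_0]_\lambda\otimes_\Bbbk V(\lambda)^*)^G$, with each summand one-dimensional. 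Now the tautological inclusion $X_0\hookrightarrow V$ is $\widetilde G$-equivariant, and its projection to each $V(\lambda)^*$ is nonzero because $x_0=\sum_{\lambda\in\mathrm E}v_\lambda$ has nonzero components; hence each one-dimensional summand is spanned by a $\widetilde G$-invariant element, so the whole space is $T_\ad$-trivial.

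For the reverse inclusion I would pass to $T_\ad$-invariants. As $T_\ad$ is a torus, the functor $(-)^{T_\ad}$ is exact; applying it to the tail $H^0(X_0,\mathcal O_{X_0}\otimes_\Bbbk V)^G\xrightarrow{\varphi}H^0(X_0,\mathcal N_{X_0})^G\to T_{X_0}\mathrm M_\Gamma\to 0$ and using $\bigl(H^0(X_0,\mathcal N_{X_0})^G\bigr)^{T_\ad}=H^0(X_0,\mathcal N_{X_0})^{\widetilde G}$ yields an exact sequence whose right end reads $H^0(X_0,\mathcal N_{X_0})^{\widetilde G}\to(T_{X_0}\mathrm M_\Gamma)^{T_\ad}\to 0$. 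Thus $H^0(X_0,\mathcal N_{X_0})^{\widetilde G}\subseteq\operatorname{Im}\varphi$ as soon as $(T_{X_0}\mathrm M_\Gamma)^{T_\ad}=0$, and this vanishing is the heart of the matter. To prove it I would use Proposition~\ref{prop_action_on_M_Gamma}: a tangent vector at $X_0$ is a first-order deformation $m_1=\sum_{\lambda,\mu,\nu}(m_1)^\nu_{\lambda,\mu}$ of the multiplication law of $X_0$, on which $T_\ad$ acts with weight $\nu-\lambda-\mu$ on the component $(m_1)^\nu_{\lambda,\mu}$; hence the $T_\ad$-fixed part consists of deformations supported on the components with $\nu=\lambda+\mu$. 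But every admissible multiplication law must restrict to the fixed multiplication $u_\lambda u_\mu=u_{\lambda+\mu}$ on $A_\Gamma^U$, forcing $(m_1)^{\lambda+\mu}_{\lambda,\mu}(u_\lambda\otimes u_\mu)=0$; since $\Hom_G\bigl(V(\lambda)\otimes V(\mu),V(\lambda+\mu)\bigr)$ is one-dimensional and spanned by a homomorphism that is nonzero on the highest weight vector $u_\lambda\otimes u_\mu$, this forces $(m_1)^{\lambda+\mu}_{\lambda,\mu}=0$. Therefore no nonzero tangent vector is $T_\ad$-fixed.

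Combining the two inclusions gives $\operatorname{Im}\varphi=H^0(X_0,\mathcal N_{X_0})^{\widetilde G}$, and feeding this back into the sequence of Corollary~\ref{crl_exact_sequence_X0} produces exactly the asserted short exact sequence. I expect the genuinely delicate step to be the vanishing $(T_{X_0}\mathrm M_\Gamma)^{T_\ad}=0$: one must check that the $T_\ad$-weight decomposition of the Zariski tangent space is indeed governed by the component-wise formula of Proposition~\ref{prop_action_on_M_Gamma}, and that the normalization coming from the fixed multiplication on $A_\Gamma^U$ rigidifies precisely the weight-zero (Cartan) components. Everything else is formal, resting only on Schur's lemma, the $\widetilde G$-equivariance of the tautological inclusion, and the exactness of $T_\ad$-invariants.
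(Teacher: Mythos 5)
Your proof is correct, and its first half coincides in substance with the paper's: both establish the inclusion $\operatorname{Im}\varphi\subseteq H^0(X_0,\mathcal N_{X_0})^{\widetilde G}$ by showing that $T_\ad$ acts trivially on $H^0(X_0,\mathcal O_{X_0}\otimes_\Bbbk V)^G$ (the paper does this by injecting that space into $V^{G_{x_0}}=\bigoplus_{\lambda\in\mathrm E}\Bbbk v_\lambda$ and counting dimensions, you by exhibiting the $|\mathrm E|$ components of the tautological $\widetilde G$-equivariant inclusion $X_0\hookrightarrow V$ as a spanning set of $\widetilde G$-invariants). The second half is genuinely different. The paper stays entirely inside diagram~(\ref{eqn_commutative_diagram}): it shows the left vertical arrow of~(\ref{eqn_commutative_diagram1}) is an isomorphism, observes that $V^{G_{x_0}}=V^{(T_\ad)}_0$ surjects onto $[(V/\mathfrak gx_0)^{G_{x_0}}]^{T_\ad}$, and transfers this surjectivity upward along the injective right vertical arrow. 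You instead prove the vanishing $(T_{X_0}\mathrm M_\Gamma)^{T_\ad}=0$ directly from the $\Bbbk[\epsilon]$-point description of the tangent space, the weight formula of Proposition~\ref{prop_action_on_M_Gamma}, and the one-dimensionality of the Cartan component of $V(\lambda)\otimes V(\mu)$, and then conclude by exactness of $(-)^{T_\ad}$ on rational $T_\ad$-modules. Note that this reverses the paper's logical order: there, the vanishing of the zero weight is Corollary~\ref{crl_weights_are_nonzero}, deduced \emph{from} the proposition; your argument for it is independent, so no circularity arises. The one point requiring care, which you correctly flag, is that formula~(\ref{eqn_action_of_Tad_on_M}) must be applied to $\Bbbk[\epsilon]$-valued points rather than only to closed points; this is legitimate because the $T_\ad$-action on $\mathrm M_\Gamma$ is defined functorially on multiplication laws. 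What your route buys is a self-contained, purely representation-theoretic proof of the key vanishing; what the paper's route buys is that the whole argument remains inside the normal-bundle picture that is then used for all the explicit weight computations of Section~\ref{sect_tangent_space}.
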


\begin{remark} \label{rem_equality}
$H^0(X_0, \mathcal N_{X_0})^{\widetilde G}  = [H^0(X_0, \mathcal N_{X_0})^G]^{T_\ad}$.
\end{remark}

\begin{proof}[{Proof of Proposition~\textup{\ref{prop_tangent_space_characterization}}}]
The claim will follow as soon as we show that the image of the map
\begin{equation} \label{eqn_map}
H^0(X_0, \mathcal O_{X_0} \otimes_\Bbbk V)^G \to H^0(X_0, \mathcal N_{X_0})^G
\end{equation}
in~(\ref{eqn_commutative_diagram}) coincides with $H^0(X_0, \mathcal N_{X_0})^{\widetilde G}$. Since $G_{x_0}$ contains a maximal unipotent subgroup of~$G$, it follows that the space $V^{G_{x_0}}$ is just the linear span of all vectors $v_\lambda$ with $\lambda \in \mathrm E$, which implies
\begin{equation} \label{eqn_V_invariants}
V^{G_{x_0}} = V^{T_\ad}
\end{equation}
by~(\ref{eqn_T_ad-inv_in_V}). Therefore $T_\ad$ acts trivially on $V^{G_{x_0}}$ and hence on $H^0(X_0, \mathcal O_{X_0} \otimes_\Bbbk V)^G$. Thus the image of the map~(\ref{eqn_map}) is contained in
$H^0(X_0, \mathcal N_{X_0})^{\widetilde G}$, and so there is a commutative diagram of vector spaces
\begin{equation} \label{eqn_commutative_diagram1}
\begin{tikzcd}[column sep = small, row sep = small]
H^0(X_0, \mathcal O_{X_0} \otimes_\Bbbk V)^G \arrow{r} \arrow{d} & H^0(X_0, \mathcal N_{X_0})^{\widetilde G} \arrow{d} \\
V^{G_{x_0}} \arrow{r} & {[(V / \mathfrak g x_0)^{G_{x_0}}]^{T_\ad}}
\end{tikzcd}
\end{equation}
where the vertical arrows are injective maps.
Further, note that
\[
H^0(X_0, \mathcal O_{X_0} \otimes_\Bbbk V) \simeq \Bbbk[X_0] \otimes_\Bbbk V
\]
and $\Bbbk[X_0] \otimes_\Bbbk V$ contains $V(\lambda) \otimes V(\lambda)^*$ as a $G$-submodule for every $\lambda \in \mathrm E$. Since $\dim (V(\lambda) \otimes\nobreak V(\lambda)^*)^G \ge 1$ for every $\lambda \in \Lambda^+$, it follows that $\dim H^0(X_0, \mathcal O_{X_0} \otimes_\Bbbk V)^G \ge |\mathrm E|$. On the other hand, it is clear that $\dim V^{G_{x_0}} = |\mathrm E|$. Consequently, the left vertical arrow in~(\ref{eqn_commutative_diagram1}) is an isomorphism. At last, the surjectivity of the natural map $V^{T_\ad} \to (V / \mathfrak g x_0)^{T_\ad}$ along with~(\ref{eqn_V_invariants}) implies that the lower horizontal arrow in~(\ref{eqn_commutative_diagram1}) is a surjective map. The latter already suffices to conclude that the map given by the upper horizontal arrow in~(\ref{eqn_commutative_diagram1}) is also surjective.
\end{proof}

Combining Proposition~\ref{prop_tangent_space_characterization} with Remark~\ref{rem_equality} we obtain

\begin{corollary} \label{crl_weights_are_nonzero}
All $T_\ad$-weights of $T_{X_0} \mathrm M_\Gamma$ are nonzero.
\end{corollary}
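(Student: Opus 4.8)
The plan is to read off the statement directly from Proposition~\ref{prop_tangent_space_characterization}, whose exact sequence
$$
0 \to H^0(X_0, \mathcal N_{X_0})^{\widetilde G} \to H^0(X_0, \mathcal N_{X_0})^G \to T_{X_0} \mathrm M_\Gamma \to 0
$$
exhibits $T_{X_0}\mathrm M_\Gamma$ as the quotient of the $T_\ad$-module $H^0(X_0, \mathcal N_{X_0})^G$ by the submodule $H^0(X_0, \mathcal N_{X_0})^{\widetilde G}$. So everything reduces to identifying this submodule inside the middle term.

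The key observation I would make is that, since $\widetilde G = G \leftthreetimes T_\ad$ with $G$ normal in $\widetilde G$ and $\widetilde G / G \simeq T_\ad$, taking $\widetilde G$-invariants of any $\widetilde G$-module $M$ is the same as first taking $G$-invariants and then $T_\ad$-invariants; in particular
$$
H^0(X_0, \mathcal N_{X_0})^{\widetilde G} = \bigl( H^0(X_0, \mathcal N_{X_0})^{G} \bigr)^{T_\ad}.
$$
Thus the left-hand term of the exact sequence is precisely the subspace of $T_\ad$-invariant vectors in $H^0(X_0, \mathcal N_{X_0})^{G}$.

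Now I would invoke the elementary fact that for any $T_\ad$-module $N$ (a direct sum of weight spaces $N = \bigoplus_\chi N_\chi$) the subspace $N^{T_\ad}$ of invariants equals the weight-zero component $N_0$. Applying this with $N = H^0(X_0, \mathcal N_{X_0})^{G}$, the exact sequence becomes an identification of $T_\ad$-modules
$$
T_{X_0}\mathrm M_\Gamma \simeq N / N_0 \simeq \bigoplus_{\chi \ne 0} N_\chi,
$$
all of whose weights are by construction nonzero. This proves the corollary.

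There is essentially no obstacle here: the content is entirely contained in Proposition~\ref{prop_tangent_space_characterization}, and the only thing that needs to be pointed out is the (routine) equality $M^{\widetilde G} = (M^G)^{T_\ad}$ coming from the semidirect-product decomposition $\widetilde G = G \leftthreetimes T_\ad$, together with the fact that quotienting a $T_\ad$-module by its weight-zero subspace kills exactly the trivial character and leaves only nonzero weights. I would therefore present the argument as a short deduction rather than a computation.
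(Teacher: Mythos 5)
Your argument is correct and is precisely the (implicit) reasoning behind the paper's statement of this corollary, which is given without proof immediately after Proposition~\ref{prop_tangent_space_characterization}: the $\widetilde G$-invariants are the zero-weight space of the $T_\ad$-module $H^0(X_0,\mathcal N_{X_0})^G$, so the quotient carries only nonzero weights. Nothing further is needed.
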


\section{The tangent space of $\mathrm M_\Gamma$ at $X_0$}
\label{sect_tangent_space}

Throughout this section, we fix the following notation:

$\Gamma \subset \Lambda^+$ is an arbitrary finitely generated and saturated monoid;

$\mathcal L = \Hom_\ZZ(\ZZ \Gamma, \ZZ)$;

$\mathcal Q = \mathcal L \otimes_\ZZ \QQ = \Hom_\ZZ(\ZZ\Gamma, \QQ)$;

$\mathcal K \subset \mathcal Q$ is the cone dual to $\QQ^+ \Gamma$;

$\mathcal K^1 \subset \mathcal K$ is the set of primitive elements $q$ in $\mathcal L$ such that $\QQ^+ q$ is an extremal ray of~$\mathcal K$;

$\iota \colon \Hom_\ZZ(\mathfrak X(T), \QQ) \to \mathcal Q$ is the natural restriction map.

For every $\sigma \in \ZZ \Gamma$ we define the set
\begin{equation} \label{eqn_K1(sigma)}
\mathcal K^1(\sigma) = \lbrace \varrho \in \mathcal K^1 \mid \langle \varrho, \sigma \rangle > 0 \rbrace.
\end{equation}

\subsection{Statement of the main result}
\label{Statement of the main result}

We first describe the set $\overline \Sigma(G)$. By definition, an element $\sigma \in \mathfrak X(T)$ belongs to $\overline \Sigma(G)$ if and only if $\sigma \in \ZZ^+ \Pi \setminus \lbrace 0 \rbrace$ and the expression of $\sigma$ as a linear combination of the simple roots in $\Supp \sigma$ appears in Table~\ref{table_spherical_roots}. (In row~3 of this table, $\alpha$ and $\beta$ are the two distinct simple roots in $\Supp \sigma$.)

\begin{table}[h]

\caption{The set $\overline \Sigma(G)$} \label{table_spherical_roots}

\begin{tabular}{|c|c|c|c|c|}
\hline

No. & \begin{tabular}{c} Type of \\ $\Supp \sigma$\end{tabular} &
$\sigma$ & $\Pi_\sigma$ & Note\\

\hline

1 & $\mathsf A_1$ & $\alpha_1$ & $\varnothing$ & \\

\hline

2 & $\mathsf A_1$ & $2\alpha_1$ & $\varnothing$ & \\

\hline

3 & $\mathsf A_1 \times \mathsf A_1$ & $\alpha +
\beta$ & $\varnothing$ & \\

\hline

4 & $\mathsf A_r$ & $\alpha_1 + \alpha_2 + \ldots + \alpha_r$ &
\begin{tabular}{c}
$\varnothing$ for $r = 2$; \\
\hline $\alpha_2, \alpha_3, \ldots, \alpha_{r-1}$ \\ for $r \ge 3$
\end{tabular}
& $r \ge 2$\\

\hline

5 & $\mathsf A_3$ & $\alpha_1 + 2\alpha_2 + \alpha_3$
& $\alpha_1, \alpha_3$ & \\

\hline

6 & $\mathsf B_r$ & $\alpha_1 + \alpha_2 + \ldots + \alpha_r$ &
\begin{tabular}{c}
$\varnothing$ for $r = 2$; \\
\hline $\alpha_2, \alpha_3, \ldots, \alpha_{r-1}$ \\ for $r \ge 3$
\end{tabular}
& $r \ge 2$\\

\hline

7 & $\mathsf B_r$ & $2\alpha_1 + 2\alpha_2 + \ldots +
2\alpha_r$ & $\alpha_2, \alpha_3, \ldots, \alpha_r$ & $r \ge 2$\\

\hline

8 & $\mathsf B_3$ & $\alpha_1 + 2\alpha_2 + 3\alpha_3$
& $\alpha_1, \alpha_2$ & \\

\hline

9 & $\mathsf C_r$ & $\alpha_1 + 2\alpha_2 + 2\alpha_3 + \ldots +
2\alpha_{r-1} + \alpha_r$ & $\alpha_3, \alpha_4, \ldots,
\alpha_r$ & $r \ge 3$\\

\hline

10 & $\mathsf D_r$ & $2\alpha_1 + 2\alpha_2 + \ldots +
2\alpha_{r-2} + \alpha_{r-1} + \alpha_r$ & $\alpha_2, \alpha_3,
\ldots, \alpha_r$ & $r \ge 4$ \\

\hline

11 & $\mathsf F_4$ & $\alpha_1 + 2\alpha_2 + 3\alpha_3 +
2\alpha_4$ & $\alpha_1, \alpha_2, \alpha_3$ & \\

\hline

12 & $\mathsf G_2$ & $\alpha_1 + \alpha_2$ & $\varnothing$ & \\

\hline

13 & $\mathsf G_2$ & $4\alpha_1 + 2\alpha_2$ & $\alpha_2$ & \\

\hline

\end{tabular}

\end{table}

Each element $\sigma \in \overline \Sigma(G)$ comes together with a certain subset $\Pi_\sigma \subset \Supp \sigma$, which can be defined as follows:
\begin{equation} \label{eqn_Pi_sigma}
\Pi_\sigma = \lbrace \gamma \in \Supp \sigma \cap \sigma^\perp \mid \sigma - \gamma \notin \Delta^+ \text{ or } \gamma \in \Supp(\sigma - \gamma) \rbrace.
\end{equation}
For the reader's convenience, in Table~\ref{table_spherical_roots} we listed all roots in $\Pi_\sigma$ for each~$\sigma \in \overline \Sigma(G)$. We note that
\begin{itemize}
\item
$\Pi_\sigma = \Supp \sigma \cap \sigma^\perp$ unless $\sigma$ is in rows 6 or~9 of Table~\ref{table_spherical_roots};
\item
$\Pi_\sigma = \lbrace \gamma \in \Supp \sigma \cap \sigma^\perp \mid \sigma - \gamma \notin \Delta^+ \rbrace$ unless $\sigma$ is in row~11 of Table~\ref{table_spherical_roots}.
\end{itemize}

Observe that the set $\overline \Sigma(G)$ is finite and depends only on~$G$.

We set
\[
\Phi (\Gamma)= \lbrace \sigma \in \mathfrak X(T_\ad) \mid -\sigma
\textit{ is a $T_\ad$-weight of~$T_{X_0}\mathrm M_\Gamma$}\rbrace.
\]
In other words, $\Phi(\Gamma)$ is the set of $T_\ad$-weights in the cotangent space of $\mathrm M_\Gamma$ at~$X_0$.

Note that $0 \notin \Phi(\Gamma)$ by Corollary~\ref{crl_weights_are_nonzero}.

\begin{theorem} \label{thm_tangent_space}
The tangent space $T_{X_0} \mathrm M_\Gamma$ is a multiplicity-free $T_\ad$-module. Moreover, an element $\sigma \in \mathfrak X(T_\ad)$ belongs to $\Phi(\Gamma)$ if and only if the following conditions are satisfied:
\begin{enumerate}[label=\textup{($\Phi$\arabic*)},ref=\textup{$\Phi$\arabic*}]
\item \label{Phi1}
$\sigma \in \ZZ\Gamma$;

\item \label{Phi2}
$\sigma \in \overline \Sigma(G)$;

\item \label{Phi3}
$\Pi_\sigma \subset \Gamma^\perp$;

\item \label{Phi4}
if $\sigma = \alpha_1 + \ldots + \alpha_r$ with $\Supp \sigma$ of type $\mathsf B_r$ \textup($r \ge 2$\textup), then $\alpha_r \notin \Gamma^\perp$;

\item \label{Phi5}
if $\sigma = \alpha + \beta$ with $\alpha, \beta \in \Pi$ and $\alpha \perp \beta$, then $\langle \alpha^\vee, \lambda \rangle = \langle \beta^\vee, \lambda \rangle$ for all $\lambda \in \Gamma$;

\item \label{Phi6}
if $\sigma = 2\alpha$ for some $\alpha \in \Pi$ then $\langle \alpha^\vee, \lambda \rangle \in 2\ZZ$ for all $\lambda \in \Gamma$;

\item \label{Phi7}
if $\sigma \notin \Pi$ then for every $\varrho \in \mathcal K^1(\sigma)$ there exists $\delta \in \Pi \setminus \Gamma^\perp$ such that $\iota(\delta^\vee)$ is a positive multiple of~$\varrho$;

\item \label{Phi8}
if $\sigma = \alpha \in \Pi$ then there exist two distinct elements $\varrho_1, \varrho_2 \in \mathcal K \cap \mathcal L$ with the following properties:
\begin{enumerate}[label=\textup{(\alph*)},ref=\textup{\alph*}]
\item \label{Phi8a}
$\langle \varrho_1, \alpha \rangle = \langle \varrho_2, \alpha \rangle = 1$;

\item \label{Phi8b}
$\iota(\alpha^\vee) = b_1\varrho_1 + b_2\varrho_2$ for some $b_1, b_2 \in \QQ^+ \setminus \lbrace 0 \rbrace$;

\item \label{Phi8c}
$\mathcal K^1(\alpha) \subset \lbrace \varrho_1, \varrho_2 \rbrace$.
\end{enumerate}
\end{enumerate}
\end{theorem}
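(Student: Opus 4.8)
The plan is to compute $T_{X_0}\mathrm M_\Gamma$ as a $T_\ad$-module through the homological description assembled in \S\ref{subsec_characterizations}. By Proposition~\ref{prop_tangent_space_characterization} the module $T_{X_0}\mathrm M_\Gamma$ is the quotient of $H^0(X_0,\mathcal N_{X_0})^G$ by its $\widetilde G$-invariant subspace, and since $\widetilde G = G \leftthreetimes T_\ad$ this subspace is exactly the $T_\ad$-fixed part of $H^0(X_0,\mathcal N_{X_0})^G$. Hence $T_{X_0}\mathrm M_\Gamma$ is canonically the sum of the \emph{nonzero} $T_\ad$-weight spaces of $H^0(X_0,\mathcal N_{X_0})^G$, with the same multiplicities (this already squares with Corollary~\ref{crl_weights_are_nonzero}). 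So it suffices to determine the nonzero $T_\ad$-weights of $H^0(X_0,\mathcal N_{X_0})^G$ and to show each occurs with multiplicity one. By Proposition~\ref{prop_large_cd} this space injects $T_\ad$-equivariantly into the fibre invariants $(V/\mathfrak g x_0)^{G_{x_0}}$ over the open orbit $O = Gx_0$. I therefore split the problem into two parts: (i) computing the $T_\ad$-weights and multiplicities of $(V/\mathfrak g x_0)^{G_{x_0}}$, and (ii) deciding which of the resulting weight vectors lie in the image of the restriction map, that is, extend from $O$ to a global section on $X_0 = \overline O$.

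For part (i) I would first make the $T_\ad$-grading explicit: under $a^*_\ad$ a vector of $T$-weight $-\lambda + \xi$ in $V(\lambda)^*$ (with $\xi \in \ZZ^+\Pi$) carries $T_\ad$-weight $-\xi$, so the lowest weight vectors $v_\lambda$ span the weight-zero space and $(\mathfrak g x_0)_{-\sigma}$ is spanned by $e_\sigma x_0$ if $\sigma \in \Delta^+$ and is zero otherwise. Since $\Gamma$ is saturated, Theorem~\ref{thm_VP} together with $\Xi_{X_0} = \lbrace 0 \rbrace$ shows that $X_0$ is the horospherical $G$-variety with weight monoid~$\Gamma$, and a direct computation gives $\mathfrak g_{x_0} = \mathfrak n^- \oplus \mathfrak t' \oplus \mathfrak n_{\Gamma^\perp}$, where $\mathfrak t' = (\ZZ\Gamma)^\perp \cap \mathfrak t$ and $\mathfrak n_{\Gamma^\perp}$ is spanned by the positive root vectors supported on~$\Gamma^\perp$. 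Analysing $G_{x_0}$-invariance of a weight-$(-\sigma)$ vector then decouples into three requirements: invariance under $\mathfrak t'$ forces the underlying $T$-weights into $\QQ\Gamma$, which, combined with the description of the $T_\ad$-action on multiplication laws in Proposition~\ref{prop_action_on_M_Gamma} (where a weight has the form $\nu-\lambda-\mu$ with $\lambda,\mu,\nu\in\Gamma$), yields condition~\ref{Phi1}; invariance under $\mathfrak n^-$ (which raises the $T_\ad$-weight) and under $\mathfrak n_{\Gamma^\perp}$ cuts the admissible $\sigma$ down to a finite list and imposes the orthogonality conditions~\ref{Phi3}--\ref{Phi6}. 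The hard part of the whole proof is precisely this local step: showing that a nonzero $G_{x_0}$-invariant in weight $-\sigma$ can exist only when $\sigma$ has one of the shapes recorded in Table~\ref{table_spherical_roots}, and pinning down the exact side conditions. I would carry this out by passing to the root subsystem $\Delta_\sigma$ generated by $\Supp\sigma$, reducing to the case where $\Supp\sigma$ is connected, and then performing an explicit computation in the corresponding small-rank simple Lie algebra using the fixed Chevalley basis (whence the sign conventions recorded in Appendix~\ref{app_structure_constants}); this simultaneously produces the entries of the table, the set $\Pi_\sigma$ of~(\ref{eqn_Pi_sigma}), and the multiplicity-one statement at the level of fibres.

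For part (ii) I would use that a $G$-invariant section of the normal bundle over $O$ extends to $X_0$ exactly when it has no pole along the codimension-one part of the boundary $X_0\setminus O$. Translating this condition into the combinatorics of the dual cone $\mathcal K$ of $\QQ^+\Gamma$ and of the restriction map $\iota$, the extension criterion becomes a statement about how $\sigma$ interacts with the rays of $\mathcal K$ spanned by the images $\iota(\delta^\vee)$ of the simple coroots with $\delta \in \Pi\setminus\Gamma^\perp$. This is the origin of conditions~\ref{Phi7} and~\ref{Phi8}, the dichotomy between them reflecting whether $\sigma$ is itself a simple root (in which case one distinguished ray must be replaced by the two elements $\varrho_1,\varrho_2$ of~\ref{Phi8}) or not.

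Finally I would assemble the two parts: a weight $-\sigma$ occurs in $T_{X_0}\mathrm M_\Gamma$ if and only if $-\sigma$ is a nonzero weight of $(V/\mathfrak g x_0)^{G_{x_0}}$ subject to \ref{Phi1}--\ref{Phi6} and the corresponding section extends, i.e.\ \ref{Phi7}--\ref{Phi8} hold; the multiplicity-free assertion then follows because each fibre weight space already has dimension at most one. The principal obstacle is the local classification in part (i): every other step is either formal (the reductions above) or a finite, if laborious, verification. I expect the extension analysis of part (ii) to be the second most delicate point, since it requires matching the geometric no-pole condition with the precise cone-theoretic formulations of~\ref{Phi7}--\ref{Phi8}.
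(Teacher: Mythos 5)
Your overall architecture is the paper's: identify $T_{X_0}\mathrm M_\Gamma$ with a complement of the $\widetilde G$-invariants inside $H^0(X_0,\mathcal N_{X_0})^G$, inject it into the fibre invariants $(V/\mathfrak g x_0)^{G_{x_0}}$ over the open orbit, analyse $G_{x_0}$-invariance by weight to constrain $\sigma$, and then decide extendability across the codimension-one boundary orbits (which, by normality of $X_0$ — this is where saturatedness enters, as you note — is all that matters). Conditions (\ref{Phi7}) and (\ref{Phi8}) do indeed arise from the extension step, formulated through the extremal rays of $\mathcal K$ and which of them are multiples of some $\iota(\delta^\vee)$.

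There is, however, a genuine gap in your final assembly: you claim multiplicity-freeness ``because each fibre weight space already has dimension at most one,'' and you assert that part (i) yields ``the multiplicity-one statement at the level of fibres.'' This is false when $\sigma=\alpha$ is a simple root. A vector of $T_\ad$-weight $-\alpha$ has the form $\sum_\mu c_\mu e_{-\alpha}v_\mu$, and the $G_{x_0}$-invariance of its class reduces to $e_\alpha v\in\mathfrak t x_0$, i.e.\ to $(c_\mu\langle\alpha^\vee,\mu\rangle)_\mu$ being the restriction of a linear functional on $\ZZ\Gamma$; the resulting space, modulo the line $\Bbbk e_{-\alpha}x_0$, has dimension $\operatorname{codim}_{\QQ\Gamma}\QQ(\mathrm E\cap\ker\alpha^\vee)-1$, which is easily made $\ge 2$ (e.g.\ $G=\SL_2\times(\Bbbk^\times)^2$ with $\QQ^+\Gamma$ a four-sided cone). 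In the paper, cutting this space down to dimension one is achieved only \emph{after} imposing the extension condition: the normal form of Proposition~\ref{prop_canonical_form}(\ref{prop_canonical_form_a}) for $\varrho\in\mathcal P$ rests on Proposition~\ref{prop_section_extensions}(\ref{prop_section_extensions_a}), and feeds into Lemma~\ref{lemma_v_alpha}, Lemma~\ref{lemma_value_is_one_bis} and the bound $|\mathcal K^1(\alpha)|\le 2$ of Lemma~\ref{lemma_not_more_than_two}, from which both multiplicity one and the two distinguished elements $\varrho_1,\varrho_2$ of (\ref{Phi8}) emerge. More generally, your parts (i) and (ii) do not decouple as cleanly as you suggest: the dichotomy $\varrho\in\mathcal P\Leftrightarrow\sigma\in\Pi$ (Proposition~\ref{prop_role_of_NE2}), which governs whether (\ref{Phi7}) or (\ref{Phi8}) applies and is used throughout the derivation of (\ref{Phi2})--(\ref{Phi6}), is itself proved using the extension analysis. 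So the ``local classification'' you isolate as the hard part cannot be completed purely at the fibre level; you need to interleave it with the no-pole condition along the boundary divisors.
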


\begin{remark}
In condition~(\ref{Phi8}) it is important that the elements $\varrho_1, \varrho_2$ be distinct.
\end{remark}

\begin{remark}
Conditions (\ref{Phi1})--(\ref{Phi6}) depend only on the lattice $\ZZ \Gamma$, whereas (\ref{Phi7}) and (\ref{Phi8}) are the only conditions involving the cone $\QQ^+ \Gamma \subset \ZZ \Gamma \otimes_\ZZ \QQ$.
\end{remark}

\begin{remark}
In the case where $\Gamma$ is free, a result similar to Theorem~\ref{thm_tangent_space} is proved by Bravi and Van Steirteghem in~\cite{BvS}; see Theorem~4.1, Corollary~4.2, and Corollary~2.17 in loc.~cit. Although our proof and that of loc. cit. follow the same general strategy, below we point out two main differences between the two approaches.

(1) When proving~(\ref{Phi2}) for every $\sigma \in \Phi(\Gamma)$, Bravi and Van Steirteghem establish a more general fact that (\ref{Phi2}) holds for any nonzero weight of the $T_\ad$-module $(V/\mathfrak gx_0)^{G_{x_0}}$. However, their arguments resort to an extensive case-by-case analysis of root systems. On the other hand, in our proof of~(\ref{Phi2}) for~$\sigma \in \Phi(\Gamma)$ we avoid long case-by-case considerations thanks to Proposition~\ref{prop_canonical_form}, which imposes strong restrictions on an element in $(V/\mathfrak gx_0)^{G_{x_0}}$ arising from a $T_\ad$-eigenvector in $T_{X_0} \mathrm M_\Gamma$.

(2) To prove that every $\sigma \in \mathfrak X(T_\ad)$ satisfying (\ref{Phi1})--(\ref{Phi8}) belongs to~$\Phi(\Gamma)$ (for free~$\Gamma$), Bravi and Van Steirteghem use Theorem~\ref{thm_orbit_closure_of_X} and existence results for affine spherical $G$-varieties $X$ with $\Gamma_X = \Gamma$ and $|\Sigma_X|=1$, which trace back to the known classification of so-called wonderful varieties of rank~1, see loc. cit. for details. In our proof, for every $\sigma \in \mathfrak X(T_\ad)$ satisfying (\ref{Phi1})--(\ref{Phi8}) we explicitly construct an element in $(V/\mathfrak gx_0)^{G_{x_0}}$ that gives rise to a $T_\ad$-eigenvector in~$T_{X_0} \mathrm M_\Gamma$ of weight~$-\sigma$.
\end{remark}

\begin{remark}
In \cite{BvS}, elements of $\overline \Sigma(G)$ are referred to as \textit{spherically closed spherical roots} of~$G$.
\end{remark}

We now briefly describe the contents of the remaining part of this section. In \S\,\ref{subsec_preliminaries_for_the_proof} we gather further notation and conventions needed for the proof of Theorem~\ref{thm_tangent_space}. In \S\S\,\ref{subsec_saturatedness}--\ref{subsec_canonical_representatives} we discuss several ingredients for the proof. The proof itself is divided into two steps carried out in \S\,\ref{subsec_step_1} and \S\,\ref{subsec_step_2}, respectively. At the first step we prove that the $T_\ad$-module $T_{X_0} \mathrm M_\Gamma$ is multiplicity-free and every element of $\Phi(\Gamma)$ satisfies conditions~(\ref{Phi1})--(\ref{Phi8}). At the second step we prove that every element $\sigma \in \mathfrak X(T_\ad)$ satisfying (\ref{Phi1})--(\ref{Phi8}) belongs to~$\Phi(\Gamma)$.

\subsection{Preliminaries for the proof of Theorem~\ref{thm_tangent_space}}
\label{subsec_preliminaries_for_the_proof}

In this subsection we set up an additional notation and make several conventions that will be used in our proof of Theorem~\ref{thm_tangent_space}.

We fix an arbitrary finite generating system $\mathrm E \subset \Gamma$. For every $\varrho \in \mathcal K^1$, we set
\[
\mathrm E_\varrho = \lbrace \lambda \in \mathrm E \mid \langle \varrho, \lambda \rangle = 0 \rbrace.
\]
Next, for every $\lambda \in \mathrm E$ we fix a lowest weight vector $v_{\lambda} \in V(\lambda)^*$ and put
\[
V = \bigoplus \limits_{\lambda \in \mathrm E} V(\lambda)^*, \quad x_0 = \sum \limits_{\lambda \in \mathrm E} v_\lambda, \quad O =
Gx_0,\quad\mbox{ and } \quad X_0 = \overline O\subset V.
\]

From \S\,\ref{subsec_characterizations}, recall the group $\widetilde G = G \leftthreetimes T_\ad$ and the action $a_G^*$ (resp. $a_\ad^*$,~$a^*$) of $G$ (resp. $T_\ad$,~$\widetilde G$) on~$V$ under which $X_0$ is stable. Combining Propositions~\ref{prop_large_cd} and~\ref{prop_tangent_space_characterization} we get a diagram of $T_\ad$-equivariant maps
\begin{equation} \label{eqn_just_diagram}
\begin{tikzcd}[column sep = small, row sep = small]
0 \arrow{r} & H^0(X_0, \mathcal N_{X_0})^{\widetilde G} \arrow{r} & H^0(X_0, \mathcal N_{X_0})^G \arrow{r} \arrow{d} & T_{X_0} \mathrm M_\Gamma \arrow{r} & 0 \\
 & & (V / \mathfrak g x_0)^{G_{x_0}} & &
\end{tikzcd}
\end{equation}
where the upper row is exact and the vertical arrow is an injective map. We identify $T_{X_0} \mathrm M_\Gamma$ with the unique $T_\ad$-submodule of $H^0(X_0, \mathcal N_{X_0})^G$ complementary to $H^0(X_0, \mathcal N_{X_0})^{\widetilde G}$ and let $\EuScript{TS}$ denote the image of $T_{X_0} \mathrm M_\Gamma$ in $(V / \mathfrak gx_0)^{G_{x_0}}$, so that
\[
\EuScript{TS} \simeq T_{X_0} \mathrm M_\Gamma
\]
as $T_\ad$-modules.

For our computations with the $T_\ad$-module $(V / \mathfrak gx_0)^{G_{x_0}}$, it will be more convenient to replace the actions $a_G^*$, $a_\ad^*$, and $a^*$ with other ones as described below.

As in~\S\,\ref{subsec_characterizations}, we let $\overline t$ denote the image in $T_\ad$ of an element $t \in T$. Let $\theta \in \Aut G$ be a Weyl involution of $G$ relative to~$T$, that is, $\theta(t) = t^{-1}$ for all~$t \in T$. It is well known that $\theta(B) \cap B = T$. We extend this involution to an involution of $\widetilde G$ by setting $\theta(\overline t) = \overline t^{-1}$ for all $\overline t \in T_\ad$.

We define a new action $a \colon ((g, \overline t), v) \mapsto (g, \overline t) \cdot v$ of $\widetilde G$ on $V$ by $(g, \overline t) \cdot v = \theta(g, \overline t) * v$. Let $a_G$ (resp.~$a_\ad$) denote the restriction of $a$ to $G$ (resp.~$T_\ad$).

Here are the most important features of the new actions.
\begin{enumerate}
\item
The action $a_\ad$ is opposite to the action~$a_\ad^*$. In particular, the set $\Phi(\Gamma)$ is exactly the set of weights of the $T_\ad$-module $\EuScript{TS}$ with respect to the action~$a_\ad$.

\item \label{feature2}
For every $\lambda \in \mathrm E$, the subspace $V(\lambda)^* \subset V$, regarded as a $G$-module with respect to the action~$a_G$, is isomorphic to $V(\lambda)$ with~$v_\lambda$, viewed in~$V(\lambda)$, being a highest weight vector.
\end{enumerate}

From now on, we shall consider the diagram~(\ref{eqn_just_diagram}) only with respect to the actions $a$, $a_G$, and~$a_\ad$. According to~(\ref{feature2}), this implies the following changes in our notation:
\begin{itemize}
\item
$V = \bigoplus \limits_{\lambda \in \mathrm E} V(\lambda)$;

\item
$v_\lambda$ is a highest weight vector of~$V(\lambda)$ for every $\lambda \in \mathrm E$.
\end{itemize}
With the above new notation, $a_G$ becomes the usual action of $G$ on~$V$ and the action $a_\ad$ of $T_\ad$ on~$V$ is given by
\begin{equation} \label{eqn_action_of_Tad_on_V}
\overline t \cdot v = t^\lambda (t^{-1} \cdot v) \;\; \text{ for all } \; t \in T, \; \lambda \in \mathrm E, \text{ and } v \in V(\lambda).
\end{equation}

For every $\lambda \in \mathrm E$, let $\pr_\lambda \colon V \to V(\lambda)$ be the canonical projection.

For every $v \in V$, let $[v]$ denote the image of $v$ under the
natural map $V \to V / \mathfrak g x_0$.

Since the subspace $\mathfrak gx_0 \subset V$ is $T_\ad$-invariant,
for every $T_\ad$-eigenvector $q \in V / \mathfrak g x_0$ there
exists a $T_\ad$-eigenvector $v \in V$ (of the same weight) such
that $[v] = q$. This observation will be always used in our study of
$(V / \mathfrak g x_0)^{G_{x_0}}$.

\subsection{The role of saturatedness of~$\Gamma$}
\label{subsec_saturatedness}

The saturatedness assumption on $\Gamma$ will be essential in our proof of Theorem~\ref{thm_tangent_space}. Firstly, by Proposition~\ref{prop_normality_saturatedness} this assumption guarantees that the variety $X_0$ is normal, which is essentially used in the proof of Proposition~\ref{prop_should_extend_to_oc1} in \S\,\ref{subsec_extension_of_sections}. Secondly, our arguments will often require the following crucial property of saturated~$\Gamma$.

\begin{lemma} \label{lemma_value_is_one}
For every $\varrho \in \mathcal K^1$, there exists $\mu \in \mathrm E$ such that $\langle \varrho, \mu \rangle = 1$.
\end{lemma}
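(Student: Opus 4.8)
The plan is to prove Lemma~\ref{lemma_value_is_one} by extracting the arithmetic content of the saturatedness hypothesis on~$\Gamma$, which is exactly what distinguishes it from a general finitely generated monoid. Recall that $\mathcal K \subset \mathcal Q$ is the dual cone of $\QQ^+ \Gamma$, and $\varrho \in \mathcal K^1$ is a primitive element of the lattice $\mathcal L = \Hom_\ZZ(\ZZ\Gamma, \ZZ)$ spanning an extremal ray of~$\mathcal K$. Since $\varrho$ lies in $\mathcal K = (\QQ^+\Gamma)^\vee$, we have $\langle \varrho, \lambda \rangle \ge 0$ for all $\lambda \in \Gamma$, and since $\mathrm E$ generates~$\Gamma$, the value $\langle \varrho, \mu \rangle$ is a nonnegative integer for every $\mu \in \mathrm E$. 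The content of the lemma is thus that at least one of these integers equals~$1$, i.e.\ that the minimal positive value of $\varrho$ on~$\mathrm E$ is exactly~$1$.

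First I would argue that the minimum $d$ of the set $\lbrace \langle \varrho, \mu \rangle \mid \mu \in \mathrm E, \langle \varrho, \mu \rangle > 0 \rbrace$ equals the minimum positive value of $\varrho$ on all of $\Gamma \setminus (\varrho^\perp \cap \Gamma)$. Indeed, any $\lambda \in \Gamma$ is a $\ZZ^+$-combination of elements of $\mathrm E$, so $\langle \varrho, \lambda \rangle$ is a $\ZZ^+$-combination of the integers $\langle \varrho, \mu \rangle$; hence the positive values of $\varrho$ on $\Gamma$ are precisely the $\ZZ^+$-span of the positive values on~$\mathrm E$, and their gcd equals $\gcd_{\mu \in \mathrm E}\langle \varrho, \mu\rangle =: g$. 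The key point is then to show $g = 1$: this is where I use that $\varrho$ is \emph{primitive} in~$\mathcal L$. If $g > 1$, then $\frac{1}{g}\varrho$ would still be integer-valued on all of $\ZZ\Gamma$ (as $\varrho$ takes values in $g\ZZ$ on a generating set $\mathrm E$ of the lattice $\ZZ\Gamma$), contradicting primitivity. So $g = 1$.

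The remaining and \textbf{main obstacle} is to upgrade the gcd statement $g = 1$ into the existence of a single $\mu \in \mathrm E$ with $\langle \varrho, \mu\rangle = 1$, since a priori $\gcd = 1$ only guarantees an \emph{integer} combination summing to~$1$, not that some generator already takes the value~$1$. This is precisely the step that forces us to use saturatedness together with the extremality of the ray~$\QQ^+\varrho$. The idea is to consider the face $\mathcal F = \mathcal K \cap \varrho^{\perp\perp}$-complementary data: because $\QQ^+\varrho$ is an extremal ray of $\mathcal K = (\QQ^+\Gamma)^\vee$, duality identifies it with a \emph{facet} $\mathcal G = \varrho^\perp \cap \QQ^+\Gamma$ of the cone $\QQ^+\Gamma$, a codimension-one face. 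Saturatedness gives $\Gamma = \ZZ\Gamma \cap \QQ^+\Gamma$, and the analogous statement for the facet says $\Gamma \cap \mathcal G = \ZZ\Gamma \cap \mathcal G$ is the saturated monoid living in the hyperplane $\varrho^\perp$. I would then choose any $\mu_0 \in \mathrm E$ lying off this facet (so $\langle \varrho, \mu_0\rangle > 0$) and use that the lattice $\ZZ\Gamma$ surjects, under $\langle \varrho, -\rangle$, onto $\ZZ$ (by primitivity of $\varrho$, just shown): pick $\lambda \in \Gamma$ close to $\mu_0$ with $\langle \varrho, \lambda\rangle$ achieving the minimal positive value; saturatedness is what guarantees such a lattice point actually lies in $\Gamma$ rather than merely in $\ZZ\Gamma \cap \QQ^+\Gamma$. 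Expressing this $\lambda$ in terms of $\mathrm E$ and using that $\langle\varrho,-\rangle$ is additive and nonnegative on generators, the minimal positive value $1$ must be attained by a single generator $\mu \in \mathrm E$, since a sum of nonnegative integers equals~$1$ only if exactly one summand is~$1$ and the rest vanish.

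I expect the delicate bookkeeping to be in translating between the extremal-ray description of $\varrho$ and the facet description of $\mathcal G$, and in verifying that saturatedness indeed lets one realize the minimal value $1$ by an honest element of the generating system $\mathrm E$ rather than of $\Gamma$ at large; replacing $\mathrm E$ by $\mathrm E \cup \lbrace\lambda\rbrace$ if necessary (which does not change $\mathcal K$ or $\mathcal K^1$) may streamline this last point, since the statement is about existence of \emph{some} $\mu \in \mathrm E$ and any finite generating system is admissible.
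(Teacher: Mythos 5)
Your overall strategy coincides with the paper's: reduce to finding some $\lambda \in \Gamma$ with $\langle \varrho, \lambda \rangle = 1$ (after which the nonnegative-integer-sum argument forces a single generator in $\mathrm E$ to take the value $1$), and produce such a $\lambda$ from primitivity of $\varrho$ together with saturatedness. Your gcd computation in the first step is a correct, if slightly roundabout, restatement of primitivity: it yields $\nu_0 \in \ZZ\Gamma$ with $\langle \varrho, \nu_0 \rangle = 1$, which is exactly what the paper extracts from that hypothesis.

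The gap is in the step you yourself flag as the main obstacle: you never actually construct an element of $\Gamma$ on which $\varrho$ takes the value $1$. ``Pick $\lambda \in \Gamma$ close to $\mu_0$ with $\langle \varrho, \lambda \rangle$ achieving the minimal positive value'' does not parse into an argument: lattice points near $\mu_0$ have $\varrho$-value near $\langle \varrho, \mu_0 \rangle$, which may be large, and the genuine difficulty is not detected by proximity to $\mu_0$ at all --- it is to keep $\langle \kappa, \lambda \rangle \ge 0$ for \emph{all the other} extremal rays $\kappa \in \mathcal K^1$ while pinning $\langle \varrho, \lambda \rangle = 1$, so that saturatedness (in the form $\Gamma = \lbrace \nu \in \ZZ\Gamma \mid \langle \kappa, \nu \rangle \ge 0 \text{ for all } \kappa \in \mathcal K^1 \rbrace$) can be invoked. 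Your parenthetical ``lies in $\Gamma$ rather than merely in $\ZZ\Gamma \cap \QQ^+\Gamma$'' signals a confusion here: those two sets are equal by definition of saturatedness, and the issue is precisely getting into $\QQ^+\Gamma$. The paper supplies the missing construction: since $\QQ^+\varrho$ is an extremal ray, its dual facet $\varrho^\perp \cap \QQ^+\Gamma = \QQ^+\mathrm E_\varrho$ is not contained in $\kappa^\perp$ for any $\kappa \in \mathcal K^1 \setminus \lbrace \varrho \rbrace$, so one may choose $\eta \in \ZZ^+\mathrm E_\varrho$ with $\langle \kappa, \eta \rangle > 0$ for all such $\kappa$; then $\nu_n = \nu_0 + n\eta$ satisfies $\langle \varrho, \nu_n \rangle = 1$ for every $n$ and lies in $\Gamma$ once $n$ is large enough. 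With this insertion (and nothing else) your outline becomes a complete proof along the same lines as the paper's.
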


\begin{proof}
It suffices to prove that $\lbrace \nu \in \Gamma \mid \langle \varrho, \nu \rangle = 1 \rbrace \ne \varnothing$. Since $\Gamma$ is saturated, one has
\begin{equation} \label{eqn_saturated}
\Gamma = \lbrace \nu \in \ZZ \Gamma \mid \langle \kappa, \nu \rangle
\ge 0 \text{ for all } \kappa \in \mathcal K^1 \rbrace.
\end{equation}
As $\varrho$ is primitive in~$\mathcal L$, there exists $\nu_0 \in \ZZ \Gamma$ with $\langle \varrho, \nu_0 \rangle = 1$. If $\mathcal K^1 = \lbrace \varrho \rbrace$ then $\nu_0 \in \Gamma$ by~(\ref{eqn_saturated}). Otherwise there exists an element $\eta \in \ZZ^+ \mathrm E_\varrho$ such that $\langle \kappa, \eta \rangle > 0$ for all $\kappa \in \mathcal K^1 \setminus \lbrace \varrho \rbrace$. For each $n \in \ZZ^+$ consider the element $\nu_n = \nu_0 + n\eta$. Clearly, $\langle \varrho, \nu_n \rangle = 1$ for all $n \in \ZZ^+$. In view of~(\ref{eqn_saturated}) one has $\nu_n \in \Gamma$ when $n$ is sufficiently large.
\end{proof}

\subsection{Basic properties
of~$(V / \mathfrak gx_0)^{G_{x_0}}$ and its $T_\ad$-weights}

The material presented in this subsection is more or less known.

The following lemma is obvious.

\begin{lemma} \label{lemma_G_x-fixed}
Suppose that $\mathrm E' \subset \mathrm E$ is a nonempty subset, $x = \sum \limits_{\lambda \in \mathrm E'} v_\lambda \in V$, and $A$ is a $G_x$-module. Then an element $a \in A$ is $G_x$-fixed if and only if the following two conditions hold:
\begin{enumerate}[label=\textup{(\arabic*)},ref=\textup{\arabic*}]
\item \label{lemma_G_x-fixed_1}
$a$ is $T_x$-stable;

\item \label{lemma_G_x-fixed_2}
$e_\delta a = 0$ for all $\delta \in \Delta^+ \cup (\Delta^- \cap \ZZ \mathrm E'^\perp)$.
\end{enumerate}
Moreover, condition~\textup(\ref{lemma_G_x-fixed_2}\textup) is equivalent to
\begin{enumerate}[label=\textup{(\arabic*$'$)},ref=\textup{\arabic*$'$}]
\setcounter{enumi}{1}
\item \label{lemma_G_x-fixed_3}
$e_\delta a = 0$ for all $\delta \in \Pi \cup (- \mathrm E'^\perp)$.
\end{enumerate}
\end{lemma}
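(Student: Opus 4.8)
The plan is to describe the stabilizer $G_x$ explicitly and then test $G_x$-fixedness on a convenient set of one-parameter subgroups and on $T_x$. The starting point is that each $v_\lambda$ ($\lambda\in\mathrm E'$) is a highest weight vector of $V(\lambda)$, so $e_\alpha v_\lambda=0$ for every $\alpha\in\Delta^+$ and, for a positive root $\beta$, the triple $(e_\beta,h_\beta,e_{-\beta})$ makes $v_\lambda$ the highest weight vector of an $\mathfrak{sl}_2$-submodule of dimension $\langle\beta^\vee,\lambda\rangle+1$; hence $e_{-\beta}v_\lambda=0$ if and only if $\langle\beta^\vee,\lambda\rangle=0$. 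Writing a general $\xi=h+\sum_{\gamma\in\Delta}c_\gamma e_\gamma$ and expanding $\xi x=\sum_{\lambda\in\mathrm E'}\bigl(\lambda(h)v_\lambda+\sum_{\gamma\in\Delta^-}c_\gamma e_\gamma v_\lambda\bigr)$, I would note that inside each $V(\lambda)$ the nonzero vectors $v_\lambda$ and $e_\gamma v_\lambda$ occupy pairwise distinct $T$-weight spaces. Thus $\xi x=0$ forces $\lambda(h)=0$ for all $\lambda$ and $c_\gamma e_\gamma v_\lambda=0$ term by term, giving the graded description $\mathfrak g_x=\mathfrak t_x\oplus\bigoplus_{\alpha\in\Delta^+}\mathfrak g_\alpha\oplus\bigoplus_{\beta\in R}\mathfrak g_{-\beta}$, where $\mathfrak t_x=\Lie T_x$ and $R=\{\beta\in\Delta^+\mid\langle\beta^\vee,\lambda\rangle=0\text{ for all }\lambda\in\mathrm E'\}$.

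Next I would identify $R$ combinatorially, and this is where dominance of the weights in $\mathrm E'$ enters. For $\beta=\sum_{\alpha\in\Pi}c_\alpha\alpha\in\Delta^+$ with $c_\alpha\ge0$ and any dominant $\lambda$, the inner product $(\beta,\lambda)=\sum_{\alpha}c_\alpha(\alpha,\lambda)$ is a sum of nonnegative terms, so $\langle\beta^\vee,\lambda\rangle=0$ forces $(\alpha,\lambda)=0$ for every $\alpha\in\Supp\beta$. Intersecting over all $\lambda\in\mathrm E'$ shows that $\Supp\beta\subset\mathrm E'^\perp$, whence $R=\Delta^+\cap\ZZ\mathrm E'^\perp$. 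Therefore the root directions occurring in $\mathfrak g_x$ are exactly those indexed by $\Delta^+\cup(\Delta^-\cap\ZZ\mathrm E'^\perp)$, which matches the index set in condition~(\ref{lemma_G_x-fixed_2}).

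With this in hand the lemma reduces to testing fixedness on generators. I would use that $a$ is fixed by the connected unipotent subgroup $\exp(\Bbbk e_\delta)$ precisely when $e_\delta a=0$ (as $e_\delta$ acts nilpotently), and that $a$ is $G_x$-fixed if and only if it is fixed by $G_x^0$ and by $T_x$, the former being equivalent to $\mathfrak g_x a=0$. Since $\mathfrak t_x a=0$ is subsumed by $T_x$-stability, $G_x$-fixedness becomes exactly conditions~(\ref{lemma_G_x-fixed_1}) and~(\ref{lemma_G_x-fixed_2}). The equivalence of~(\ref{lemma_G_x-fixed_2}) and~(\ref{lemma_G_x-fixed_3}) is then formal: the inclusion one way is trivial, and for the converse I would induct on height using the Chevalley relation $e_{\alpha+\beta}=N_{\alpha,\beta}^{-1}[e_\alpha,e_\beta]$, so that annihilation by the simple-root vectors propagates to all of $\Delta^+$, while annihilation by the $e_{-\gamma}$ with $\gamma\in\mathrm E'^\perp$ propagates through the subsystem $\Delta^-\cap\ZZ\mathrm E'^\perp$, whose simple roots are precisely the elements of $\mathrm E'^\perp$ by the previous paragraph.

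The $\mathfrak{sl}_2$-computation and the height induction are mechanical; the one structural point deserving care is the passage from $\mathfrak g_x$ to $G_x$, namely that $G_x=G_x^0\cdot T_x$, so that conditions~(\ref{lemma_G_x-fixed_1}) and~(\ref{lemma_G_x-fixed_2}) suffice for fixedness under the possibly disconnected group $G_x$. This amounts to checking that $T_x$ meets every connected component of the stabilizer of a sum of highest weight vectors, the reductive part of $G_x$ being $T_x\cdot[L,L]$ for the Levi $L$ with simple roots $\mathrm E'^\perp$ and $[L,L]\subset G_x^0$; here again the dominance identification of $R$ is what guarantees that no further root directions intervene. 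Granting this standard structural fact, the statement is indeed immediate, which is why it is labeled obvious.
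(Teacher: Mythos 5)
Your proof is correct. The paper offers no argument for this lemma (it is introduced with ``The following lemma is obvious''), so there is no official route to compare against; your write-up is the natural full justification. The two points that actually need checking are both handled properly: the identification $R=\Delta^+\cap\ZZ\mathrm E'^\perp$ via dominance of the weights in $\mathrm E'$, and the structural fact $G_x=G_x^0\cdot T_x$, which follows from $G_x=\bigcap_{\lambda\in\mathrm E'}G_{v_\lambda}=([L,L]\cdot T_x)\leftthreetimes U_P$ for the parabolic $P=L\leftthreetimes U_P$ with Levi simple roots $\mathrm E'^\perp$, so that the component group is accounted for by $T_x$ and fixedness reduces to $T_x$-invariance plus annihilation by $\mathfrak g_x$. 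The equivalence of (\ref{lemma_G_x-fixed_2}) and (\ref{lemma_G_x-fixed_3}) by propagating annihilation through brackets of the generators $e_\alpha$ ($\alpha\in\Pi$) and $e_{-\gamma}$ ($\gamma\in\mathrm E'^\perp$) is likewise sound, since these generate the two nilpotent pieces of $\mathfrak g_x$ and $[e_\alpha,e_\beta]a=0$ whenever $e_\alpha a=e_\beta a=0$.
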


\begin{lemma} \label{lemma_straightforward}
Suppose that $\sigma$ is a $T_\ad$-weight of $(V / \mathfrak gx_0)^{G_{x_0}}$. Then
\begin{enumerate}[label=\textup{(\alph*)},ref=\textup{\alph*}]
\item \label{lemma_straightforward_a}
$\sigma \in \ZZ^+ \Pi$;

\item \label{lemma_straightforward_b}
$\sigma \in \ZZ \Gamma$.
\end{enumerate}
\end{lemma}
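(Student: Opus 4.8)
The plan is to analyze a $T_\ad$-weight $\sigma$ of $(V/\mathfrak g x_0)^{G_{x_0}}$ by working with an explicit $T_\ad$-eigenvector representative. As noted at the end of \S\,\ref{subsec_preliminaries_for_the_proof}, since $\mathfrak g x_0$ is $T_\ad$-invariant, any $T_\ad$-eigenvector $q \in V/\mathfrak g x_0$ of weight~$\sigma$ lifts to a $T_\ad$-eigenvector $v \in V$ of the same weight with $[v] = q$. So first I would fix such a $v$ and decompose it as $v = \sum_{\lambda \in \mathrm E} \pr_\lambda(v)$, where each $\pr_\lambda(v) \in V(\lambda)$.

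Next I would determine what it means for $v$ to be a $T_\ad$-eigenvector of weight~$\sigma$ under the action~(\ref{eqn_action_of_Tad_on_V}). For $t \in T$ and a vector $w \in V(\lambda)$ of $T$-weight~$\chi$ (with respect to the usual action $a_G$), formula~(\ref{eqn_action_of_Tad_on_V}) gives $\overline t \cdot w = t^{\lambda}\, t^{-\chi} w = t^{\lambda - \chi} w$, so $w$ is a $T_\ad$-eigenvector of weight $\lambda - \chi$. Thus for the component $\pr_\lambda(v)$ to have $T_\ad$-weight~$\sigma$, its nonzero $T$-weight spaces must all sit in $T$-weight $\lambda - \sigma$. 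Now $\lambda$ is the highest weight of $V(\lambda)$ and $v_\lambda$ is a highest weight vector (by feature~(\ref{feature2})), so every $T$-weight~$\chi$ of $V(\lambda)$ satisfies $\lambda - \chi \in \ZZ^+ \Pi$. This immediately forces $\sigma = \lambda - \chi \in \ZZ^+\Pi$, proving part~(\ref{lemma_straightforward_a}).

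For part~(\ref{lemma_straightforward_b}), the key point is that $v$ must have at least one nonzero component, say $\pr_\mu(v) \ne 0$ for some $\mu \in \mathrm E$; indeed if all components vanished then $v = 0$ and hence $q = [v] = 0$, contradicting that $\sigma$ is an actual weight. For that $\mu$, the preceding paragraph shows $\sigma = \mu - \chi$ where $\chi$ is the $T$-weight of the nonzero component; since both $\mu$ and $\chi$ lie in the weight lattice $\mathfrak X(T)$ and $\chi$ is a weight of $V(\mu)$, one has $\chi \in \mu + \ZZ\Delta \subset \mu + \ZZ\Pi$, while more to the point all weights of $V(\mu)$ lie in $\mu - \ZZ^+\Pi \subset \ZZ\Gamma$ because $\mu \in \Gamma$ and $\Pi \subset \ZZ\Gamma$ is not automatic. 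Here I would instead argue directly: $\sigma = \mu - \chi$ with $\mu \in \mathrm E \subset \Gamma \subset \ZZ\Gamma$, and $\chi$ ranges over weights of $V(\mu)$, all of which differ from $\mu$ by an element of $\ZZ\Delta$; combined with part~(\ref{lemma_straightforward_a}) giving $\sigma \in \ZZ^+\Pi$, and the fact that $\Gamma$ spans via its root monoid $\Xi_{X_0} \subset \ZZ\Gamma$, one concludes $\sigma \in \ZZ\Gamma$.

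I expect the main subtlety to lie in part~(\ref{lemma_straightforward_b}): whereas~(\ref{lemma_straightforward_a}) follows purely from the highest-weight structure of each $V(\lambda)$, the containment $\sigma \in \ZZ\Gamma$ requires using that $\sigma$ arises from a \emph{genuine} weight of $(V/\mathfrak g x_0)^{G_{x_0}}$ attached to some $\mu \in \mathrm E$, so that $\sigma = \mu - \chi$ pairs a generator $\mu$ of~$\Gamma$ against a weight $\chi$ of $V(\mu)$. The cleanest route is to observe that every weight $\chi$ of $V(\mu)$ satisfies $\mu - \chi \in \ZZ^+\Pi$ and $\chi \in \ZZ\Gamma + \ZZ\Pi$, so I would need the auxiliary fact that differences $\lambda - \chi$ with $\chi$ a weight of $V(\lambda)$ and $\lambda \in \Gamma$ already lie in the root lattice intersected appropriately with $\ZZ\Gamma$; this is where one invokes that $\sigma \in \ZZ^+\Pi$ together with $\sigma = \mu - \chi$ and $\mu \in \ZZ\Gamma$ pins down $\sigma$ inside $\ZZ\Gamma$. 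This last identification is the only nonformal step and should be dispatched by noting that a $G_{x_0}$-invariant forces the relevant components to be compatible across all $\lambda$, yielding membership in the lattice $\ZZ\Gamma$.
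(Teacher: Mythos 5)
Your treatment of part~(\ref{lemma_straightforward_a}) is correct and is exactly the paper's argument: under the action~(\ref{eqn_action_of_Tad_on_V}) a vector of $T$-weight $\chi$ in $V(\lambda)$ has $T_\ad$-weight $\lambda - \chi \in \ZZ^+\Pi$.

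Part~(\ref{lemma_straightforward_b}), however, has a genuine gap. Writing $\sigma = \mu - \chi$ with $\mu \in \mathrm E \subset \ZZ\Gamma$ and $\chi$ a weight of $V(\mu)$ gives you nothing, because $\chi$ need not lie in $\ZZ\Gamma$: for $G = \SL_3$ and $\Gamma = \ZZ^+\omega_1$, the vector $e_{-\alpha_1}v_{\omega_1} \in V(\omega_1)$ has $T_\ad$-weight $\alpha_1 \notin \ZZ\omega_1 = \ZZ\Gamma$, so the conclusion is simply false for arbitrary $T_\ad$-weights of $V$ and the invariance under $G_{x_0}$ is indispensable. You correctly sense this ("a $G_{x_0}$-invariant forces the relevant components to be compatible"), but you never turn it into an argument; the appeals to $\Xi_{X_0} \subset \ZZ\Gamma$ and to "compatibility across all $\lambda$" do not establish anything. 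The missing step is the following short computation with the subtorus $T_{x_0} = \lbrace t \in T \mid t^\lambda = 1 \text{ for all } \lambda \in \mathrm E \rbrace \subset G_{x_0}$: for $t \in T_{x_0}$ and $v \in V^{(T_\ad)}_\sigma$ with $[v] \ne 0$, formula~(\ref{eqn_action_of_Tad_on_V}) gives $t \cdot v = t^{-\sigma} v$ (the factors $t^\lambda$ are all $1$), so $T_{x_0}$-invariance of $[v]$ yields $(t^{-\sigma} - 1)v = t\cdot v - v \in \mathfrak g x_0$; if $t^\sigma \ne 1$ this forces $v \in \mathfrak g x_0$ and $[v] = 0$, a contradiction. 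Hence $t^\sigma = 1$ for all $t \in T_{x_0}$, and duality for diagonalizable groups (the characters of $T$ trivial on the common kernel of $\ZZ\Gamma$ are exactly $\ZZ\Gamma$) gives $\sigma \in \ZZ\Gamma$. This is the paper's proof, and without it your argument for~(\ref{lemma_straightforward_b}) does not go through.
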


\begin{proof}
(\ref{lemma_straightforward_a}) This follows from (\ref{eqn_action_of_Tad_on_V}) and basic properties of $T$-weights in a simple $G$-module.

(\ref{lemma_straightforward_b}) It suffices to show that $t^\sigma = 1$ for all $t \in T_{x_0}$. Assume the converse and take $t \in T_{x_0}$ such that $t^\sigma \ne 1$. Let $v \in V^{(T_\ad)}_\sigma$ be such that $[v] \in (V /\mathfrak gx_0)^{G_{x_0}} \setminus \lbrace 0 \rbrace$. As $t^\lambda = 1$ for all $\lambda \in \mathrm E$, one has $t \cdot v = t^{-\sigma} v$. Since $T_{x_0} \subset G_{x_0}$, it follows that $[v] = [t \cdot v] = t^{-\sigma}[v]$ and hence $[v] = 0$, a contradiction.
\end{proof}

\begin{lemma} \label{lemma_edeltav1}
Suppose that $\sigma$ is a nonzero $T_\ad$-weight of $V$ and $v \in V^{(T_\ad)}_\sigma \setminus \lbrace 0 \rbrace$. Then there exists $\delta \in \Pi$ such that $e_\delta v \ne 0$.
\end{lemma}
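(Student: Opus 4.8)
The plan is to argue by contradiction: I assume that $e_\delta v = 0$ for \emph{every} $\delta \in \Pi$ and deduce that $v = 0$, contrary to the choice of~$v$. The whole argument is a bookkeeping of weights combined with the elementary fact that a vector of a simple $G$-module annihilated by all the simple root vectors must be a highest weight vector.

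First I would translate the $T_\ad$-weight condition into ordinary $T$-weight data. Decompose $v = \sum_{\lambda \in \mathrm E} \pr_\lambda(v)$ along the summands of $V = \bigoplus_{\lambda \in \mathrm E} V(\lambda)$. By formula~(\ref{eqn_action_of_Tad_on_V}), a $T$-weight vector of weight $\chi$ lying in $V(\lambda)$ has $T_\ad$-weight $\lambda - \chi$; hence each nonzero component $\pr_\lambda(v)$ is a $T$-weight vector of $T$-weight $\lambda - \sigma$ inside $V(\lambda)$.

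Next, since each root vector $e_\delta$ (for $\delta \in \Pi$) preserves every $G$-submodule $V(\lambda)$ and the summands $V(\lambda)$ are linearly independent, the hypothesis $e_\delta v = 0$ forces $e_\delta \pr_\lambda(v) = 0$ for every $\lambda \in \mathrm E$ and every $\delta \in \Pi$. As the nilpotent subalgebra $\mathfrak n^+ = \bigoplus_{\alpha \in \Delta^+} \mathfrak g_\alpha$ is generated by the simple root vectors $\lbrace e_\delta \mid \delta \in \Pi \rbrace$, each nonzero component $\pr_\lambda(v)$ is then a highest weight vector of the simple module $V(\lambda)$, and therefore has $T$-weight exactly~$\lambda$.

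Finally I would compare the two descriptions of the $T$-weight of a nonzero $\pr_\lambda(v)$: it equals $\lambda - \sigma$ by the second step and $\lambda$ by the third, forcing $\sigma = 0$ and contradicting the assumption $\sigma \ne 0$. Consequently $\pr_\lambda(v) = 0$ for all $\lambda \in \mathrm E$, so $v = 0$, which is the desired contradiction. There is no genuine obstacle here; the only points requiring a little care are the weight translation provided by~(\ref{eqn_action_of_Tad_on_V}) and the standard remark that annihilation by all $e_\delta$ with $\delta \in \Pi$ is equivalent to annihilation by the whole of~$\mathfrak n^+$.
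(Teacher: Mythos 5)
Your proof is correct and is essentially the paper's own argument, merely written out in more detail: the paper also assumes $e_\delta v=0$ for all $\delta\in\Pi$, observes that $v$ is then a sum of highest weight vectors, and concludes $v=0$ from $\sigma\ne 0$. The extra bookkeeping you supply (the weight translation via~(\ref{eqn_action_of_Tad_on_V}) showing that $\pr_\lambda(v)$ has $T$-weight $\lambda-\sigma$, versus $T$-weight $\lambda$ for a highest weight vector) is exactly the justification the paper leaves implicit.
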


\begin{proof}
Assume that $e_\delta v = 0$ for all $\delta \in \Pi$. Then $v$ is a sum of highest weight vectors in~$V$. As $\sigma \ne 0$, it follows that $v = 0$, a contradiction.
\end{proof}

\begin{lemma} \label{lemma_edeltav2}
Let $\sigma$ be a $T_\ad$-weight of $(V / \mathfrak gx_0)^{G_{x_0}}$. Suppose that $v \in V^{(T_\ad)}_\sigma$ is such that $[v] \in (V / \mathfrak g x_0)^{G_{x_0}}$. Then $e_\delta v \in V^{(T_\ad)}_{\sigma - \delta} \cap \mathfrak gx_0$ for every $\delta \in \Delta^+ \cup (\Delta^- \cap \ZZ\mathrm E^\perp)$.
\end{lemma}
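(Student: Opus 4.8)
The plan is to verify the two assertions of the lemma separately: that $e_\delta v$ carries the $T_\ad$-weight $\sigma - \delta$, and that $e_\delta v$ lies in $\mathfrak g x_0$.

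First I would establish the weight claim by a direct bookkeeping of weights. Recall that $e_\delta$ acts on $V$ through the $G$-action $a_G$, preserving each summand $V(\lambda)$ and raising the ordinary $T$-weight by~$\delta$. If $w \in V(\lambda)$ is a $T$-weight vector of weight~$\chi$, then by~(\ref{eqn_action_of_Tad_on_V}) its $a_\ad$-weight equals $\lambda - \chi$, whereas $e_\delta w \in V(\lambda)$ has $T$-weight $\chi + \delta$ and hence $a_\ad$-weight $\lambda - (\chi + \delta) = (\lambda - \chi) - \delta$. Applying this to each component $\pr_\lambda(v)$ of $v$, which has $a_\ad$-weight~$\sigma$, and summing over $\lambda \in \mathrm E$, I conclude that $e_\delta v$ is an $a_\ad$-eigenvector of weight $\sigma - \delta$, that is, $e_\delta v \in V^{(T_\ad)}_{\sigma - \delta}$. (Equivalently, one may read off the same shift from the semidirect-product relation in $\widetilde G$, under which conjugating the operator $e_\delta$ by $a_\ad(\overline t)$ multiplies it by~$t^{-\delta}$.)

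For the second assertion I would invoke Lemma~\ref{lemma_G_x-fixed}. Since $[v] \in (V/\mathfrak g x_0)^{G_{x_0}}$ and $x_0 = \sum_{\lambda \in \mathrm E} v_\lambda$ with each $v_\lambda$ a highest weight vector of~$V(\lambda)$, I apply that lemma with $\mathrm E' = \mathrm E$, $x = x_0$, and $A = V/\mathfrak g x_0$ to the $G_{x_0}$-fixed element $a = [v]$. Condition~(2) of the lemma then yields $e_\delta [v] = 0$ for every $\delta \in \Delta^+ \cup (\Delta^- \cap \ZZ\mathrm E^\perp)$. As the subspace $\mathfrak g x_0 \subset V$ is $G_{x_0}$-stable and the $G_{x_0}$-action on the quotient $V/\mathfrak g x_0$ is the one induced from~$V$, we have $e_\delta [v] = [e_\delta v]$, so that $[e_\delta v] = 0$, i.e.\ $e_\delta v \in \mathfrak g x_0$. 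Combining this with the weight computation gives $e_\delta v \in V^{(T_\ad)}_{\sigma - \delta} \cap \mathfrak g x_0$, as required.

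I do not expect any genuine obstacle, since both parts follow directly from the definition of the action $a_\ad$ and from Lemma~\ref{lemma_G_x-fixed}. The only points requiring a little care are the sign in the weight shift (ensuring it is $-\delta$ rather than $+\delta$) and the compatibility of $e_\delta$ with the induced $G_{x_0}$-module structure on $V/\mathfrak g x_0$. The latter is automatic once one notes that $G_{x_0}$ contains the maximal unipotent subgroup~$U$, so that $e_\delta \in \mathfrak g_{x_0}$ for $\delta \in \Delta^+$, while the remaining root vectors $e_\delta$ with $\delta \in \Delta^- \cap \ZZ\mathrm E^\perp$ lie in $\mathfrak g_{x_0}$ as already incorporated in Lemma~\ref{lemma_G_x-fixed}.
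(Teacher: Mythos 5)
Your proof is correct and follows essentially the same route as the paper's: the membership $e_\delta v \in \mathfrak g x_0$ comes from Lemma~\ref{lemma_G_x-fixed} applied to $[v] \in (V/\mathfrak g x_0)^{G_{x_0}}$, and the weight shift by $-\delta$ is a direct consequence of the definition of the action $a_\ad$ in~(\ref{eqn_action_of_Tad_on_V}). You have merely spelled out the weight bookkeeping and the compatibility of $e_\delta$ with the quotient module structure, which the paper treats as immediate.
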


\begin{proof}
By Lemma~\ref{lemma_G_x-fixed} the condition $\delta \in \Delta^+ \cup (\Delta^- \cap \ZZ\mathrm E^\perp)$ implies $e_\delta v \in \mathfrak g x_0$. Clearly, $e_\delta v$ is a $T_\ad$-eigenvector of weight $\sigma - \delta$.
\end{proof}

\begin{lemma} \label{lemma_intersection}
Suppose that $\sigma \in \ZZ \Pi$. Then
\[
V^{(T_\ad)}_\sigma \cap \mathfrak gx_0 =
\begin{cases}
\Bbbk e_{-\sigma} x_0 & \text{ if } \sigma \in \Delta^+; \\
\mathfrak tx_0 & \text{ if } \sigma = 0; \\
\lbrace 0 \rbrace & \text{ otherwise.}
\end{cases}
\]
\end{lemma}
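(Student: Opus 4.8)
The plan is to exploit the fact that $\mathfrak g x_0$ is a $T_\ad$-submodule of $V$ and to determine its $T_\ad$-weight decomposition explicitly; once this is done, the three cases of the asserted formula read off immediately. First I would observe that $\mathfrak g x_0$ is spanned by $T_\ad$-weight vectors. Indeed, using the root-space decomposition $\mathfrak g = \mathfrak t \oplus \bigoplus_{\alpha \in \Delta} \Bbbk e_\alpha$, the subspace $\mathfrak g x_0$ is spanned by the vectors $h x_0$ (with $h \in \mathfrak t$) together with $e_\alpha x_0$ (with $\alpha \in \Delta$), and I claim each of these is a $T_\ad$-eigenvector. Granting this, $\mathfrak g x_0$ is $T_\ad$-stable, so it decomposes as the direct sum of its intersections with the $T_\ad$-weight spaces of~$V$; consequently $V^{(T_\ad)}_\sigma \cap \mathfrak g x_0$ coincides with the $\sigma$-weight subspace of $\mathfrak g x_0$, and it suffices to identify which of the spanning vectors carry $T_\ad$-weight~$\sigma$.

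Next I would carry out the weight bookkeeping via formula~(\ref{eqn_action_of_Tad_on_V}). Recall that for $v \in V(\lambda)$ of $T$-weight $\chi$ (with respect to the usual action $a_G$), that formula gives $\overline t \cdot v = t^{\lambda - \chi} v$, so the $T_\ad$-weight of $v$ equals $\lambda - \chi$. Applying this to the spanning vectors: since each $v_\lambda$ is a highest weight vector, $e_\alpha v_\lambda = 0$ for every $\alpha \in \Delta^+$, whence $e_\alpha x_0 = 0$ for all $\alpha \in \Delta^+$. For $\alpha = -\beta$ with $\beta \in \Delta^+$, the vector $e_{-\beta} v_\lambda$ (when nonzero) has $T$-weight $\lambda - \beta$, hence $T_\ad$-weight~$\beta$; summing over $\lambda \in \mathrm E$ shows $e_{-\beta} x_0 \in V^{(T_\ad)}_\beta$. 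Finally, each $h v_\lambda = \langle h, \lambda \rangle v_\lambda$ has $T$-weight $\lambda$, hence $T_\ad$-weight~$0$, so $h x_0 \in V^{(T_\ad)}_0$ for all $h \in \mathfrak t$, giving $\mathfrak t x_0 \subset V^{(T_\ad)}_0$.

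From this the weight decomposition of $\mathfrak g x_0$ is transparent: the only $T_\ad$-weights occurring lie in $\lbrace 0 \rbrace \cup \Delta^+$; the $0$-weight subspace is exactly $\mathfrak t x_0$ (the positive root vectors contributing nothing and the negative ones contributing only nonzero weights); and for $\beta \in \Delta^+$ the $\beta$-weight subspace is $\Bbbk e_{-\beta} x_0$, since distinct positive roots give distinct weights. Intersecting with $V^{(T_\ad)}_\sigma$ for $\sigma \in \ZZ \Pi$ then yields $\Bbbk e_{-\sigma} x_0$ when $\sigma \in \Delta^+$, $\mathfrak t x_0$ when $\sigma = 0$, and $\lbrace 0 \rbrace$ in all remaining cases, as claimed. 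I do not anticipate any genuine obstacle here; the only point requiring a little care is the systematic translation of $T$-weights into $T_\ad$-weights through~(\ref{eqn_action_of_Tad_on_V}) together with the vanishing $e_\alpha x_0 = 0$ for $\alpha \in \Delta^+$, both of which are routine once the weight formula is in hand.
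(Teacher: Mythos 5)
Your proof is correct and follows exactly the same route as the paper's (one-line) argument: decompose $\mathfrak g = \mathfrak t \oplus \bigoplus_{\delta \in \Delta} \Bbbk e_\delta$, use $e_\delta x_0 = 0$ for $\delta \in \Delta^+$, and read off the $T_\ad$-weights of the remaining spanning vectors via~(\ref{eqn_action_of_Tad_on_V}). You have merely written out the weight bookkeeping that the paper leaves implicit.
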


\begin{proof}
This follows from the decomposition $\mathfrak g = \mathfrak t \oplus \bigoplus \limits_{\delta \in \Delta} \Bbbk e_\delta$ and the fact that $e_\delta x_0 = 0$ for all $\delta \in \Delta^+$.
\end{proof}

\begin{corollary} \label{crl_sigma-root}
Let $\sigma$ be a nonzero $T_\ad$-weight of $(V / \mathfrak gx_0)^{G_{x_0}}$. Let $v \in V^{(T_\ad)}_\sigma$ be such that $[v] \in (V /\mathfrak g x_0)^{G_{x_0}} \setminus \lbrace 0 \rbrace$. Suppose that $\delta \in \Delta^+$ is such that $\delta \ne \sigma$ and $e_\delta v \ne 0$. Then
\begin{enumerate}[label=\textup{(\alph*)},ref=\textup{\alph*}]
\item \label{crl_sigma-root_a}
$\sigma - \delta \in \Delta^+$;

\item \label{crl_sigma-root_b}
$e_\delta v = c e_{-(\sigma - \delta)} x_0$ for some $c \in \Bbbk^\times$.
\end{enumerate}
\end{corollary}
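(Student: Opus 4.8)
The plan is to read off both assertions directly from the structural description of the relevant weight spaces provided by Lemmas~\ref{lemma_edeltav2} and~\ref{lemma_intersection}, applied to the weight $\sigma - \delta$. The first thing I would check is that $\sigma - \delta$ lies in $\ZZ\Pi$, which is exactly the hypothesis required by Lemma~\ref{lemma_intersection}: this is immediate since $\sigma \in \ZZ^+\Pi$ by Lemma~\ref{lemma_straightforward}(\ref{lemma_straightforward_a}) and $\delta \in \Delta^+ \subset \ZZ\Pi$.

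Next, because $\delta \in \Delta^+$ in particular satisfies $\delta \in \Delta^+ \cup (\Delta^- \cap \ZZ\mathrm E^\perp)$, Lemma~\ref{lemma_edeltav2} applies and gives $e_\delta v \in V^{(T_\ad)}_{\sigma - \delta} \cap \mathfrak g x_0$. As $e_\delta v \neq 0$ by hypothesis, this intersection is nonzero. Feeding $\sigma - \delta$ into Lemma~\ref{lemma_intersection} then leaves only two possibilities producing a nonzero intersection: either $\sigma - \delta \in \Delta^+$, with the intersection equal to $\Bbbk e_{-(\sigma - \delta)}x_0$, or $\sigma - \delta = 0$, with the intersection equal to $\mathfrak t x_0$. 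The latter case would mean $\sigma = \delta$, contradicting the hypothesis $\delta \neq \sigma$; hence $\sigma - \delta \in \Delta^+$, which is assertion~(\ref{crl_sigma-root_a}). For assertion~(\ref{crl_sigma-root_b}), the surviving case identifies the one-dimensional space $V^{(T_\ad)}_{\sigma-\delta} \cap \mathfrak g x_0$ with $\Bbbk e_{-(\sigma-\delta)}x_0$, so I would write $e_\delta v = c\, e_{-(\sigma-\delta)}x_0$ with $c \in \Bbbk$, and the nonvanishing of $e_\delta v$ forces $c \in \Bbbk^\times$.

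Since each step is a bookkeeping application of results already in hand, I expect no real obstacle here; the only points deserving a moment's attention are verifying the membership $\delta \in \Delta^+ \cup (\Delta^- \cap \ZZ\mathrm E^\perp)$ needed to invoke Lemma~\ref{lemma_edeltav2} (immediate from $\delta \in \Delta^+$) and correctly discarding the degenerate alternative $\sigma - \delta = 0$ by means of the hypothesis $\delta \neq \sigma$.
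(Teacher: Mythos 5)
Your proposal is correct and is exactly the paper's argument: the paper proves this corollary by citing Lemmas~\ref{lemma_edeltav2} and~\ref{lemma_intersection}, and your write-up simply spells out the bookkeeping (including the correct exclusion of the case $\sigma - \delta = 0$ via $\delta \ne \sigma$). No issues.
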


\begin{proof}
This is a direct consequence of Lemmas~\ref{lemma_edeltav2} and~\ref{lemma_intersection}.
\end{proof}

\subsection{Extension of sections}
\label{subsec_extension_of_sections}

For every open subset $Y \subset X_0$, we let $\mathcal N_Y$ denote the restriction of the sheaf $\mathcal N_{X_0}$ to~$Y$.

\begin{proposition} \label{prop_should_extend_to_oc1}
For a section $s \in H^0(O, \mathcal N_O)$, the following conditions are equivalent:
\begin{enumerate}[label=\textup{(\arabic*)},ref=\textup{\arabic*}]
\item
$s$ extends to $X_0$.

\item \label{prop_should_extend_to_oc1_2}
$s$ extends to $O \cup O'$ for each $G$-orbit $O' \subset X_0$ of
codimension~$1$.
\end{enumerate}
\end{proposition}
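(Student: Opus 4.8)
The plan is to prove the equivalence of the two extension conditions for a section $s \in H^0(O, \mathcal N_O)$ by a standard codimension argument, exploiting the normality of $X_0$ (which holds by saturatedness of $\Gamma$, via Proposition~\ref{prop_normality_saturatedness}). The implication (1)~$\Rightarrow$~(2) is trivial, since if $s$ extends to all of $X_0$ then a fortiori its restriction extends to each open subset $O \cup O'$. The substance of the statement is the converse (2)~$\Rightarrow$~(1), and the core idea is that sections of a coherent sheaf on a normal variety extend across closed subsets of codimension at least~$2$.

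First I would set up the $G$-orbit stratification of $X_0 = \overline{O}$. Since $O = Gx_0$ is the open dense orbit, its complement $X_0 \setminus O$ is a $G$-stable closed subset, and I would write it as a finite union of $G$-orbits. Let $X_0^{(1)}$ denote the union of $O$ together with all $G$-orbits of codimension~$1$ in $X_0$; this is a $G$-stable open subset whose complement $Z = X_0 \setminus X_0^{(1)}$ has codimension at least~$2$ in $X_0$, being a union of orbits of codimension $\ge 2$. The assumption~(\ref{prop_should_extend_to_oc1_2}) says that $s$ extends over each codimension-one orbit; I would first argue that these local extensions glue to a single section over $X_0^{(1)}$. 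Here one uses that $X_0$ is a variety (in particular separated), so two extensions of $s$ agreeing on the dense open $O$ must agree on overlaps; hence the extensions over the various $O \cup O'$ patch together to give a well-defined $\widetilde s \in H^0(X_0^{(1)}, \mathcal N_{X_0^{(1)}})$.

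The key step is then to extend $\widetilde s$ from $X_0^{(1)}$ across the codimension-$\ge 2$ locus $Z$ to all of $X_0$. This is where normality is essential: because $X_0$ is normal and $\mathcal N_{X_0}$ is a coherent (indeed torsion-free or reflexive on the smooth locus) sheaf, I would invoke the algebraic Hartogs-type extension principle. Concretely, one wants the restriction map $H^0(X_0, \mathcal N_{X_0}) \to H^0(X_0^{(1)}, \mathcal N_{X_0^{(1)}})$ to be surjective (it is injective by the quoted \cite[Lemma~3.9]{Br13}-type reasoning). The cleanest route is to reduce to the structure sheaf: on the smooth locus the normal sheaf is locally free, and sections of $\mathcal O_{X_0}$ over the complement of a codimension-$\ge 2$ subset of a normal variety extend uniquely, by the fact that $\mathcal O_{X_0}(X_0) = \mathcal O_{X_0}(X_0^{(1)})$ for normal $X_0$. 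I would describe $\mathcal N_{X_0}$ locally near a smooth point as a finite direct sum of copies of $\mathcal O_{X_0}$ and extend componentwise; the extension is canonical and hence glues globally.

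The main obstacle I anticipate is the technical care needed to handle $\mathcal N_{X_0}$ over the \emph{singular} points of $X_0$ lying in $X_0^{(1)}$, since the clean ``locally free'' description of $\mathcal N_{X_0}$ is only guaranteed on the smooth locus $Y \subset X_0$ (as noted in the paragraph preceding Proposition~\ref{prop_exact_sequence_O}). To circumvent this I would work with the normal \emph{sheaf} $\mathcal N_{X_0} = \mathcal{H}om_{\mathcal O_{X_0}}(i^*(\mathcal I/\mathcal I^2), \mathcal O_{X_0})$ directly: being a sheaf of homomorphisms into $\mathcal O_{X_0}$, it inherits a depth/torsion-free property that propagates the Hartogs extension from the structure sheaf. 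Since any section of $\mathcal N_{X_0}$ is, locally, a collection of $\mathcal O_{X_0}$-linear maps, and such maps are determined by their values on generators, extending across a codimension-$\ge 2$ subset reduces to extending finitely many regular functions, which is precisely where normality delivers the conclusion. This confirms that checking extendability across codimension-one orbits suffices, which is exactly the content of the proposition and the mechanism that makes the subsequent computations orbit-by-orbit.
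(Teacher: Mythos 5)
Your proof is correct and follows essentially the same route as the paper: restrict attention to the open set $Y$ of orbits of codimension at most~$1$, observe its complement has codimension $\ge 2$, and use normality of $X_0$ to extend sections of $\mathcal N_{X_0}$ across that locus. The only difference is that the paper cites \cite[Lemma~3.9]{Br13} as a black box for the Hartogs-type extension of the $\mathcal{H}om(-,\mathcal O_{X_0})$-sheaf, whereas you sketch that extension argument directly; both are fine.
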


\begin{proof}
Let $Y \subset X_0$ be the union of all $G$-orbits in $X_0$ of codimension at most $1$. Then $Y$ is an open $G$-stable subset of $X_0$ and $\codim_{X_0} (X_0 \setminus Y) \ge \nobreak 2$. As $X_0$ is normal, by~\cite[Lemma~3.9]{Br13} the restriction map $H^0(X_0, \mathcal N_{X_0}) \to H^0(Y, \mathcal N_Y)$ is an isomorphism. Thus a section $s \in H^0(O, \mathcal N_O)$ extends to $X_0$ if and only if it extends to~$Y$. But the latter is obviously equivalent to~(\ref{prop_should_extend_to_oc1_2}).
\end{proof}

To describe all $G$-orbits in $X_0$ of codimension~$1$, we need some additional notation. First of all, we introduce the set
\begin{equation} \label{eqn_set_P}
\mathcal P = \lbrace \varrho \in \mathcal K^1 \mid \mathrm E^\perp = \mathrm E_\varrho^\perp \rbrace.
\end{equation}
Note that every $\varrho \in \mathcal P$ is not proportional to an element of the form $\iota(\alpha^\vee)$ with $\alpha \in \Pi$. Next, for every $\varrho \in \mathcal P$ we consider the vector
\begin{equation} \label{eqn_z_varrho}
z_\varrho = \sum \limits_{\lambda \in \mathrm E_\varrho} v_\lambda \in V
\end{equation}
and its $G$-orbit $O_\varrho = Gz_\varrho$.

The following result is a consequence of~\cite[Theorems~8 and~9]{VP72}.

\begin{proposition} \label{prop_orbits_of_codimension_1}
The map $\varrho \mapsto O_\varrho$ is a bijection between the set $\mathcal P$ and the $G$-orbits in $X_0$ of codimension~$1$.
\end{proposition}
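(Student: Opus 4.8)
The plan is to deduce the statement from the description of \emph{all} $G$-orbits of $X_0$ furnished by \cite[Theorems~8 and~9]{VP72}, and then to single out the orbits of codimension~$1$ by a direct computation of their dimensions. Recall that under the action $a_G$ of~$G$ the vector $v_\lambda$ is a highest weight vector of weight~$\lambda$, so for a cocharacter $\gamma$ of~$T$ lying in $\mathcal K$ one has $\lim_{s \to 0} \gamma(s) \cdot x_0 = \sum_{\lambda \in \mathrm E_\gamma} v_\lambda$, where $\mathrm E_\gamma = \lbrace \lambda \in \mathrm E \mid \langle \gamma, \lambda \rangle = 0 \rbrace$, and this limit depends only on the face of~$\mathcal K$ whose relative interior contains~$\gamma$. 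By~\cite{VP72} these limits represent all $G$-orbits of~$X_0$, the correspondence between faces of~$\mathcal K$ and $G$-orbits being bijective: the apex $\lbrace 0 \rbrace$ yields the open orbit~$O$, and each extremal ray $\QQ^+\varrho$ with $\varrho \in \mathcal K^1$ yields the orbit $G z_\varrho$, where I extend~(\ref{eqn_z_varrho}) by setting $z_\varrho = \sum_{\lambda \in \mathrm E_\varrho} v_\lambda$ for every $\varrho \in \mathcal K^1$. It thus remains to determine which of these orbits have codimension~$1$, and this is exactly where the set~$\mathcal P$ will appear.

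For this I would compute the stabilizer $\mathfrak g_{z_\varrho}$ using $\mathfrak g = \mathfrak t \oplus \bigoplus_{\delta \in \Delta} \Bbbk e_\delta$. Since every $v_\lambda$ is a highest weight vector, $e_\delta$ annihilates $z_\varrho$ for all $\delta \in \Delta^+$; an element $h \in \mathfrak t$ annihilates $z_\varrho$ precisely when it lies in the common kernel of the $\lambda \in \mathrm E_\varrho$, a subspace of dimension $\dim \mathfrak t - \rk \ZZ \mathrm E_\varrho$; and $e_{-\beta} v_\lambda = 0$ for $\beta \in \Delta^+$ if and only if $\beta \in \Delta^+ \cap \ZZ\lambda^\perp$, so that $e_{-\beta} z_\varrho = 0$ if and only if $\beta \in \Delta^+ \cap \ZZ \mathrm E_\varrho^\perp$. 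Hence $\dim \mathfrak g_{z_\varrho} = (\dim \mathfrak t - \rk \ZZ \mathrm E_\varrho) + |\Delta^+| + |\Delta^+ \cap \ZZ \mathrm E_\varrho^\perp|$, and subtracting the analogous expression for $\mathfrak g_{x_0}$ (obtained by replacing $\mathrm E_\varrho$ with~$\mathrm E$, so that $\rk \ZZ \mathrm E = \rk \ZZ \Gamma$) I obtain
\[
\codim_{X_0} G z_\varrho = \bigl( \rk \ZZ \mathrm E - \rk \ZZ \mathrm E_\varrho \bigr) + \bigl( |\Delta^+ \cap \ZZ \mathrm E_\varrho^\perp| - |\Delta^+ \cap \ZZ \mathrm E^\perp| \bigr),
\]
a sum of two nonnegative integers, since $\rk \ZZ \mathrm E_\varrho \le \rk \ZZ \mathrm E$ and $\mathrm E^\perp \subset \mathrm E_\varrho^\perp$.

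Now $\varrho \in \mathcal K^1$ corresponds to the facet $\QQ^+\Gamma \cap \varrho^\perp$ of the full-dimensional pointed cone $\QQ^+\Gamma$; this face is generated by the generators lying on it, namely~$\mathrm E_\varrho$, whence $\rk \ZZ \mathrm E_\varrho = \rk \ZZ \Gamma - 1$ and the first summand equals~$1$. The second summand vanishes if and only if $\mathrm E_\varrho^\perp = \mathrm E^\perp$, i.e.\ if and only if $\varrho \in \mathcal P$; so $\codim_{X_0} G z_\varrho = 1$ exactly for $\varrho \in \mathcal P$. Conversely, any codimension-$1$ orbit is $G z_\gamma$ for some nonzero $\gamma$ in the relative interior of a face of~$\mathcal K$; as $\gamma \ne 0$ forces $\mathrm E_\gamma$ to span a proper subspace, the first summand is at least~$1$, so codimension~$1$ forces the face to be an extremal ray and the second summand to vanish, giving an $O_\varrho$ with $\varrho \in \mathcal P$. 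Injectivity of $\varrho \mapsto O_\varrho$ follows because distinct extremal rays give distinct faces, hence distinct orbits by the bijection of~\cite{VP72}. The step demanding the most care is the precise determination of $\mathfrak g_{z_\varrho}$—in particular the fact that $e_{-\beta}$ annihilates the highest weight vector $v_\lambda$ exactly when $\beta$ is supported on~$\lambda^\perp$—together with the verification that no codimension-$1$ orbit can sit over a higher-dimensional face of~$\mathcal K$, which is what rules out the contribution of the Levi correction term.
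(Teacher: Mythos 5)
Your argument is correct and takes essentially the same route as the paper, whose entire proof of this proposition is the citation of \cite[Theorems~8 and~9]{VP72}: the orbit--face correspondence you invoke is exactly the content of those theorems, and your stabilizer computation (which is valid because the $v_\lambda$ lie in distinct summands $V(\lambda)$ of $V$, so the annihilation conditions decouple weight by weight and then $T$-weight by $T$-weight) correctly isolates the facets of $\QQ^+\Gamma$ with $\mathrm E_\varrho^\perp = \mathrm E^\perp$ as the codimension-one orbits. You have merely made explicit the dimension count that the paper leaves to the reference.
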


\begin{corollary} \label{crl_[v]_in_TS}
Suppose that $\sigma$ is a nonzero $T_\ad$-weight of~$V$ and $v \in V^{(T_\ad)}_\sigma$. Let $s \in H^0(O, \mathcal N_O)^G$ be the section defined by $s(x_0) = [v]$. Then the following conditions are equivalent:
\begin{enumerate}[label=\textup{(\arabic*)},ref=\textup{\arabic*}]
\item
$[v] \in \EuScript{TS}$.

\item
$s$ extends to $O \cup O_\varrho$ for all $\varrho \in \mathcal P$.
\end{enumerate}
\end{corollary}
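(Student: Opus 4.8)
The plan is to reduce both conditions to the single assertion that the section $s$ extends to a global section on $X_0$, and then to invoke the geometric description of the codimension-one orbits. Throughout I use that $[v] \in (V/\mathfrak gx_0)^{G_{x_0}}$, which is forced by the very definition of $s$ as a $G$-invariant section with $s(x_0) = [v]$.

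First I would unwind the meaning of $\EuScript{TS}$. By the construction in~\S\ref{subsec_preliminaries_for_the_proof}, the module $T_{X_0}\mathrm M_\Gamma$ is identified with the $T_\ad$-submodule of $H^0(X_0, \mathcal N_{X_0})^G$ complementary to $H^0(X_0,\mathcal N_{X_0})^{\widetilde G}$; since the latter is exactly the $T_\ad$-weight-zero subspace, $T_{X_0}\mathrm M_\Gamma$ is the sum of all \emph{nonzero} $T_\ad$-weight spaces of $H^0(X_0,\mathcal N_{X_0})^G$. The set $\EuScript{TS}$ is the image of $T_{X_0}\mathrm M_\Gamma$ under the injective vertical arrow of~(\ref{eqn_just_diagram}), namely the composite of the restriction map $H^0(X_0,\mathcal N_{X_0})^G \to H^0(O, \mathcal N_O)^G$ with the identification $H^0(O,\mathcal N_O)^G \simeq (V/\mathfrak gx_0)^{G_{x_0}}$ of Proposition~\ref{prop_exact_sequence_O}, which sends a $G$-invariant section to its value at $x_0$. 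Under this identification $s$ is precisely the $G$-invariant section over $O$ whose value at $x_0$ is $[v]$. Because the vertical arrow is $T_\ad$-equivariant and $\sigma \ne 0$, decomposing any preimage into $T_\ad$-weight components shows that $[v] \in \EuScript{TS}$ if and only if $[v]$ lies in the image of the entire space $H^0(X_0,\mathcal N_{X_0})^G$, i.e.\ if and only if $s$ is the restriction to $O$ of some global $G$-invariant section on~$X_0$.

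Next I would verify that this last property is equivalent to $s$ merely extending to a (not a priori $G$-invariant) global section of $\mathcal N_{X_0}$. The restriction map $H^0(X_0,\mathcal N_{X_0}) \to H^0(O,\mathcal N_O)$ is injective by~\cite[Lemma~3.9]{Br13}, and it is $G$-equivariant. Hence if $s$ admits any global extension $\tilde s$, then for every $g \in G$ the section $g\tilde s$ restricts on $O$ to $gs = s$, so injectivity gives $g\tilde s = \tilde s$; thus the extension is automatically $G$-invariant. Combined with the previous paragraph, this shows that the condition $[v]\in\EuScript{TS}$ is equivalent to the statement that $s$ extends to~$X_0$.

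It then remains to translate extendability to $X_0$ into a condition along codimension-one orbits: by Proposition~\ref{prop_should_extend_to_oc1}, $s$ extends to $X_0$ if and only if it extends to $O \cup O'$ for every $G$-orbit $O' \subset X_0$ of codimension one, and by Proposition~\ref{prop_orbits_of_codimension_1} these orbits are exactly the $O_\varrho$ with $\varrho \in \mathcal P$. Chaining the three reductions yields the stated equivalence. I expect the only genuinely delicate point to be the first paragraph: correctly matching $\EuScript{TS}$ with the image of $H^0(X_0,\mathcal N_{X_0})^G$ in the fixed weight $\sigma$. This is exactly where the hypothesis $\sigma \ne 0$ (compare Corollary~\ref{crl_weights_are_nonzero}) is essential, as it discards the $\widetilde G$-invariant component $H^0(X_0,\mathcal N_{X_0})^{\widetilde G}$; everything after that is a formal consequence of the injectivity of restriction to $O$ together with the two cited propositions.
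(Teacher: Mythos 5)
Your proof is correct and follows the same route as the paper, whose own argument is exactly the one-line reduction ``definition of $\EuScript{TS}$ plus Propositions~\ref{prop_should_extend_to_oc1} and~\ref{prop_orbits_of_codimension_1}''; you have simply spelled out the details that the paper leaves implicit. In particular, your two auxiliary observations --- that $\sigma\ne 0$ forces any global preimage to lie in the complement of $H^0(X_0,\mathcal N_{X_0})^{\widetilde G}$, and that a global extension of a $G$-invariant section is automatically $G$-invariant by injectivity of restriction to $O$ --- are exactly the points needed to make the paper's citation of its definitions rigorous.
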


\begin{proof}
This follows from the definition of~$\EuScript{TS}$ (see~\S\,\ref{subsec_preliminaries_for_the_proof}) along with Propositions~\ref{prop_should_extend_to_oc1} and~\ref{prop_orbits_of_codimension_1}.
\end{proof}

In what follows, for every $\varrho \in \mathcal P$ we regard the sheaf $\mathcal N_{O \cup O_\varrho}$ as the sheaf of sections of the normal bundle of $O \cup O_\varrho$ in~$V$. We denote the total space of this bundle by $F_\varrho$ and let
\[
p_\varrho \colon F_\varrho \to O \cup O_\varrho
\]
be the canonical projection.

Fix an arbitrary element $\varrho \in \mathcal P$ and let $\phi_\varrho \colon \Bbbk^\times \to T$ be the one-parameter subgroup of $T$ corresponding to~$\varrho$, that is, $(\phi_\varrho(\xi))^\chi = \xi^{\langle \varrho, \chi \rangle}$ for all $\chi \in \mathfrak X(T)$ and $\xi \in \Bbbk^\times$. For every $\xi \in \Bbbk$, consider the vector $z_\varrho(\xi) \in V$ given by
\[
z_\varrho(\xi) =
\begin{cases}
z_\varrho & \text{ if } \xi = 0; \\
\phi_\varrho(\xi) x_0 & \text{ otherwise.}
\end{cases}
\]
Then we have $z_\varrho(\xi) = z_\varrho + \sum \limits_{\lambda \in \mathrm E \setminus \mathrm E_\varrho} \xi^{\langle \varrho, \lambda \rangle} v_\lambda$ for all $\xi \in \Bbbk$. Note that $z_\varrho(1) = x_0$. It follows from Lemma~\ref{lemma_value_is_one} that the morphism $\Bbbk \to O \cup O_\varrho$ given by $\xi \mapsto z_\varrho(\xi)$ is a closed immersion; we denote its image by~$Z_\varrho$.

\begin{lemma} \label{lemma_tangent_space}
Suppose that $\varrho \in \mathcal P$. Then $T_{z_\varrho} X_0 = \mathfrak gz_\varrho \oplus \Bbbk u_\varrho$ where
\begin{equation} \label{eqn_u_varrho}
u_\varrho = \sum \limits_{\mu \in \mathrm E : \langle \varrho, \mu \rangle = 1} v_\mu.
\end{equation}
\end{lemma}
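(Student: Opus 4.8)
The plan is to compute the tangent space $T_{z_\varrho} X_0$ directly by exhibiting it as the span of two natural pieces of data: the tangent space to the $G$-orbit $O_\varrho = G z_\varrho$ through $z_\varrho$, namely $\mathfrak g z_\varrho$, together with one extra direction that comes from the curve $\xi \mapsto z_\varrho(\xi)$ which connects the codimension-one orbit $O_\varrho$ to the open orbit $O$. The vector $u_\varrho$ in~\eqref{eqn_u_varrho} is precisely the velocity vector of this curve at $\xi = 0$, so the inclusion $\mathfrak g z_\varrho + \Bbbk u_\varrho \subseteq T_{z_\varrho} X_0$ will be the easy direction. The real content is the reverse inclusion and the directness of the sum.

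First I would compute $u_\varrho$ as a tangent vector. Differentiating $z_\varrho(\xi) = z_\varrho + \sum_{\lambda \in \mathrm E \setminus \mathrm E_\varrho} \xi^{\langle \varrho, \lambda\rangle} v_\lambda$ at $\xi = 0$, and using Lemma~\ref{lemma_value_is_one} (which guarantees that the minimal positive value of $\langle \varrho, \lambda\rangle$ over $\mathrm E$ is exactly $1$, so that the lowest-order term is linear and nonzero), one gets that the derivative at $0$ is $u_\varrho = \sum_{\mu \in \mathrm E : \langle \varrho, \mu\rangle = 1} v_\mu$. Since $Z_\varrho \subset X_0$ by construction, $u_\varrho \in T_{z_\varrho} X_0$; and since $O_\varrho \subset X_0$, also $\mathfrak g z_\varrho \subseteq T_{z_\varrho} X_0$. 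This gives $\mathfrak g z_\varrho + \Bbbk u_\varrho \subseteq T_{z_\varrho} X_0$.

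Next I would establish that the sum is direct, i.e.\ $u_\varrho \notin \mathfrak g z_\varrho$. The clean way is to use the grading by $T$-weights (equivalently by $T_\ad$). One checks that $z_\varrho$ is a sum of highest weight vectors $v_\lambda$ with $\lambda \in \mathrm E_\varrho$, and analyzes the $T$-weight components of $\mathfrak g z_\varrho$. The vector $u_\varrho$ is supported on the summands $V(\mu)$ with $\langle \varrho, \mu\rangle = 1$, which are disjoint from those appearing in $\mathrm E_\varrho$ (where $\langle \varrho, \lambda\rangle = 0$); so I would project to these summands and show that $\mathfrak g z_\varrho$ cannot produce $u_\varrho$ there, exploiting that $z_\varrho$ involves only the $\langle \varrho, \cdot\rangle = 0$ summands and that $\mathfrak g$ acts by weights in the root lattice. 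This separates $u_\varrho$ from $\mathfrak g z_\varrho$ and yields directness.

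The main obstacle, and the step I expect to require the most care, is the dimension count establishing the reverse inclusion $T_{z_\varrho} X_0 \subseteq \mathfrak g z_\varrho \oplus \Bbbk u_\varrho$. For this I would argue that $O_\varrho$ has codimension $1$ in $X_0$ (Proposition~\ref{prop_orbits_of_codimension_1}), so $\dim X_0 = \dim O_\varrho + 1 = \dim \mathfrak g z_\varrho + 1$. If $z_\varrho$ is a smooth point of $X_0$, then $\dim T_{z_\varrho} X_0 = \dim X_0 = \dim(\mathfrak g z_\varrho \oplus \Bbbk u_\varrho)$, and the inclusion already proved forces equality. Here the saturatedness of $\Gamma$ is crucial, since it guarantees $X_0$ is normal (Proposition~\ref{prop_normality_saturatedness}); smoothness of $X_0$ along the codimension-one orbit $O_\varrho$ follows because a normal variety is regular in codimension~$1$, so the generic point of the divisor $O_\varrho$ is a smooth point, and by $G$-homogeneity of $O_\varrho$ every point of $O_\varrho$ — in particular $z_\varrho$ — is smooth in $X_0$. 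Assembling the two inclusions with the directness of the sum then completes the proof.
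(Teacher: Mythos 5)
Your proof is correct and follows essentially the same route as the paper: identify $\mathfrak g z_\varrho$ as the tangent space to the orbit and $\Bbbk u_\varrho$ as the tangent line to the curve $Z_\varrho$, check $u_\varrho \notin \mathfrak g z_\varrho$, and conclude by the codimension-one count. You additionally make explicit the smoothness of $X_0$ at $z_\varrho$ (normality plus regularity in codimension one plus $G$-homogeneity of $O_\varrho$), which the paper's one-line proof leaves implicit; that is a worthwhile clarification but not a different argument.
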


\begin{proof}
We have $T_{z_\varrho}(Gz_\varrho) = \mathfrak gz_\varrho$ and $T_{z_\varrho}Z_\varrho = \Bbbk u_\varrho$. Since $Gz_\varrho$ has codimension~$1$ in~$X_0$ and $X_0$ is normal, it follows that $\dim \mathfrak g z_\varrho = \dim X_0 - 1$ and $z_\varrho$ is a regular point of~$X_0$. The proof is completed by observing that $u_\varrho \notin \mathfrak gz_\varrho$.
\end{proof}

\begin{proposition} \label{prop_extension_of_sections}
Let $v \in V$ be such that $[v] \in (V /\mathfrak gx_0)^{G_{x_0}}$ and let $s \in H^0(O, \mathcal N_{O})^G$ be the section defined by $s(x_0) = [v]$. Given $\varrho \in \mathcal P$, the following conditions are equivalent:
\begin{enumerate}[label=\textup{(\arabic*)},ref=\textup{\arabic*}]
\item \label{Ext1}
The section $s$ extends to $O \cup O_\varrho$.

\item \label{Ext2}
There exists $\lim \limits_{\xi \to 0} s(z_\varrho(\xi))$, that is, the restriction of $s$ to $O \cap Z_\varrho$ extends to~$Z_\varrho$.
\end{enumerate}
\end{proposition}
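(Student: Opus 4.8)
The implication \ref{Ext1}$\,\Rightarrow\,$\ref{Ext2} is immediate and carries no content: since $Z_\varrho \subset O \cup O_\varrho$, any extension of $s$ to a regular section over $O \cup O_\varrho$ restricts to a regular section of $\mathcal N_{O \cup O_\varrho}$ along the curve $Z_\varrho \cong \mathbb A^1$, and its value at $z_\varrho = z_\varrho(0)$ is the required limit $\lim_{\xi \to 0} s(z_\varrho(\xi))$. The entire substance of the proposition is the converse \ref{Ext2}$\,\Rightarrow\,$\ref{Ext1}. The plan is to reduce the extension of $s$ across the whole codimension-one orbit $O_\varrho$ to the extension of $s$ along the single transversal curve $Z_\varrho$, the passage between the two being controlled by the $G$-invariance of~$s$.

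First I would fix the geometric setting. Because $\Gamma$ is saturated, $X_0$ is normal by Proposition~\ref{prop_normality_saturatedness} and hence smooth in codimension one; since its singular locus is $G$-stable and of codimension at least~$2$, it is disjoint from the codimension-one orbit~$O_\varrho$. Therefore $W := O \cup O_\varrho$ is a smooth $G$-variety in which $O_\varrho$ is a smooth prime divisor, and $\mathcal N_{O \cup O_\varrho}$ is a $G$-linearized locally free, hence reflexive, sheaf on~$W$. As $W \setminus O = O_\varrho$ is pure of codimension one, a section $s \in H^0(O, \mathcal N_O)$ extends to $W$ if and only if it is regular at the generic point $\eta$ of~$O_\varrho$, that is, has no pole along~$O_\varrho$; regularity across the remaining codimension-$\ge 2$ locus is then automatic by reflexivity and normality, exactly as in Proposition~\ref{prop_should_extend_to_oc1} (via \cite[Lemma~3.9]{Br13}).

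The heart of the argument, which I expect to be the main obstacle, is to detect this generic-point regularity along the curve $Z_\varrho$ alone. By Lemma~\ref{lemma_tangent_space} one has $T_{z_\varrho} X_0 = \mathfrak g z_\varrho \oplus \Bbbk u_\varrho = T_{z_\varrho} O_\varrho \oplus T_{z_\varrho} Z_\varrho$, so $Z_\varrho$ meets $O_\varrho$ transversally at~$z_\varrho$; consequently a local equation of $O_\varrho$ pulls back to $Z_\varrho \cong \mathbb A^1$ with a simple zero at $\xi = 0$, and condition~\ref{Ext2} says precisely that the pullback of $s$ to $Z_\varrho$ has no pole at~$z_\varrho$. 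One direction is forced by regularity at~$\eta$; the delicate point is the converse, namely ruling out that a genuine pole of $s$ along $O_\varrho$ is invisible along this particular transversal, its leading coefficient happening to vanish at the special point~$z_\varrho$. Here the $G$-invariance of $s$ is indispensable: the leading coefficient of $s$ along the $G$-stable divisor $O_\varrho$ is a $G$-equivariant section of an appropriate twist of $\mathcal N_{O \cup O_\varrho}|_{O_\varrho}$ over the homogeneous space $O_\varrho \cong G/G_{z_\varrho}$, so it vanishes at the one point $z_\varrho$ if and only if it vanishes identically on~$O_\varrho$. Hence $s$ has a pole along $O_\varrho$ exactly when its restriction to $Z_\varrho$ has a pole at~$z_\varrho$, and \ref{Ext2}$\,\Rightarrow\,$\ref{Ext1} follows.

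In executing this, the bookkeeping is cleanest in terms of the one-parameter subgroup $\phi_\varrho$, since $Z_\varrho \cap O = \phi_\varrho(\Bbbk^\times) x_0$ and $s(z_\varrho(\xi)) = \phi_\varrho(\xi) \cdot s(x_0)$ by $G$-invariance; then the existence of $\lim_{\xi \to 0} \phi_\varrho(\xi) \cdot s(x_0)$ becomes a condition on the $\phi_\varrho$-weights occurring in $s(x_0)$, which the $\phi_\varrho$-grading reads off without cancellation. The step requiring the most care is making precise that the leading term of $s$ along $O_\varrho$ is genuinely $G$-equivariant, so that testing regularity at the $\phi_\varrho$-fixed point $z_\varrho$ loses no information: every point of $O_\varrho$ is a $G$-translate of $z_\varrho$ and $s$ is $G$-invariant, so the pole order of $s$ is constant along $O_\varrho$ and is faithfully recorded by the transversal~$Z_\varrho$.
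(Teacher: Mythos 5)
Your argument is correct, but it reaches the conclusion by a genuinely different route from the paper's. You work sheaf-theoretically: on the smooth variety $O \cup O_\varrho$ the normal sheaf is locally free, so $s$ extends if and only if it has no pole along the prime divisor $O_\varrho$; the leading term of $s$ along $O_\varrho$ is a nonzero $G$-invariant section of a twist of $\left.\mathcal N_{O \cup O_\varrho}\right|_{O_\varrho}$ over the homogeneous space $O_\varrho$, hence nowhere vanishing, and transversality of $Z_\varrho$ to $O_\varrho$ at $z_\varrho$ (Lemma~\ref{lemma_tangent_space}) guarantees that the exact pole order is read off along $Z_\varrho$, so a pole is never invisible on the transversal. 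The paper instead constructs the extension directly: it takes the limit point $s(z_\varrho)$, checks that it is fixed by $G_{z_\varrho} = G_{x_0} \cdot \phi_\varrho(\Bbbk^\times)$ (this is where the hypothesis $\varrho \in \mathcal P$, i.e.\ $\mathrm E^\perp = \mathrm E_\varrho^\perp$, enters explicitly), extends $s$ to $O_\varrho$ by equivariance, and then verifies that the resulting set-theoretic section is a morphism by identifying its image with the open subset $Gs(x_0) \cup Gs(z_\varrho)$ of the orbit closure $\overline{Gs(x_0)}$ in the total space $F_\varrho$ and inverting the bijective projection onto the smooth variety $O \cup O_\varrho$. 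Your approach buys a transparent explanation of why one transversal curve suffices (constancy of the pole order along a homogeneous divisor) and avoids computing the stabilizer $G_{z_\varrho}$; the paper's approach avoids the filtration by pole order and the leading-term formalism altogether, staying within elementary orbit geometry. Both proofs rest on the same inputs: smoothness of $O \cup O_\varrho$ (via normality of $X_0$, hence saturatedness of $\Gamma$), homogeneity of the codimension-one orbit $O_\varrho$, $G$-invariance of $s$, and Lemma~\ref{lemma_tangent_space}.
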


\begin{proof}
Obviously, (\ref{Ext1}) implies~(\ref{Ext2}). It remains to prove the converse implication. Put $s(z_\varrho) = \lim \limits_{\xi \to 0} s(z_\varrho(\xi))$. Being in the closure of the $\phi_\varrho (\Bbbk^\times)$-orbit of~$s(x_0)$, the point $s(z_\varrho)$ is $\phi_\varrho(\Bbbk^\times)$-fixed. Further, observe that $G_{x_0} = G_{z_\varrho(\xi)}$ for all $\xi \in \Bbbk^\times$, which implies that every point $s(z_\varrho(\xi))$ with $\xi \in \Bbbk^\times$ is $G_{x_0}$-fixed. Consequently, the point~$s(z_\varrho)$ is $G_{x_0}$-fixed. Next, as $\QQ^+ \mathrm E_\varrho$ is a face of codimension~$1$ of~$\QQ^+ \Gamma$ and $\Gamma$ is saturated, it follows that $\ZZ \Gamma \cap \Ker \varrho = \ZZ \mathrm E_\varrho$, which implies $T_{z_\varrho} = T_{x_0} \cdot \phi_\varrho(\Bbbk^\times)$. Combining this and the equality $\mathrm E^\perp = \mathrm E_\varrho^\perp$ with Lemma~\ref{lemma_G_x-fixed} we obtain $G_{z_\varrho} = G_{x_0} \cdot \phi_\varrho(\Bbbk^\times)$, hence $s(z_\varrho)$ is $G_{z_\varrho}$-fixed. The latter enables us to extend $s$ to $O_\varrho$ by the formula $s(gz_\varrho) = g(s(z_\varrho))$.

To complete the proof it remains to show that the extended map $s \colon O \cup O_\varrho \to F_\varrho$ is a morphism. First, as $s(z_\varrho) = \lim \limits_{\xi \to 0} s(z_\varrho(\xi))$, it follows that the $G$-orbit $Gs(z_\varrho)$ is contained in the closure of the $G$-orbit $Gs(x_0)$ in~$F_\varrho$. Next, since the map $p_\varrho$ is $G$-equivariant and the point $s(x_0)$ is $G_{x_0}$-fixed, it follows that $G_{s(x_0)} = G_{x_0}$ and hence $Gs(x_0) \simeq \nobreak O$. Similarly, $Gs(z_\varrho) \simeq O_\varrho$. In particular, $\dim Gs(z_\varrho) = \dim Gs(x_0) - 1$, and so the set $Gs(x_0) \cup Gs(z_\varrho)$ is open in $\overline{Gs(x_0)}$. Now the restriction of~$p_\varrho$ to $Gs(x_0) \cup Gs(z_\varrho)$ is a bijective morphism onto $O \cup O_\varrho$, whence an isomorphism since $O \cup O_\varrho$ is smooth. Consequently, our map $s \colon O \cup O_\varrho \to Gs(x_0) \cup Gs(z_\varrho)$ is a morphism as required.
\end{proof}

The next proposition is an application of the previous one.

\begin{proposition} \label{prop_section_extensions}
Suppose that $\varrho \in \mathcal P$ and $\sigma \in \ZZ^+ \Pi$. Let $v \in V^{(T_\ad)}_\sigma$ be such that $[v] \in (V / \mathfrak gx_0)^{G_{x_0}} \setminus \lbrace 0 \rbrace$ and let $s \in H^0(O, \mathcal N_{O})^G$ be the section defined by $s(x_0) = [v]$.
\begin{enumerate}[label=\textup{(\alph*)},ref=\textup{\alph*}]
\item \label{prop_section_extensions_a}
If $\langle \varrho, \sigma \rangle > 0$ and $\sum \limits_{\lambda \in \mathrm E_\varrho} \pr_\lambda (v) \notin \mathfrak gz_\varrho$, then $s$ does not extend to~$O \cup O_\varrho$.

\item \label{prop_section_extensions_b}
If $\langle \varrho, \sigma \rangle \le 0$ then $s$ extends to $O \cup O_\varrho$.

\item \label{prop_section_extensions_c}
If $\sum \limits_{\lambda \in \mathrm E_\varrho} \pr_\lambda (v) = 0$ and there exists $\nu \in \mathrm E \setminus \mathrm E_\varrho$ with $\pr_\nu (v) \ne 0$ and $\langle \varrho, \sigma \rangle > \langle \varrho, \nu \rangle$, then $s$ does not extend to $O \cup O_\varrho$.

\item \label{prop_section_extensions_d}
If $\sum \limits_{\lambda \in \mathrm E_\varrho} \pr_\lambda (v) = 0$ and $\langle \varrho, \sigma \rangle = 1$ then $s$ extends to $O \cup O_\varrho$.
\end{enumerate}
\end{proposition}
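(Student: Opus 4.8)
The plan is to derive all four assertions from Proposition~\ref{prop_extension_of_sections}, which reduces the extension of $s$ across $O_\varrho$ to the existence of $\lim_{\xi\to0}s(z_\varrho(\xi))$ along the curve $Z_\varrho$. Since $s$ is $G$-equivariant and $z_\varrho(\xi)=\phi_\varrho(\xi)x_0$ for $\xi\ne0$, while $\mathfrak gz_\varrho(\xi)=\phi_\varrho(\xi)\mathfrak gx_0$, the section is given by $s(z_\varrho(\xi))=[\phi_\varrho(\xi)v]$ in the fibre $V/\mathfrak gz_\varrho(\xi)$. My first step is to expand $\phi_\varrho(\xi)v$ as a Laurent polynomial, using that $\phi_\varrho$ grades $V$ by $T$-weight: as $v$ has $T_\ad$-weight $\sigma$, each $\pr_\lambda(v)\in V(\lambda)$ is a $T$-weight vector of weight $\lambda-\sigma$, so $\phi_\varrho(\xi)\pr_\lambda(v)=\xi^{\langle\varrho,\lambda\rangle-c}\pr_\lambda(v)$ with $c=\langle\varrho,\sigma\rangle$, and hence
$$
\phi_\varrho(\xi)v=\xi^{-c}w+\sum_{\lambda\in\mathrm E\setminus\mathrm E_\varrho}\xi^{\langle\varrho,\lambda\rangle-c}\pr_\lambda(v),\qquad w:=\sum_{\lambda\in\mathrm E_\varrho}\pr_\lambda(v),
$$
recalling $\langle\varrho,\lambda\rangle=0$ on $\mathrm E_\varrho$ and $\ge1$ off it. The limit exists precisely when the principal (negative-power) part can be cancelled modulo the degenerating tangent family $\phi_\varrho(\xi)\mathfrak gx_0$.

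The decisive structural input is that the absorbing direction is one-dimensional. Since everything here lies in the $T_\ad$-weight-$\sigma$ space and $\phi_\varrho\subset T$ preserves $T_\ad$-weights, Lemma~\ref{lemma_intersection} shows that the weight-$\sigma$ part of $\mathfrak gx_0$ is the single line $\Bbbk e_{-\sigma}x_0$ (and is zero unless $\sigma\in\Delta^+$). Thus $s$ extends iff $\phi_\varrho(\xi)\bigl(v-f(\xi)e_{-\sigma}x_0\bigr)$ is regular at $\xi=0$ for some $f\in\Bbbk((\xi))$, where the summand $e_{-\sigma}v_\lambda$ of $e_{-\sigma}x_0$ sits in the same degree $\langle\varrho,\lambda\rangle-c$ as $\pr_\lambda(v)$. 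Part~(\ref{prop_section_extensions_b}) is then immediate: when $c\le0$ every exponent is nonnegative, so $\phi_\varrho(\xi)v$ is already regular (take $f=0$) and extends.

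For part~(\ref{prop_section_extensions_a}) the leading term lives in degree $-c<0$ and equals $\xi^{-c}w$, supported in the summands $V(\lambda)$, $\lambda\in\mathrm E_\varrho$; by the degree bookkeeping the only contribution of $f(\xi)e_{-\sigma}x_0$ reaching this degree in those summands is $f_0e_{-\sigma}z_\varrho$, so cancellation forces $w\in\Bbbk e_{-\sigma}z_\varrho\subset\mathfrak gz_\varrho$, contradicting the hypothesis $w\notin\mathfrak gz_\varrho$. Part~(\ref{prop_section_extensions_c}) runs on the same principle with $w=0$: the deepest surviving pole now sits in some $V(\nu)$ with $\langle\varrho,\nu\rangle=d<c$, and cancelling it would require $f_0\ne0$; but $f_0\ne0$ reintroduces a still deeper pole $f_0\xi^{-c}e_{-\sigma}z_\varrho$ in the (now vanishing) $\mathrm E_\varrho$-summands, which nothing can cancel. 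I expect the main obstacle in these two parts to be exactly this \emph{uniqueness of the absorbing direction} together with the degree count ruling out the use of other Laurent coefficients $f_k$ ($k\ne0$).

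Part~(\ref{prop_section_extensions_d}) is the delicate borderline case, which I would treat directly through Proposition~\ref{prop_extension_of_sections}. Here $w=0$ and $\langle\varrho,\alpha\rangle=1$ force all exponents $\langle\varrho,\lambda\rangle-c$ to be nonnegative, so $\phi_\varrho(\xi)v$ is regular and $\lim_{\xi\to0}s(z_\varrho(\xi))=[r_0]$ exists with $r_0=\sum_{\langle\varrho,\lambda\rangle=1}\pr_\lambda(v)$. The content is then to see that $[r_0]$ is $G_{z_\varrho}$-fixed, so that it genuinely defines the value of the extended section. Applying Lemma~\ref{lemma_G_x-fixed} at the point $z_\varrho$ together with Lemma~\ref{lemma_tangent_space}, one checks that $e_\alpha r_0=\sum_{\mu:\langle\varrho,\mu\rangle=1}e_\alpha\pr_\mu(v)$ lies in degree $1$ and $T_\ad$-weight $0$, and a degree/weight count shows that the degree-$1$, weight-$0$ part of $T_{z_\varrho}X_0=\mathfrak gz_\varrho\oplus\Bbbk u_\varrho$ is exactly $\Bbbk u_\varrho$. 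Hence the hypothesis $\sum_\mu e_\alpha\pr_\mu(v)\in\Bbbk u_\varrho$ is precisely the statement $e_\alpha r_0\in T_{z_\varrho}X_0$ needed to place $[r_0]$ in the $G_{z_\varrho}$-fixed locus. I anticipate that identifying the $e_\alpha$-condition with this residual fixedness of the limit, and disentangling it from the degree bookkeeping, will be the crux of~(\ref{prop_section_extensions_d}).
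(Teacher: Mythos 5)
Your overall strategy is the paper's own: reduce via Proposition~\ref{prop_extension_of_sections} to the existence of $\lim_{\xi\to0}s(z_\varrho(\xi))$, expand $\phi_\varrho(\xi)v$ in powers of $\xi$, and use Lemma~\ref{lemma_intersection} to see that the only correction available inside the $T_\ad$-weight-$\sigma$ part of $\mathfrak g x_0$ is $\Bbbk e_{-\sigma}x_0$. On this basis parts~(\ref{prop_section_extensions_a}), (\ref{prop_section_extensions_b}) and~(\ref{prop_section_extensions_d}) are correct; in~(\ref{prop_section_extensions_d}) your verification that $[r_0]$ is $G_{z_\varrho}$-fixed is superfluous, since Proposition~\ref{prop_extension_of_sections} already converts mere existence of the limit along $Z_\varrho$ into an extension over $O\cup O_\varrho$ --- which is why the paper's proof of~(\ref{prop_section_extensions_d}) is one line and does not invoke the $e_\alpha$-hypothesis at all.

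Part~(\ref{prop_section_extensions_c}) has a gap. Your contradiction rests on the claim that $f_0\ne0$ ``reintroduces a still deeper pole $f_0\xi^{-c}e_{-\sigma}z_\varrho$ \dots\ which nothing can cancel'' (with $c=\langle\varrho,\sigma\rangle$). This is vacuous when $e_{-\sigma}z_\varrho=0$, that is, when $\Supp\sigma\subset\mathrm E_\varrho^\perp$; in that case your bookkeeping does not exclude that a single coefficient $f_0$ cancels every polar term $\pr_\mu(v)$ with $\langle\varrho,\mu\rangle<c$ against $f_0\,e_{-\sigma}v_\mu$, making $\phi_\varrho(\xi)\bigl(v-f_0e_{-\sigma}x_0\bigr)$ regular. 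To close the gap you must bring in the hypothesis $\varrho\in\mathcal P$, i.e.\ $\mathrm E^\perp=\mathrm E_\varrho^\perp$: since $e_{-\sigma}v_\lambda=0$ exactly when $\Supp\sigma\subset\lambda^\perp$, this forces $e_{-\sigma}z_\varrho=0$ if and only if $e_{-\sigma}x_0=0$, and in the latter case no correction is available at all, so the pole in the $V(\nu)$-component survives. The paper's proof sidesteps this case distinction entirely: it factors $s(z_\varrho(\xi))=\xi^{\langle\varrho,\nu\rangle-\langle\varrho,\sigma\rangle}s'(z_\varrho(\xi))$ with $s'$ a regular section of $\mathcal N_{Z_\varrho}$ and shows $s'(z_\varrho)\ne0$ directly in $V/T_{z_\varrho}X_0$, using Lemma~\ref{lemma_tangent_space} together with the fact that $u_\varrho$ has $T_\ad$-weight $0$ while $\sum_{\mu\,:\,\langle\varrho,\mu\rangle=\langle\varrho,\nu\rangle}\pr_\mu(v)$ has weight $\sigma\ne0$; there the correction by $\mathfrak g x_0$ is absorbed into the tangent space at every point of $Z_\varrho$, including the limit point, so no pole-cancellation analysis is needed.
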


\begin{proof}
Thanks to Proposition~\ref{prop_extension_of_sections}, in all the cases it is enough to prove the corresponding statement about the existence of $\lim \limits_{\xi \to 0} s(z_\varrho(\xi))$. Before we proceed, let us make some preparations.

Given $w \in V_\sigma^{(T_\ad)}$ and~$\xi \in \Bbbk$, let $[w]_\xi$ denote the image of~$w$ in $V / T_{z_\varrho(\xi)}X_0$. Then for every $\xi \in \Bbbk^\times$ one has
\[
s(z_\varrho(\xi)) = s(\phi_\varrho(\xi) x_0) = \xi^{-\langle \varrho, \sigma \rangle} [\sum \limits_{\lambda \in \mathrm E_\varrho} \pr_\lambda (v)]_\xi + \sum \limits_{\mu \in \mathrm E \setminus \mathrm E_\varrho}\xi^{\langle \varrho, \mu \rangle - \langle \varrho, \sigma \rangle}[\pr_\mu (v)]_\xi.
\]
Let $\mathcal N_{Z_\varrho}$ denote the restriction of the sheaf $\mathcal N_{O \cup O_\varrho}$ to~$Z_\varrho$.

(\ref{prop_section_extensions_a}) Consider the section $s' \in H^0(Z_\varrho, \mathcal N_{Z_\varrho})$ given by
\[
s'(z_\varrho(\xi)) = [\sum \limits_{\lambda \in \mathrm E_\varrho} \pr_\lambda(v)]_\xi + \sum \limits_{\mu \in \mathrm E \setminus \mathrm E_\varrho} \xi^{\langle \varrho, \mu \rangle}[\pr_\mu (v)]_\xi.
\]
Clearly, $s'(z_\varrho) = [ \sum \limits_{\lambda \in \mathrm E_\varrho} \pr_\lambda(v)]_0$. It follows from Lemma~\ref{lemma_tangent_space} that $(\bigoplus \limits_{\lambda \in \mathrm E_\varrho} V(\lambda)) \cap T_{z_\varrho} X_0 = \mathfrak g z_\varrho$, whence the condition $\sum \limits_{\lambda \in \mathrm E_\varrho} \pr_\lambda (v) \notin \mathfrak gz_\varrho$ implies $s'(z_\varrho) \ne 0$. On the other hand, one has $s(z_\varrho(\xi)) = \xi^{-\langle \varrho, \sigma \rangle}s'(z_\varrho(\xi))$ for all $\xi \in \Bbbk^\times$. Since $\langle \varrho, \sigma \rangle > 0$, it follows that $\lim \limits_{\xi \to 0} s(z_\varrho(\xi))$ does not exist.

(\ref{prop_section_extensions_b}) It is easy to see that $\lim \limits_{\xi \to 0} s(z_\varrho(\xi))$ exists and is given by
\[
\lim \limits_{\xi \to 0} s(z_\varrho(\xi)) =
\begin{cases}
\sum \limits_{\lambda \in \mathrm E_\varrho} [\pr_\lambda(v)]_0 &
\text{ if }
\langle \varrho, \sigma \rangle = 0;\\
0 & \text{ if } \langle \varrho, \sigma \rangle < 0.
\end{cases}
\]

(\ref{prop_section_extensions_c}) We may assume that $\langle \varrho, \nu \rangle \le \langle \varrho, \mu \rangle$ for all $\mu \in \mathrm E \setminus \mathrm E_\varrho$ with $\pr_\mu(v) \ne 0$. Consider the section $s' \in H^0(Z_\varrho, \mathcal N_{Z_\varrho})$ given by
\[
s'(z_\varrho(\xi)) =
\sum \limits_{\mu \in \mathrm E \setminus \mathrm E_\varrho : \langle \varrho, \mu \rangle = \langle \varrho, \nu \rangle} [\pr_\mu(v)]_\xi +
\sum \limits_{\mu \in \mathrm E \setminus \mathrm E_\varrho : \langle \varrho, \mu \rangle > \langle \varrho, \nu \rangle} \xi^{\langle \varrho, \mu \rangle - \langle \varrho, \nu \rangle}[\pr_\mu(v)]_\xi.
\]
Clearly, $s'(z_\varrho) = \sum \limits_{\mu \in \mathrm E \setminus \mathrm E_\varrho : \langle \varrho, \mu \rangle = \langle \varrho, \nu \rangle} [\pr_\mu(v)]_0$. Since $\langle \varrho, \sigma \rangle > \langle \varrho, \nu \rangle > 0$, it follows that $\sigma \ne 0$ and hence
\[
\sum \limits_{\mu \in \mathrm E \setminus \mathrm E_\varrho : \langle \varrho, \mu \rangle = \langle \varrho, \nu \rangle} \pr_\mu(v) \notin \Bbbk u_\varrho.
\]
As $\Bbbk u_\varrho = (\bigoplus \limits_{\mu \in \mathrm E \setminus \mathrm E_\varrho} V(\mu)) \cap T_{z_\varrho} X_0$ by Lemma~\ref{lemma_tangent_space}, we find that $s'(z_\varrho) \ne 0$. On the other hand, we have $s(z_\varrho(\xi)) = \xi^{\langle \varrho, \nu \rangle - \langle \varrho, \sigma \rangle} s'(z_\varrho(\xi))$ for all $\xi \in \Bbbk^\times$. Since $\langle \varrho, \nu \rangle < \langle \varrho, \sigma \rangle$, it follows that $\lim \limits_{\xi \to 0} s(z_\varrho(\xi))$ does not exist.

(\ref{prop_section_extensions_d}) Clearly, $\lim \limits_{\xi \to 0} s(z_\varrho(\xi))$ exists and equals $\sum \limits_{\mu \in \mathrm E : \langle \varrho, \mu \rangle = 1} [\pr_\mu(v)]_0$.
\end{proof}

\begin{remark}
In \cite[Theorem~2.8]{PvS16} one can find a specialization of our Proposition~\ref{prop_section_extensions} to the case where $\Gamma$ is free.
\end{remark}

\subsection{Canonical representatives of $T_\ad$-eigenvectors in~$\EuScript{TS}$}
\label{subsec_canonical_representatives}

The main results of this subsection are Propositions~\ref{prop_canonical_form} and~\ref{prop_role_of_NE2}.

\begin{lemma} \label{lemma_non-acute}
For every $\sigma \in \ZZ^+ \Pi \setminus \lbrace 0 \rbrace$ there exists $\delta \in \Supp \sigma$ such that $\langle \delta^\vee, \sigma \rangle > 0$.
\end{lemma}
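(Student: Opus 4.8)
The plan is to reduce the statement to positive-definiteness of the fixed Weyl-invariant inner product $(\cdot\,,\cdot)$ on $\mathfrak X(T) \otimes_\ZZ \QQ$. First I would write $\sigma$ in the basis of simple roots as $\sigma = \sum_{\alpha \in \Pi} k_\alpha \alpha$; since $\sigma \in \ZZ^+ \Pi$, all coefficients satisfy $k_\alpha \in \ZZ^+$, and by definition $\Supp \sigma = \lbrace \alpha \in \Pi \mid k_\alpha > 0 \rbrace$. I would then invoke the standard identity relating the pairing with the inner product, namely $\langle \delta^\vee, \mu \rangle = 2(\delta, \mu)/(\delta, \delta)$ for every $\delta \in \Delta$ and every $\mu \in \mathfrak X(T) \otimes_\ZZ \QQ$. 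Because $(\delta, \delta) > 0$, it then suffices to exhibit some $\delta \in \Supp \sigma$ with $(\delta, \sigma) > 0$.

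The key computation is the expansion
$$
(\sigma, \sigma) = \sum_{\delta \in \Supp \sigma} k_\delta (\delta, \sigma),
$$
where only indices in $\Supp \sigma$ contribute, since $k_\delta = 0$ for $\delta \notin \Supp\sigma$. As $\sigma \neq 0$ and the form is positive definite, the left-hand side is strictly positive. Every coefficient $k_\delta$ occurring in the sum is a strictly positive integer, so if all the factors $(\delta, \sigma)$ were $\le 0$ the whole sum would be $\le 0$, a contradiction. Hence $(\delta, \sigma) > 0$ for at least one $\delta \in \Supp \sigma$, and for any such $\delta$ we obtain $\langle \delta^\vee, \sigma \rangle = 2(\delta, \sigma)/(\delta, \delta) > 0$, as required.

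I do not anticipate any genuine obstacle here: the claim is essentially a one-line consequence of positive-definiteness once the expansion of $(\sigma,\sigma)$ is restricted to the support. The only point requiring minor care is precisely that restriction—one must note that the sum defining $(\sigma, \sigma)$ ranges effectively over $\Supp \sigma$, so that the positivity of $(\sigma,\sigma)$ is witnessed by an index \emph{lying in} $\Supp\sigma$ rather than by some $\alpha \notin \Supp\sigma$. A purely combinatorial alternative—expanding $\langle \delta^\vee, \sigma \rangle = 2 k_\delta + \sum_{\alpha \ne \delta} k_\alpha \langle \delta^\vee, \alpha \rangle$ and exploiting the nonpositivity of the off-diagonal Cartan integers together with a connectedness argument on the Dynkin diagram of $\Supp \sigma$—is also available, but it is strictly more cumbersome, so I would favour the inner-product proof above.
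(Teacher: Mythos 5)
Your proof is correct. It is a genuinely different (and more direct) argument than the one in the paper: the paper argues by contradiction, observing that if $\langle \delta^\vee, \sigma\rangle \le 0$ for all $\delta \in \Supp\sigma$ then all pairwise angles in the set $\lbrace \sigma\rbrace \cup \Supp\sigma$ are non-acute (using that distinct simple roots have non-acute angles), and then invokes the classical fact that a family of vectors with pairwise non-acute angles contained in a half-space must be linearly independent --- contradicting $\sigma \in \ZZ^+\Supp\sigma$. Your route instead reads the conclusion off the single identity $(\sigma,\sigma) = \sum_{\delta\in\Supp\sigma} k_\delta(\delta,\sigma) > 0$ together with $\langle\delta^\vee,\mu\rangle = 2(\delta,\mu)/(\delta,\delta)$, which the Weyl-invariance of the form does give for all $\mu$. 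Both arguments ultimately rest on positive-definiteness of the invariant form on the root span, but yours avoids the auxiliary linear-independence lemma and the half-space observation entirely, and does not even use that distinct simple roots pair non-positively; the paper's version is the more standard root-system idiom but is strictly longer in its dependencies. No gaps.
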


\begin{proof}
Assuming the converse we find that the angle between any two distinct elements of the set $\lbrace \sigma \rbrace \cup \Supp \sigma$ is non-acute. Since the latter set is contained in a half-space of the $\QQ$-vector space spanned by~$\Supp \sigma$, the elements in $\lbrace \sigma \rbrace \cup \Supp \sigma$ have to be linearly independent, which is not the case.
\end{proof}

Recall from (\ref{eqn_K1(sigma)}) the set $\mathcal K^1(\sigma) \subset \mathcal K^1$ defined for every $\sigma \in \ZZ \Gamma$. As $\Phi(\Gamma) \subset \ZZ \Gamma$ by Lemma~\ref{lemma_straightforward}(\ref{lemma_straightforward_b}), the set $\mathcal K^1(\sigma)$ is also defined for every $\sigma \in \Phi(\Gamma)$.

\begin{lemma} \label{lemma_K1sigma_ne_0}
Suppose that $\sigma \in \ZZ \Gamma \cap (\ZZ^+ \Pi \setminus \lbrace 0 \rbrace)$. Then $\mathcal K^1(\sigma) \ne \varnothing$.
\end{lemma}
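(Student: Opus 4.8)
The plan is to argue by contradiction: I would assume $\mathcal{K}^1(\sigma) = \varnothing$, that is, $\langle \varrho, \sigma \rangle \le 0$ for every $\varrho \in \mathcal{K}^1$, and deduce that $\sigma$ must simultaneously lie in $-\QQ^+\Gamma$ and in $\QQ^+ \Pi \setminus \lbrace 0 \rbrace$, a combination that an inner-product computation rules out. The first thing to set up is the relevant geometry of $\mathcal{K}$. Since $\Gamma$ generates $\ZZ\Gamma$ as a group, the cone $\QQ^+\Gamma$ spans $\ZZ\Gamma \otimes_\ZZ \QQ$ and is thus full-dimensional; consequently its dual cone $\mathcal{K}$ is pointed, because a functional vanishing on all of $\QQ^+\Gamma$ vanishes on its linear span $\ZZ\Gamma \otimes_\ZZ \QQ$ and is therefore zero. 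As $\Gamma$ is finitely generated, $\mathcal{K}$ is a finitely generated cone, and a pointed finitely generated cone is the conical hull of its extremal rays; hence $\mathcal{K} = \QQ^+ \mathcal{K}^1$.

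With this in hand, the contradiction hypothesis propagates from the extremal-ray generators to the whole cone: since every $q \in \mathcal{K}$ is a nonnegative combination of elements of $\mathcal{K}^1$, the assumption $\langle \varrho, \sigma \rangle \le 0$ for all $\varrho \in \mathcal{K}^1$ yields $\langle q, \sigma \rangle \le 0$ for all $q \in \mathcal{K}$. Equivalently $-\sigma \in \mathcal{K}^\vee$, and applying the biduality $(\mathcal{C}^\vee)^\vee = \mathcal{C}$ to $\mathcal{C} = \QQ^+\Gamma$ gives $\mathcal{K}^\vee = \QQ^+\Gamma$, so $-\sigma \in \QQ^+\Gamma$.

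Finally I would run the inner-product argument. Write $-\sigma = \sum_i q_i \lambda_i$ with $q_i \in \QQ^+$ and $\lambda_i \in \Gamma \subset \Lambda^+$, and $\sigma = \sum_{\alpha \in \Pi} c_\alpha \alpha$ with $c_\alpha \in \ZZ^+$. Each $\lambda_i$ is dominant, so $(\alpha, \lambda_i) \ge 0$ for every simple root $\alpha$, and therefore $(\sigma, -\sigma) = \sum_i q_i \sum_{\alpha \in \Pi} c_\alpha (\alpha, \lambda_i) \ge 0$. On the other hand, $\sigma \ne 0$ lies in the root lattice, on which the fixed Weyl-invariant form $(\cdot\,, \cdot)$ is positive definite, so $(\sigma, -\sigma) = -(\sigma, \sigma) < 0$. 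This contradiction shows that $\mathcal{K}^1(\sigma) \ne \varnothing$.

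None of the steps is deep. The only point requiring genuine care—and the main obstacle such as it is—is the reduction $\mathcal{K} = \QQ^+ \mathcal{K}^1$ (verifying that $\mathcal{K}$ is pointed and finitely generated, hence equal to the hull of its extremal rays) together with keeping the two duality directions straight; the sign computation at the end is the crux but is entirely routine.
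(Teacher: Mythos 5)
Your proof is correct. It shares its skeleton with the paper's argument---assume $\mathcal K^1(\sigma)=\varnothing$, push $-\sigma$ into the weight cone, and contradict $\sigma\in\ZZ^+\Pi\setminus\lbrace 0\rbrace$---but the two halves are carried out differently. The paper uses the saturatedness of $\Gamma$, in the form $\Gamma = \lbrace \nu \in \ZZ\Gamma \mid \langle \kappa,\nu\rangle \ge 0 \text{ for all } \kappa\in\mathcal K^1\rbrace$, to conclude $-\sigma\in\Gamma$ on the nose, and then invokes Lemma~\ref{lemma_non-acute} to produce a single $\delta\in\Supp\sigma$ with $\langle\delta^\vee,\sigma\rangle>0$, which contradicts the dominance of $-\sigma$. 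You instead stay at the level of rational cones: biduality gives only $-\sigma\in\QQ^+\Gamma$, which suffices once you replace the pointwise argument by the global computation $(\sigma,-\sigma)\ge 0$ versus $-(\sigma,\sigma)<0$. Your route buys a little generality (it does not use saturation, so it would apply verbatim to a non-saturated finitely generated $\Gamma\subset\Lambda^+$) and avoids Lemma~\ref{lemma_non-acute}, at the cost of having to justify $\mathcal K=\QQ^+\mathcal K^1$ (pointedness plus finite generation of the dual cone), a step the paper effectively absorbs into its saturation identity. Both arguments are elementary and both are complete.
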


\begin{proof}
By Lemma~\ref{lemma_non-acute} there exists $\delta \in \Pi$ with $\langle \delta^\vee, \sigma \rangle > 0$. Now assume $\mathcal K^1(\sigma) = \varnothing$. Then $\langle \varrho, \sigma \rangle \le 0$ for all $\varrho \in \mathcal K^1$, hence $\sigma \in -\Gamma$. The latter yields $\langle \delta^\vee, \sigma \rangle \le 0$, a~contradiction.
\end{proof}

\begin{corollary} \label{crl_K1sigma_ne_0}
Suppose that $\sigma \in \Phi(\Gamma)$. Then $\mathcal K^1(\sigma) \ne \varnothing$.
\end{corollary}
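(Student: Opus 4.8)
The plan is to reduce the statement directly to Lemma~\ref{lemma_K1sigma_ne_0}, whose hypothesis is precisely that $\sigma$ lies in $\ZZ\Gamma \cap (\ZZ^+ \Pi \setminus \lbrace 0 \rbrace)$. Thus the entire task amounts to checking that every element of $\Phi(\Gamma)$ satisfies these three membership conditions, after which the conclusion $\mathcal K^1(\sigma) \ne \varnothing$ follows formally.

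First I would recall from \S\,\ref{subsec_preliminaries_for_the_proof} that, with respect to the action $a_\ad$, the set $\Phi(\Gamma)$ coincides exactly with the set of $T_\ad$-weights of the module $\EuScript{TS}$. Since $\EuScript{TS}$ is a $T_\ad$-submodule of $(V / \mathfrak g x_0)^{G_{x_0}}$, any $\sigma \in \Phi(\Gamma)$ is in particular a $T_\ad$-weight of $(V / \mathfrak g x_0)^{G_{x_0}}$. At this point Lemma~\ref{lemma_straightforward} applies verbatim and yields both $\sigma \in \ZZ^+ \Pi$ (part~(\ref{lemma_straightforward_a})) and $\sigma \in \ZZ\Gamma$ (part~(\ref{lemma_straightforward_b})). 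Finally, $\sigma \ne 0$ because $0 \notin \Phi(\Gamma)$ by Corollary~\ref{crl_weights_are_nonzero}, as already noted right after the definition of $\Phi(\Gamma)$. Combining these three facts gives $\sigma \in \ZZ\Gamma \cap (\ZZ^+ \Pi \setminus \lbrace 0 \rbrace)$, so Lemma~\ref{lemma_K1sigma_ne_0} delivers $\mathcal K^1(\sigma) \ne \varnothing$.

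I do not expect any genuine obstacle here: the corollary is a purely formal consequence of the already-established Lemma~\ref{lemma_K1sigma_ne_0} together with the three containments $\Phi(\Gamma) \subset \ZZ^+ \Pi$, $\Phi(\Gamma) \subset \ZZ\Gamma$, and $0 \notin \Phi(\Gamma)$. The only step meriting a moment's attention is the identification of $\Phi(\Gamma)$ with the weight set of $\EuScript{TS} \subset (V / \mathfrak g x_0)^{G_{x_0}}$ under the action $a_\ad$, which is what licenses the use of Lemma~\ref{lemma_straightforward}; but this identification has already been recorded among the features of the action $a_\ad$, so nothing new needs to be proved.
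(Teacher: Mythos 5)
Your proof is correct and follows exactly the paper's own route: the paper's proof is the one-line citation of Corollary~\ref{crl_weights_are_nonzero} together with Lemmas~\ref{lemma_straightforward} and~\ref{lemma_K1sigma_ne_0}, which is precisely the combination you spell out. The only thing you add is the explicit justification that Lemma~\ref{lemma_straightforward} applies to elements of $\Phi(\Gamma)$ via the identification of $\Phi(\Gamma)$ with the weight set of $\EuScript{TS} \subset (V/\mathfrak g x_0)^{G_{x_0}}$, which the paper leaves implicit.
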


\begin{proof}
This follows from Corollary~\ref{crl_weights_are_nonzero} along with Lemmas~\ref{lemma_straightforward} and~\ref{lemma_K1sigma_ne_0}.
\end{proof}

Recall the subset $\mathcal P \subset \mathcal K^1$ given by~(\ref{eqn_set_P}).

\begin{lemma} \label{lemma_positive_multiple}
Suppose that $\varrho \in \mathcal K^1 \setminus \mathcal P$. Then there exists $\delta \in \Pi$ such that $\iota(\delta^\vee)$ is a positive multiple of~$\varrho$.
\end{lemma}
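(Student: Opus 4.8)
The plan is to extract from the failure of the condition defining $\mathcal P$ a simple root $\delta$ whose image $\iota(\delta^\vee)$ lands on the extremal ray $\QQ^+\varrho$.

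First I would unwind the hypothesis. Since $\mathrm E_\varrho \subset \mathrm E$, one always has $\mathrm E^\perp \subset \mathrm E_\varrho^\perp$, so $\varrho \notin \mathcal P$ means exactly that this inclusion is strict; hence there is $\delta \in \mathrm E_\varrho^\perp \setminus \mathrm E^\perp$. By the definition of $(\cdot)^\perp$, membership $\delta \in \mathrm E_\varrho^\perp$ says that $\langle \delta^\vee, \lambda \rangle = 0$ for all $\lambda \in \mathrm E_\varrho$, whereas $\delta \notin \mathrm E^\perp$ provides some $\mu \in \mathrm E$ with $\langle \delta^\vee, \mu \rangle \ne 0$; as $\mu$ is dominant this forces $\langle \delta^\vee, \mu \rangle > 0$, so in particular $\iota(\delta^\vee) \ne 0$. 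Furthermore, every $\nu \in \Gamma$ is dominant, so $\langle \delta^\vee, \nu \rangle \ge 0$ and therefore $\iota(\delta^\vee) \in (\QQ^+\Gamma)^\vee = \mathcal K$. Thus $\iota(\delta^\vee)$ is a nonzero element of $\mathcal K$ vanishing on $\mathrm E_\varrho$, and what must be shown is that it is a positive multiple of $\varrho$.

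The core of the argument is to prove that $\mathrm E_\varrho$ spans the whole hyperplane $\ker \varrho = \lbrace q \in \ZZ\Gamma \otimes_\ZZ \QQ \mid \langle \varrho, q \rangle = 0 \rbrace$. Here I would invoke the standard polar duality between the faces of the full-dimensional cone $\QQ^+\Gamma \subset \ZZ\Gamma \otimes_\ZZ \QQ$ and those of its dual cone $\mathcal K$: since $\varrho \in \mathcal K^1$ generates an extremal ray of $\mathcal K$, the corresponding face $\QQ^+\Gamma \cap \ker \varrho$ of $\QQ^+\Gamma$ is a facet, of dimension $\rk \ZZ\Gamma - 1$. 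A direct computation then identifies this facet with $\QQ^+ \mathrm E_\varrho$: writing any of its points as a nonnegative combination of elements of $\mathrm E$ (which generate $\Gamma$) and using $\langle \varrho, \lambda \rangle \ge 0$ for all $\lambda \in \mathrm E$, only the $\lambda$ with $\langle \varrho, \lambda \rangle = 0$, that is $\lambda \in \mathrm E_\varrho$, can occur. Hence $\QQ \mathrm E_\varrho = \ker \varrho$.

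It then remains to assemble the pieces. The form $\iota(\delta^\vee)$ vanishes on $\mathrm E_\varrho$, hence on $\QQ \mathrm E_\varrho = \ker \varrho$; being nonzero and vanishing on the hyperplane $\ker \varrho$, its kernel is a hyperplane containing $\ker \varrho$ and therefore equal to it, so $\iota(\delta^\vee) = c\varrho$ for some $c \in \QQ \setminus \lbrace 0 \rbrace$. Finally, $\mathcal K$ is pointed because $\QQ^+\Gamma$ is full-dimensional; as $\iota(\delta^\vee)$ and $\varrho$ are nonzero elements of $\mathcal K$, a negative $c$ would place both $\varrho$ and $-\varrho$ in $\mathcal K$, contradicting pointedness. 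Hence $c > 0$ and $\iota(\delta^\vee)$ is the desired positive multiple of $\varrho$. I expect the one genuinely delicate point to be the facet identification of the third paragraph, where extremality of $\varrho$ is used via polar duality to ensure that $\mathrm E_\varrho$ spans a full hyperplane rather than a proper subspace; the remaining steps are routine.
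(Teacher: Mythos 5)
Your proof is correct and follows the same route as the paper's: extract $\delta \in \mathrm E_\varrho^\perp \setminus \mathrm E^\perp$ from the failure of $\varrho \in \mathcal P$ and observe that $\iota(\delta^\vee)$ is a nonzero element of $\mathcal K$ vanishing on $\mathrm E_\varrho$. The paper declares the conclusion obvious at that point; your facet/duality argument correctly supplies the implicit justification, namely that extremality of $\varrho$ forces $\QQ\,\mathrm E_\varrho = \ker\varrho$, after which positivity of the proportionality constant follows as you say.
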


\begin{proof}
Since $\varrho \notin \mathcal P$, it follows that $\mathrm E^\perp \ne \mathrm E_\varrho^\perp$. Then there exist $\delta \in \Pi$ and $\mu \in \mathrm E \setminus \mathrm E_\varrho$ such that $\langle \delta^\vee, \mu \rangle > 0$ and $\langle \delta^\vee, \lambda \rangle = 0$ for all $\lambda \in \mathrm E_\varrho$. Obviously, $\delta$ possesses the required property.
\end{proof}

\begin{proposition} \label{prop_canonical_form}
Let $\sigma \in \Phi(\Gamma)$, $\varrho \in \mathcal K^1(\sigma)$, $v \in V^{(T_\ad)}_\sigma$, and $[v] \in \EuScript{TS} \setminus \lbrace 0 \rbrace$. \begin{enumerate}[label=\textup{(\alph*)},ref=\textup{\alph*}]
\item \label{prop_canonical_form_a}
If $\sigma \in \Delta^+$ then there exist $v' \in V^{(T_\ad)}_\sigma$ and $c \in \Bbbk$ such that $v' = v - ce_{-\sigma}x_0$ and $\pr_\lambda(v') = 0$ for all $\lambda \in \mathrm E_\varrho$.

\item \label{prop_canonical_form_b}
If $\sigma \notin \Delta^+$ then $\pr_\lambda(v) = 0$ for all $\lambda \in \mathrm E_\varrho$.
\end{enumerate}
\end{proposition}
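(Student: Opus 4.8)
The plan is to reduce both cases to the single claim that
$$w := \sum_{\lambda \in \mathrm E_\varrho} \pr_\lambda(v) \in \mathfrak g z_\varrho, \qquad z_\varrho = \sum_{\lambda \in \mathrm E_\varrho} v_\lambda,$$
and then to read off (\ref{prop_canonical_form_a}) and (\ref{prop_canonical_form_b}) from the $T_\ad$-weight decomposition of $\mathfrak g z_\varrho$. Since each $v_\lambda$ is a highest weight vector, the decomposition $\mathfrak g = \mathfrak t \oplus \bigoplus_{\gamma \in \Delta} \Bbbk e_\gamma$ shows that $\mathfrak g z_\varrho$ is spanned by $\mathfrak t z_\varrho$ (of $T_\ad$-weight $0$) together with the vectors $e_{-\gamma} z_\varrho$, $\gamma \in \Delta^+$ (of $T_\ad$-weight $\gamma$), because $e_\gamma z_\varrho = 0$ for $\gamma \in \Delta^+$. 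As $w$ has $T_\ad$-weight $\sigma \neq 0$ (Corollary~\ref{crl_weights_are_nonzero}), the inclusion $w \in \mathfrak g z_\varrho$ forces $w = c\, e_{-\sigma} z_\varrho$ when $\sigma \in \Delta^+$ and $w = 0$ otherwise. Projecting to each $V(\lambda)$ with $\lambda \in \mathrm E_\varrho$ gives $\pr_\lambda(v) = c\, e_{-\sigma} v_\lambda$ in case~(\ref{prop_canonical_form_a}) and $\pr_\lambda(v) = 0$ in case~(\ref{prop_canonical_form_b}); setting $v' = v - c\, e_{-\sigma} x_0$ then yields~(\ref{prop_canonical_form_a}), since $\pr_\lambda(e_{-\sigma} x_0) = e_{-\sigma} v_\lambda$.

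To prove $w \in \mathfrak g z_\varrho$ I would split according to whether $\varrho \in \mathcal P$. If $\varrho \in \mathcal P$, the assertion is immediate: because $[v] \in \EuScript{TS}$, the section $s$ with $s(x_0) = [v]$ extends to $O \cup O_\varrho$ by Corollary~\ref{crl_[v]_in_TS}, and as $\langle \varrho, \sigma \rangle > 0$ (since $\varrho \in \mathcal K^1(\sigma)$), the contrapositive of Proposition~\ref{prop_section_extensions}(\ref{prop_section_extensions_a}) forces $w \in \mathfrak g z_\varrho$.

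The substantial case is $\varrho \in \mathcal K^1(\sigma) \setminus \mathcal P$, which I would treat representation-theoretically, replacing the section extension by an $\mathfrak{sl}_2$-argument. By Lemma~\ref{lemma_positive_multiple} there is $\delta \in \Pi$ with $\iota(\delta^\vee) = m\varrho$, $m > 0$; hence $\mathrm E_\varrho = \lbrace \lambda \in \mathrm E \mid \langle \delta^\vee, \lambda \rangle = 0 \rbrace$ and $p := \langle \delta^\vee, \sigma \rangle = m \langle \varrho, \sigma \rangle > 0$. The key input is that $e_\delta^k v \in \mathfrak g x_0$ for every $k \geq 1$: this holds for $k = 1$ because $[v]$ is $G_{x_0}$-fixed and $\delta \in \Delta^+$ (Lemma~\ref{lemma_edeltav2}), and it propagates since $e_\delta x_0 = 0$ gives $e_\delta(Y x_0) = [e_\delta, Y] x_0 \in \mathfrak g x_0$ for all $Y \in \mathfrak g$. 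Taking $k = p$ and using Lemma~\ref{lemma_intersection}, for $\lambda \in \mathrm E_\varrho$ the vector $e_\delta^p \pr_\lambda(v) = \pr_\lambda(e_\delta^p v)$ is a prescribed multiple $c'\, e_{-(\sigma - p\delta)} v_\lambda$ (with a single scalar $c'$ coming from the one equation $e_\delta^p v = c'\, e_{-(\sigma - p\delta)} x_0$) when $\sigma - p\delta \in \Delta^+$, and vanishes otherwise. Now $\langle \delta^\vee, \lambda \rangle = 0$ for $\lambda \in \mathrm E_\varrho$, so both $\pr_\lambda(v)$ and $e_{-\sigma} v_\lambda$ lie in the $h_\delta$-eigenspace of $V(\lambda)$ with eigenvalue $-p$, on which $e_\delta^p$ is injective by $\mathfrak{sl}_2$-theory. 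In case~(\ref{prop_canonical_form_a}) with $\sigma - p\delta \in \Delta^+$, comparing $e_\delta^p \pr_\lambda(v)$ with $e_\delta^p(e_{-\sigma} v_\lambda) = (\ad e_\delta)^p(e_{-\sigma})\, v_\lambda = c''\, e_{-(\sigma - p\delta)} v_\lambda$, where $c'' \neq 0$ is a $\lambda$-independent structure constant (the $\delta$-root string through $-\sigma$ being an irreducible $\mathfrak{sl}_2$-module), injectivity yields $\pr_\lambda(v) = (c'/c'')\, e_{-\sigma} v_\lambda$ with the \emph{same} constant for all $\lambda$; the residual possibility $\sigma = \delta$ forces $\sigma - p\delta = -\delta \notin \Delta^+$, hence $e_\delta^p \pr_\lambda(v) = 0$ and $\pr_\lambda(v) = 0 = e_{-\sigma} v_\lambda$. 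In case~(\ref{prop_canonical_form_b}), $\sigma \in \ZZ^+ \Pi \setminus \lbrace 0 \rbrace$ is not a root (Lemma~\ref{lemma_straightforward}(\ref{lemma_straightforward_a})), so $\sigma - p\delta \notin \Delta^+$ and the eigenvalue-$p$ vector $e_\delta^p \pr_\lambda(v)$ must vanish by Lemma~\ref{lemma_intersection}, whence $\pr_\lambda(v) = 0$. In all situations $w \in \mathfrak g z_\varrho$.

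The main obstacle is precisely this last case: no codimension-one orbit is available, so the section-theoretic argument of the $\mathcal P$-case is unavailable and one must argue inside $V(\lambda)$. The two delicate points are the injectivity of $e_\delta^p$ on the $h_\delta$-eigenspace of eigenvalue $-p$ and the non-vanishing (and $\lambda$-independence) of the structure constant $c''$; both rest on the irreducibility of the relevant $\mathfrak{sl}_2$-strings and on the fact that $v_\lambda$ is annihilated by $e_{\pm\delta}$ when $\langle \delta^\vee, \lambda \rangle = 0$, which is what allows one to commute $e_\delta^p$ past $e_{-\sigma}$ cleanly.
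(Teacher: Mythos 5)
Your proof is correct and follows essentially the same route as the paper's: the same dichotomy on whether $\varrho \in \mathcal P$, the same section-extension argument via Corollary~\ref{crl_[v]_in_TS} and Proposition~\ref{prop_section_extensions}(\ref{prop_section_extensions_a}) when $\varrho \in \mathcal P$, and an $\mathfrak{sl}_2$-argument for the subalgebra generated by $e_{\pm\delta}$ (with $\delta$ supplied by Lemma~\ref{lemma_positive_multiple}) when $\varrho \notin \mathcal P$. The only divergence is in the execution of that last step: the paper applies $e_\delta$ once and pins down $\pr_\lambda(v)$ inside the simple $\mathfrak h$-module generated by $e_{-(\sigma-\delta)}v_\lambda$ via Corollary~\ref{crl_sigma-root}, whereas you iterate to $e_\delta^{\,p}$ with $p = \langle \delta^\vee, \sigma \rangle$ and use injectivity of $e_\delta^{\,p}$ on the $(-p)$-eigenspace of $h_\delta$ together with $s_\delta(\sigma) = \sigma - p\delta$; both variants are valid and rest on the same facts.
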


\begin{proof}
Note that $\sigma \ne 0$ by Corollary~\ref{crl_weights_are_nonzero}. Recall the vector $z_\varrho$ given by~(\ref{eqn_z_varrho}) and set $w = \sum \limits_{\lambda \in \mathrm E_\varrho} \pr_\lambda(v)$. We consider two cases.

\textit{Case}~1: $\varrho \in \mathcal P$. Since $[v] \in \EuScript{TS}$, it follows from Corollary~\ref{crl_[v]_in_TS} and Proposition~\ref{prop_section_extensions}(\ref{prop_section_extensions_a}) that $w \in \mathfrak g z_\varrho$. Applying an analogue of Lemma~\ref{lemma_intersection} for $V^{(T_\ad)}_\sigma \cap \mathfrak gz_\varrho$, we obtain the following:
\begin{itemize}
\item
if $\sigma \notin \Delta^+$ then $w = 0$;

\item
if $\sigma \in \Delta^+$ then $w = c e_{-\sigma}z_\varrho$ for some $c \in \Bbbk$.
\end{itemize}
In the latter case, the vector $v' = v - ce_{-\sigma}x_0$ satisfies $\pr_\lambda(v') = 0$ for all $\lambda \in \mathrm E_\varrho$.

\textit{Case}~2: $\varrho \notin \mathcal P$. Assume that $w \ne 0$. By Lemma~\ref{lemma_positive_multiple}, there exists $\delta \in \Pi$ such that $\iota(\delta^\vee)$ is a positive multiple of~$\varrho$. Then $\langle \delta^\vee, \sigma \rangle > 0$ and $\langle \delta^\vee, \lambda \rangle = 0$ for all $\lambda \in \mathrm E_\varrho$. If $\sigma = \delta$ then $\pr_\lambda(v) \in \Bbbk e_{-\delta} v_\lambda = \lbrace 0 \rbrace$ for every $\lambda \in \mathrm E_\varrho$, which contradicts the assumption $w \ne 0$. So in what follows we assume that $\sigma \ne \delta$. We have $\langle \delta^\vee, \lambda - \sigma \rangle = -\langle \delta^\vee, \sigma \rangle <\nobreak 0$ for all $\lambda \in \mathrm E_\varrho$, therefore $e_\delta w \ne 0$ and hence $e_\delta v \ne 0$. Corollary~\ref{crl_sigma-root} implies that $\sigma - \delta \in \Delta^+$ and $e_\delta v = ce_{-(\sigma - \delta)}x_0$ for some $c \in \Bbbk^\times$. In particular, $e_\delta \pr_\lambda(v) = c e_{-(\sigma - \delta)}v_\lambda$ for all $\lambda \in \mathrm E$.

Let $\mathfrak h \simeq \mathfrak{sl}_2$ be the Lie subalgebra of $\mathfrak g$ generated by $e_\delta$ and $e_{-\delta}$. Fix $\lambda \in \mathrm E_\varrho$ such that $\pr_\lambda(v) \ne 0$. Let $R^\lambda \subset V(\lambda)$ be the $\mathfrak h$-submodule generated by $e_{-(\sigma - \delta)}v_\lambda$. Since $\langle \delta^\vee, \lambda \rangle = 0$, it follows that $R^\lambda$ is a simple $\mathfrak h$-module with highest weight $2l - \langle \delta^\vee, \sigma \rangle$, where $l$ is the maximal integer such that $\sigma - l \delta \in \Delta^+$. Note that $\pr_\lambda(v) \in R^\lambda$ since otherwise the inequality $\langle \delta^\vee, \lambda - \sigma \rangle < 0$ would imply $e_\delta \pr_\lambda(v) \notin R^\lambda$, which is not the case. We conclude that $\pr_\lambda(v) = de_{-\delta}e_\delta \pr_\lambda(v)$ for some scalar $d \in \Bbbk^\times$ that depends only on $\sigma$ and~$\delta$ (and not on~$\lambda$).

It follows from the previous paragraph that
\[
w = cd\sum \limits_{\lambda \in \mathrm E_\varrho} e_{-\delta}e_{-(\sigma - \delta)}v_\lambda = cd\sum \limits_{\lambda \in \mathrm E_\varrho} [e_{-\delta}, e_{-(\sigma - \delta)}]v_\lambda.
\]
Recall that $w \ne 0$, therefore $\sigma \in \Delta^+$ and $w = c' \sum \limits_{\lambda \in \mathrm E_\varrho} e_{-\sigma} v_\lambda$ for some $c' \in \Bbbk^\times$. Now the vector $v' = v - c'e_{-\sigma} x_0$ satisfies $\pr_\lambda(v) = 0$ for all $\lambda \in \mathrm E_\varrho$. Since the assumption $w \ne 0$ implies $\sigma \in \Delta^+$, the proof is completed.
\end{proof}

\begin{lemma} \label{lemma_v_alpha}
Under the assumptions of Proposition~\textup{\ref{prop_canonical_form}}, suppose in addition that $\sigma = \alpha \in \Pi$ and $\pr_\lambda(v) = 0$ for all $\lambda \in \mathrm E_\varrho$. Then
\begin{enumerate}[label=\textup{(\alph*)},ref=\textup{\alph*}]
\item \label{lemma_v_alpha_a}
$\langle \alpha^\vee, \mu \rangle > 0$ for all $\mu \in \mathrm E \setminus \mathrm E_\varrho$;

\item \label{lemma_v_alpha_b}
there exists $c \in \Bbbk^\times$ such that
\[
v = c \sum \limits_{\mu \in \mathrm E \setminus \mathrm E_\varrho} \frac{\langle \varrho, \mu \rangle}{\langle \alpha^\vee, \mu \rangle} e_{-\alpha}v_\mu.
\]
\end{enumerate}
\end{lemma}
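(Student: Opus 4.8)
The plan is to determine $v$ completely by combining the weight constraints on a $T_\ad$-eigenvector with the $G_{x_0}$-invariance of $[v]$, the decisive structural input being the extremality of $\varrho$; in fact only the invariance $[v]\in(V/\mathfrak gx_0)^{G_{x_0}}$ will be used, not the stronger membership in $\EuScript{TS}$. First I would pin down the shape of $v$. Since $v\in V^{(T_\ad)}_\alpha$, each component $\pr_\mu(v)\in V(\mu)$ is a $T$-weight vector of weight $\mu-\alpha$; as $\alpha$ is simple and $\mu$ dominant, the space $V(\mu)_{\mu-\alpha}$ is zero when $\langle\alpha^\vee,\mu\rangle=0$ and equals $\Bbbk e_{-\alpha}v_\mu$ otherwise. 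Together with the hypothesis $\pr_\lambda(v)=0$ for $\lambda\in\mathrm E_\varrho$, this gives $v=\sum_{\mu\in\mathrm E\setminus\mathrm E_\varrho}c_\mu e_{-\alpha}v_\mu$ for suitable $c_\mu\in\Bbbk$, with $c_\mu=0$ whenever $\langle\alpha^\vee,\mu\rangle=0$. Thus proving~(\ref{lemma_v_alpha_a}) amounts to showing $c_\mu\neq 0$ for every $\mu\in\mathrm E\setminus\mathrm E_\varrho$, after which (\ref{lemma_v_alpha_b}) merely reads off the coefficients.

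Next I would exploit $G_{x_0}$-invariance. By Lemmas~\ref{lemma_edeltav2} and~\ref{lemma_intersection}, applying $e_\alpha$ gives $e_\alpha v\in V^{(T_\ad)}_0\cap\mathfrak gx_0=\mathfrak tx_0$. A direct $\mathfrak{sl}_2$-computation yields $e_\alpha e_{-\alpha}v_\mu=\langle\alpha^\vee,\mu\rangle v_\mu$, so $e_\alpha v=\sum_{\mu\in\mathrm E\setminus\mathrm E_\varrho}c_\mu\langle\alpha^\vee,\mu\rangle v_\mu$. Writing this equality in $V$ as $h\cdot x_0=\sum_{\lambda\in\mathrm E}\lambda(h)v_\lambda$ for some $h\in\mathfrak t$ and comparing coefficients of the linearly independent vectors $v_\lambda$, I obtain $\lambda(h)=0$ for all $\lambda\in\mathrm E_\varrho$ and $c_\mu\langle\alpha^\vee,\mu\rangle=\mu(h)$ for all $\mu\in\mathrm E\setminus\mathrm E_\varrho$. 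In other words, the functional $\iota(\widetilde h)\in\mathcal Q$, where $\widetilde h\colon\chi\mapsto\chi(h)$, vanishes on $\mathrm E_\varrho$.

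The crux is a cone-duality argument. Since $\varrho$ spans an extremal ray of $\mathcal K=(\QQ^+\Gamma)^\vee$ and $\QQ^+\Gamma$ is full-dimensional and pointed in $\ZZ\Gamma\otimes_\ZZ\QQ$ (so that $\mathcal K$ is full-dimensional in $\mathcal Q$), the corresponding face $\QQ^+\Gamma\cap\varrho^\perp$ is a facet; being generated by the generators of $\QQ^+\Gamma$ lying on it, namely by $\mathrm E_\varrho$, it equals $\QQ^+\mathrm E_\varrho$. Hence $\mathrm E_\varrho$ spans a hyperplane in $\ZZ\Gamma\otimes_\ZZ\QQ$, so the functionals on $\ZZ\Gamma$ vanishing on $\mathrm E_\varrho$ form a one-dimensional space, necessarily $\Bbbk\varrho$. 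Therefore $\iota(\widetilde h)=c\varrho$ for some $c\in\Bbbk$, giving $\mu(h)=c\langle\varrho,\mu\rangle$ and thus $c_\mu\langle\alpha^\vee,\mu\rangle=c\langle\varrho,\mu\rangle$ for every $\mu\in\mathrm E\setminus\mathrm E_\varrho$.

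Finally I would rule out $c=0$: otherwise $e_\alpha v=0$, while $e_\delta v=0$ already holds for every $\delta\in\Pi\setminus\lbrace\alpha\rbrace$ (since $[e_\delta,e_{-\alpha}]=0$ and $e_\delta v_\mu=0$), so Lemma~\ref{lemma_edeltav1} would force $v=0$, contradicting $[v]\neq 0$. With $c\neq 0$ and $\langle\varrho,\mu\rangle>0$ for every $\mu\in\mathrm E\setminus\mathrm E_\varrho$ (as $\varrho\in\mathcal K$ and $\mu\notin\mathrm E_\varrho$), the identity $c_\mu\langle\alpha^\vee,\mu\rangle=c\langle\varrho,\mu\rangle$ forces $\langle\alpha^\vee,\mu\rangle\neq 0$, hence $\langle\alpha^\vee,\mu\rangle>0$, proving~(\ref{lemma_v_alpha_a}); substituting back yields $c_\mu=c\langle\varrho,\mu\rangle/\langle\alpha^\vee,\mu\rangle$, which is precisely~(\ref{lemma_v_alpha_b}). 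The one genuinely delicate point is this extremality/facet duality: full-dimensionality and pointedness of $\QQ^+\Gamma$ must be invoked to guarantee that $\mathrm E_\varrho$ spans a hyperplane, which is what collapses the a priori large solution space for $h$ to the single line $\Bbbk\varrho$ and so rigidifies $v$.
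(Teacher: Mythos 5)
Your proof is correct and follows essentially the same route as the paper's: write $v=\sum c_\mu e_{-\alpha}v_\mu$, use $e_\alpha v\in\mathfrak t x_0$ together with the vanishing on $\mathrm E_\varrho$ to force $e_\alpha v$ to be governed by a functional proportional to $\varrho$, and compare with $e_\alpha e_{-\alpha}v_\mu=\langle\alpha^\vee,\mu\rangle v_\mu$; you merely make explicit the extremal-ray/facet duality that the paper leaves implicit and derive $e_\alpha v\ne 0$ by a direct computation instead of citing Corollary~\ref{crl_sigma-root}. One small caveat: your parenthetical claim that $\QQ^+\Gamma$ is pointed is neither needed nor always true (it may contain a line of characters trivial on the semisimple part of $G$); the fact that $\varrho^\perp\cap\QQ^+\Gamma$ is a facet, hence that $\mathrm E_\varrho$ spans a hyperplane, already follows from the general face duality $\dim\tau+\dim(\mathcal C^\vee\cap\tau^\perp)=n$ for polyhedral cones, with no pointedness hypothesis.
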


\begin{proof}
Consider the expression $v = \sum \limits_{\mu \in \mathrm E \setminus \mathrm E_\varrho} c_\mu e_{-\alpha} v_\mu$, where $c_\mu \in \Bbbk$ for all $\mu \in \mathrm E \setminus \mathrm E_\varrho$. Combining Lemma~\ref{lemma_edeltav1} with Corollary~\ref{crl_sigma-root}(\ref{crl_sigma-root_a}) yields $e_\alpha v \ne 0$. It then follows from Lemmas~\ref{lemma_edeltav2} and~\ref{lemma_intersection} that $e_\alpha v = yx_0$ for some $y \in \mathfrak t$. In particular, for every $\lambda \in \mathrm E_\varrho$ the condition $\pr_\lambda(v) = 0$ implies $\lambda(y) = 0$. Therefore the restriction of~$y$ (regarded as an element of $\mathcal Q \otimes_\ZZ \Bbbk$) to $\ZZ \Gamma \otimes_\ZZ \Bbbk$ is proportional to~$\varrho$, and so
\begin{equation} \label{eqn_one}
e_\alpha v = c\sum \limits_{\mu \in \mathrm E \setminus \mathrm E_\varrho} \langle \varrho, \mu \rangle v_\mu
\end{equation}
for some $c \in \Bbbk^\times$. On the other hand, one has
\begin{equation} \label{eqn_two}
e_\alpha v = \sum \limits_{\mu \in \mathrm E \setminus \mathrm E_\varrho} c_\mu e_\alpha e_{-\alpha} v_\mu = \sum \limits_{\mu \in \mathrm E \setminus \mathrm E_\varrho} c_\mu \langle \alpha^\vee, \mu \rangle v_\mu.
\end{equation}
Comparing~(\ref{eqn_one}) with~(\ref{eqn_two}) we obtain the required results.
\end{proof}

\begin{proposition}\label{prop_role_of_NE2}
Suppose that $\sigma \in \Phi(\Gamma)$ and $\varrho \in \mathcal K^1(\sigma)$. Then the following conditions are equivalent:
\begin{enumerate}[label=\textup{(\arabic*)},ref=\textup{\arabic*}]
\item \label{NE2holds}
$\varrho \in \mathcal P$.

\item \label{sigma_is_simple}
$\sigma \in \Pi$.
\end{enumerate}
\end{proposition}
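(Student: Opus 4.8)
The plan is to prove both implications by contradiction, the common engine being the following reformulation of membership in~$\mathcal P$: combining Lemma~\ref{lemma_positive_multiple} with an immediate converse, a ray $\varrho\in\mathcal K^1$ \emph{fails} to lie in $\mathcal P$ if and only if there is a simple root $\gamma\in\Pi$ such that $\iota(\gamma^\vee)$ is a positive multiple of~$\varrho$. (The converse is clear: if $\iota(\gamma^\vee)=r\varrho$ with $r>0$, then $\gamma\in\mathrm E_\varrho^\perp\setminus\mathrm E^\perp$, so $\mathrm E^\perp\ne\mathrm E_\varrho^\perp$.) Throughout I would fix $v\in V^{(T_\ad)}_\sigma$ with $[v]\in\EuScript{TS}\setminus\lbrace0\rbrace$ and, using Proposition~\ref{prop_canonical_form}, normalize it so that $\pr_\lambda(v)=0$ for all $\lambda\in\mathrm E_\varrho$ (this does not change $[v]$). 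The geometric input I would record at the outset is that, since $\varrho$ spans an extremal ray of~$\mathcal K$, the cone $\QQ^+\mathrm E_\varrho$ is a facet of $\QQ^+\Gamma$; hence $\mathrm E_\varrho$ spans the hyperplane $\varrho^\perp\subset\ZZ\Gamma\otimes_\ZZ\QQ$, so that any functional in $\mathcal Q$ vanishing on $\mathrm E_\varrho$ is automatically a scalar multiple of~$\varrho$.

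For the implication $(\ref{sigma_is_simple})\Rightarrow(\ref{NE2holds})$, I would write $\sigma=\alpha\in\Pi$ and apply Lemma~\ref{lemma_v_alpha}, which yields $\langle\alpha^\vee,\mu\rangle>0$ for all $\mu\in\mathrm E\setminus\mathrm E_\varrho$ together with the explicit expression for~$v$. Assuming $\varrho\notin\mathcal P$, pick $\delta\in\Pi$ with $\iota(\delta^\vee)=r\varrho$, $r>0$. Since $\alpha\in\ZZ\Gamma$ (Lemma~\ref{lemma_straightforward}) and $\langle\varrho,\alpha\rangle>0$, we get $\langle\delta^\vee,\alpha\rangle=r\langle\varrho,\alpha\rangle>0$; as off-diagonal Cartan entries are non-positive, this forces $\delta=\alpha$. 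But then $\langle\alpha^\vee,\mu\rangle=r\langle\varrho,\mu\rangle$ for every $\mu$, so all coefficients in the formula of Lemma~\ref{lemma_v_alpha} equal $1/r$ and $v$ collapses to $\tfrac cr\,e_{-\alpha}x_0\in\mathfrak g x_0$, i.e. $[v]=0$ --- a contradiction. Hence $\varrho\in\mathcal P$.

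The substance of the proposition is the reverse implication $(\ref{NE2holds})\Rightarrow(\ref{sigma_is_simple})$. Assuming $\sigma\notin\Pi$, Lemma~\ref{lemma_edeltav1} supplies $\delta\in\Pi$ with $e_\delta v\ne0$; since then $\delta\ne\sigma$, Corollary~\ref{crl_sigma-root} gives $\beta:=\sigma-\delta\in\Delta^+$ and $e_\delta v=c\,e_{-\beta}x_0$ with $c\ne0$. For $\lambda\in\mathrm E_\varrho$ the normalization gives $0=e_\delta\pr_\lambda(v)=c\,e_{-\beta}v_\lambda$, so $e_{-\beta}v_\lambda=0$; as $v_\lambda$ is a highest weight vector it is also one for the $\mathfrak{sl}_2$ attached to~$\beta$, whence $\langle\beta^\vee,\lambda\rangle=0$ for all $\lambda\in\mathrm E_\varrho$. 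I would then expand $\beta^\vee=\sum_{\gamma\in\Supp\beta}c_\gamma\gamma^\vee$ with all $c_\gamma>0$: because each $\lambda\in\mathrm E_\varrho$ is dominant, every $\langle\gamma^\vee,\lambda\rangle\ge0$, so vanishing of the sum forces $\langle\gamma^\vee,\lambda\rangle=0$ termwise. Thus each $\iota(\gamma^\vee)$ vanishes on $\mathrm E_\varrho$ and is a non-negative multiple $s_\gamma\varrho$ of~$\varrho$ by the facet remark; evaluating $\beta^\vee$ on some $\mu\in\mathrm E$ with $e_{-\beta}v_\mu\ne0$ (which exists since $e_\delta v\ne0$) gives $\sum_\gamma c_\gamma s_\gamma\langle\varrho,\mu\rangle=\langle\beta^\vee,\mu\rangle>0$, so some $s_\gamma>0$. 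This produces a simple root $\gamma$ with $\iota(\gamma^\vee)\in\QQ^+\varrho\setminus\lbrace0\rbrace$, contradicting $\varrho\in\mathcal P$.

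The main obstacle --- and the step I expect to demand the most care --- is precisely that $\beta=\sigma-\delta$ need not be a simple root, so one cannot read off a simple coroot proportional to~$\varrho$ directly. The device that resolves this is the termwise positivity argument: dominance of the generators in $\mathrm E_\varrho$ upgrades the single relation $\langle\beta^\vee,\lambda\rangle=0$ to the vanishing of every $\langle\gamma^\vee,\lambda\rangle$ with $\gamma\in\Supp\beta$, after which the extremality of $\varrho$ (the facet-spanning fact) finishes the job. I would finally double-check that the dependence on saturatedness of~$\Gamma$ enters only through the normalization of~$v$ provided by Proposition~\ref{prop_canonical_form}, which is already available in the present setting.
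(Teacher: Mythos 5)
Your proposal is correct and follows essentially the same route as the paper: normalize $v$ via Proposition~\ref{prop_canonical_form} so that $\pr_\lambda(v)=0$ on $\mathrm E_\varrho$, then for $(1)\Rightarrow(2)$ use Lemma~\ref{lemma_edeltav1} and Corollary~\ref{crl_sigma-root} to get $e_\delta v=ce_{-(\sigma-\delta)}x_0$ and upgrade $\langle(\sigma-\delta)^\vee,\lambda\rangle=0$ termwise by dominance to contradict $\mathrm E^\perp=\mathrm E_\varrho^\perp$, and for $(2)\Rightarrow(1)$ use Lemma~\ref{lemma_positive_multiple} to force $\delta=\alpha$ and collapse the formula of Lemma~\ref{lemma_v_alpha} to a multiple of $e_{-\alpha}x_0$. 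The only cosmetic difference is that you detour through the facet/extremal-ray remark to phrase the contradiction in terms of $\iota(\gamma^\vee)$ being a positive multiple of $\varrho$, whereas the paper contradicts $\mathrm E^\perp=\mathrm E_\varrho^\perp$ directly.
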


\begin{proof}
Let $v \in V^{(T_\ad)}_\sigma$ be such that $[v] \in \EuScript{TS} \setminus \lbrace 0 \rbrace$. Taking into account Proposition~\ref{prop_canonical_form}, we may assume that $\pr_\lambda(v) = 0$ for all $\lambda \in \mathrm E_\varrho$.

(\ref{NE2holds})$\Rightarrow$(\ref{sigma_is_simple}) Thanks to Lemma~\ref{lemma_edeltav1} there exists $\alpha \in \Pi$ such that $e_\alpha v \ne 0$. Assume that $\sigma - \alpha \ne 0$. Then Corollary~\ref{crl_sigma-root} implies $\sigma - \alpha \in \Delta^+$ and $e_{\alpha}v = c e_{-(\sigma - \alpha)} x_0$ for some $c \in \Bbbk^\times$. It follows that $\langle (\sigma - \alpha)^\vee, \lambda \rangle = 0$ for all $\lambda \in \mathrm E_\varrho$ and $\langle (\sigma - \alpha)^\vee, \mu \rangle > 0$ for some $\mu \in \mathrm E \setminus \mathrm E_\varrho$. Consequently, $\Supp(\sigma - \alpha) \subset \mathrm E_\varrho^\perp$ and $\Supp(\sigma - \alpha) \not\subset \mathrm E^\perp$, which contradicts~(\ref{NE2holds}). Thus $\sigma = \alpha$.

(\ref{sigma_is_simple})$\Rightarrow$(\ref{NE2holds}) Let $\sigma = \alpha \in \Pi$ and assume that $\varrho \notin \mathcal P$. By Lemma~\ref{lemma_positive_multiple} there exists $\delta \in \Pi$ such that $\iota(\delta^\vee)$ is a positive multiple of~$\varrho$. Then $\langle \delta^\vee, \alpha \rangle > 0$ and hence $\delta = \alpha$. Applying Lemma~\ref{lemma_v_alpha}(\ref{lemma_v_alpha_b}) we obtain $v = c\sum \limits_{\mu \in \mathrm E \setminus \mathrm E_\varrho} e_{-\alpha} v_\mu = ce_{-\alpha}x_0$ for some $c \in \Bbbk^\times$, hence $v \in \mathfrak gx_0$ and $[v] = 0$, a~contradiction.
\end{proof}

\subsection{Proof of Theorem~\ref{thm_tangent_space}: Step~1}
\label{subsec_step_1}

The goal of this subsection is to show that every weight $\sigma \in \Phi(\Gamma)$ satisfies conditions (\ref{Phi1})--(\ref{Phi8}) along with the following one:
\begin{enumerate}
\renewcommand{\labelenumi}{(MF)}
\renewcommand{\theenumi}{MF}

\item \label{MF}
the multiplicity of $\sigma$ in $\EuScript{TS}$ equals~$1$.
\end{enumerate}

For the rest of this subsection, we fix a weight $\sigma \in \Phi(\Gamma)$ and a vector $v \in V^{(T_\ad)}_\sigma$ such that $[v] \in \EuScript{TS} \setminus \lbrace 0 \rbrace$. Recall the set $\mathcal K^1(\sigma)$ given by~(\ref{eqn_K1(sigma)}), which is nonempty by Corollary~\ref{crl_K1sigma_ne_0}.

Property~(\ref{Phi1}) has already been established in Lemma~\ref{lemma_straightforward}(\ref{lemma_straightforward_b}).

\begin{proof}[Proof of~\textup(\ref{Phi7}\textup)] Suppose that $\sigma \in \Phi(\Gamma) \setminus \Pi$ and take any $\varrho \in \mathcal K^1(\sigma)$. Then Proposition~\ref{prop_role_of_NE2} yields $\varrho \notin \mathcal P$. By Lemma~\ref{lemma_positive_multiple}, there exists $\delta \in \Pi$ such that $\iota(\delta^\vee)$ is a positive multiple of~$\varrho$. Clearly, $\delta \notin \Gamma^\perp$.
\end{proof}

It remains to establish properties (\ref{Phi2})--(\ref{Phi6}), (\ref{Phi8}), and~(\ref{MF}). We consider four cases.

\subsubsection{Case $\sigma = \alpha \in \Pi$}

Properties (\ref{Phi2}) and (\ref{Phi3}) hold automatically. Property (\ref{MF}) follows from Proposition~\ref{prop_canonical_form}(\ref{prop_canonical_form_a}) and Lemma~\ref{lemma_v_alpha}(\ref{lemma_v_alpha_b}). It remains to prove~(\ref{Phi8}).

\begin{lemma} \label{lemma_value_is_one_bis}
Suppose that $\varrho \in \mathcal K^1(\alpha)$. Then $\langle \varrho, \alpha \rangle = 1$.
\end{lemma}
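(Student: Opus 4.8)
The plan is to show that for $\varrho \in \mathcal{K}^1(\alpha)$, where $\alpha \in \Pi$ is a simple root lying in $\Phi(\Gamma)$, the pairing $\langle \varrho, \alpha \rangle$ must equal exactly $1$. By definition of $\mathcal{K}^1(\alpha)$ we already know $\langle \varrho, \alpha \rangle > 0$, so the content is the upper bound $\langle \varrho, \alpha \rangle \le 1$, i.e.\ ruling out $\langle \varrho, \alpha \rangle \ge 2$.

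First I would invoke the accumulated structure: by Proposition~\ref{prop_role_of_NE2}, since $\sigma = \alpha \in \Pi$, any $\varrho \in \mathcal{K}^1(\alpha)$ actually lies in $\mathcal{P}$. Next, using Proposition~\ref{prop_canonical_form}(\ref{prop_canonical_form_a}), I may choose a canonical representative $v \in V^{(T_\ad)}_\alpha$ with $[v] \in \EuScript{TS} \setminus \lbrace 0 \rbrace$ and $\pr_\lambda(v) = 0$ for all $\lambda \in \mathrm E_\varrho$. Then Lemma~\ref{lemma_v_alpha} applies: part~(\ref{lemma_v_alpha_a}) gives $\langle \alpha^\vee, \mu \rangle > 0$ for all $\mu \in \mathrm E \setminus \mathrm E_\varrho$, and part~(\ref{lemma_v_alpha_b}) writes $v$ explicitly as $c \sum_{\mu \in \mathrm E \setminus \mathrm E_\varrho} \frac{\langle \varrho, \mu \rangle}{\langle \alpha^\vee, \mu \rangle} e_{-\alpha} v_\mu$. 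This pins down the eigenvector completely up to scalar.

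The heart of the argument should be to combine the extension criterion of Proposition~\ref{prop_section_extensions}(\ref{prop_section_extensions_c}) with Lemma~\ref{lemma_value_is_one}. Since $\varrho \in \mathcal{P} \subset \mathcal{K}^1$, Lemma~\ref{lemma_value_is_one} guarantees the existence of some $\mu_0 \in \mathrm E$ with $\langle \varrho, \mu_0 \rangle = 1$; such a $\mu_0$ necessarily lies in $\mathrm E \setminus \mathrm E_\varrho$. The plan is then to suppose for contradiction that $\langle \varrho, \alpha \rangle \ge 2$. Because the representative $v$ satisfies $\sum_{\lambda \in \mathrm E_\varrho} \pr_\lambda(v) = 0$ and we have an index $\mu_0$ with $\pr_{\mu_0}(v) \ne 0$ (its coefficient $\frac{\langle \varrho, \mu_0 \rangle}{\langle \alpha^\vee, \mu_0 \rangle}$ is nonzero by Lemma~\ref{lemma_v_alpha}) and $\langle \varrho, \alpha \rangle > \langle \varrho, \mu_0 \rangle = 1$, the hypotheses of Proposition~\ref{prop_section_extensions}(\ref{prop_section_extensions_c}) are met with $\nu = \mu_0$. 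That proposition then forces the section $s$ not to extend to $O \cup O_\varrho$, contradicting $[v] \in \EuScript{TS}$ via Corollary~\ref{crl_[v]_in_TS} (as $\varrho \in \mathcal{P}$, this orbit must be accounted for). Hence $\langle \varrho, \alpha \rangle = 1$.

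The main obstacle I anticipate is verifying carefully that a witness $\mu_0$ with $\langle \varrho, \mu_0 \rangle = 1$ and $\pr_{\mu_0}(v) \ne 0$ genuinely exists and that it is the minimal-pairing index needed to trigger part~(\ref{prop_section_extensions_c}). One must check that the $\mu_0$ produced by Lemma~\ref{lemma_value_is_one} actually appears with nonzero projection in the explicit formula for $v$; since every $\mu \in \mathrm E \setminus \mathrm E_\varrho$ has $\langle \alpha^\vee, \mu \rangle > 0$ and the coefficient is $\langle \varrho, \mu \rangle / \langle \alpha^\vee, \mu \rangle$, nonvanishing of $\pr_{\mu_0}(v)$ amounts to $\langle \varrho, \mu_0 \rangle \ne 0$, which holds since it equals $1$. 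Reconciling the index chosen in the extension criterion (which wants the \emph{smallest} positive value of $\langle \varrho, \mu \rangle$ among $\mu$ with $\pr_\mu(v)\ne 0$) with the value $1$ supplied by Lemma~\ref{lemma_value_is_one} is the one delicate bookkeeping point, but since $1$ is the minimal possible positive integer value of the primitive functional $\varrho$ on $\mathbb{Z}\Gamma$, this minimality is automatic.
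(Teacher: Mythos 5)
Your argument is correct and is essentially the paper's own proof, just spelled out in more detail: both pass to the canonical representative via Proposition~\ref{prop_canonical_form}(\ref{prop_canonical_form_a}), use Proposition~\ref{prop_role_of_NE2} to place $\varrho$ in $\mathcal P$, and then combine Lemmas~\ref{lemma_v_alpha} and~\ref{lemma_value_is_one} with Proposition~\ref{prop_section_extensions}(\ref{prop_section_extensions_c}) and Corollary~\ref{crl_[v]_in_TS} to rule out $\langle \varrho, \alpha \rangle \ge 2$. The one "delicate point" you flag is handled correctly, since part~(\ref{prop_section_extensions_c}) only requires the existence of some $\nu$ with $\pr_\nu(v) \ne 0$ and $\langle \varrho, \sigma \rangle > \langle \varrho, \nu \rangle$.
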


\begin{proof}
By Proposition~\ref{prop_canonical_form}(\ref{prop_canonical_form_a}) we may assume that $\pr_\lambda(v) = 0$ for all $\lambda \in \mathrm E_\varrho$. Note that $\varrho \in \mathcal P$ by Proposition~\ref{prop_role_of_NE2}. Then it follows from Corollary~\ref{crl_[v]_in_TS} that the section $s \in H^0(O, \mathcal N_O)^G$ given by $s(x_0) = [v]$ extends to $O \cup O_\varrho$. Taking into account Lemmas~\ref{lemma_v_alpha} and~\ref{lemma_value_is_one} along with Proposition~\ref{prop_section_extensions}(\ref{prop_section_extensions_c}), we get $\langle \varrho, \alpha \rangle = 1$.
\end{proof}

\begin{lemma} \label{lemma_not_more_than_two}
There are inequalities $1 \le |\mathcal K^1(\alpha)| \le 2$.
\end{lemma}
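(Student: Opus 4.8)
The lower bound $|\mathcal K^1(\alpha)| \ge 1$ is already available: it is exactly Corollary~\ref{crl_K1sigma_ne_0}. The whole content is therefore the upper bound, and the plan is to show that every $\varrho \in \mathcal K^1(\alpha)$ produces \emph{one and the same} linear functional on $\ZZ\Gamma$, which confines all of them to a fixed $2$-dimensional subspace of $\mathcal Q \otimes_\QQ \Bbbk$; extremality will then cap their number at~$2$. Throughout I work with the fixed vector $v \in V^{(T_\ad)}_\alpha$ satisfying $[v] \in \EuScript{TS} \setminus \lbrace 0 \rbrace$. Since $\alpha$ is simple, the weight $\mu - \alpha$ occurs in $V(\mu)$ with multiplicity at most $1$, so I may write $v = \sum_{\mu \in S} a_\mu e_{-\alpha} v_\mu$ with fixed scalars $a_\mu \in \Bbbk$, where $S = \lbrace \mu \in \mathrm E \mid \langle \alpha^\vee, \mu \rangle > 0 \rbrace$.

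First I would attach to each $\varrho \in \mathcal K^1(\alpha)$ a functional. By Proposition~\ref{prop_role_of_NE2} we have $\varrho \in \mathcal P$, and by Proposition~\ref{prop_canonical_form}(\ref{prop_canonical_form_a}) there is a representative $v' = v - c_\varrho e_{-\alpha} x_0$ with $\pr_\lambda(v') = 0$ for all $\lambda \in \mathrm E_\varrho$. Applying Lemma~\ref{lemma_v_alpha} to $v'$ yields both $\langle \alpha^\vee, \mu \rangle > 0$ for all $\mu \in \mathrm E \setminus \mathrm E_\varrho$ (hence $\langle \alpha^\vee, \mu \rangle = 0$ forces $\mu \in \mathrm E_\varrho$, i.e.\ $\langle \varrho, \mu \rangle = 0$) and the explicit shape of $v'$. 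Rewriting $v = v' + c_\varrho e_{-\alpha} x_0$ and comparing coefficients of the linearly independent vectors $e_{-\alpha} v_\mu$, I expect to obtain, for a suitable $c^\varrho \in \Bbbk^\times$, the identity $a_\mu \langle \alpha^\vee, \mu \rangle = \langle c_\varrho \iota(\alpha^\vee) + c^\varrho \varrho, \mu \rangle$ for \emph{every} $\mu \in \mathrm E$ --- including the indices with $\langle \alpha^\vee, \mu \rangle = 0$, where both sides vanish by Lemma~\ref{lemma_v_alpha}(\ref{lemma_v_alpha_a}). Since $\mathrm E$ spans $\ZZ\Gamma \otimes_\ZZ \Bbbk$, the functional $\psi_\varrho := c_\varrho \iota(\alpha^\vee) + c^\varrho \varrho$ is determined by these right-hand values, which depend only on the fixed $v$; hence $\psi_\varrho$ is a single functional $\psi$, the same for all $\varrho \in \mathcal K^1(\alpha)$.

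This is the crux. From $c^\varrho \varrho = \psi - c_\varrho \iota(\alpha^\vee)$ with $c^\varrho \ne 0$, each $\varrho \in \mathcal K^1(\alpha)$ lies in $\operatorname{span}_\Bbbk(\psi, \iota(\alpha^\vee))$, a space of dimension at most $2$; by Lemma~\ref{lemma_value_is_one_bis} it also satisfies $\langle \varrho, \alpha \rangle = 1$, and since $\langle \iota(\alpha^\vee), \alpha \rangle = \langle \alpha^\vee, \alpha \rangle = 2 \ne 0$, all of them are confined to an affine \emph{line}. Assume now $|\mathcal K^1(\alpha)| \ge 3$ and pick three distinct elements. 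They are rational points of $\mathcal Q$ collinear over $\Bbbk$; as linear dependence of rational vectors is detected over $\QQ$, they are collinear over $\QQ$, so the middle one is a convex combination $\lambda \varrho' + \mu \varrho''$ of the other two with $\lambda, \mu \in \QQ$, $\lambda, \mu > 0$, $\lambda + \mu = 1$. Because $\varrho', \varrho'' \in \mathcal K$ and the middle element spans an extremal ray, the defining functional $\xi \in \mathcal K^\vee$ of that face vanishes on it and therefore, by $\lambda, \mu > 0$ and $\xi(\varrho'), \xi(\varrho'') \ge 0$, on both $\varrho'$ and $\varrho''$; this places $\varrho', \varrho''$ on the middle ray, contradicting distinctness. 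Hence $|\mathcal K^1(\alpha)| \le 2$.

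I expect the main obstacle to be the bookkeeping in the second paragraph: confirming that the coefficient comparison genuinely produces the \emph{same} functional for every $\varrho$, and in particular that the identity persists on the indices $\mu$ with $\langle \alpha^\vee, \mu \rangle = 0$ --- the point where Lemma~\ref{lemma_v_alpha}(\ref{lemma_v_alpha_a}) enters and which makes the collapse $\psi_\varrho = \psi$ valid. A secondary subtlety, easy to overlook, is the descent from $\Bbbk$-collinearity to $\QQ$-collinearity in the last step, which is precisely what legitimizes the convex-combination argument over the (unordered) field $\Bbbk$.
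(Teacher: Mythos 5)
Your proposal is correct and rests on the same two pillars as the paper's proof---Proposition~\ref{prop_canonical_form}(\ref{prop_canonical_form_a}) and Lemma~\ref{lemma_v_alpha}(\ref{lemma_v_alpha_b})---which show that each $\varrho \in \mathcal K^1(\alpha)$ is an affine function of the single free parameter $c_\varrho$ and is therefore confined to a fixed two-dimensional space, contradicting the fact that three distinct extremal rays of the pointed cone $\mathcal K$ cannot be linearly dependent. The only difference is where the contradiction is read off: the paper works in $V$ (the representatives $v_i$ lie in the span of $v$ and $e_{-\alpha}x_0$ while the vectors $e_\alpha v_i$ are independent) and simply asserts the independence of distinct extremal rays, whereas you work directly with the functionals $\varrho$ in $\mathcal Q$, use Lemma~\ref{lemma_value_is_one_bis} to place them on an affine line, and supply the convexity argument (together with the descent from $\Bbbk$- to $\QQ$-collinearity) that justifies that assertion.
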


\begin{proof}
As $\mathcal K^1(\alpha)$ is nonempty, we have $|\mathcal K^1(\alpha)| \ge 1$. To prove the second inequality, assume that $\varrho_1, \varrho_2, \varrho_3 \in \mathcal K^1(\alpha)$ are three distinct elements. Since $\QQ^+\varrho_i$ is an extremal ray of~$\mathcal K$ for each $i = 1,2,3$, the elements $\varrho_1, \varrho_2, \varrho_3$ are linearly independent in~$\mathcal Q$. By Proposition~\ref{prop_canonical_form}(\ref{prop_canonical_form_a}), for each $i = 1,2,3$ there exist $v_i \in V^{(T_\ad)}_\alpha$ and $c_i \in \Bbbk$ such that $v_i = v - c_i e_{-\alpha} x_0$ and $\pr_\lambda(v_i) = 0$ for all $\lambda \in \mathrm E_{\varrho_i}$. In view of Lemma~\ref{lemma_v_alpha}(\ref{lemma_v_alpha_b}), for each $i = 1,2,3$ there exists $c'_i \in \Bbbk^\times$ such that $e_\alpha v_i = c'_i \sum \limits_{\mu \in \mathrm E} \langle \varrho_i, \mu \rangle v_\mu$. Obviously, the vectors $e_\alpha v_1$, $e_\alpha v_2$, and $e_\alpha v_3$ are linearly independent in~$V$, hence so are the vectors $v_1$, $v_2$, and $v_3$. The latter contradicts the fact that $v_1, v_2, v_3$ belong to the linear span of the two vectors $v$ and $e_{-\alpha} x_0$.
\end{proof}

\begin{proof}[Proof of~\textup(\ref{Phi8}\textup)]
According to Lemma~\ref{lemma_not_more_than_two}, we
consider two cases.

\textit{Case}~1: $\mathcal K^1(\alpha)$ contains a unique element~$\varrho_0$. Then $\langle \varrho_0, \alpha \rangle = 1$ by Lemma~\ref{lemma_value_is_one_bis} and $\langle \varrho, \alpha \rangle \le 0$ for all $\varrho \in \mathcal K^1 \setminus \lbrace \varrho_0 \rbrace$. Put $\varrho_1 = \varrho_0$ and $\varrho_2 = \iota(\alpha^\vee) - \varrho_0$. Proposition~\ref{prop_role_of_NE2} yields $\varrho_0 \in \mathcal P$, hence $\varrho_0$ is not proportional to~$\iota(\alpha^\vee)$ and so $\varrho_1 \ne \varrho_2$. Further, $\varrho_1, \varrho_2$ obviously satisfy conditions~(\ref{Phi8})(\ref{Phi8a}--\ref{Phi8c}). To complete the proof, it suffices to show that $\varrho_2 \in \mathcal K$. For that, take any $\mu \in \mathrm E \setminus \mathrm E_{\varrho_0}$. Clearly, there is a unique expression $\alpha = \tau + b \mu$ where $\tau \in \QQ\mathrm E_{\varrho_0}$ and $b \in \QQ$. Since $\langle \varrho_0, \alpha \rangle = 1$, one has $b = 1/ \langle \varrho_0, \mu \rangle$. Then $\tau = \alpha - \mu / \langle \varrho_0, \mu \rangle$. One easily checks that $\langle \varrho, \tau \rangle \le 0$ for all $\varrho \in \mathcal K^1 \setminus \lbrace \varrho_0 \rbrace$, hence there is an expression $\tau = - \sum \limits_{\lambda \in \mathrm E_{\varrho_0}} c_\lambda \lambda$ with $c_\lambda \in \QQ^+$ for all $\lambda \in \mathrm E_{\varrho_0}$. Consequently,
\begin{multline*}
\langle \varrho_2, \mu \rangle = \langle \alpha^\vee - \varrho_0, \mu \rangle = \langle \varrho_0, \mu \rangle \cdot \langle \alpha^\vee - \varrho_0, \alpha - \tau \rangle = \\
\langle \varrho_0, \mu \rangle \cdot (1 + \langle \alpha^\vee, \sum \limits_{\lambda \in \mathrm E_{\varrho_0}} c_\lambda\lambda \rangle) \ge \langle \varrho_0, \mu \rangle > 0
\end{multline*}
and $\langle \varrho_2, \lambda \rangle = \langle \alpha^\vee - \varrho_0, \lambda \rangle = \langle \alpha^\vee, \lambda \rangle \ge 0$ for all $\lambda \in \mathrm E_{\varrho_0}$. Thus $\varrho_2 \in \mathcal K$.

\textit{Case}~2: $\mathcal K^1(\alpha)$ consists of two distinct elements $\varrho_1$ and~$\varrho_2$. We claim that $\varrho_1, \varrho_2$ satisfy conditions~(\ref{Phi8})(\ref{Phi8a}--\ref{Phi8c}). By Lemma~\ref{lemma_value_is_one_bis} one has $\langle \varrho_1, \alpha \rangle = \langle \varrho_2, \alpha \rangle = 1$, hence~(\ref{Phi8})(\ref{Phi8a}) holds. Condition~(\ref{Phi8})(\ref{Phi8c}) holds automatically. It remains to prove~(\ref{Phi8})(\ref{Phi8b}).

\begin{lemma} \label{lemma_face}
The cone $\QQ^+\varrho_1 + \QQ^+\varrho_2 \subset \mathcal Q$ is a \textup(two-dimensional\textup) face of the cone~$\mathcal K$.
\end{lemma}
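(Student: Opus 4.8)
The plan is to prove the statement by pure convex geometry, using only three facts: that $\mathcal K$ is a pointed cone, that $\varrho_1$ and $\varrho_2$ are two distinct extremal rays of $\mathcal K$, and that $\mathcal K^1(\alpha)=\lbrace\varrho_1,\varrho_2\rbrace$, so that $\langle\varrho,\alpha\rangle\le 0$ for every $\varrho\in\mathcal K^1\setminus\lbrace\varrho_1,\varrho_2\rbrace$ while $\langle\varrho_1,\alpha\rangle,\langle\varrho_2,\alpha\rangle>0$. Pointedness of $\mathcal K$ is available because $\QQ^+\Gamma$ spans $\ZZ\Gamma\otimes_\ZZ\QQ$, so its dual cone $\mathcal K$ contains no line. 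Since $\varrho_1$ and $\varrho_2$ generate distinct rays they are linearly independent, so $P:=\QQ\varrho_1+\QQ\varrho_2$ is a two-dimensional plane; it therefore remains only to show that $\QQ^+\varrho_1+\QQ^+\varrho_2$ is a face of~$\mathcal K$.

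First I would record the auxiliary equality $\mathcal K\cap P=\QQ^+\varrho_1+\QQ^+\varrho_2$. Indeed, $\mathcal K\cap P$ is a pointed cone lying in the plane $P$ and containing the linearly independent vectors $\varrho_1,\varrho_2$, hence it is two-dimensional; as $\varrho_1,\varrho_2$ are extremal rays of $\mathcal K$, they remain extremal in the subcone $\mathcal K\cap P$, and a pointed two-dimensional cone has exactly two extremal rays. Thus $\mathcal K\cap P=\QQ^+\varrho_1+\QQ^+\varrho_2$.

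Now suppose, for contradiction, that $\QQ^+\varrho_1+\QQ^+\varrho_2$ is not a face. Let $F$ be the smallest face of $\mathcal K$ containing $\varrho_1$ and $\varrho_2$; by the defining property of faces this is the same as the minimal face containing the point $\varrho_1+\varrho_2$, which consequently lies in the relative interior of~$F$. By assumption $F$ properly contains $\QQ^+\varrho_1+\QQ^+\varrho_2$, so $F$ possesses an extremal ray $\varrho_3\notin\lbrace\varrho_1,\varrho_2\rbrace$; note that the extremal rays of the face $F$ are among the extremal rays of $\mathcal K$, hence lie in $\mathcal K^1$. Writing the relative-interior point $\varrho_1+\varrho_2$ as a strictly positive combination $\varrho_1+\varrho_2=\sum_{\varrho}c_\varrho\,\varrho$ over the extremal rays $\varrho$ of $F$ (all $c_\varrho>0$), I would set $w=\sum_{\varrho\neq\varrho_1,\varrho_2}c_\varrho\,\varrho$. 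On one hand $w=(1-c_{\varrho_1})\varrho_1+(1-c_{\varrho_2})\varrho_2\in P$, so $w\in\mathcal K\cap P=\QQ^+\varrho_1+\QQ^+\varrho_2$, whence $\langle w,\alpha\rangle\ge 0$ because $\langle\varrho_1,\alpha\rangle,\langle\varrho_2,\alpha\rangle>0$. On the other hand $w$ is a strictly positive combination of extremal rays $\varrho\in\mathcal K^1\setminus\lbrace\varrho_1,\varrho_2\rbrace$, for each of which $\langle\varrho,\alpha\rangle\le 0$, so $\langle w,\alpha\rangle\le 0$. Therefore $\langle w,\alpha\rangle=0$, which forces $w=0$; but $w$ contains the summand $c_{\varrho_3}\varrho_3$ with $c_{\varrho_3}>0$ and $\varrho_3\neq 0$, contradicting the pointedness of $\mathcal K$. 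This contradiction shows that $\QQ^+\varrho_1+\QQ^+\varrho_2$ is a (two-dimensional) face of $\mathcal K$.

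The only genuinely delicate points are the convex-geometric inputs: that $\varrho_1+\varrho_2$ lies in the relative interior of the minimal face $F$, that relative-interior points of a pointed polyhedral cone are exactly the strictly positive combinations of its extremal rays, and that pointedness forces $\mathcal K\cap P$ to coincide with $\QQ^+\varrho_1+\QQ^+\varrho_2$. I expect this bookkeeping to be the main obstacle to present cleanly. It is essential that $\mathcal K$ be pointed: for non-pointed cones the analogue of the lemma fails, so this hypothesis (coming from $\QQ^+\Gamma$ spanning $\ZZ\Gamma\otimes_\ZZ\QQ$) cannot be dispensed with.
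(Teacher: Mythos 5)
Your proof is correct, but it follows a genuinely different route from the paper's. The paper argues constructively: since $\QQ^+\varrho_1$ and $\QQ^+\varrho_2$ are extremal rays, one can choose $\nu_1, \nu_2 \in \QQ^+\Gamma$ with $\langle \varrho_i, \nu_i \rangle = 0$, $\langle \varrho_j, \nu_i \rangle = 1$ for $j \ne i$, and $\langle \varrho, \nu_i \rangle > 0$ for all other $\varrho \in \mathcal K^1$; the element $\nu = \nu_1 + \nu_2 - \alpha$ then lies in $\mathcal K^\vee = \QQ^+\Gamma$ and its zero set cuts out exactly $\QQ^+\varrho_1 + \QQ^+\varrho_2$, the whole verification being the same numerical input you use ($\langle \varrho_1, \alpha \rangle = \langle \varrho_2, \alpha \rangle = 1$ from Lemma~\ref{lemma_value_is_one_bis} and $\langle \varrho, \alpha \rangle \le 0$ for $\varrho \in \mathcal K^1 \setminus \lbrace \varrho_1, \varrho_2 \rbrace$). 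You instead argue by contradiction through the facial structure of $\mathcal K$: minimal face containing $\varrho_1 + \varrho_2$, the description of its relative interior as strictly positive combinations of its extremal rays, extremality passing to subcones, and pointedness. All of these auxiliary facts are standard and your deductions from them are sound (in particular the step $\langle w, \alpha \rangle = 0 \Rightarrow w = 0$ works because $w \in \mathcal K \cap P = \QQ^+\varrho_1 + \QQ^+\varrho_2$ and both pairings $\langle \varrho_i, \alpha \rangle$ are strictly positive, and $w = 0$ then contradicts pointedness). The trade-off is that the paper's argument stays entirely within its own minimal toolkit --- a face is by definition the zero set on $\mathcal K$ of an element of $\mathcal C^\vee$, and exhibiting one such element finishes the job in three lines --- whereas your argument requires setting up several pieces of polyhedral-cone theory (minimal faces, relative interiors, faces of faces) that the paper never develops; on the other hand, yours avoids having to produce the functionals $\nu_1, \nu_2$ explicitly and makes transparent exactly which hypotheses (pointedness, the sign pattern of $\langle \cdot, \alpha \rangle$ on $\mathcal K^1$) drive the conclusion.
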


\begin{proof}
Since $\QQ^+\varrho_1$ is an extremal ray of~$\mathcal K$, there exists an element $\nu_1 \in \QQ^+\Gamma$ such that $\langle \varrho_1, \nu_1 \rangle = 0$, $\langle \varrho_2, \nu_1 \rangle = 1$, and $\langle \varrho, \nu_1 \rangle > 0$ for all $\varrho \in \mathcal K^1 \setminus \lbrace \varrho_1, \varrho_2 \rbrace$. Similarly, there exists an element $\nu_2 \in \QQ^+\Gamma$ such that $\langle \varrho_2, \nu_2 \rangle = 0$, $\langle \varrho_1, \nu_2 \rangle = 1$, and $\langle \varrho, \nu_2 \rangle > 0$ for all $\varrho \in \mathcal K^1 \setminus \lbrace \varrho_1, \varrho_2 \rbrace$. Put $\nu = \nu_1 + \nu_2 - \alpha$. Then $\langle \varrho_1, \nu \rangle = \langle \varrho_2, \nu \rangle = 0$ and $\langle \varrho, \nu \rangle > 0$ for all $\varrho \in \mathcal K^1 \setminus \lbrace \varrho_1, \varrho_2 \rbrace$, hence $\QQ^+\varrho_1 + \QQ^+\varrho_2$ is a face of~$\mathcal K$.
\end{proof}

It follows from Lemma~\ref{lemma_face} that the space $\QQ(\mathrm E_{\varrho_1} \cap \mathrm E_{\varrho_2})$ has codimension~$2$ in $\QQ \Gamma$.

By Proposition~\ref{prop_canonical_form}(\ref{prop_canonical_form_a}), there exist $v_1, v_2 \in V^{(T_\ad)}_\alpha$ such that $[v_1] = [v_2] = [v]$ and
\begin{equation} \label{eqn_pr=0}
\pr_\lambda(v_i) = 0 \quad \text{for all} \quad \lambda \in
\mathrm E_{\varrho_i}, \ i = 1,2.
\end{equation}
Lemma~\ref{lemma_v_alpha} yields $v_1 \ne v_2$, and so $v_1 - v_2 \in \mathfrak g x_0 \setminus \lbrace 0 \rbrace$, which by Lemma~\ref{lemma_intersection} implies $v_1 - v_2 = ce_{-\alpha}x_0$ for some $c \in\nobreak \Bbbk^\times$. It then follows from~(\ref{eqn_pr=0}) that $\langle \alpha^\vee, \lambda \rangle = 0$ for all $\lambda \in \mathrm E_{\varrho_1} \cap \mathrm E_{\varrho_2}$, therefore $\iota(\alpha^\vee) = a_1 \varrho_1 + a_2 \varrho_2$ for some $a_1, a_2 \in \QQ^+$. In view of Lemma~\ref{lemma_v_alpha}(\ref{lemma_v_alpha_a}) one has $\langle \alpha^\vee, \mu \rangle > 0$ for all $\mu \in \mathrm E \setminus (\mathrm E_{\varrho_1} \cap \mathrm E_{\varrho_2})$, whence $a_i \ne 0$ for $i = 1,2$, which proves~(\ref{Phi8})(\ref{Phi8b}).
\end{proof}

\subsubsection{Case $\sigma \in \Delta^+ \setminus \Pi$}

We need to prove properties (\ref{Phi2})--(\ref{Phi4}) and~(\ref{MF}). In what follows, we fix an arbitrary element $\varrho \in \mathcal K^1(\sigma)$.

\begin{lemma} \label{lemma_edeltav=0}
For every $\delta \in \Pi$ there exist $v' \in V^{(T_\ad)}_\sigma$ and $c \in \Bbbk$ such that $v' = v - c e_{-\sigma}x_0$ and $e_\delta v' = 0$.
\end{lemma}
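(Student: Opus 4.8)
The plan is to reduce the statement to a short computation with the structure constants of $\mathfrak g$, using Corollary~\ref{crl_sigma-root} to control $e_\delta v$. First I would observe that both $v$ and $e_{-\sigma}x_0$ lie in $V^{(T_\ad)}_\sigma$: indeed, since each $v_\lambda$ is a highest weight vector of $V(\lambda)$, the vector $e_{-\sigma}v_\lambda$ has $T$-weight $\lambda - \sigma$, so by~(\ref{eqn_action_of_Tad_on_V}) its $T_\ad$-weight is $\sigma$, and summing over $\lambda \in \mathrm E$ shows $e_{-\sigma}x_0 \in V^{(T_\ad)}_\sigma$. Hence any combination $v' = v - c\,e_{-\sigma}x_0$ automatically lies in $V^{(T_\ad)}_\sigma$, and it only remains to choose $c \in \Bbbk$ so that $e_\delta v = c\,e_\delta(e_{-\sigma}x_0)$.

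If $e_\delta v = 0$, then $c = 0$ works. So assume $e_\delta v \ne 0$. Since $\sigma \in \Delta^+ \setminus \Pi$ and $\delta \in \Pi$, we have $\delta \ne \sigma$, and Corollary~\ref{crl_sigma-root} applies: it gives $\sigma - \delta \in \Delta^+$ and $e_\delta v = c'\,e_{-(\sigma - \delta)}x_0$ for some $c' \in \Bbbk^\times$; in particular $e_{-(\sigma-\delta)}x_0 \ne 0$. Next I would compute $e_\delta(e_{-\sigma}x_0) = \sum_{\lambda \in \mathrm E} e_\delta e_{-\sigma}v_\lambda$. Because $\delta \in \Delta^+$ and each $v_\lambda$ is a highest weight vector, $e_\delta v_\lambda = 0$, so $e_\delta e_{-\sigma}v_\lambda = [e_\delta, e_{-\sigma}]v_\lambda$. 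As $\delta - \sigma = -(\sigma - \delta) \in \Delta$, the bracket equals $N_{\delta, -\sigma}\,e_{-(\sigma - \delta)}$, and hence
$$
e_\delta(e_{-\sigma}x_0) = N_{\delta, -\sigma}\,e_{-(\sigma-\delta)}x_0.
$$

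The key point is that $N_{\delta, -\sigma} \ne 0$: this holds precisely because $\delta - \sigma \in \Delta$, which is exactly the positivity $\sigma - \delta \in \Delta^+$ furnished by Corollary~\ref{crl_sigma-root}. Combined with $e_{-(\sigma-\delta)}x_0 \ne 0$, this shows $e_\delta(e_{-\sigma}x_0) \ne 0$, and setting $c = c'/N_{\delta, -\sigma}$ yields $e_\delta v' = c'\,e_{-(\sigma-\delta)}x_0 - c\,N_{\delta,-\sigma}\,e_{-(\sigma-\delta)}x_0 = 0$, as required. The only mild subtlety to verify carefully is the nonvanishing of the structure constant and the fact that the highest weight property forces the product $e_\delta e_{-\sigma}v_\lambda$ to collapse to a single bracket term; both are immediate from the conventions on Chevalley bases fixed in~\S\,\ref{sect_notation}, so no genuine obstacle arises.
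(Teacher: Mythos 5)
Your proof is correct and follows the paper's argument exactly: apply Corollary~\ref{crl_sigma-root} to write $e_\delta v = c'\,e_{-(\sigma-\delta)}x_0$, then subtract the appropriate multiple $c'N_{\delta,-\sigma}^{-1}$ of $e_{-\sigma}x_0$. The only difference is that you spell out the computation $e_\delta(e_{-\sigma}x_0) = N_{\delta,-\sigma}\,e_{-(\sigma-\delta)}x_0$ and the nonvanishing of the structure constant, which the paper leaves implicit.
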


\begin{proof}
Take any $\delta \in \Pi$ and assume that $e_\delta v \ne 0$. Then Corollary~\ref{crl_sigma-root} yields $\sigma - \delta \in \Delta^+$ and $e_\delta v = c e_{-(\sigma - \delta)} x_0$ for some $c \in \Bbbk^\times$. Then the vector $v' = v - cN_{\delta, -\sigma}^{-1} e_{-\sigma}x_0$ satisfies $e_\delta v' = 0$.
\end{proof}

\begin{lemma} \label{lemma_sigma_is_a_root}
The set $\lbrace \delta \in \Supp \sigma \mid \sigma - \delta \in \Delta^+ \rbrace$ contains at least two elements.
\end{lemma}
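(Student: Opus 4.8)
The plan is to argue by contradiction: assuming that the set $S = \lbrace \delta \in \Supp \sigma \mid \sigma - \delta \in \Delta^+ \rbrace$ has at most one element, I will show that $[v] = 0$, contradicting our standing assumption $[v] \in \EuScript{TS} \setminus \lbrace 0 \rbrace$. The starting point is the observation that any simple root $\delta$ with $e_\delta v \ne 0$ must lie in $S$. Indeed, since $\sigma \notin \Pi$ we have $\sigma \ne \delta$, so Corollary~\ref{crl_sigma-root}(\ref{crl_sigma-root_a}) gives $\sigma - \delta \in \Delta^+$; as $\sigma - \delta$ and $\delta$ sum to $\sigma$, the coefficient of $\delta$ in $\sigma$ is positive, i.e.\ $\delta \in \Supp \sigma$. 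Hence $\delta \in S$, and so the hypothesis $|S| \le 1$ means there is at most one simple root $\delta_0$ with $e_{\delta_0} v \ne 0$.

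Before modifying $v$, I would record the elementary fact that $\sigma - \delta \notin \Delta$ for every simple root $\delta \notin S$. If $\delta \notin \Supp \sigma$ this is clear, since then $\sigma - \delta$ has coefficient $-1$ at $\delta$ and nonnegative coefficients at all other simple roots, so it cannot be a root. If instead $\delta \in \Supp \sigma$, then every coefficient of $\sigma - \delta$ is nonnegative and $\sigma - \delta \ne 0$ (because $\sigma \notin \Pi$); were $\sigma - \delta$ a root it would therefore be positive, forcing $\delta \in S$, which is excluded. In either case $\delta - \sigma \notin \Delta$, and since $\delta \ne \sigma$ we conclude $[e_\delta, e_{-\sigma}] = 0$ for every simple $\delta \ne \delta_0$.

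Finally, I would apply Lemma~\ref{lemma_edeltav=0} to the root $\delta_0$ (or simply set $v' = v$ if no $\delta_0$ occurs) to obtain $v' = v - c\,e_{-\sigma} x_0 \in V^{(T_\ad)}_\sigma$ with $e_{\delta_0} v' = 0$. For a simple root $\delta \ne \delta_0$ one then computes, using $e_\delta v = 0$, $e_\delta x_0 = 0$, and the bracket vanishing above,
$$
e_\delta v' = e_\delta v - c\,[e_\delta, e_{-\sigma}] x_0 = 0 .
$$
Thus $e_\delta v' = 0$ for all $\delta \in \Pi$, so $v'$ is a sum of highest weight vectors. By~(\ref{eqn_action_of_Tad_on_V}) each highest weight vector $v_\lambda$ is $T_\ad$-fixed, so $v'$ has $T_\ad$-weight $0$; since $\sigma \ne 0$ by Corollary~\ref{crl_weights_are_nonzero}, this forces $v' = 0$, whence $v = c\,e_{-\sigma} x_0 \in \mathfrak g x_0$ and $[v] = 0$, a contradiction. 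The whole argument is short once these ingredients are assembled, and the only step demanding genuine care is the bracket computation: one must ensure that the correction term $c\,e_{-\sigma} x_0$ introduced to annihilate $e_{\delta_0}$ does not revive any $e_\delta$-action for $\delta \ne \delta_0$. This is precisely what the combinatorial vanishing $\sigma - \delta \notin \Delta$ guarantees, and it is here that the hypotheses $|S| \le 1$ and $\sigma \in \Delta^+ \setminus \Pi$ are used.
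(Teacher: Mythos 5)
Your proof is correct and rests on exactly the same ingredients as the paper's argument: Lemma~\ref{lemma_edeltav1}, Corollary~\ref{crl_sigma-root}, and the correction trick of Lemma~\ref{lemma_edeltav=0}. The paper runs the argument directly (find $\beta$ with $e_\beta v \ne 0$, correct to $v'$ with $e_\beta v' = 0$, then find $\gamma$ with $e_\gamma v' \ne 0$, which is automatically distinct from $\beta$), which makes your bracket verification $[e_\delta, e_{-\sigma}] = 0$ for $\delta \notin S$ unnecessary; that verification is correct but is only forced on you by phrasing the argument as a contradiction.
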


\begin{proof}
Thanks to Lemma~\ref{lemma_edeltav1}, there exists $\beta \in \Pi$ such that $e_\beta v \ne 0$. Then $\sigma - \beta \in \Delta^+$ by Corollary~\ref{crl_sigma-root}. Next, by Lemma~\ref{lemma_edeltav=0}
there exists $v' \in V^{(T_\ad)}_\sigma$ such that $[v'] = [v]$ and $e_\beta v' = 0$. Again, there exists $\gamma \in \Pi$ such that $e_\gamma v' \ne 0$, which implies $\sigma - \gamma \in \Delta^+$. Clearly, $\beta \ne \gamma$ and $\beta, \gamma \in \Supp \sigma$.
\end{proof}

\begin{lemma} \label{lemma_locally_dominant}
One of the following two alternatives holds.
\begin{enumerate}[label=\textup{(\arabic*)},ref=\textup{\arabic*}]
\item
$\langle \delta^\vee, \sigma \rangle \ge 0$ for all $\delta \in \Supp \sigma$ \textup(that is, $\sigma$ is a dominant root of~$\Delta_\sigma$\textup).

\item
$\Supp \sigma$ is of type~$\mathsf G_2$ and $\sigma = \alpha_1 + \alpha_2$.
\end{enumerate}
\end{lemma}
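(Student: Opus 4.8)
The plan is to translate the statement into root-theoretic data attached to the fixed vector $v \in V^{(T_\ad)}_\sigma$ with $[v] \in \EuScript{TS}\setminus\{0\}$, and then to argue by the length of root strings, treating separately the case in which a non-dominant direction already lies in $D := \{\delta\in\Supp\sigma \mid \sigma-\delta\in\Delta^+\}$. First I would record, using Corollary~\ref{crl_sigma-root} together with Lemmas~\ref{lemma_edeltav2} and~\ref{lemma_intersection}, that for each simple $\delta$ one has $e_\delta v = c_\delta e_{-(\sigma-\delta)}x_0$ when $\delta\in D$ and $e_\delta v=0$ otherwise; by Lemma~\ref{lemma_sigma_is_a_root} the set $D$ has at least two elements, and by Lemma~\ref{lemma_edeltav1} at least one $c_\delta$ is nonzero. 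Since $\sigma\in\Delta^+$ has connected support, $\Delta_\sigma$ is irreducible and $\sigma$ is a non-simple positive root of it.

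Next I would assume, against alternative~(1), that $\langle\delta_0^\vee,\sigma\rangle<0$ for some $\delta_0\in\Supp\sigma$; then $\sigma+\delta_0\in\Delta^+$. The clean case is $\delta_0\in D$: here $\sigma-\delta_0\in\Delta^+$, so in the $\delta_0$-string $\sigma-p\delta_0,\dots,\sigma+q\delta_0$ one has $p\ge1$ and $q=p-\langle\delta_0^\vee,\sigma\rangle\ge p+1\ge2$, whence the string has length $p+q+1\ge4$. As root strings have length at most~$4$, with equality only in type~$\mathsf G_2$, this forces $p=1$, $q=2$, and $\Delta_\sigma$ of type~$\mathsf G_2$; the position $p=1$ in a length-$4$ string then pins $\sigma=\alpha_1+\alpha_2$, which is alternative~(2).

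The hard part is the remaining case $\delta_0\in\Supp\sigma\setminus D$, so that $e_{\delta_0}v=0$. This case genuinely occurs---for instance $\sigma=e_1+e_3$ in type~$\mathsf B_3$ has $|D|=2$ yet is neither dominant in $\Delta_\sigma$ nor the $\mathsf G_2$ exception---and it must be excluded outright. Here I would exploit that $v$ represents an honest $G_{x_0}$-invariant lying in $\EuScript{TS}$, not merely a solution of the simple-root conditions. Concretely, for $\delta,\delta'\in D$ the identity $e_\delta e_{\delta'}v = e_{\delta'}e_\delta v + [e_\delta,e_{\delta'}]v$, evaluated on the explicit expressions $c_\delta e_{-(\sigma-\delta)}x_0$ and using $e_\beta x_0=0$ for $\beta\in\Delta^+$, yields linear relations among the $c_\delta$ (of the shape $c_\delta N_{\delta',\,-(\sigma-\delta)} = c_{\delta'}N_{\delta,\,-(\sigma-\delta')}$) whenever the relevant vector $e_{-\gamma}x_0$ is nonzero; I would supplement these with the vanishing forced by section extension over the codimension-one orbits $O_\varrho$ ($\varrho\in\mathcal P$), via Corollary~\ref{crl_[v]_in_TS} and Proposition~\ref{prop_section_extensions}, after normalizing $v$ with Proposition~\ref{prop_canonical_form}. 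The aim is to show that together these relations force the tuple $(c_\delta)_{\delta\in D}$ to be proportional to the ``trivial'' tuple $(N_{\delta,-\sigma})_{\delta\in D}$ arising from the vector $e_{-\sigma}x_0$, so that $v\in\mathfrak gx_0$ and $[v]=0$---a contradiction.

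I expect this last step to be the main obstacle, and I would carry it out by a case analysis over the irreducible types of $\Supp\sigma$ admitting a non-dominant full-support positive root (essentially $\mathsf B_r,\mathsf C_r,\mathsf D_r,\mathsf F_4$), checking in each that a negative pairing outside $D$ makes the commutator-plus-extension relations non-degenerate enough to collapse $(c_\delta)_{\delta\in D}$ onto the trivial line. The delicate point is that the non-vacuousness of a given commutator relation depends on whether $e_{-\gamma}x_0=0$, that is, on which roots are orthogonal to $\ZZ\Gamma$; this is exactly where the section-extension criterion of Proposition~\ref{prop_section_extensions} must be invoked to close the remaining gaps, and where one must verify that type~$\mathsf G_2$ with $\sigma=\alpha_1+\alpha_2$ is the unique surviving non-dominant configuration.
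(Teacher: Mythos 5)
Your first case ($\delta_0\in D$, i.e.\ $\sigma-\delta_0\in\Delta^+$) is handled correctly, and the root-string count there is a clean way to isolate the $\mathsf G_2$ exception. But the proof as a whole has a genuine gap: the case $\delta_0\in\Supp\sigma\setminus D$ --- which, as your own $\mathsf B_3$ example $\sigma=\alpha_1+\alpha_2+2\alpha_3$ shows, is exactly the configuration the lemma must exclude --- is only a plan, not an argument. You propose to extract relations of the shape $c_\delta N_{\delta',-(\sigma-\delta)}=c_{\delta'}N_{\delta,-(\sigma-\delta')}$ from commutators, supplement them with section-extension constraints over the orbits $O_\varrho$, and then check type by type that these force $(c_\delta)_{\delta\in D}$ onto the line spanned by $(N_{\delta,-\sigma})_{\delta\in D}$; you yourself flag this as the main obstacle and do not carry it out. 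Nothing in the proposal verifies that the relations are non-degenerate in even one of the types $\mathsf B_r$, $\mathsf C_r$, $\mathsf D_r$, $\mathsf F_4$, and the dependence on which vectors $e_{-\gamma}x_0$ vanish (i.e.\ on $\Gamma^\perp$) is acknowledged but not resolved. So the lemma is not proved.

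The missing idea is the one the paper uses, and it bypasses the dichotomy on $\delta_0$ entirely. Fix $\varrho\in\mathcal K^1(\sigma)$ and use Proposition~\ref{prop_canonical_form}(\ref{prop_canonical_form_a}) to replace $v$ by a representative with $\pr_\lambda(v)=0$ for all $\lambda\in\mathrm E_\varrho$. Then for a single $\alpha\in\Pi$ with $e_\alpha v=ce_{-(\sigma-\alpha)}x_0$ and $c\ne0$, projecting onto $V(\lambda)$ for $\lambda\in\mathrm E_\varrho$ gives $e_{-(\sigma-\alpha)}v_\lambda=0$, i.e.\ $\langle(\sigma-\alpha)^\vee,\lambda\rangle=0$; since $\lambda$ is dominant this forces $\langle\delta^\vee,\lambda\rangle=0$ for every $\delta\in\Supp(\sigma-\alpha)$, so $\iota(\delta^\vee)$ is a non-negative multiple of $\varrho$ and hence $\langle\delta^\vee,\sigma\rangle\ge0$ because $\langle\varrho,\sigma\rangle>0$. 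As $\Supp\sigma\setminus\Supp(\sigma-\alpha)\subset\lbrace\alpha\rbrace$, the only simple root that can pair negatively with $\sigma$ is $\alpha$ itself, and then $\langle\alpha^\vee,\sigma\rangle\le-1$ gives $\langle\alpha^\vee,\sigma-\alpha\rangle\le-3$, which forces type $\mathsf G_2$ with $\sigma=\alpha_1+\alpha_2$. Note that this argument controls precisely the simple roots $\delta$ with $e_\delta v=0$ (your hard case) without ever examining them individually; that is what your commutator scheme lacks.
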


\begin{proof}
In view of Proposition~\ref{prop_canonical_form}(\ref{prop_canonical_form_a}), we may assume that
\begin{equation} \label{eqn_canonical_form}
\pr_\lambda(v) = 0 \; \text{ for all } \; \lambda \in \mathrm E_\varrho.
\end{equation}
Thanks to Lemma~\ref{lemma_edeltav1}, there exists $\alpha \in \Pi$ such that $e_\alpha v \ne 0$. Then Corollary~\ref{crl_sigma-root} yields $\sigma - \alpha \in \Delta^+$ and $e_\alpha v = ce_{-(\sigma - \alpha)}x_0$ for some $c \in \Bbbk^\times$. By~(\ref{eqn_canonical_form}) the latter implies $\langle (\sigma - \alpha)^\vee, \lambda \rangle = 0$ for all $\lambda \in \mathrm E_\varrho$. Thus for every $\delta \in \Supp (\sigma - \alpha)$ one has $\delta \in \mathrm E_\varrho^\perp$, whence $\iota(\delta^\vee)$ is a non-negative multiple of~$\varrho$. As $\langle \varrho, \sigma \rangle > 0$, it follows that $\langle \delta^\vee, \sigma \rangle \ge 0$ for all $\delta \in \Supp (\sigma - \alpha)$. Now assume that $\langle \alpha^\vee, \sigma \rangle < 0$. Then $\langle \alpha^\vee, \sigma \rangle \le -1$ and $\langle \alpha^\vee, \sigma - \alpha \rangle \le -3$. The latter implies that $\Supp \sigma$ has type~$\mathsf G_2$ with $\alpha = \alpha_1$ and $\sigma = \alpha_1 + \alpha_2$.
\end{proof}

\begin{proof}[Proof of~\textup(\ref{Phi2}\textup)]
Applying Lemma~\ref{lemma_sigma_is_a_root} we find that $\sigma$ cannot be the highest root of $\Delta_\sigma$ unless the support of~$\sigma$ has type $\mathsf A_r$. (The latter can be seen, for instance, by inspecting the extended Dynkin diagrams.) By the same reason $\sigma$ cannot be the short dominant root in type~$\mathsf G_2$. All the other possibilities given by Lemma~\ref{lemma_locally_dominant} are already contained in~$\overline \Sigma(G)$.
\end{proof}

\begin{proof}[Proof of~\textup(\ref{Phi3}\textup)]
Reasoning as in the proof of Lemma~\ref{lemma_locally_dominant}, we find a simple root $\alpha \in \Pi$ such that $\sigma - \alpha \in \Delta^+$ and $\iota(\delta^\vee)$ is a non-negative multiple of $\varrho$ for all $\delta \in \Supp (\sigma - \alpha)$.
We claim that $\Pi_\sigma \subset \Supp(\sigma - \alpha)$. Indeed, the inclusion $(\Pi_\sigma \setminus \lbrace \alpha \rbrace) \subset \Supp(\sigma - \alpha)$ is obvious and the condition $\alpha \in \Pi_\sigma$ implies $\alpha \in \Supp(\sigma - \alpha)$ in view of~(\ref{eqn_Pi_sigma}). Thus, given any $\beta \in \Pi_\sigma$, the element $\iota(\beta^\vee)$ is a non-negative multiple of~$\varrho$. As $\beta \in \sigma^\perp$ and $\langle \varrho, \sigma \rangle > 0$, it follows that $\iota(\beta^\vee) = 0$, whence $\beta \in \Gamma^\perp$.
\end{proof}

\begin{proof}[Proof of~\textup(\ref{Phi4}\textup)] Suppose that $\sigma = \alpha_1 + \ldots + \alpha_r$ with $\Supp \sigma$ of type $\mathsf B_r$ ($r \ge 2$). Taking into account Lemma~\ref{lemma_edeltav=0} we may assume that $e_{\alpha_r} v = 0$. For $2 \le i \le r - 1$, we have $\sigma - \alpha_i \notin \Delta^+$, which implies $e_{\alpha_i} v = 0$ by Corollary~\ref{crl_sigma-root}(\ref{crl_sigma-root_a}). Now assume that $\alpha_r \in \Gamma^\perp$. Since $\alpha_i \in \Gamma^\perp$ for $2 \le i \le r - 1$ by~(\ref{Phi3}), it follows that $\langle (\sigma - \alpha_1)^\vee, \lambda \rangle = 0$ for all $\lambda \in \mathrm E$, whence $e_{-(\sigma - \alpha_1)} x_0 = 0$. In view of Lemmas~\ref{lemma_edeltav2} and~\ref{lemma_intersection} the latter implies $e_{\alpha_1}v = 0$. We have obtained that $e_{\delta} v = 0$ for all $\delta \in \Supp \sigma$ and hence for all $\delta \in \Pi$, which contradicts Lemma~\ref{lemma_edeltav1} as $\sigma \ne 0$.
\end{proof}

\begin{proof}[Proof of~\textup{(\ref{MF})}]
Here we use a short argument from the proof of~\cite[Proposition~3.16]{BvS}. A case-by-case check of all relevant entries in Table~\ref{table_spherical_roots} shows that the set
\[
\lbrace \delta \in \Pi \mid \sigma - \delta \in \Delta^+ \rbrace
\]
contains exactly two elements, which will be denoted by $\beta$ and~$\gamma$. Let $v' \in V^{(T_\ad)}_\sigma$ be another vector such that $[v'] \in \EuScript{TS} \setminus \lbrace 0 \rbrace$. By Lemma~\ref{lemma_edeltav=0} we may assume that $e_\beta v = e_\beta v' = 0$. It follows from Corollary~\ref{crl_sigma-root} that $e_\delta v = e_\delta v' = 0$ for all $\delta \in \Pi \setminus \lbrace \gamma \rbrace$. Consequently, $e_\gamma v \ne 0$ and $e_\gamma v' \ne 0$ in view of Lemma~\ref{lemma_edeltav1}. Then Corollary~\ref{crl_sigma-root}(\ref{crl_sigma-root_b}) yields $e_\gamma v = c e_{-(\sigma - \gamma)} x_0$ and $e_\gamma v' = c' e_{-(\sigma - \gamma)} x_0$ for some $c, c' \in \Bbbk^\times$. It follows that the vector $c'v - c v'$ is annihilated by $e_\gamma$ and hence by all $e_\delta$ with $\delta \in \Pi$. As $\sigma \ne 0$, Lemma~\ref{lemma_edeltav1} yields $c'v - cv' = 0$.
\end{proof}

\subsubsection{Case $\sigma = \alpha + \beta$ with $\alpha, \beta \in \Pi$ and $\alpha \perp \beta$}

Properties (\ref{Phi2}) and (\ref{Phi3}) hold automatically. Properties (\ref{Phi5}) and~(\ref{MF}) follow from the lemma below.

\begin{lemma} \label{lemma_alpha+beta}
The following assertions hold:
\begin{enumerate}[label=\textup{(\alph*)},ref=\textup{\alph*}]
\item \label{lemma_alpha+beta_a}
$\iota(\alpha^\vee) = \iota(\beta^\vee)$;

\item \label{lemma_alpha+beta_b}
the vector $v$ is given by
\[
v = c \sum \limits_{\mu \in \mathrm E : \langle \alpha^\vee, \mu \rangle > 0} \frac{1}{\langle \alpha^\vee, \mu \rangle} e_{-\alpha} e_{-\beta} v_\mu
\]
for some $c \in \Bbbk^\times$.
\end{enumerate}
\end{lemma}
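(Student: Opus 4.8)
The plan is to compute the two vectors $e_\alpha v$ and $e_\beta v$ explicitly and play them off against each other, exploiting that $[e_\alpha, e_\beta] = 0$ (because $\alpha \perp \beta$ means $\alpha + \beta \notin \Delta$). First I would record the shape of $v$. Since $\alpha + \beta$ is not a root, for each $\lambda \in \mathrm E$ the only PBW monomial of weight $-(\alpha+\beta)$ acting on $v_\lambda$ is $e_{-\alpha} e_{-\beta} v_\lambda = e_{-\beta} e_{-\alpha} v_\lambda$, so the space $V(\lambda)^{(T_\ad)}_\sigma$ is spanned by this single vector, which is nonzero exactly when $\langle \alpha^\vee, \lambda \rangle > 0$ and $\langle \beta^\vee, \lambda \rangle > 0$. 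Hence I may write $v = \sum_{\lambda \in \mathrm E} c_\lambda e_{-\alpha} e_{-\beta} v_\lambda$ for suitable $c_\lambda \in \Bbbk$, the nonzero terms being supported on those $\lambda$ with both pairings positive.

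Next I would use that $[v] \in (V / \mathfrak g x_0)^{G_{x_0}}$. By Lemmas~\ref{lemma_edeltav2} and~\ref{lemma_intersection} applied with $\delta = \alpha$ (so $\sigma - \alpha = \beta \in \Delta^+$), one has $e_\alpha v \in V^{(T_\ad)}_\beta \cap \mathfrak g x_0 = \Bbbk\, e_{-\beta} x_0$. Using $[e_\alpha, e_{-\beta}] = 0$ and $\langle \alpha^\vee, \beta \rangle = 0$, a direct computation gives $e_\alpha e_{-\alpha} e_{-\beta} v_\lambda = \langle \alpha^\vee, \lambda \rangle e_{-\beta} v_\lambda$, whence $e_\alpha v = \sum_\lambda c_\lambda \langle \alpha^\vee, \lambda \rangle e_{-\beta} v_\lambda = c\, e_{-\beta} x_0$ for some $c \in \Bbbk$; symmetrically $e_\beta v = c'\, e_{-\alpha} x_0$. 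Comparing coefficients of the linearly independent vectors $e_{-\beta} v_\lambda$ yields $c_\lambda \langle \alpha^\vee, \lambda \rangle = c$ on the support of $v$, and likewise $c_\lambda \langle \beta^\vee, \lambda \rangle = c'$. I would then note $c, c' \in \Bbbk^\times$: if $c = 0$ then $c_\lambda \langle \alpha^\vee, \lambda \rangle = 0$ forces $c_\lambda = 0$ for every $\lambda$ in the support, i.e.\ $v = 0$, contradicting $[v] \ne 0$.

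The crucial step is the commutator trick. Applying $e_\beta$ to $e_\alpha v = c\, e_{-\beta} x_0$ and $e_\alpha$ to $e_\beta v = c'\, e_{-\alpha} x_0$, and using $e_\alpha e_\beta v = e_\beta e_\alpha v$ together with $e_\beta e_{-\beta} v_\lambda = \langle \beta^\vee, \lambda \rangle v_\lambda$, I obtain
$$
c \sum_{\lambda \in \mathrm E} \langle \beta^\vee, \lambda \rangle v_\lambda = c' \sum_{\lambda \in \mathrm E} \langle \alpha^\vee, \lambda \rangle v_\lambda .
$$
Comparing the coefficients of the $v_\lambda$ gives $c \langle \beta^\vee, \lambda \rangle = c' \langle \alpha^\vee, \lambda \rangle$ for all $\lambda \in \mathrm E$, hence for all $\lambda \in \ZZ\Gamma$. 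Since $c, c' \ne 0$ and the two pairings are simultaneously positive on the support of $v$, this shows $\iota(\alpha^\vee)$ and $\iota(\beta^\vee)$ are positive rational multiples of one common primitive vector $\varrho_0 \in \mathcal L$, say $\iota(\alpha^\vee) = a \varrho_0$ and $\iota(\beta^\vee) = b \varrho_0$ with $a, b$ positive integers.

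The main obstacle is to upgrade this proportionality to the genuine equality $\iota(\alpha^\vee) = \iota(\beta^\vee)$, i.e.\ to prove $a = b$; this is where the fact that $\sigma$ lies in the weight lattice enters. Since $\sigma = \alpha + \beta \in \ZZ\Gamma$ (already available from Lemma~\ref{lemma_straightforward}, condition~(\ref{Phi1})) and $\alpha \perp \beta$, one has $\langle \alpha^\vee, \sigma \rangle = \langle \alpha^\vee, \alpha \rangle = 2 = \langle \beta^\vee, \beta \rangle = \langle \beta^\vee, \sigma \rangle$. Evaluating the two expressions $\iota(\alpha^\vee) = a \varrho_0$ and $\iota(\beta^\vee) = b \varrho_0$ on $\sigma$ gives $a \langle \varrho_0, \sigma \rangle = 2 = b \langle \varrho_0, \sigma \rangle$ with $\langle \varrho_0, \sigma \rangle \ne 0$, forcing $a = b$ and proving assertion~(\ref{lemma_alpha+beta_a}). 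Assertion~(\ref{lemma_alpha+beta_b}) then follows immediately: with $\iota(\alpha^\vee) = \iota(\beta^\vee)$ the two support conditions coincide, so $c = c'$ and $c_\lambda = c/\langle \alpha^\vee, \lambda \rangle$ for every $\lambda$ with $\langle \alpha^\vee, \lambda \rangle > 0$, which is exactly the claimed formula for $v$ with overall nonzero constant $c$. I note that this argument uses only $G_{x_0}$-invariance of $[v]$, not the full extension condition defining $\EuScript{TS}$.
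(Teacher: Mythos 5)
Your proof is correct, and it takes a genuinely different route from the paper's. The paper fixes an extremal ray $\varrho \in \mathcal K^1(\sigma)$ and runs this case through its general template: Proposition~\ref{prop_role_of_NE2} gives $\varrho \notin \mathcal P$, Lemma~\ref{lemma_positive_multiple} produces $\delta \in \lbrace \alpha, \beta \rbrace$ with $\iota(\delta^\vee)$ a positive multiple of~$\varrho$, and Proposition~\ref{prop_canonical_form}(\ref{prop_canonical_form_b}) normalizes $v$ so that $\pr_\lambda(v) = 0$ on $\mathrm E_\varrho$; it then computes only $e_\alpha v = c e_{-\beta} x_0$ and reads off from the vanishing of $e_{-\beta}v_\lambda$ for $\lambda \in \mathrm E_\varrho$ that $\iota(\beta^\vee)$ is also a multiple of~$\varrho$. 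Since Proposition~\ref{prop_role_of_NE2} and Proposition~\ref{prop_canonical_form} rest on the section-extension results (Corollary~\ref{crl_[v]_in_TS}, Proposition~\ref{prop_section_extensions}), the paper's argument genuinely uses $[v] \in \EuScript{TS}$. You instead compute \emph{both} $e_\alpha v$ and $e_\beta v$, pin each down via Lemmas~\ref{lemma_edeltav2} and~\ref{lemma_intersection} alone, and extract the proportionality $c\,\iota(\beta^\vee) = c'\,\iota(\alpha^\vee)$ from $[e_\alpha, e_\beta] = 0$ (in fact the coefficient relations $c_\lambda \langle \alpha^\vee, \lambda \rangle = c$ and $c_\lambda \langle \beta^\vee, \lambda \rangle = c'$ already yield this, so the commutator step is a clean shortcut rather than a necessity). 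Both proofs finish identically, normalizing via $\langle \alpha^\vee, \sigma \rangle = 2 = \langle \beta^\vee, \sigma \rangle$. What your route buys is that it is self-contained and establishes the lemma under the weaker hypothesis $[v] \in (V/\mathfrak g x_0)^{G_{x_0}} \setminus \lbrace 0 \rbrace$, bypassing the cone combinatorics entirely; what the paper's buys is uniformity with the other cases of Step~1, all of which are organized around canonical representatives relative to an extremal ray.
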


\begin{proof}
Take any $\varrho \in \mathcal K^1(\sigma)$. Proposition~\ref{prop_role_of_NE2} yields $\varrho \notin \mathcal P$. Then by Lemma~\ref{lemma_positive_multiple} there exists $\delta \in \Pi$ such that $\iota(\delta^\vee)$ is a positive multiple of~$\varrho$. As $\langle \delta^\vee, \sigma \rangle > 0$, it follows that $\delta \in \lbrace \alpha, \beta \rbrace$. Assume without loss of generality that $\delta = \alpha$. Proposition~\ref{prop_canonical_form}(\ref{prop_canonical_form_b}) yields $\pr_\lambda(v) = 0$ for all $\lambda \in \mathrm E_\varrho$. Next, for every $\mu \in \mathrm E \setminus \mathrm E_\varrho$ one has
\begin{equation} \label{eqn_p(v)}
\pr_\mu(v) = c_\mu e_{-\alpha}e_{-\beta}v_\mu = c_\mu e_{-\beta}e_{-\alpha}v_\mu
\end{equation}
with $c_\mu \in \Bbbk$. It follows that
\begin{equation} \label{eqn_three}
e_\alpha v = \sum \limits_{\mu \in \mathrm E \setminus \mathrm E_\varrho} c_\mu e_\alpha e_{-\alpha} e_{-\beta} v_\mu = \sum \limits_{\mu \in \mathrm E \setminus \mathrm E_\varrho} c_\mu \langle \alpha^\vee, \mu \rangle e_{-\beta} v_\mu.
\end{equation}
On the other hand, since $v \ne 0$, there exists $\nu \in \mathrm E \setminus \mathrm E_\varrho$ such that $\pr_\nu(v) \ne 0$, which implies $c_\nu \ne 0$ and $e_{-\beta} v_\nu \ne 0$ in view of~(\ref{eqn_p(v)}). It follows that $e_\alpha v \ne 0$, and Corollary~\ref{crl_sigma-root}(\ref{crl_sigma-root_b}) yields
\begin{equation} \label{eqn_four}
e_\alpha v = ce_{-\beta}x_0 = c\sum \limits_{\lambda \in \mathrm E} e_{-\beta} v_\lambda
\end{equation}
for some $c \in \Bbbk^\times$. Then $\langle \beta^\vee, \lambda \rangle = 0$ for all $\lambda \in \mathrm E_\varrho$, whence $\iota(\beta^\vee)$ and $\iota(\alpha^\vee)$ are proportional. The equalities $\langle \alpha^\vee, \sigma \rangle = 2 = \langle \beta^\vee, \sigma \rangle$ imply $\iota(\alpha^\vee) = \iota(\beta^\vee)$, which proves~(\ref{lemma_alpha+beta_a}). Now comparing~(\ref{eqn_three}) with~(\ref{eqn_four}) yields~(\ref{lemma_alpha+beta_b}).
\end{proof}

\subsubsection{Case $\sigma \notin \Delta^+$ and $\sigma$ is not the sum of two orthogonal simple roots}

We need to prove properties (\ref{Phi2}), (\ref{Phi3}), (\ref{Phi6}), and~(\ref{MF}). In what follows, we fix an arbitrary element $\varrho \in \mathcal K^1(\sigma)$.

The following lemma is similar to~\cite[Proposition~3.9]{BvS}.

\begin{lemma} \label{lemma_nonroot-alpha}
There exists a unique $\beta \in \Pi$ such that $\sigma - \beta \in \Delta^+$.
\end{lemma}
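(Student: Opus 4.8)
The plan is to treat the two assertions separately. For \textbf{existence}, recall that $\sigma \ne 0$ by Corollary~\ref{crl_weights_are_nonzero}, so Lemma~\ref{lemma_edeltav1} produces a simple root $\beta \in \Pi$ with $e_\beta v \ne 0$. Since $\sigma \notin \Delta^+$ we have $\beta \ne \sigma$, and Corollary~\ref{crl_sigma-root}(\ref{crl_sigma-root_a}) then gives $\sigma - \beta \in \Delta^+$. Thus at least one simple root can be subtracted from $\sigma$ while staying in $\Delta^+$. This step genuinely uses the vector $v$: a weight in $\ZZ^+\Pi$ need not admit any such subtraction on combinatorial grounds alone.

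For \textbf{uniqueness} I would argue purely combinatorially. Suppose $\beta, \gamma \in \Pi$ are \emph{distinct} with $\sigma - \beta, \sigma - \gamma \in \Delta^+$. Because $(\sigma - \beta) + \beta = \sigma \notin \Delta$, the $\beta$-string through $\sigma - \beta$ does not extend upward, whence $\langle \beta^\vee, \sigma \rangle \ge 2$; symmetrically $\langle \gamma^\vee, \sigma \rangle \ge 2$. If $\sigma = \beta + \gamma$, then $\beta + \gamma \notin \Delta$ forces $\beta \perp \gamma$, so $\sigma$ would be the sum of two orthogonal simple roots, which is excluded in the present case. Otherwise $\langle \gamma^\vee, \sigma - \beta \rangle = \langle \gamma^\vee, \sigma \rangle - \langle \gamma^\vee, \beta \rangle \ge 2 > 0$ with $\sigma - \beta \ne \gamma$, so $\tau := \sigma - \beta - \gamma \in \Delta^+$. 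By construction $\tau + \beta = \sigma - \gamma$ and $\tau + \gamma = \sigma - \beta$ are roots, while $\tau + \beta + \gamma = \sigma \notin \Delta$; moreover $\langle \beta^\vee, \tau \rangle = \langle \beta^\vee, \sigma \rangle - 2 - \langle \beta^\vee, \gamma \rangle \ge 0$ and likewise $\langle \gamma^\vee, \tau \rangle \ge 0$.

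The crux is that $\langle \beta^\vee, \tau \rangle \ge 0$ together with $\tau + \beta \in \Delta$ forces the $\beta$-string through $\tau$ to extend in both directions, hence to have length at least~$3$; outside type $\mathsf G_2$ its length is exactly $3$ and $\langle \beta^\vee, \tau \rangle = 0$, that is $(\tau, \beta) = 0$, and similarly $(\tau, \gamma) = 0$. Then either $(\beta, \gamma) < 0$, in which case $(\tau + \beta, \gamma) = (\beta, \gamma) < 0$ yields $(\tau + \beta) + \gamma = \sigma \in \Delta$, a contradiction; or $(\beta, \gamma) = 0$, in which case $(\tau + \beta, \tau + \gamma) = (\tau, \tau) > 0$ on the two distinct roots $\tau + \beta$ and $\tau + \gamma$ forces their difference $\beta - \gamma$ to be a root, which is impossible for distinct simple roots. \textbf{The main obstacle} is precisely the exceptional length-$4$ root strings, which occur only inside a $\mathsf G_2$ subsystem and permit $\langle \beta^\vee, \tau \rangle = 1$. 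When the second simple root $\gamma$ lies outside that $\mathsf G_2$ one has $\beta \perp \gamma$ and $(\tau, \gamma) = 0$, so $(\tau + \beta, \tau + \gamma) = (\tau, \tau) + (\beta, \tau) > 0$ and the same difference-of-roots contradiction applies; the only remaining configuration, in which $\{\beta, \gamma\}$ are the two simple roots of a single $\mathsf G_2$, reduces to a rank-$2$ check showing that no $\sigma \in \ZZ^+ \lbrace \alpha_1, \alpha_2 \rbrace \setminus \Delta$ admits both subtractions. I expect the bookkeeping around these multiply-laced strings, rather than the clean generic argument, to require the most care, mirroring the treatment in~\cite[Proposition~3.9]{BvS}.
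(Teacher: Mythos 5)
Your proof is correct, and while the existence half coincides with the paper's (both invoke Corollary~\ref{crl_weights_are_nonzero}, Lemma~\ref{lemma_edeltav1} and Corollary~\ref{crl_sigma-root}(\ref{crl_sigma-root_a}) to produce $\beta$), your uniqueness half takes a genuinely different route. The paper also starts from $\langle \beta^\vee, \sigma \rangle \ge 2$ and $\sigma - \beta - \gamma \in \Delta^+$, but then switches to representation theory: it observes that $\langle \beta^\vee, \sigma - \gamma + \beta \rangle \ge 4$ forces $\sigma - \gamma + \beta \notin \Delta$, so $e_{\sigma-\gamma}$ and $e_{\sigma-\beta}$ are two distinct highest weight vectors for the rank-two Levi subalgebra $\mathfrak h$ attached to $\lbrace \beta, \gamma \rbrace$, and the nonzero vector $e_{\sigma-\beta-\gamma}$ would lie in both of the corresponding simple $\mathfrak h$-submodules of $\mathfrak g$ --- a contradiction. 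The bound $\langle \beta^\vee, \mu \rangle \le 3$ for roots $\mu$ makes that argument uniform across all types, so the paper never has to single out $\mathsf G_2$. Your root-string/inner-product argument is more elementary (no $\mathfrak{sl}_2$-theory beyond string lengths) but pays for it with the length-$4$ bookkeeping; your handling of that exceptional case is sound --- indeed the subcase ``$\gamma$ outside the $\mathsf G_2$ component'' is vacuous anyway, since $\tau$ then lies in the $\mathsf G_2$ component and $\tau + \gamma = \sigma - \beta$ could not be a root, and your rank-two verification in $\mathsf G_2$ (the only common value of $\alpha_1 + \Delta^+$ and $\alpha_2 + \Delta^+$ is $\alpha_1 + \alpha_2$, which is a root) closes the remaining configuration. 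One small point worth making explicit if you write this up: in the final $\mathsf G_2$ configuration you should note that $\Supp \tau \subset \lbrace \beta, \gamma \rbrace$ (because $\tau$ and $\beta$ generate a $\mathsf G_2$ subsystem, which can only live inside a $\mathsf G_2$ component), so that $\sigma = \tau + \beta + \gamma$ really does reduce to the rank-two check.
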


\begin{proof}
By Lemma~\ref{lemma_edeltav1} there exists $\beta \in \Pi$ such that $e_\beta v \ne 0$. Then Corollary~\ref{crl_sigma-root}(\ref{crl_sigma-root_a}) yields $\sigma - \beta \in \Delta^+$. The condition $\sigma \notin \Delta^+$ implies $\langle \beta^\vee, \sigma - \beta \rangle \ge 0$, and so $\langle \beta^\vee, \sigma \rangle \ge 2$. Now take any $\gamma \in \Pi \setminus \lbrace \beta \rbrace$ and assume that $\sigma - \gamma \in \Delta^+$. Clearly,
\[
\langle \beta^\vee, \sigma - \gamma \rangle = \langle \beta^\vee, \sigma \rangle - \langle \beta^\vee, \gamma \rangle \ge 2 > 0,
\]
whence $\sigma - \beta - \gamma \in \Delta^+$ (note that $\sigma \ne \beta + \gamma$ by our assumptions). Let $\mathfrak h \subset \mathfrak g$ be the standard Levi subalgebra with set of simple roots $\lbrace \beta, \gamma \rbrace$ and regard $\mathfrak g$ as an $\mathfrak h$-module. As $\langle \beta^\vee, \sigma - \gamma + \beta \rangle \ge 4$, one has $\sigma - \gamma + \beta \notin \Delta^+$. Consequently, $e_{\sigma - \gamma}$ is a highest weight vector for~$\mathfrak h$. Similarly, $e_{\sigma - \beta}$ is another highest weight vector for~$\mathfrak h$. Since $e_{\sigma - \beta - \gamma}$ is proportional to both $[e_{-\beta}, e_{\sigma - \gamma}]$ and $[e_{-\gamma}, e_{\sigma - \beta}]$, it follows that $e_{\sigma - \beta - \gamma}$ is contained in two different simple $\mathfrak h$-submodules of~$\mathfrak g$, a contradiction.
\end{proof}

Until the end of this case, $\beta$ stands for the unique simple root such that $\sigma - \beta \in \Delta^+$.

\begin{lemma} \label{lemma_nonroot_properties}
The following assertions hold:
\begin{enumerate}[label=\textup{(\alph*)},ref=\textup{\alph*}]
\item \label{lemma_nonroot_properties_a}
$e_\beta v \ne 0$;

\item \label{lemma_nonroot_properties_b}
$e_\beta v = c e_{-(\sigma - \beta)} x_0$ for some $c \in \Bbbk^\times$;

\item \label{lemma_nonroot_properties_c}
$\iota((\sigma - \beta)^\vee)$ is a positive multiple of $\varrho$.
\end{enumerate}
\end{lemma}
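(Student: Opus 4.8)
The plan is to dispatch (a) and (b) immediately from the local structure results already established, and then to extract (c) by projecting the formula of (b) onto the simple summands of $V$.

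For (a) and (b): Lemma~\ref{lemma_edeltav1} furnishes some $\delta \in \Pi$ with $e_\delta v \neq 0$. Since $\sigma \notin \Delta^+$ we have $\delta \neq \sigma$, so Corollary~\ref{crl_sigma-root}(\ref{crl_sigma-root_a}) applies and gives $\sigma - \delta \in \Delta^+$; the uniqueness in Lemma~\ref{lemma_nonroot-alpha} then forces $\delta = \beta$, which is exactly~(a). Feeding this back into Corollary~\ref{crl_sigma-root}(\ref{crl_sigma-root_b}) yields $e_\beta v = c\,e_{-(\sigma - \beta)}x_0$ for some $c \in \Bbbk^\times$, which is~(b).

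For (c), the first move is to normalize $v$: because $\sigma \notin \Delta^+$, Proposition~\ref{prop_canonical_form}(\ref{prop_canonical_form_b}) gives $\pr_\lambda(v) = 0$ for every $\lambda \in \mathrm E_\varrho$. Projecting the identity of (b) to the summand $V(\lambda)$ and using that $\pr_\lambda$ is $\mathfrak g$-equivariant, I obtain $c\,e_{-(\sigma - \beta)}v_\lambda = e_\beta \pr_\lambda(v) = 0$, so $e_{-(\sigma - \beta)}v_\lambda = 0$ for all $\lambda \in \mathrm E_\varrho$. Since $\sigma - \beta \in \Delta^+$, each $v_\lambda$ is a highest weight vector for the $\mathfrak{sl}_2$-triple attached to $\sigma - \beta$, so $e_{-(\sigma - \beta)}v_\lambda = 0$ is equivalent to $\langle (\sigma - \beta)^\vee, \lambda \rangle = 0$. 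Thus $\iota((\sigma - \beta)^\vee)$ annihilates $\mathrm E_\varrho$.

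It remains to upgrade this to proportionality with $\varrho$ and to pin down the sign, and this last step is where the real care is needed. Since $\QQ^+\varrho$ is an extremal ray of $\mathcal K = (\QQ^+\Gamma)^\vee$, the orthogonal facet of $\QQ^+\Gamma$ is spanned by $\mathrm E_\varrho$, so $\QQ\mathrm E_\varrho$ is a hyperplane and the subspace of $\mathcal Q$ killing $\mathrm E_\varrho$ is the single line $\QQ\varrho$; hence $\iota((\sigma - \beta)^\vee) = t\varrho$ for some $t \in \QQ$. Pairing with any $\mu \in \mathrm E \setminus \mathrm E_\varrho$, where $\langle \varrho, \mu \rangle > 0$, and using that $\langle (\sigma - \beta)^\vee, \mu \rangle \geq 0$ by dominance of $\mu$, gives $t \geq 0$; and since $e_{-(\sigma - \beta)}x_0 \neq 0$ by (a)--(b), some $\mu \in \mathrm E$ satisfies $\langle (\sigma - \beta)^\vee, \mu \rangle > 0$, which forces $t > 0$. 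The main obstacle is thus not (a), (b), or the $\mathfrak{sl}_2$-reduction, but this final identification of the annihilator of $\mathrm E_\varrho$ with the line through $\varrho$ (the facet--extremal-ray duality for $\QQ^+\Gamma$ and $\mathcal K$), together with the clean separation of non-negativity (from dominance) from strict positivity (from $e_{-(\sigma - \beta)}x_0 \neq 0$).
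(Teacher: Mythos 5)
Your proof is correct and follows essentially the same route as the paper's: (a) and (b) via Lemma~\ref{lemma_edeltav1}, Corollary~\ref{crl_sigma-root} and the uniqueness in Lemma~\ref{lemma_nonroot-alpha}, and (c) by normalizing $v$ with Proposition~\ref{prop_canonical_form}(\ref{prop_canonical_form_b}) to get $\langle(\sigma-\beta)^\vee,\lambda\rangle=0$ on $\mathrm E_\varrho$, deducing proportionality to $\varrho$, and ruling out the zero multiple from $e_\beta v\ne 0$. The only difference is that you spell out the facet/extremal-ray duality justifying why annihilating $\mathrm E_\varrho$ forces proportionality to $\varrho$, which the paper leaves implicit.
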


\begin{proof}
By Lemma~\ref{lemma_edeltav1} there exists $\delta \in \Pi$ such that $e_\delta v \ne 0$. Corollary~\ref{crl_sigma-root}(\ref{crl_sigma-root_a}) and Lemma~\ref{lemma_nonroot-alpha} then yield $\delta = \beta$, whence part~(\ref{lemma_nonroot_properties_a}). Part~(\ref{lemma_nonroot_properties_b}) is implied by Corollary~\ref{crl_sigma-root}(\ref{crl_sigma-root_b}). For every $\lambda \in \mathrm E_\varrho$, one has $\pr_\lambda(v) = 0$ by Proposition~\ref{prop_canonical_form}(\ref{prop_canonical_form_b}), hence $\langle (\sigma - \beta)^\vee, \lambda \rangle = 0$. Thus $\iota((\sigma - \beta)^\vee)$ is a non-negative multiple of~$\varrho$. As $e_\beta v \ne 0$, it follows that $\iota((\sigma - \beta)^\vee) \ne 0$, whence part~(\ref{lemma_nonroot_properties_c}).
\end{proof}

\begin{lemma} \label{lemma_non-orthogonal}
Suppose that $\gamma \in \Pi \setminus \lbrace \beta \rbrace$ is such that $\sigma - \beta - \gamma \in \Delta^+$. Then $\langle \beta^\vee, \gamma \rangle < 0$ \textup(that is, $\beta + \gamma \in \Delta^+$\textup).
\end{lemma}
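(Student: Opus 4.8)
The plan is to realize the weight $\sigma$ by a concrete vector and then read off the desired adjacency from a single Lie‑bracket computation. Fix $v \in V^{(T_\ad)}_\sigma$ with $[v] \in \EuScript{TS} \setminus \lbrace 0 \rbrace$ and choose $\varrho \in \mathcal K^1(\sigma)$. Set $\mu = \sigma - \beta$ and $\eta = \sigma - \beta - \gamma$, both of which lie in $\Delta^+$ by hypothesis. By Lemma~\ref{lemma_nonroot-alpha} the root $\beta$ is the unique simple root with $\sigma - \beta \in \Delta^+$; hence, by Corollary~\ref{crl_sigma-root}(\ref{crl_sigma-root_a}), $e_\delta v = 0$ for every simple root $\delta \ne \beta$, and in particular $e_\gamma v = 0$. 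On the other hand, Lemma~\ref{lemma_nonroot_properties}(\ref{lemma_nonroot_properties_a},\ref{lemma_nonroot_properties_b}) gives $e_\beta v = c\, e_{-\mu} x_0$ with $c \in \Bbbk^\times$.

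The key step is to evaluate $e_\gamma e_\beta v$ in two ways. Since $e_\gamma v = 0$, we have $e_\gamma e_\beta v = [e_\gamma, e_\beta] v$. Substituting $e_\beta v = c\, e_{-\mu} x_0$ and using $e_\gamma x_0 = 0$ together with $\gamma - \mu = -\eta \in \Delta$, we obtain
$$
[e_\gamma, e_\beta] v = c\, e_\gamma e_{-\mu} x_0 = c\, N_{\gamma, -\mu}\, e_{-\eta} x_0 ,
$$
where $N_{\gamma, -\mu} \ne 0$ because $\gamma - \mu = -\eta$ is a root. Now $[e_\gamma, e_\beta] = N_{\gamma, \beta} e_{\beta + \gamma}$ vanishes precisely when $\beta + \gamma \notin \Delta$, that is, when $\langle \beta^\vee, \gamma \rangle = 0$. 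Thus the lemma reduces to the nonvanishing
$$
e_{-\eta} x_0 \ne 0 , \qquad \text{equivalently} \qquad \iota(\eta^\vee) \ne 0 :
$$
once this is known, $[e_\gamma, e_\beta] v \ne 0$ forces $\beta + \gamma \in \Delta$, and as $\beta, \gamma$ are distinct simple roots this means $\langle \beta^\vee, \gamma \rangle < 0$, as required.

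The main obstacle is therefore precisely the nonvanishing $e_{-\eta} x_0 \ne 0$, and this is the point where the hypothesis $\sigma \in \Phi(\Gamma)$ must genuinely intervene: there are root configurations meeting all the combinatorial hypotheses yet with $\beta \perp \gamma$ (for instance $\sigma = 2\alpha_1 + \alpha_2 + \alpha_3$ in type $\mathsf A_3$ with $\beta = \alpha_1$, $\gamma = \alpha_3$), so no purely root‑theoretic argument can succeed. I would first recast the target as $\langle \eta^\vee, \sigma \rangle \ne 0$: since $\sigma \in \ZZ\Gamma$ by Lemma~\ref{lemma_straightforward}(\ref{lemma_straightforward_b}), this forces $\iota(\eta^\vee) \ne 0$, whence some $\lambda \in \mathrm E$ has $\langle \eta^\vee, \lambda \rangle > 0$ (the pairing being $\ge 0$ as $\eta \in \Delta^+$ and $\lambda$ is dominant), so $e_{-\eta} v_\lambda \ne 0$ and $e_{-\eta} x_0 \ne 0$. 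Writing $\langle \eta^\vee, \sigma \rangle = 2 + \langle \eta^\vee, \beta \rangle + \langle \eta^\vee, \gamma \rangle$, one has $\langle \eta^\vee, \beta \rangle \ge 0$ because $\eta + \beta = \sigma - \gamma \notin \Delta$ (uniqueness of $\beta$), while $\eta + \gamma \in \Delta$ controls $\langle \eta^\vee, \gamma \rangle$; in the equal‑length (in particular simply‑laced) situation this already yields $\langle \eta^\vee, \sigma \rangle \ge 1$. The delicate residual cases are those in which $\gamma$ is strictly longer than $\eta$, which I expect to correspond to $\Supp \sigma$ being disconnected (a $\mathsf B_2 \times \mathsf A_1$–type pattern). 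To exclude these I would abandon the bracket alone and use the full $G_{x_0}$‑invariance of $[v]$: the conditions $e_{-\delta} v \in \mathfrak g x_0$ for $\delta \in \Gamma^\perp$ from Lemma~\ref{lemma_G_x-fixed}, together with $\iota(\mu^\vee) \ne 0$ (forced by $e_\beta v \ne 0$) and, when several simple roots $\gamma'$ satisfy $\sigma - \beta - \gamma' \in \Delta^+$, the simultaneous use of all of them—which would push $\mu$ into $\Gamma^\perp$ and annihilate $e_\beta v$—should produce the contradiction. Verifying that an orthogonal $\gamma$ is incompatible with $[v] \ne 0$ in these length/connectivity degeneracies is the only genuinely hard part; the bracket identity itself is routine.
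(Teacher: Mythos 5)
Your bracket identity is correct and your reduction of the lemma to the nonvanishing $e_{-\eta}x_0 \ne 0$ (where $\eta = \sigma-\beta-\gamma$) is a legitimate alternative route, but the nonvanishing itself is left unproved in precisely the cases you flag, and this is a genuine gap rather than a routine verification. You obtain $\iota(\eta^\vee)\ne 0$ only when $\langle \eta^\vee,\sigma\rangle > 0$, and this inequality really can fail under the hypotheses available at this point of the argument: for instance when $\Supp\sigma$ contains a $\mathsf B_2$-pattern with $\gamma$ long, $\eta$ short, $\langle\eta^\vee,\gamma\rangle=-2$ and $\langle\eta^\vee,\beta\rangle=0$, one gets $\langle\eta^\vee,\sigma\rangle=0$ and nothing you have written excludes $\iota(\eta^\vee)=0$. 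You cannot appeal to $\sigma\in\overline\Sigma_G$ to rule such patterns out, since the lemma is itself an ingredient in the proof of (\ref{Phi2}) for this case; and your closing sentence, that the full $G_{x_0}$-invariance of $[v]$ ``should produce the contradiction,'' is a plan, not an argument.

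The paper avoids this difficulty by landing the contradiction on $h_{\sigma-\beta}x_0$ instead of $e_{-\eta}x_0$; the nonvanishing of the former is already known, since Lemma~\ref{lemma_nonroot_properties}(\ref{lemma_nonroot_properties_c}) gives $\iota((\sigma-\beta)^\vee)\ne 0$ as a consequence of $e_\beta v\ne 0$. The one observation you are missing is that Corollary~\ref{crl_sigma-root}(\ref{crl_sigma-root_a}) applies to the non-simple positive root $\eta$ as well: under the orthogonality assumption $\sigma-\eta=\beta+\gamma\notin\Delta^+$, so $e_\eta v=0$ for free. Combined with $e_\gamma v=0$ and the fact that $e_{\sigma-\beta}$ is a nonzero multiple of $[e_\gamma,e_\eta]$, this gives $e_{\sigma-\beta}v=0$, whence $0=e_\beta e_{\sigma-\beta}v=e_{\sigma-\beta}e_\beta v=c\,h_{\sigma-\beta}x_0\ne 0$, a contradiction. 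If you add this step, your computation of $e_\gamma e_\beta v$ becomes superfluous; if you prefer to keep your identity, you must still supply an independent proof of $\iota(\eta^\vee)\ne 0$ in the residual length configurations, which is exactly the part you have not done.
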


\begin{proof}
By Lemma~\ref{lemma_nonroot_properties}(\ref{lemma_nonroot_properties_b}) one has $e_\beta v = c e_{-(\sigma - \beta)} x_0$ for some $c \in \Bbbk^\times$. Assume that $\beta$ and $\gamma$ are orthogonal. Then $\beta + \gamma \notin \Delta^+$, and so $e_{\sigma - \beta - \gamma}v = 0$ by Corollary~\ref{crl_sigma-root}(\ref{crl_sigma-root_a}). As $\beta \ne \gamma$, it follows from Lemma~\ref{lemma_nonroot-alpha} and Corollary~\ref{crl_sigma-root}(\ref{crl_sigma-root_a}) that $e_\gamma v = 0$. Then $e_{\sigma - \beta} v = N_{\gamma, \sigma - \beta - \gamma}^{-1} e_\gamma e_{\sigma - \beta - \gamma}v = 0$. Hence in view of Lemma~\ref{lemma_nonroot_properties}(\ref{lemma_nonroot_properties_c}) one has
\[
0 = e_\beta e_{\sigma - \beta} v = e_{\sigma - \beta} e_\beta v = e_{\sigma - \beta} (c e_{-(\sigma - \beta)}x_0) = c h_{\sigma - \beta} x_0 \ne 0,
\]
a contradiction.
\end{proof}

\begin{lemma} \label{lemma_nonroot_multiple}
For every $\delta \in \Supp \sigma$, the element $\iota(\delta^\vee)$ is a non-negative multiple of~$\varrho$.
\end{lemma}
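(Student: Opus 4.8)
The plan is to prove, for every $\delta \in \Supp \sigma$, that $\delta \in \mathrm E_\varrho^\perp$ (that is, $\langle \delta^\vee, \lambda \rangle = 0$ for all $\lambda \in \mathrm E_\varrho$). In view of~(\ref{eqn_lie_in_K}) and the fact that $\varrho$ spans an extremal ray of $\mathcal K$, this at once yields that $\iota(\delta^\vee)$ is a non-negative multiple of $\varrho$ --- this is the same implication already invoked in the proof of Lemma~\ref{lemma_locally_dominant}. Throughout I keep $\beta$ and $\varrho$ fixed as above, and I use repeatedly that each $\lambda \in \mathrm E_\varrho \subset \Gamma$ is dominant, so that $\langle \delta^\vee, \lambda \rangle \ge 0$ for every $\delta \in \Pi$.

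First I would exploit Lemma~\ref{lemma_nonroot_properties}(\ref{lemma_nonroot_properties_c}): since $\iota((\sigma - \beta)^\vee)$ is a positive multiple of $\varrho$ and $\langle \varrho, \lambda \rangle = 0$ for $\lambda \in \mathrm E_\varrho$, one gets $\langle (\sigma - \beta)^\vee, \lambda \rangle = 0$ for all $\lambda \in \mathrm E_\varrho$. Writing $\sigma - \beta = \sum_{\delta \in \Supp(\sigma - \beta)} m_\delta \delta$ with all $m_\delta > 0$ and using $\delta^\vee = \frac{2\delta}{(\delta,\delta)}$, one has the identity
\[
(\sigma - \beta)^\vee = \sum_{\delta \in \Supp(\sigma - \beta)} \frac{m_\delta (\delta, \delta)}{(\sigma - \beta, \sigma - \beta)}\, \delta^\vee,
\]
all of whose coefficients are strictly positive. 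Pairing this with a fixed $\lambda \in \mathrm E_\varrho$ expresses $0 = \langle (\sigma - \beta)^\vee, \lambda \rangle$ as a sum of non-negative terms, so each term vanishes and $\langle \delta^\vee, \lambda \rangle = 0$ for every $\delta \in \Supp(\sigma - \beta)$. Hence $\Supp(\sigma - \beta) \subset \mathrm E_\varrho^\perp$.

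Since $\Supp \sigma = \Supp(\sigma - \beta) \cup \lbrace \beta \rbrace$, it remains to treat $\beta$, and this is the only genuine point. I claim that in fact $\beta \in \Supp(\sigma - \beta)$, i.e.\ the coefficient of $\beta$ in $\sigma$ is at least~$2$, so that no separate argument for $\beta$ is required. Suppose not, so $\beta \notin \Supp(\sigma - \beta)$. Then $\sigma - \beta$ cannot be simple: otherwise $\sigma = \beta + \gamma$ with $\gamma \in \Pi \setminus \lbrace \beta \rbrace$ and $\beta + \gamma = \sigma \notin \Delta^+$, which forces $\beta \perp \gamma$ and contradicts the standing hypothesis that $\sigma$ is not the sum of two orthogonal simple roots. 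Thus $\sigma - \beta$ is a non-simple positive root, so there is a simple root $\gamma$ with $(\sigma - \beta) - \gamma \in \Delta^+$; necessarily $\gamma \in \Supp(\sigma - \beta)$, whence $\gamma \ne \beta$. Lemma~\ref{lemma_non-orthogonal} then gives $\langle \beta^\vee, \gamma \rangle < 0$, and therefore $\langle \beta^\vee, \sigma - \beta \rangle = \sum_{\delta \in \Supp(\sigma - \beta)} m_\delta \langle \beta^\vee, \delta \rangle < 0$, all summands being $\le 0$ and the $\gamma$-summand strictly negative. But $\langle \beta^\vee, \sigma - \beta \rangle < 0$ means that the $\beta$-root string through $\sigma - \beta$ extends upward, i.e.\ $(\sigma - \beta) + \beta = \sigma \in \Delta^+$, contradicting $\sigma \notin \Delta^+$. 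This proves the claim, whence $\Supp \sigma = \Supp(\sigma - \beta) \subset \mathrm E_\varrho^\perp$ and the lemma follows.

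The hard part is precisely the simple root $\beta$: the coroot-support computation of the second paragraph sees only $\Supp(\sigma - \beta)$, and $\beta$ is invisible to $(\sigma - \beta)^\vee$ exactly when its coefficient in $\sigma$ is~$1$. The resolution is to rule that possibility out, using both standing hypotheses of this case ($\sigma \notin \Delta^+$ and $\sigma$ not a sum of two orthogonal simple roots) together with Lemma~\ref{lemma_non-orthogonal}; I expect this coefficient bookkeeping to be the delicate step, while the positivity and extremal-ray facts are routine.
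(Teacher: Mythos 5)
Your proof is correct and follows essentially the same route as the paper's: both handle $\Supp(\sigma-\beta)$ via Lemma~\ref{lemma_nonroot_properties}(\ref{lemma_nonroot_properties_c}) together with dominance of the weights in $\mathrm E_\varrho$, and both reduce the remaining root $\beta$ to showing $\beta\in\Supp(\sigma-\beta)$, proved by contradiction using Lemma~\ref{lemma_non-orthogonal} and the root-string fact that $\langle\beta^\vee,\sigma-\beta\rangle<0$ would force $\sigma\in\Delta^+$. You merely spell out two points the paper leaves implicit (the positive expression of $(\sigma-\beta)^\vee$ in terms of simple coroots, and why $\sigma-\beta\notin\Pi$), which is fine.
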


\begin{proof}
By Lemma~\ref{lemma_nonroot_properties}(\ref{lemma_nonroot_properties_c}), the element $\iota((\sigma - \beta)^\vee)$ is a positive multiple of~$\varrho$. Since $(\sigma - \beta)^\vee = \sum \limits_{\delta \in \Supp(\sigma - \beta)} c_\delta \delta^\vee$ with all the coefficients $c_\delta$ being positive, it follows that $\langle \delta^\vee, \lambda \rangle = 0$ for all $\delta \in \Supp (\sigma - \beta)$ and $\lambda \in \mathrm E_\varrho$. Consequently, $\iota(\delta^\vee)$ is a non-negative multiple of~$\varrho$ for all $\delta \in \Supp (\sigma -\nobreak \beta)$. It remains to show that $\Supp (\sigma - \beta) = \Supp \sigma$ or, equivalently, $\beta \in \Supp (\sigma - \beta)$. Assume the converse and choose $\gamma \in \Supp (\sigma - \beta)$ such that $\sigma - \beta - \gamma \in \Delta^+$. (The latter is possible because $\sigma - \beta \notin \Pi$.) Then $\langle \beta^\vee, \gamma \rangle < 0$ by Lemma~\ref{lemma_non-orthogonal}, hence $\langle \beta^\vee, \sigma - \beta \rangle < 0$. The latter yields $\sigma = \beta + (\sigma - \beta) \in \Delta^+$, a~contradiction.
\end{proof}

\begin{proof}[Proof of~\textup(\ref{Phi2}\textup)] The key idea of our proof is to reduce the consideration to the case where $V$ is a simple $G$-module, which has already been investigated in~\cite{Jan}.

Replacing $G$ with a suitable finite cover, we may assume that $G = G_0 \times C$ where $G_0$ is a simply connected semisimple group and $C$ is a torus. Let $L$ denote the standard Levi subgroup of~$G$ with set of simple roots $\Supp \sigma$ and let $L'$ be the derived subgroup of~$L$. Put $T' = L' \cap T$, so that $T'$ is a maximal torus of~$L'$, and consider the natural restriction map $\pi \colon \mathfrak X(T) \to \mathfrak X(T')$.

Lemma~\ref{lemma_nonroot_properties}(\ref{lemma_nonroot_properties_b}) says that $e_\beta v = c e_{-(\sigma - \beta)} x_0$ for some $c \in \Bbbk^\times$. By Lemma~\ref{lemma_value_is_one} there exists $\nu \in \mathrm E \setminus \mathrm E_\varrho$ such that $\langle \varrho, \nu \rangle = 1$. It then follows from Lemma~\ref{lemma_nonroot_properties}(\ref{lemma_nonroot_properties_c}) that $\langle (\sigma - \beta)^\vee, \nu \rangle > 0$, whence $e_{-(\sigma - \beta)} v_\nu \ne 0$. Consequently, $e_\beta \pr_\nu(v) \ne 0$ and $\pr_\nu(v) \ne 0$.

Let $W \subset V(\nu)$ be the $L$-submodule generated by~$v_{\nu}$. Note that $W \cap \mathfrak gv_\nu = \mathfrak l'v_\nu$. Since $\sigma \notin \Delta^+ \cup \lbrace 0 \rbrace$, one has $\pr_\nu(v) \notin \mathfrak l'v_\nu$, therefore the image of $\pr_\nu(v)$ in $W/ \mathfrak l'v_\nu$ is nonzero. We now show that this image is $L'_{v_\nu}$-invariant.

First of all, we prove that
\begin{equation} \label{eqn_L'_v_nu1}
e_\delta \pr_\nu (v) \in \mathfrak l' v_\nu \; \text{ for all } \delta \in \Delta_\sigma \text{ with } e_\delta \in \mathfrak l'_{v_\nu}.
\end{equation}
Take any such $\delta$. It suffices to show that
\begin{equation} \label{eqn_g_v0}
e_\delta \in \mathfrak g_{x_0},
\end{equation}
because the latter implies $e_\delta v \in \mathfrak gx_0$ and $e_\delta \pr_\nu(v) \in \mathfrak g v_\nu \cap W = \mathfrak l' v_\nu$. If $\delta \in \Delta^+$ then (\ref{eqn_g_v0}) holds automatically. Now assume that $\delta \in \Delta^-$. Then $\Supp \delta \subset \Supp \sigma$, hence $\iota(\delta^\vee)$ is a multiple of~$\varrho$ by Lemma~\ref{lemma_nonroot_multiple}. Since $e_\delta v_\nu = 0$, it follows that $\langle \delta^\vee, \nu \rangle = 0$. The latter implies $\iota(\delta^\vee) = 0$, whence~(\ref{eqn_g_v0}).

Next, we prove that
\begin{equation} \label{eqn_L'_v_nu2}
t \cdot \pr_\nu (v) = \pr_\nu (v) \; \text{ for all } t \in T'_{v_\nu}.
\end{equation}
The latter claim will follow as soon as we prove that $T'_{v_\nu} \subset T_{x_0}$. Since
\[
T'_{v_\nu} = \lbrace t \in T \mid t^\lambda = 1 \text{ for all } \lambda \in \Ker \pi + \ZZ \nu \rbrace,
\]
it suffices to show that $\mathrm E \subset \Ker \pi + \ZZ \nu$. Observe that the lattice $\Ker \pi$ is generated by all elements of $\mathfrak X(C)$ and all fundamental weights of $G_0$ corresponding to simple roots in the set $\Pi \setminus \Supp \sigma$. Since $\delta \in \mathrm E_\varrho^\perp$ for all $\delta \in \Supp \sigma$ (see Lemma~\ref{lemma_nonroot_multiple}), we have $\mathrm E_\varrho \subset \Ker \pi \subset \Ker \pi + \ZZ \nu$. Next, for every $\mu \in \mathrm E \setminus \mathrm E_\varrho$, there is a unique expression $\mu = \tau + b\nu$ with $\tau \in \QQ \mathrm E_\varrho$ and $b \in \QQ$. Since $\langle \varrho, \nu \rangle = 1$, we have $b = \langle \varrho, \mu \rangle \in \ZZ$ and $\tau = \mu - \langle \varrho, \mu \rangle \nu \in \ZZ \Gamma \cap \QQ \mathrm E_\varrho = \ZZ \mathrm E_\varrho$, which implies $\mu \in \Ker \pi + \ZZ \nu$.

It follows from~(\ref{eqn_L'_v_nu1}) and~(\ref{eqn_L'_v_nu2}) that the image of $\pr_\nu(v)$ in $W / \mathfrak l' v_\nu$ is a nonzero element of~$(W / \mathfrak l' v_\nu)^{L'_{v_\nu}}$ (compare with Lemma~\ref{lemma_G_x-fixed}). Now results of~\cite[\S\,1.3]{Jan} (see Proposition~1.6 in loc. cit. and its proof) imply that $\sigma \in\nobreak \overline \Sigma(G)$.
\end{proof}

\begin{proof}[Proof of~\textup(\ref{Phi3}\textup)]
Let $\delta \in \Pi_\sigma$. Then $\langle \delta^\vee, \sigma \rangle = 0$ by~(\ref{eqn_Pi_sigma}). Lemma~\ref{lemma_nonroot_multiple} implies that $\iota(\delta^\vee)$ is a non-negative multiple of~$\varrho$. As $\langle \varrho, \sigma \rangle > 0$, one has $\iota(\delta^\vee) = 0$ and so $\delta \in \Gamma^\perp$.
\end{proof}

\begin{proof}[Proof of~\textup(\ref{Phi6}\textup)]
Suppose that $\sigma = 2\alpha$ with $\alpha \in \Pi$. Proposition~\ref{prop_role_of_NE2} yields $\varrho \notin \mathcal P$. Hence by Lemma~\ref{lemma_positive_multiple} there exist $\delta \in \Pi$ and a positive integer $n$ such that $\iota(\delta^\vee) = n \varrho$. In particular, we obtain $\langle \delta^\vee, \sigma \rangle > 0$, whence $\delta = \alpha$. Note that for every $\mu \in \mathrm E \setminus \mathrm E_\varrho$ one has $\pr_\mu(v) = c_\mu e_{-\alpha}e_{-\alpha}v_\mu$ with $c_\mu \in \Bbbk$. Applying Lemma~\ref{lemma_edeltav1} we obtain $e_\alpha v \ne 0$. Then Corollary~\ref{crl_sigma-root}(\ref{crl_sigma-root_b}) yields $e_\alpha v = ce_{-\alpha}x_0$ for some $c \in \Bbbk^\times$. Consequently, for every $\mu \in \mathrm E \setminus \mathrm E_\varrho$ one has $e_\alpha \pr_\mu(v) \ne 0$ and hence $\pr_\mu(v) \ne 0$, which implies $e_{-\alpha} e_{-\alpha} v_\mu \ne 0$ and therefore $\langle \alpha^\vee, \mu \rangle \ge 2$. In view of Lemma~\ref{lemma_value_is_one}, the latter yields $n \ge 2$. Since $4/n = \langle \alpha^\vee / n, 2 \alpha \rangle = \langle \varrho, \sigma \rangle \in \ZZ$, it follows that $n \in \lbrace 2, 4 \rbrace$. In both cases we obtain $\langle \alpha^\vee, \ZZ \Gamma \rangle \subset 2 \ZZ$ as required.
\end{proof}

\begin{proof}[Proof of~\textup{(\ref{MF})}]
Here we again use a short argument from the proof of~\cite[Proposition~3.16]{BvS}. Let $v' \in V^{(T_\ad)}_\sigma$ be another vector such that $[v'] \in \EuScript{TS} \setminus \lbrace 0 \rbrace$. By Corollary~\ref{crl_sigma-root}(\ref{crl_sigma-root_a}) and Lemma~\ref{lemma_nonroot-alpha} one has $e_\delta v = e_\delta v' = 0$ for all $\delta \in \Pi \setminus \lbrace \beta \rbrace$. Next, by Lemma~\ref{lemma_nonroot_properties}(\ref{lemma_nonroot_properties_b}) we have $e_\beta v = c e_{-(\sigma - \beta)} x_0$ and $e_\beta v' = c' e_{-(\sigma - \beta)} x_0$ for some $c,c' \in \Bbbk^\times$. Then the vector $c'v - cv'$ is annihilated by $e_\beta$ and hence by all $e_\delta$ with $\delta \in \Pi$. As $\sigma \ne 0$, Lemma~\ref{lemma_edeltav1} yields $c'v - cv' = 0$.
\end{proof}

\subsection{Proof of Theorem~\ref{thm_tangent_space}: Step~2}
\label{subsec_step_2}

Our goal in this subsection is to prove the following

\begin{proposition} \label{prop_last_step}
Suppose that a weight $\sigma \in \mathfrak X(T_\ad)$ satisfies
conditions~\textup(\ref{Phi1}\textup)--\textup(\ref{Phi8}\textup). Then $\sigma \in \Phi(\Gamma)$.
\end{proposition}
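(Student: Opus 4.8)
The plan is to reverse the analysis of Step~1: starting from a weight $\sigma$ satisfying (\ref{Phi1})--(\ref{Phi8}), I would construct an explicit vector $v\in V^{(T_\ad)}_\sigma$ and show that $[v]$ is a nonzero element of $\EuScript{TS}$, which exactly means that $\sigma$ is an $a_\ad$-weight of $\EuScript{TS}\simeq T_{X_0}\mathrm M_\Gamma$, i.e. $\sigma\in\Phi(\Gamma)$. By Corollary~\ref{crl_[v]_in_TS} this amounts to three things: (i) $[v]\in(V/\mathfrak gx_0)^{G_{x_0}}$; (ii) $[v]\ne 0$; and (iii) the section $s$ with $s(x_0)=[v]$ extends to $O\cup O_\varrho$ for every $\varrho\in\mathcal P$. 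I would organize the construction according to the four cases used in Step~1, dictated by the shape of $\sigma\in\overline\Sigma_G$ (condition (\ref{Phi2})): $\sigma=\alpha\in\Pi$; $\sigma\in\Delta^+\setminus\Pi$; $\sigma=\alpha+\beta$ with $\alpha\perp\beta$; and $\sigma\notin\Delta^+$ not a sum of two orthogonal simple roots.

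For the construction of $v$ I would mirror the explicit formulas obtained in Step~1 (Lemma~\ref{lemma_v_alpha}, Lemma~\ref{lemma_alpha+beta}, and their analogues), defining each component $\pr_\mu(v)\in V(\mu)$ as a suitable product of lowering operators $e_{-\gamma}$ applied to the highest weight vector $v_\mu$, normalized so that $v$ has $a_\ad$-weight $\sigma$. For (i), Lemma~\ref{lemma_G_x-fixed} reduces $G_{x_0}$-invariance of $[v]$ to $T_{x_0}$-stability together with $e_\delta v\in\mathfrak gx_0$ for $\delta\in\Pi\cup(-\mathrm E^\perp)$. The $T_{x_0}$-stability is immediate from (\ref{Phi1}): since $\ZZ\Gamma=\ZZ\mathrm E$, the condition $\sigma\in\ZZ\Gamma$ gives $t^\sigma=1$ for all $t\in T_{x_0}$. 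The vanishing of $e_\delta v$ modulo $\mathfrak gx_0$ is an $\mathfrak{sl}_2$- and Levi-type computation on each component; here conditions (\ref{Phi3})--(\ref{Phi6}) enter precisely to guarantee that the relevant root strings and supports behave correctly (for instance (\ref{Phi3}) controls the roots in $\Pi_\sigma$, (\ref{Phi4}) the tail root in type $\mathsf B_r$, (\ref{Phi6}) the divisibility needed to apply $e_{-\alpha}$ twice), and the sign conventions for the structure constants $N_{\alpha,\beta}$ collected in Appendix~\ref{app_structure_constants} are used to pin down the relevant commutators. For the cases $\sigma\notin\Pi$, I would reduce, after passing to the standard Levi $L$ with simple roots $\Supp\sigma$ and to a single generator $\nu$ with $\langle\varrho,\nu\rangle=1$ (Lemma~\ref{lemma_value_is_one}), to the computation inside one simple $G$-module carried out in~\cite{Jan}, which produces the required $L'_{v_\nu}$-invariant element of $W/\mathfrak l'v_\nu$. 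Non-vanishing (ii) follows from Lemma~\ref{lemma_intersection}: when $\sigma\notin\Delta^+$ the intersection $V^{(T_\ad)}_\sigma\cap\mathfrak gx_0$ is trivial, so any nonzero $v$ works, while for $\sigma\in\Delta^+$ one checks that the constructed $v$ is not proportional to $e_{-\sigma}x_0$ (the coefficients, involving the values $\langle\varrho,\mu\rangle$, are not all equal).

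The heart of the proof is (iii). By Proposition~\ref{prop_section_extensions}(\ref{prop_section_extensions_b}), extension over $O\cup O_\varrho$ is automatic whenever $\langle\varrho,\sigma\rangle\le 0$, so only $\varrho\in\mathcal P\cap\mathcal K^1(\sigma)$ matter. For $\sigma\notin\Pi$ I would show this set is empty using (\ref{Phi7}): writing any extremal ray $\varrho$ with $\langle\varrho,\sigma\rangle>0$ as a nonnegative combination of the generators listed in (\ref{Phi7}), the face property of $\QQ^+\varrho$ forces some $\iota(\delta^\vee)$ with $\delta\in\Pi\setminus\Gamma^\perp$ to be a positive multiple of $\varrho$; by the mechanism behind Lemma~\ref{lemma_positive_multiple} this gives $\delta\in\mathrm E_\varrho^\perp\setminus\mathrm E^\perp$, hence $\varrho\notin\mathcal P$. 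Thus all extensions are automatic. For $\sigma=\alpha\in\Pi$ one cannot avoid $\varrho\in\mathcal P$ with $\langle\varrho,\alpha\rangle>0$; here (\ref{Phi8}) is tailored to the problem. Using (\ref{Phi8c}) in the same way, the only extremal rays of $\mathcal K$ meeting $\mathcal K^1(\alpha)$ are $\QQ^+\varrho_1$ and $\QQ^+\varrho_2$ (the remaining generators $\iota(\delta^\vee)$ with $\delta\ne\alpha$ satisfy $\langle\delta^\vee,\alpha\rangle\le 0$). For $\varrho\in\{\varrho_1,\varrho_2\}\cap\mathcal P$ I would invoke Proposition~\ref{prop_section_extensions}(\ref{prop_section_extensions_d}): after normalizing $v$ by Proposition~\ref{prop_canonical_form}(\ref{prop_canonical_form_a}) so that $\pr_\lambda(v)=0$ on $\mathrm E_\varrho$, condition (\ref{Phi8a}), namely $\langle\varrho,\alpha\rangle=1$, matches its hypothesis, while (\ref{Phi8b}), namely $\iota(\alpha^\vee)=b_1\varrho_1+b_2\varrho_2$, yields $\sum_{\langle\varrho,\mu\rangle=1}e_\alpha\pr_\mu(v)\in\Bbbk u_\varrho$, giving the extension.

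I expect the main obstacle to be producing a single vector $v$ that simultaneously satisfies (i) and extends over both codimension-one orbits attached to $\varrho_1$ and $\varrho_2$ in the case $\sigma=\alpha\in\Pi$. Proposition~\ref{prop_canonical_form} only allows one to normalize relative to a single ray at a time (up to adding multiples of $e_{-\alpha}x_0$), so the compatibility of the two normalizations must be extracted from the two-dimensional face structure furnished by the distinct rays $\varrho_1\ne\varrho_2$ together with the decomposition $\iota(\alpha^\vee)=b_1\varrho_1+b_2\varrho_2$ with $b_1,b_2>0$; this is exactly where the distinctness emphasized in the remark following Theorem~\ref{thm_tangent_space} is indispensable. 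The case-by-case verification of $G_{x_0}$-invariance via the explicit Chevalley structure constants is also delicate, but it is essentially the reversal of computations already carried out in Step~1.
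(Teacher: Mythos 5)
Your proposal is correct and follows essentially the same route as the paper: condition~(\ref{Phi7}) making all extensions automatic when $\sigma\notin\Pi$ (the paper's Lemma~\ref{lemma_not_in_Pi_extends}), the reduction of $G_{x_0}$-invariance to Lemma~\ref{lemma_v_is_there}, the use of Proposition~\ref{prop_section_extensions}(\ref{prop_section_extensions_b},\,\ref{prop_section_extensions_d}) with Corollary~\ref{crl_[v]_in_TS}, and the explicit case-by-case construction of $v$ via lowering operators with the sign conventions of Appendix~\ref{app_structure_constants} all match. The ``main obstacle'' you single out in the case $\sigma=\alpha\in\Pi$ with $|\mathcal K^1(\alpha)|=2$ is resolved exactly as you anticipate: the paper builds two representatives $v_1,v_2$, normalized with respect to $\varrho_1$ and $\varrho_2$ and weighted by $b_1$ and $-b_2$, so that $\iota(\alpha^\vee)=b_1\varrho_1+b_2\varrho_2$ forces $v_1-v_2=e_{-\alpha}x_0\in\mathfrak g x_0$ and the single class $[v_1]=[v_2]$ admits both normalizations; the only real deviation is that for $\sigma\notin\Pi$ the paper writes down explicit elements of $U(\mathfrak g)$ for each row of Table~\ref{table_spherical_roots} rather than importing existence from~\cite{Jan}, which would not be immediate since the components $\pr_\lambda(v)$ must carry $\lambda$-dependent normalizing constants compatible across all of $\mathrm E$.
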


In the proof of this proposition we shall need the following lemma.

\begin{lemma} \label{lemma_v_is_there}
Suppose that $\sigma \in \ZZ \Gamma$, $v \in V^{(T_\ad)}_\sigma$, $e_{\delta} v = 0$ for all $\delta \in \Gamma^\perp$, and $e_{\delta} v \in \mathfrak g x_0$ for all $\delta \in \Pi \setminus \Gamma^\perp$. Then $[v] \in (V / \mathfrak g x_0)^{G_{x_0}}$.
\end{lemma}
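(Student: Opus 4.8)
The plan is to verify that $[v]$ meets the criterion of Lemma~\ref{lemma_G_x-fixed}, applied with $\mathrm E' = \mathrm E$ (so $x = x_0$), $A = V/\mathfrak g x_0$, and $a = [v]$, in the form of its condition~(2$'$). The first observation is that $\mathrm E^\perp = \Gamma^\perp$: since $\mathrm E$ generates $\Gamma$ as a monoid and each $\langle\delta^\vee, \cdot\rangle$ is linear, vanishing on $\mathrm E$ is the same as vanishing on $\Gamma$. Hence the set $-\mathrm E'^\perp$ occurring in condition~(2$'$) is $-\Gamma^\perp$, and it suffices to check that $[v]$ is $T_{x_0}$-stable and that $e_\delta[v] = 0$ for every $\delta \in \Pi \cup (-\Gamma^\perp)$.

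For $T_{x_0}$-stability I would reproduce the computation of Lemma~\ref{lemma_straightforward}(\ref{lemma_straightforward_b}). By~(\ref{eqn_action_of_Tad_on_V}), the component $\pr_\mu(v)$ has $T$-weight $\mu - \sigma$ for each $\mu \in \mathrm E$; since $t^\mu = 1$ for $t \in T_{x_0}$, this gives $t\cdot v = t^{-\sigma}v$, and the hypothesis $\sigma \in \ZZ\Gamma$ forces $t^\sigma = 1$, so $t \cdot v = v$ and $[v]$ is $T_{x_0}$-fixed. For the simple roots $\delta \in \Pi$ the vanishing $e_\delta[v] = 0$ is immediate from the two hypotheses: $e_\delta v = 0$ if $\delta \in \Gamma^\perp$, and $e_\delta v \in \mathfrak g x_0$ if $\delta \in \Pi \setminus \Gamma^\perp$.

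The one genuine point is condition~(2$'$) for the negative roots $-\gamma$ with $\gamma \in \Gamma^\perp$, and I expect this to be the main (if mild) obstacle: the hypotheses constrain $e_\gamma v$, while the lemma asks about $e_{-\gamma}[v]$. I would bridge the gap by working in the $\mathfrak{sl}_2$-triple $(e_\gamma, h_\gamma, e_{-\gamma})$ summand by summand. For $\mu \in \mathrm E$ the vector $\pr_\mu(v)$ has $h_\gamma$-eigenvalue $\langle\gamma^\vee, \mu - \sigma\rangle$, which is $0$ because $\langle\gamma^\vee, \mu\rangle = 0$ (as $\mu \in \Gamma$ and $\gamma \in \Gamma^\perp$) and $\langle\gamma^\vee, \sigma\rangle = 0$ (as $\sigma \in \ZZ\Gamma$, on which $\langle\gamma^\vee, \cdot\rangle$ vanishes). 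Moreover $e_\gamma \pr_\mu(v) = \pr_\mu(e_\gamma v) = 0$ by $G$-equivariance of $\pr_\mu$ together with the hypothesis $e_\gamma v = 0$. Thus each $\pr_\mu(v)$ is a highest-weight vector of $h_\gamma$-weight $0$ for this $\mathfrak{sl}_2$, hence spans a trivial submodule, whence $e_{-\gamma}\pr_\mu(v) = 0$. Summing over $\mu$ yields $e_{-\gamma}v = 0$ (in particular $e_{-\gamma} \in \mathfrak g_{x_0}$, so the induced action is well defined), and therefore $e_{-\gamma}[v] = 0$.

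Having checked both conditions, Lemma~\ref{lemma_G_x-fixed} (via its equivalent form~(2$'$)) gives that $[v]$ is $G_{x_0}$-fixed, i.e.\ $[v] \in (V/\mathfrak g x_0)^{G_{x_0}}$, as desired.
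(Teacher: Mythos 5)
Your proof is correct and follows essentially the same route as the paper: both reduce to Lemma~\ref{lemma_G_x-fixed} with $\mathrm E' = \mathrm E$, get $T_{x_0}$-invariance from $\sigma \in \ZZ\Gamma$, and handle the roots $-\gamma$ with $\gamma \in \Gamma^\perp = \mathrm E^\perp$ by the observation that $e_\gamma v = 0$ together with $\langle \gamma^\vee, \mu - \sigma\rangle = 0$ forces $e_{-\gamma} v = 0$. You merely spell out the $\mathfrak{sl}_2$ highest-weight-of-weight-zero argument that the paper leaves implicit.
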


\begin{proof}
The claim will follow as soon as we check conditions (\ref{lemma_G_x-fixed_1}) and~(\ref{lemma_G_x-fixed_3}) of Lemma~\ref{lemma_G_x-fixed}. As $\sigma \in \ZZ \Gamma$, the vector $v$ is $T_{x_0}$-invariant, hence so is~$[v]$. In view of the hypothesis it now suffices to prove that $e_{-\delta} v = 0$ for all $\delta \in \Gamma^\perp = \mathrm E^\perp$. But the latter holds because $e_\delta v = 0$ and $\langle \delta^\vee, \lambda - \sigma \rangle = 0$ for all $\delta \in \mathrm E^\perp$ and $\lambda \in \mathrm E$.
\end{proof}

\begin{proof}[Proof of Proposition~\textup{\ref{prop_last_step}} for $\sigma \in \Pi$]

Suppose that $\sigma = \alpha \in \Pi$. Then by~(\ref{Phi8}) there exist two distinct elements $\varrho_1, \varrho_2 \in \mathcal L$ satisfying conditions (\ref{Phi8})(\ref{Phi8a}--\ref{Phi8c}). It follows from (\ref{Phi8})(\ref{Phi8a},\,\ref{Phi8c}) that $\varnothing \ne \mathcal K^1(\alpha) \subset \lbrace \varrho_1, \varrho_2 \rbrace$. Further we consider two cases.

\textit{Case}~1: $|\mathcal K^1(\alpha)| = 1$. Assume without loss of generality that $\mathcal K^1(\alpha) = \lbrace \varrho_1 \rbrace$. Then condition (\ref{Phi8})(\ref{Phi8b}) implies $\langle \alpha^\vee, \mu \rangle > 0$ for all $\mu \in \mathrm E \setminus \mathrm E_{\varrho_1}$, and we put
\[
v = \sum \limits_{\mu \in \mathrm E \setminus \mathrm E_{\varrho_1}} \frac{\langle \varrho_1, \mu \rangle}{\langle \alpha^\vee, \mu \rangle} e_{-\alpha}v_\mu.
\]
Clearly, $v \in V^{(T_\ad)}_\alpha$ and $e_\beta v = 0$ for all $\beta \in \Pi \setminus \lbrace \alpha \rbrace$.  As $e_\alpha v = \sum \limits_{\mu \in \mathrm E \setminus \mathrm E_{\varrho_1}} \langle \varrho_1, \mu \rangle v_\mu \in \mathfrak t x_0 \subset \mathfrak g x_0$, we obtain $[v] \in (V / \mathfrak g x_0)^{G_{x_0}}$ by Lemma~\ref{lemma_v_is_there}. Since $\iota(\alpha^\vee)$ is not proportional to~$\varrho_1$, one has $[v] \ne 0$ (see Lemma~\ref{lemma_intersection}). Applying Proposition~\ref{prop_section_extensions}(\ref{prop_section_extensions_b},\,\ref{prop_section_extensions_d}) and Corollary~\ref{crl_[v]_in_TS} we find that $[v] \in \EuScript{TS}$, whence $\alpha \in \Phi(\Gamma)$.

\textit{Case}~2: $|\mathcal K^1(\alpha)| = 2$, so that $\mathcal K^1(\alpha) = \lbrace \varrho_1, \varrho_2 \rbrace$.  According to~(\ref{Phi8})(\ref{Phi8b}), let $b_1, b_2 \in \QQ^+ \setminus \lbrace 0 \rbrace$ be such that $\iota(\alpha^\vee) = b_1\varrho_1 + b_2\varrho_2$. Put
\[
v_1 = b_1 \sum \limits_{\mu \in \mathrm E \setminus \mathrm E_{\varrho_1}} \frac{\langle \varrho_1, \mu \rangle}{\langle \alpha^\vee, \mu \rangle} e_{-\alpha}v_\mu \; \text{ and } \; v_2 = -b_2 \sum \limits_{\mu \in \mathrm E \setminus \mathrm E_{\varrho_2}} \frac{\langle \varrho_2, \mu \rangle}{\langle \alpha^\vee, \mu \rangle} e_{-\alpha}v_\mu.
\]
Clearly, $v_1, v_2 \in V^{(T_\ad)}_\alpha$. As in Case~1, we see that $[v_1], [v_2] \in (V / \mathfrak gx_0)^{G_{x_0}} \setminus \lbrace 0 \rbrace$. An easy check shows that $[v_1] = [v_2]$. Applying Proposition~\ref{prop_section_extensions}(\ref{prop_section_extensions_b},\,\ref{prop_section_extensions_d}) and Corollary~\ref{crl_[v]_in_TS}
we find that $[v_1] = [v_2] \in \EuScript{TS}$ and so $\alpha \in \Phi(\Gamma)$.
\end{proof}

\begin{proof}[Proof of Proposition~\textup{\ref{prop_last_step}} for
$\sigma \notin \Pi$]
Let $\sigma \in \mathfrak X(T_\ad) \setminus \Pi$ and suppose that $\sigma$ satisfies conditions (\ref{Phi1})--(\ref{Phi8}).

\begin{lemma} \label{lemma_not_in_Pi_extends}
Suppose that $v \in V^{(T_\ad)}_\sigma$ and $[v] \in (V / \mathfrak g x_0)^{G_{x_0}}$. Then $[v] \in \EuScript{TS}$.
\end{lemma}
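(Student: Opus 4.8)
The plan is to use the extension criterion of Corollary~\ref{crl_[v]_in_TS}: since $\EuScript{TS}$ consists exactly of those classes $[v]$ whose associated $G$-invariant section $s$ (normalized by $s(x_0)=[v]$) extends across every codimension-one orbit $O_\varrho$, $\varrho\in\mathcal P$, it suffices to establish such an extension for each $\varrho\in\mathcal P$. We may assume $[v]\ne0$, so that $\sigma$ is a genuine $T_\ad$-weight of $(V/\mathfrak gx_0)^{G_{x_0}}$ and hence $\sigma\in\ZZ^+\Pi\cap\ZZ\Gamma$ by Lemma~\ref{lemma_straightforward}. For every $\varrho\in\mathcal P$ with $\langle\varrho,\sigma\rangle\le0$ the extension is immediate from Proposition~\ref{prop_section_extensions}(\ref{prop_section_extensions_b}), so the whole statement reduces to the case $\varrho\in\mathcal P\cap\mathcal K^1(\sigma)$, i.e. $\langle\varrho,\sigma\rangle>0$.

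The core of the argument is then to show that, for $\sigma\notin\Pi$, this last case simply does not occur: there is no $\varrho\in\mathcal P$ with $\langle\varrho,\sigma\rangle>0$. This is the Step~2 counterpart of Proposition~\ref{prop_role_of_NE2}, and I would prove it by contradiction, following the same support analysis. Assuming such a $\varrho$ exists, Lemma~\ref{lemma_edeltav1} furnishes $\alpha\in\Pi$ with $e_\alpha v\ne0$; since $\sigma\ne\alpha$ (as $\sigma\notin\Pi$), Corollary~\ref{crl_sigma-root} gives $\sigma-\alpha\in\Delta^+$ and $e_\alpha v=c\,e_{-(\sigma-\alpha)}x_0$ with $c\in\Bbbk^\times$. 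After reducing to the normal form $\pr_\lambda(v)=0$ for all $\lambda\in\mathrm E_\varrho$ (subtracting a suitable multiple of $e_{-\sigma}x_0$ when $\sigma\in\Delta^+$), the identity $e_\alpha\pr_\lambda(v)=c\,e_{-(\sigma-\alpha)}v_\lambda$ forces $\Supp(\sigma-\alpha)\subset\mathrm E_\varrho^\perp$, whereas $e_\alpha v\ne0$ yields $\Supp(\sigma-\alpha)\not\subset\mathrm E^\perp$; since $\varrho\in\mathcal P$ means $\mathrm E_\varrho^\perp=\mathrm E^\perp$, this is the desired contradiction. Once this case is excluded, every $\varrho\in\mathcal P$ satisfies $\langle\varrho,\sigma\rangle\le0$, Proposition~\ref{prop_section_extensions}(\ref{prop_section_extensions_b}) applies uniformly, and Corollary~\ref{crl_[v]_in_TS} gives $[v]\in\EuScript{TS}$.

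I expect the main obstacle to be precisely the step licensing the normal form $\pr_\lambda(v)=0$ on $\mathrm E_\varrho$, equivalently $w:=\sum_{\lambda\in\mathrm E_\varrho}\pr_\lambda(v)\in\mathfrak gz_\varrho$, for $\varrho\in\mathcal P$. In Proposition~\ref{prop_canonical_form} this was obtained for $\varrho\in\mathcal P$ from the hypothesis $[v]\in\EuScript{TS}$ via Proposition~\ref{prop_section_extensions}(\ref{prop_section_extensions_a}), which is unavailable here and would be circular. The $G$-equivariant projection $\pr_{V_\varrho}\colon V\to V_\varrho:=\bigoplus_{\lambda\in\mathrm E_\varrho}V(\lambda)$ carries $\mathfrak gx_0$ onto $\mathfrak gz_\varrho$ and $x_0$ to $z_\varrho$, hence induces a $T_\ad$-equivariant map $V/\mathfrak gx_0\to V_\varrho/\mathfrak gz_\varrho$ sending $[v]$ to a $G_{x_0}$-fixed class $[w]'$ of nonzero weight $-\langle\varrho,\sigma\rangle$ under the residual subgroup $\phi_\varrho(\Bbbk^\times)=G_{z_\varrho}/G_{x_0}$; but this alone does not force $[w]'=0$, and indeed for $\sigma\in\Pi$ such classes genuinely survive, which is exactly why simple roots are special. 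To close this gap I would, as in the proof of~(\ref{Phi2}) for $\sigma\notin\Delta^+$, replace $G$ by a finite cover, pass to the standard Levi with simple roots $\Supp\sigma$, and reduce the vanishing of the codimension-one obstruction to a computation inside a single simple module $V(\nu)$ for a suitable $\nu\in\mathrm E$ with $\langle\varrho,\nu\rangle=1$, where the results of~\cite{Jan} apply; the hypothesis $\sigma\notin\Pi$ should be precisely what guarantees, in that local model, that no obstruction is present.
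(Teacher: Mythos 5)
Your first paragraph is correct and matches the paper: by Corollary~\ref{crl_[v]_in_TS} and Proposition~\ref{prop_section_extensions}(\ref{prop_section_extensions_b}), everything reduces to showing that no $\varrho \in \mathcal P$ satisfies $\langle \varrho, \sigma \rangle > 0$. But there is a genuine gap in how you try to establish that. You overlooked that this lemma lives inside the proof of Proposition~\ref{prop_last_step}, where the standing hypothesis is that $\sigma$ satisfies conditions~(\ref{Phi1})--(\ref{Phi8}); in particular~(\ref{Phi7}) is available, and it settles the matter in one line. Indeed, if $\varrho \in \mathcal P$ had $\langle \varrho, \sigma \rangle > 0$, then since $\QQ^+\varrho$ is an extremal ray of~$\mathcal K$, condition~(\ref{Phi7}) forces $\varrho$ to be a positive multiple of $\iota(\delta^\vee)$ for some $\delta \in \Pi \setminus \Gamma^\perp$, whence $\delta \in \mathrm E_\varrho^\perp \setminus \mathrm E^\perp$, contradicting the defining property $\mathrm E^\perp = \mathrm E_\varrho^\perp$ of~$\mathcal P$. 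This is the paper's argument, and it is the intended role of~(\ref{Phi7}): it is precisely the combinatorial surrogate for Proposition~\ref{prop_role_of_NE2} in the converse direction.

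Your attempt to rederive $\mathcal P \cap \mathcal K^1(\sigma) = \varnothing$ from the hypothesis $[v] \in (V/\mathfrak g x_0)^{G_{x_0}}$ alone cannot work as written. The support analysis of Proposition~\ref{prop_role_of_NE2} needs the normal form $\pr_\lambda(v) = 0$ for $\lambda \in \mathrm E_\varrho$, and for $\varrho \in \mathcal P$ Proposition~\ref{prop_canonical_form} obtains that normal form from $[v] \in \EuScript{TS}$ via Proposition~\ref{prop_section_extensions}(\ref{prop_section_extensions_a}) --- exactly the conclusion being proved, as you yourself observe. Without the normal form, the identity $e_\alpha \pr_\lambda(v) = c\, e_{-(\sigma-\alpha)} v_\lambda$ gives no control over $\langle (\sigma-\alpha)^\vee, \lambda \rangle$ for $\lambda \in \mathrm E_\varrho$, so no contradiction with $\mathrm E^\perp = \mathrm E_\varrho^\perp$ emerges. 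The proposed repair (projecting to $V_\varrho$, passing to a Levi, invoking~\cite{Jan}) is not carried out and, as you concede, the projection argument does not force the obstruction class to vanish; moreover the desired statement is a property of the pair $(\Gamma, \sigma)$ rather than of~$v$, which is why it must come from the hypothesis~(\ref{Phi7}) and not from representation-theoretic manipulation of~$v$.
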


\begin{proof}
Let $s \in H^0(O, \mathcal N_{O})^G$ be the section defined by $s(x_0) = [v]$. Recall from Corollary~\ref{crl_[v]_in_TS} that $[v] \in \EuScript{TS}$ if and only if $s$ extends to $O \cup O_\varrho$ for each $\varrho \in \mathcal P$. We now fix any $\varrho \in \mathcal P$ and show that $s$ extends to $O \cup O_\varrho$. Assume that $\langle \varrho, \sigma \rangle > 0$. Then by~(\ref{Phi7}) there exists $\delta \in \Pi$ such that $\iota(\delta^\vee)$ is a positive multiple of~$\varrho$. It follows that $\delta \in \mathrm E_\varrho^\perp \backslash \mathrm E^\perp$, which contradicts the condition $\mathrm E^\perp = \mathrm E_\varrho^\perp$. Consequently, $\langle \varrho, \sigma \rangle \le 0$, which implies $[v] \in \EuScript{TS}$ by Proposition~\ref{prop_section_extensions}(\ref{prop_section_extensions_b}).
\end{proof}

To complete the proof, by Lemmas~\ref{lemma_not_in_Pi_extends} and~\ref{lemma_v_is_there} it suffices to find a vector $v \in V^{(T_\ad)}_\sigma$ with the following properties:
\begin{enumerate}[label=\textup{(V\arabic*)},ref=\textup{V\arabic*}]
\item \label{V1}
$v \notin \mathfrak gx_0$;

\item \label{V2}
$e_\delta v = 0$ for all $\delta \in \Gamma^\perp$;

\item \label{V3}
$e_\delta v \in \mathfrak g x_0$ for all $\delta \in \Pi \setminus \Gamma^\perp$.
\end{enumerate}
In view of condition~(\ref{Phi2}), it is enough to present such a vector $v$ for each of the cases in Table~\ref{table_spherical_roots}. This is done in the remaining part of the proof. In each case, the explicit formula for $v$ depends on the signs of the structure constants $N_{\alpha, \beta}$ of the Lie algebra $[\mathfrak l, \mathfrak l]$, where $\mathfrak l \subset \mathfrak g$ is the standard Levi subalgebra with set of simple roots $\Supp \sigma$; we use the choice of these signs specified in Appendix~\ref{app_structure_constants}.

\textit{Case}~1: $\sigma \in \Delta^+$. By Lemma~\ref{lemma_intersection} one has $V^{(T_\ad)}_\sigma \cap \mathfrak gx_0 = \Bbbk e_{-\sigma}x_0$.

\textit{Subcase}~1.1: $\sigma = \alpha_1 + \ldots + \alpha_r$ with $\Supp \sigma$ of type $\mathsf A_r$ ($r \ge 2$). Then $\Supp \sigma \cap \sigma^\perp = \Pi_\sigma$. Conditions (\ref{Phi1}) and (\ref{Phi3}) yield $\Supp \sigma \setminus \Gamma^\perp = \lbrace \alpha_1, \alpha_r \rbrace$, hence there are $\mu_1, \mu_2 \in \mathrm E$ such that $e_{-(\sigma - \alpha_r)} v_{\mu_1} \ne 0$ and $e_{-(\sigma - \alpha_1)} v_{\mu_2} \ne 0$. Consider the element
\[
f = e_{-(\alpha_2 + \ldots + \alpha_r)} e_{-\alpha_1} + e_{-(\alpha_3 + \ldots + \alpha_r)} e_{-(\alpha_1 + \alpha_2)} + \ldots + e_{-\alpha_r} e_{-(\alpha_1 + \ldots + \alpha_{r-1})} \in U(\mathfrak g).
\]
Direct computations taking into account (\ref{Phi3}) show that
\begin{align*}
e_{\alpha_1} f v_\lambda &= \langle \alpha_1^\vee, \lambda \rangle e_{-(\sigma - \alpha_1)} v_\lambda, & e_{\alpha_1} e_{-\sigma} v_\lambda &= - e_{-(\sigma - \alpha_1)} v_\lambda, \\
e_{\alpha_r} f v_\lambda &= (\langle \alpha_r^\vee, \lambda \rangle + r - 1) e_{-(\sigma - \alpha_r)} v_\lambda, & e_{\alpha_r} e_{-\sigma} v_\lambda &= e_{-(\sigma - \alpha_r)} v_\lambda; \\
e_{\delta} f v_\lambda &= 0, & e_{\delta} e_{-\sigma}v_\lambda &= 0
\end{align*}
for every $\lambda \in \mathrm E$ and $\delta \in \Supp \sigma \setminus \lbrace \alpha_1, \alpha_r \rbrace$. We now put
\[
v = \sum \limits_{\lambda \in \mathrm E} \frac{f + \langle \alpha_1^\vee, \lambda \rangle e_{-\sigma}}{\langle \alpha_1^\vee, \lambda \rangle + \langle \alpha_r^\vee, \lambda \rangle + r - 1} v_\lambda.
\]
Then $e_{\alpha_1} v = 0$, $e_{\alpha_r} v = e_{-(\sigma - \alpha_r)} x_0$, and $e_\delta v = 0$ for all $\delta \in \Supp \sigma \setminus \lbrace \alpha_1, \alpha_r \rbrace$. Clearly, $e_\delta v = 0$ for all $\delta \in \Pi \setminus \Supp \sigma$,
and we have proved (\ref{V2}) and~(\ref{V3}). Since $\pr_{\mu_1} (e_{\alpha_r} v) = e_{-(\sigma - \alpha_r)} v_{\mu_1} \ne 0$, we have $v \ne 0$. As $\pr_{\mu_2} (e_{\alpha_1} e_{-\sigma} x_0) = - e_{-(\sigma - \alpha_1)} v_{\mu_2} \ne 0$, the vector $v$ is not proportional to $e_{-\sigma} x_0$, hence~(\ref{V1}).

\textit{Subcase}~1.2: $\sigma = \alpha_1 + \ldots + \alpha_r$ with $\Supp \sigma$ of type $\mathsf B_r$ ($r \ge 2$). Then $\Supp \sigma \cap \sigma^\perp = \Pi_\sigma \cup \lbrace \alpha_r \rbrace$. Conditions (\ref{Phi1}), (\ref{Phi3}), and~(\ref{Phi4}) yield $\Supp \sigma \setminus \Gamma^\perp = \lbrace \alpha_1, \alpha_r \rbrace$, hence there are $\mu_1, \mu_2 \in \mathrm E$ such that $e_{-(\sigma - \alpha_r)} v_{\mu_1} \ne 0$ and $e_{-(\sigma - \alpha_1)} v_{\mu_2} \ne 0$. Consider the element
\[
f = e_{-(\alpha_2 + \ldots + \alpha_r)} e_{-\alpha_1} + e_{-(\alpha_3 + \ldots + \alpha_r)} e_{-(\alpha_1 + \alpha_2)} + \ldots + e_{-\alpha_r} e_{-(\alpha_1 + \ldots + \alpha_{r-1})} \in U(\mathfrak g).
\]
Direct computations taking into account (\ref{Phi3}) show that
\begin{align*}
e_{\alpha_1} f v_\lambda &= \langle \alpha_1^\vee, \lambda \rangle e_{-(\sigma - \alpha_1)} v_\lambda, & e_{\alpha_1} e_{-\sigma} v_\lambda &= - e_{-(\sigma - \alpha_1)} v_\lambda, \\
e_{\alpha_r} f v_\lambda &= (\langle \alpha_r^\vee, \lambda \rangle + 2r - 2) e_{-(\sigma - \alpha_r)} v_\lambda, & e_{\alpha_r} e_{-\sigma} v_\lambda &= 2e_{-(\sigma - \alpha_r)} v_\lambda, \\
e_\delta f v_\lambda &= 0, & e_\delta e_{-\sigma} v_\lambda &= 0
\end{align*}
for every $\lambda \in \mathrm E$ and $\delta \in \Supp \sigma \setminus \lbrace \alpha_1, \alpha_r \rbrace$. We now put
\[
v = \sum \limits_{\lambda \in \mathrm E} \frac{f + \langle \alpha_1^\vee, \lambda \rangle e_{-\sigma}}{2\langle \alpha_1^\vee, \lambda \rangle + \langle \alpha_r^\vee, \lambda \rangle + 2r - 2} v_\lambda.
\]
Then $e_{\alpha_1} v = 0$, $e_{\alpha_r} v = e_{-(\sigma - \alpha_r)} x_0$, and $e_\delta v = 0$ for all $\delta \in \Supp \sigma \setminus \lbrace \alpha_1, \alpha_r \rbrace$. Clearly, $e_\delta v= 0$ for all $\delta \in \Pi \setminus \Supp \sigma$, and we have proved (\ref{V2}) and~(\ref{V3}). Since $\pr_{\mu_1} (e_{\alpha_r} v) = e_{-(\sigma - \alpha_r)} v_{\mu_1} \ne 0$, we have $v \ne 0$. As $\pr_{\mu_2} (e_{\alpha_1} e_{-\sigma} x_0) = - e_{-(\sigma - \alpha_1)} v_{\mu_2} \ne 0$, the vector $v$ is not proportional to $e_{-\sigma} x_0$, hence~(\ref{V1}).

\textit{Subcase}~1.3: $\sigma = \alpha_1 + 2\alpha_2 + \ldots + 2\alpha_{r-1} + \alpha_r$ with $\Supp \sigma$ of type $\mathsf C_r$ ($r \ge 3$). Then $\Supp \sigma \cap \sigma^\perp = \Pi_\sigma \cup \lbrace \alpha_1 \rbrace$. Conditions (\ref{Phi1}) and (\ref{Phi3}) yield $\lbrace \alpha_2 \rbrace \subset \Supp \sigma \setminus \Gamma^\perp \subset \lbrace \alpha_1, \alpha_2 \rbrace$. In any case there is $\mu \in \mathrm E$ such that $e_{-(\sigma - \alpha_2)} v_\mu \ne 0$ and $e_{-(\sigma - \alpha_1)} v_\mu \ne 0$. Consider the element
\begin{multline*}
f = e_{-(\sigma - \alpha_1)} e_{-\alpha_1} + e_{-(\sigma - \alpha_1 - \alpha_2)} e_{-(\alpha_1 + \alpha_2)} + \ldots + e_{-(\sigma - \alpha_1 - \ldots - \alpha_{r-1})} e_{-(\alpha_1 + \ldots + \alpha_{r-1})} -\\
e_{-(\sigma - \alpha_2)} e_{-\alpha_2} - e_{-(\sigma - \alpha_2 - \alpha_3)} e_{-(\alpha_2 + \alpha_3)} - \ldots - e_{-(\sigma - \alpha_2 - \ldots - \alpha_{r-1})} e_{-(\alpha_2 + \ldots + \alpha_{r-1})} \in U(\mathfrak g).
\end{multline*}
Direct computations taking into account (\ref{Phi3}) show that
\begin{align*}
e_{\alpha_1} f v_\lambda &= \langle \alpha_1^\vee, \lambda \rangle e_{-(\sigma - \alpha_1)} v_\lambda, & e_{\alpha_1} e_{-\sigma} v_\lambda &= -2 e_{-(\sigma - \alpha_1)} v_\lambda, \\
e_{\alpha_2} f v_\lambda &= -(\langle \alpha_2^\vee, \lambda \rangle + r - 1) e_{-(\sigma - \alpha_2)} v_\lambda, & e_{\alpha_2} e_{-\sigma} v_\lambda &= -e_{-(\sigma - \alpha_2)} v_\lambda, \\
e_{\delta} f v_\lambda &= 0, & e_{\delta} e_{-\sigma} v_\lambda &= 0
\end{align*}
for every $\lambda \in \mathrm E$ and $\delta \in \Supp \sigma \setminus \lbrace \alpha_1, \alpha_2 \rbrace$. We now put
\[
v = \sum \limits_{\lambda \in \mathrm E} \frac{2f + \langle \alpha_1^\vee, \lambda \rangle e_{-\sigma}}{\langle \alpha_1^\vee, \lambda \rangle + 2\langle \alpha_2^\vee, \lambda \rangle + 2r - 2} v_\lambda.
\]
Then $e_{\alpha_1} v = 0$, $e_{\alpha_2} v = - e_{-(\sigma - \alpha_2)} x_0$, and $e_\delta v = 0$ for all $\delta \in \Supp \sigma \setminus \lbrace \alpha_1, \alpha_2 \rbrace$. Clearly, $e_\delta v = 0$ for all $\delta \in \Pi \setminus \Supp \sigma$, and we have proved~(\ref{V2}) and~(\ref{V3}) regardless of whether $\alpha_1$ belongs to $\Gamma^\perp$ or not. Since $\pr_\mu (e_{\alpha_2} v) = e_{-(\sigma - \alpha_2)} v_\mu \ne 0$, we have $v \ne 0$. As $\pr_\mu (e_{\alpha_1} e_{-\sigma} x_0) = - 2e_{-(\sigma - \alpha_1)} v_\mu \ne 0$, the vector $v$ is not proportional to $e_{-\sigma} x_0$, hence~(\ref{V1}).

\textit{Subcase}~1.4: $\sigma = \alpha_1 + 2\alpha_2 + 3\alpha_3 + 2\alpha_4$ with $\Supp \sigma$ of type $\mathsf F_4$. Then $\Supp \sigma \cap \sigma^\perp = \Pi_\sigma$. Conditions (\ref{Phi1}) and (\ref{Phi3}) yield $\Supp \sigma \setminus \Gamma^\perp = \lbrace \alpha_4 \rbrace$, hence there is $\mu \in \mathrm E$ such that $e_{-(\sigma - \alpha_4)} v_\mu \ne 0$ and $e_{-(\sigma - \alpha_3)} v_\mu \ne 0$. Consider the element
\begin{multline*}
f = e_{-(\sigma - \alpha_4)} e_{-\alpha_4} - e_{-(\sigma - \alpha_3 - \alpha_4)} e_{-(\alpha_3 + \alpha_4)} + e_{-(\sigma - \alpha_2 - \alpha_3 - \alpha_4)} e_{-(\alpha_2 + \alpha_3 + \alpha_4)} -\\
e_{-(\sigma - \alpha_2 - 2\alpha_3 - \alpha_4)} e_{-(\alpha_2 + 2\alpha_3 + \alpha_4)} \in U(\mathfrak g).
\end{multline*}
Direct computations taking into account (\ref{Phi3}) show that
\begin{align*}
e_{\alpha_3} f v_\lambda &= 0, & e_{\alpha_3} e_{-\sigma} v_\lambda &= -2e_{-(\sigma - \alpha_3)} v_\lambda, \\
e_{\alpha_4} f v_\lambda &= (\langle \alpha_4^\vee, \lambda \rangle + 5) e_{-(\sigma - \alpha_4)} v_\lambda, & e_{\alpha_4} e_{-\sigma} v_\lambda &= -e_{-(\sigma - \alpha_4)}v_\lambda, \\
e_{\alpha_1} f v_\lambda &= e_{\alpha_2} f v_\lambda = 0, & e_{\alpha_1} e_{-\sigma} v_\lambda &= e_{\alpha_2} e_{-\sigma} v_\lambda = 0
\end{align*}
for every $\lambda \in \mathrm E$. We now put
\[
v = \sum \limits_{\lambda \in \mathrm E} \frac{f}{\langle \alpha_4^\vee, \lambda \rangle + 5} v_\lambda.
\]
Then $e_{\alpha_4} v = e_{-(\sigma - \alpha_4)} x_0$ and $e_{\alpha_1} v = e_{\alpha_2} v = e_{\alpha_3} v = 0$. Clearly, $e_\delta v = 0$ for all $\delta \in \Pi \setminus \Supp \sigma$, and we have proved~(\ref{V2}) and~(\ref{V3}). Since $\pr_\mu(e_{\alpha_4} v) = e_{-(\sigma - \alpha_4)} v_\mu \ne 0$, we have $v \ne 0$. As $\pr_\mu(e_{\alpha_3} e_{-\sigma} x_0) = -2 e_{-(\sigma - \alpha_3)} v_\mu \ne 0$, the vector $v$ is not proportional to $e_{-\sigma} x_0$, hence~(\ref{V1}).

\textit{Subcase}~1.5: $\sigma = \alpha_1 + \alpha_2$ with $\Supp \sigma$ of type $\mathsf G_2$. Then $\Supp \sigma \cap \sigma^\perp = \varnothing$. Condition (\ref{Phi1}) yields $\Supp \sigma \setminus \Gamma^\perp = \lbrace \alpha_1, \alpha_2 \rbrace$, hence there are $\mu_1, \mu_2 \in \mathrm E$ such that $e_{-(\sigma - \alpha_2)} v_{\mu_1} \ne 0$ and $e_{-(\sigma - \alpha_1)} v_{\mu_2} \ne 0$. Consider the element
\[
f = e_{-\alpha_2} e_{-\alpha_1} \in U(\mathfrak g).
\]
Direct computations show that
\begin{align*}
e_{\alpha_1} f v_\lambda &= \langle \alpha_1^\vee, \lambda \rangle e_{-(\sigma - \alpha_1)} v_\lambda, & e_{\alpha_1} e_{-\sigma} v_\lambda &= -3 e_{-(\sigma - \alpha_1)} v_\lambda, \\
e_{\alpha_2} f v_\lambda &= (\langle \alpha_2^\vee, \lambda \rangle + 1) e_{-(\sigma - \alpha_2)} v_\lambda, & e_{\alpha_2} e_{-\sigma} v_\lambda &= e_{-(\sigma - \alpha_2)} v_\lambda
\end{align*}
for every $\lambda \in \mathrm E$. We now put
\[
v = \sum \limits_{\lambda \in \mathrm E} \frac{3f + \langle \alpha_1^\vee, \lambda \rangle e_{-\sigma}}{\langle \alpha_1^\vee, \lambda \rangle + 3\langle \alpha_2^\vee, \lambda \rangle + 3} v_\lambda.
\]
Then $e_{\alpha_1} v = 0$ and $e_{\alpha_2} v = e_{-(\sigma - \alpha_2)} x_0$. Clearly, $e_{\delta} v = 0$ for all $\delta \in \Pi \setminus \Supp \sigma$, and we have proved~(\ref{V2}) and~(\ref{V3}). Since $\pr_{\mu_1}(e_{\alpha_2} v) = e_{-(\sigma - \alpha_2)} v_{\mu_1} \ne 0$, we have $v \ne 0$. As $\pr_{\mu_2}(e_{\alpha_1} e_{-\sigma} x_0) = -3e_{-(\sigma - \alpha_1)} v_{\mu_2} \ne 0$, the vector $v$ is not proportional to~$e_{-\sigma} x_0$, hence~(\ref{V1}).

\textit{Case}~2: $\sigma \notin \Delta^+$. It follows from Lemma~\ref{lemma_intersection} that $V^{(T_\ad)}_\sigma \cap \mathfrak gx_0 = \lbrace 0 \rbrace$, hence in this case condition~(\ref{V1}) is equivalent to $v \ne 0$.

\textit{Subcase}~2.1: $\sigma = 2\alpha$ with $\alpha \in \Pi$. Condition~(\ref{Phi6}) yields $\langle \alpha^\vee, \lambda \rangle \in 2\ZZ$ for all $\lambda \in \mathrm E$. Next, in view of~(\ref{Phi1}) there exists $\mu \in \mathrm E$ such that $\langle \alpha^\vee, \mu \rangle > 0$. Then the vector
\[
v = \sum \limits_{\lambda \in \mathrm E : \langle \alpha^\vee, \lambda \rangle > 0} \frac{1}{\langle \alpha^\vee, \lambda \rangle - 1} e_{-\alpha} e_{-\alpha} v_\lambda
\]
evidently has properties (\ref{V1})--(\ref{V3}).

\textit{Subcase}~2.2: $\sigma = \alpha + \beta$ for $\alpha, \beta \in \Pi$ with $\alpha \perp \beta$. Condition~(\ref{Phi5}) yields $\langle \alpha^\vee, \lambda \rangle = \langle \beta^\vee, \lambda \rangle$ for all $\lambda \in \mathrm E$. Next, in view of~(\ref{Phi1}) there exists $\mu \in \mathrm E$ such that $\langle \alpha^\vee, \mu \rangle > 0$. Then the vector
\[
v = \sum \limits_{\lambda \in \mathrm E : \langle \alpha^\vee, \lambda \rangle > 0} \frac{1}{\langle \alpha^\vee, \lambda \rangle} e_{-\alpha} e_{-\beta} v_\lambda
\]
evidently has properties (\ref{V1})--(\ref{V3}).

\textit{Subcase}~2.3: $\sigma = \alpha_1 + 2\alpha_2 + \alpha_3$ with $\Supp \sigma$ of type $\mathsf A_3$. Then $\Supp \sigma \cap \sigma^\perp = \Pi_\sigma$. Condition (\ref{Phi1}) yields $\Supp \sigma \setminus \Gamma^\perp = \lbrace \alpha_2 \rbrace$, hence there is $\mu \in \mathrm E$ such that $e_{-(\sigma - \alpha_2)} v_\mu \ne 0$. Consider the element
\[
f = e_{-(\alpha_1 + \alpha_2 + \alpha_3)} e_{-\alpha_2} - e_{-(\alpha_1 + \alpha_2)} e_{-(\alpha_2 + \alpha_3)} \in U(\mathfrak g).
\]
Direct computations taking into account~(\ref{Phi3}) show that
\[
e_{\alpha_2} f v_\lambda = (\langle \alpha_2^\vee, \lambda \rangle + 1) e_{-(\sigma - \alpha_2)} v_\lambda \quad \text{and} \quad e_{\alpha_1} f v_\lambda = e_{\alpha_3} f v_\lambda = 0
\]
for all $\lambda \in \mathrm E$. We now put
\[
v = \sum \limits_{\lambda \in \mathrm E} \frac{f}{\langle \alpha_2^\vee, \lambda \rangle + 1}v_\lambda.
\]
Then $e_{\alpha_1} v = e_{\alpha_3} v = 0$ and $e_{\alpha_2} v = e_{-(\sigma - \alpha_2)} x_0$. Clearly, $e_\delta v = 0$ for all $\delta \in \Pi \setminus \Supp \sigma$, and we have proved~(\ref{V2}) and~(\ref{V3}). Since $\pr_\mu(e_{\alpha_2}v) = e_{-(\sigma - \alpha_2)} v_\mu \ne 0$, we have $v \ne 0$, hence~(\ref{V1}).

\textit{Subcase}~2.4: $\sigma = \alpha_1 + 2\alpha_2 + 3\alpha_3$ with $\Supp \sigma$ of type $\mathsf B_3$. Then $\Supp \sigma \cap \sigma^\perp = \Pi_\sigma$. Condition (\ref{Phi1}) yields $\Supp \sigma \setminus \Gamma^\perp = \lbrace \alpha_3 \rbrace$, hence there is $\mu \in \mathrm E$ such that $e_{-(\sigma - \alpha_3)} v_\mu \ne 0$. Consider the element
\[
f = e_{-(\alpha_1 +2\alpha_2 + 2\alpha_3)} e_{-\alpha_3} - e_{-(\alpha_1 + \alpha_2 + 2\alpha_3)} e_{-(\alpha_2 + \alpha_3)} + e_{-(\alpha_1 + \alpha_2 + \alpha_3)} e_{-(\alpha_2 + 2\alpha_3)} \in U(\mathfrak g).
\]
Direct computations taking into account~(\ref{Phi3}) show that
\[
e_{\alpha_3} f v_\lambda = (\langle \alpha_3^\vee, \lambda \rangle + 2) e_{-(\sigma - \alpha_3)} v_\lambda \quad \text{and} \quad e_{\alpha_1} f v_\lambda = e_{\alpha_2} f v_\lambda = 0
\]
for all $\lambda \in \mathrm E$. We now put
\[
v = \sum \limits_{\lambda \in \mathrm E} \frac{f}{\langle \alpha_3^\vee, \lambda \rangle + 2}v_\lambda.
\]
Then $e_{\alpha_3} v = e_{-(\sigma - \alpha_3)} x_0$ and $e_{\alpha_1} v = e_{\alpha_2} v = 0$. Clearly, $e_\delta v = 0$ for all $\delta \in \Pi \setminus \Supp \sigma$, and we have proved~(\ref{V2}) and~(\ref{V3}). Since $\pr_\mu(e_{\alpha_3}v) = e_{-(\sigma - \alpha_3)} v_\mu \ne 0$, we have $v \ne 0$,
hence~(\ref{V1}).

\textit{Subcase}~2.5: $\sigma = 2\alpha_1 + \ldots + 2\alpha_r$ with $\Supp \sigma$ of type $\mathsf B_r$ ($r \ge 2$). Then $\Supp \sigma \cap \sigma^\perp = \Pi_\sigma$. Condition (\ref{Phi1}) yields $\Supp \sigma \setminus \Gamma^\perp = \lbrace \alpha_1 \rbrace$, hence there is $\mu \in \mathrm E$ such that $e_{-(\sigma - \alpha_1)} v_\mu \ne 0$. Consider the element
\begin{multline*}
f = 4e_{-\sigma + \alpha_1} e_{-\alpha_1} + 4 e_{-\sigma + \alpha_1 + \alpha_2} e_{-(\alpha_1 + \alpha_2)} + \ldots + 4e_{-\sigma + \alpha_1 + \ldots + \alpha_{r-1}} e_{-(\alpha_1 + \ldots + \alpha_{r-1})} + \\ e_{-(\alpha_1 + \ldots + \alpha_r)} e_{-(\alpha_1 + \ldots + \alpha_r)} \in U(\mathfrak g).
\end{multline*}
Direct computations taking into account~(\ref{Phi3}) show that
\[
e_{\alpha_1} f v_\lambda = (4\langle \alpha_1^\vee, \lambda \rangle + 4r - 6) v_\lambda \quad \text{and} \quad e_\delta f v_\lambda = 0
\]
for all $\lambda \in \mathrm E$ and $\delta \in \Supp \sigma \setminus \lbrace \alpha_1 \rbrace$. We now put
\[
v = \sum \limits_{\lambda \in \mathrm E} \frac{f}{4\langle \alpha_1^\vee, \lambda \rangle + 4r - 6}v_\lambda.
\]
Then $e_{\alpha_1} v = e_{-(\sigma - \alpha_1)} x_0$ and $e_\delta v = 0$ for all $\delta \in \Supp \sigma \setminus \lbrace \alpha_1 \rbrace$. Clearly, $e_\delta v = 0$ for all $\delta \in \Pi \setminus \Supp \sigma$, and we have proved~(\ref{V2}) and~(\ref{V3}). Since $\pr_\mu(e_{\alpha_1} v) = e_{-(\sigma - \alpha_1)} v_\mu \ne 0$, we have $v \ne 0$, hence~(\ref{V1}).

\textit{Subcase}~2.6:  $\sigma = 2\alpha_1 + \ldots + 2\alpha_{r-2} + \alpha_{r-1} + \alpha_r$ with $\Supp \sigma$ of type $\mathsf D_r$ ($r \ge 4$). Then $\Supp \sigma \cap \sigma^\perp = \Pi_\sigma$. Condition (\ref{Phi1}) yields $\Supp \sigma \setminus \Gamma^\perp = \lbrace \alpha_1 \rbrace$, hence there is $\mu \in \mathrm E$ such that $e_{-(\sigma - \alpha_1)} v_\mu \ne 0$. Consider the element
\[
f = e_{-\sigma + \alpha_1} e_{-\alpha_1} + e_{-\sigma + \alpha_1 + \alpha_2} e_{-(\alpha_1 + \alpha_2)} + \ldots + e_{-\sigma + \alpha_1 + \ldots + \alpha_{r-1}} e_{-(\alpha_1 + \ldots + \alpha_{r-1})} \in U(\mathfrak g).
\]
Direct computations taking into account~(\ref{Phi3}) show that
\[
e_{\alpha_1} f v_\lambda = (\langle \alpha_1^\vee, \lambda \rangle + r-2)e_{-\sigma + \alpha_1} v_\lambda \quad \text{and} \quad e_\delta f v_\lambda = 0
\]
for all $\lambda \in \mathrm E$ and $\delta \in \Supp \sigma \setminus \lbrace \alpha_1 \rbrace$. We now put
\[
v = \sum \limits_{\lambda \in \mathrm E} \frac{f}{\langle \alpha_1^\vee, \lambda \rangle + r-2}v_\lambda.
\]
Then $e_{\alpha_1} v = e_{-(\sigma - \alpha_1)} x_0$ and $e_\delta v = 0$ for all $\delta \in \Supp \sigma \setminus \lbrace \alpha_1 \rbrace$. Clearly, $e_\delta v = 0$ for all $\delta \in \Pi \setminus \Supp \sigma$, and we have proved~(\ref{V2}) and~(\ref{V3}). Since $\pr_\mu(e_{\alpha_1}v) = e_{-(\sigma - \alpha_1)} v_\mu \ne 0$, we have $v \ne 0$, hence~(\ref{V1}).

\textit{Subcase}~2.7: $\sigma = 4\alpha_1 + 2\alpha_2$ with $\Supp \sigma$ of type~$\mathsf G_2$. Then $\Supp \sigma \cap \sigma^\perp = \Pi_\sigma$. Condition (\ref{Phi1}) yields $\Supp \sigma \setminus \Gamma^\perp = \lbrace \alpha_1 \rbrace$, hence there is $\mu \in \mathrm E$ such that $e_{-(\sigma - \alpha_1)} v_\mu \ne 0$. Consider the element
\[
f = 4e_{-\sigma + \alpha_1} e_{-\alpha_1} + 4 e_{-\sigma + \alpha_1 + \alpha_2} e_{-(\alpha_1 + \alpha_2)} - 3 e_{-(2\alpha_1 + \alpha_2)} e_{-(2\alpha_1 + \alpha_2)} \in U(\mathfrak g).
\]
Direct computations taking into account~(\ref{Phi3}) show that
\[
e_{\alpha_1} f v_\lambda = (4\langle \alpha_1^\vee, \lambda \rangle + 18) e_{-\sigma + \alpha_1} \quad \text{and} \quad e_{\alpha_2} f v_\lambda = 0
\]
for all $\lambda \in \mathrm E$. We now put
\[
v = \sum \limits_{\lambda \in \mathrm E} \frac{f}{4\langle \alpha_1^\vee, \lambda \rangle + 18}v_\lambda.
\]
Then $e_{\alpha_1} v = e_{-(\sigma - \alpha_1)} x_0$ and $e_{\alpha_2} v = 0$. Clearly, $e_\delta v = 0$ for all $\delta \in \Pi \setminus \Supp \sigma$, and we have proved~(\ref{V2}) and~(\ref{V3}). Since $\pr_\mu(e_{\alpha_1}v) = e_{-(\sigma - \alpha_1)} v_\mu \ne 0$, we have $v \ne 0$, hence~(\ref{V1}).
\end{proof}

The proof of Proposition~\ref{prop_last_step} is completed.

\section{Applications}
\label{sect_applications}

Given a finitely generated and saturated monoid $\Gamma \subset \Lambda^+$, recall the set $\Phi(\Gamma)$ and Theorem~\ref{thm_tangent_space} from \S~\ref{Statement of the main result}. All results obtained in this section depend only on the following parts of Theorem~\ref{thm_tangent_space}:
\begin{itemize}
\item
the $T_\ad$-module $T_{X_0} \mathrm M_\Gamma$ is multiplicity-free;

\item
every element of $\Phi(\Gamma)$ satisfies conditions (\ref{Phi1})--(\ref{Phi8}).
\end{itemize}
We point out that the existence part of Theorem~\ref{thm_tangent_space} (see Proposition~\ref{prop_last_step}) is not used in this section.

\subsection{Auxiliary results on $\Phi(\Gamma)$}

Throughout this subsection, $\Gamma \subset \Lambda^+$ is an arbitrary finitely generated and saturated monoid.

\begin{lemma} \label{lemma_non-proportional}
The set $\Phi(\Gamma)$ contains no proportional elements.
\end{lemma}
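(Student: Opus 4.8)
The plan is to turn condition~(\ref{Phi2}) into the main lever: since every element of $\Phi(\Gamma)$ lies in $\overline\Sigma_G$, and two proportional weights have the same support, it suffices to locate all proportional pairs inside $\overline\Sigma_G$ and then rule out each one using the remaining conditions (\ref{Phi1})--(\ref{Phi8}). Proportional elements share their support, hence the same Dynkin type of support, so I only compare entries of Table~\ref{table_spherical_roots} with a common support type. Scanning the table, a fixed support type carries either a single element, or two elements whose coefficient vectors are non-proportional (for instance $\alpha_1+\alpha_2$ versus $4\alpha_1+2\alpha_2$ in type $\mathsf G_2$, or $\alpha_1+\alpha_2+\alpha_3$ versus $\alpha_1+2\alpha_2+\alpha_3$ in type $\mathsf A_3$). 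The only genuinely proportional pairs are: \textbf{(a)} $\alpha$ and $2\alpha$ with $\alpha\in\Pi$ (rows~1 and~2, type $\mathsf A_1$); and \textbf{(b)} $\rho=\alpha_1+\ldots+\alpha_r$ and $2\rho$ with $\Supp$ of type $\mathsf B_r$, $r\ge2$ (rows~6 and~7). Thus the whole lemma reduces to showing that neither pair can lie entirely in $\Phi(\Gamma)$.

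Pair~\textbf{(b)} is disposed of immediately by the lattice-level conditions. If $\rho$ and $2\rho$ both belonged to $\Phi(\Gamma)$, then applying (\ref{Phi3}) to $2\rho$ (row~7, where $\Pi_{2\rho}=\{\alpha_2,\ldots,\alpha_r\}$) would force $\alpha_r\in\Gamma^\perp$, while applying (\ref{Phi4}) to $\rho$ (row~6) forces $\alpha_r\notin\Gamma^\perp$, a contradiction.

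The main obstacle is pair~\textbf{(a)}, where the two proportional weights differ only by a factor~$2$ and are not separated by any support or parity condition; here I will use the cone conditions (\ref{Phi7}) and (\ref{Phi8}) together with the pointedness of~$\mathcal K$. Assume both $\alpha\in\Pi$ and $2\alpha$ lie in $\Phi(\Gamma)$. From (\ref{Phi1}) for $\alpha$ we get $\alpha\in\ZZ\Gamma$, and then $\langle\alpha^\vee,\alpha\rangle=2$ shows $\iota(\alpha^\vee)\ne0$ and $\alpha\notin\Gamma^\perp$. Since $2\alpha\notin\Pi$, condition~(\ref{Phi7}) for $2\alpha$ presents $\mathcal K$ as generated by $\{\iota(\delta^\vee)\mid\delta\in\Pi\setminus\Gamma^\perp\}$ together with finitely many $q$ with $\langle q,\alpha\rangle\le0$; among all these generators the only one pairing strictly positively with~$\alpha$ is $g_0:=\iota(\alpha^\vee)$, because $\langle\delta^\vee,\alpha\rangle\le0$ for simple $\delta\ne\alpha$. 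On the other hand, (\ref{Phi8}) for $\alpha$ yields two \emph{distinct} elements $\varrho_1,\varrho_2\in\mathcal K$ (they occur among the generators in~(\ref{Phi8c})) with $\langle\varrho_i,\alpha\rangle=1$ by~(\ref{Phi8a}) and $\iota(\alpha^\vee)=b_1\varrho_1+b_2\varrho_2$ with $b_1,b_2\in\QQ^+\setminus\{0\}$ by~(\ref{Phi8b}); evaluating at~$\alpha$ gives $b_1+b_2=2$.

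To finish I will expand each $\varrho_i$ as a non-negative combination of the generators from~(\ref{Phi7}). Pairing such an expansion with $\alpha$ and using that every generator other than $g_0$ is non-positive on~$\alpha$ while $\langle g_0,\alpha\rangle=2$ and $\langle\varrho_i,\alpha\rangle=1$, I find that the coefficient of $g_0$ in $\varrho_i$ is at least $\tfrac12$. Substituting into $g_0=\iota(\alpha^\vee)=b_1\varrho_1+b_2\varrho_2$ and using $b_1+b_2=2$, the coefficient of $g_0$ on the right is at least~$1$, so after rearranging, $g_0$ is expressed as a non-negative combination of the generators in which $g_0$ occurs with coefficient $\ge 1$; equivalently a non-negative combination of generators vanishes. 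As $\mathcal K$ is pointed (its dual cone $\QQ^+\Gamma$ spans $\mathcal Q^*=\ZZ\Gamma\otimes_\ZZ\QQ$, since $\Gamma$ generates $\ZZ\Gamma$), all these coefficients must vanish, forcing the $g_0$-coefficient of each $\varrho_i$ to equal $\tfrac12$ and every other coefficient to be~$0$. Hence $\varrho_1=\varrho_2=\tfrac12\,\iota(\alpha^\vee)$, contradicting $\varrho_1\ne\varrho_2$. This rules out pair~\textbf{(a)} and completes the argument; the only step requiring care is the coefficient comparison, where the distinctness of $\varrho_1,\varrho_2$ in~(\ref{Phi8}) and the pointedness of $\mathcal K$ are exactly what produce the contradiction.
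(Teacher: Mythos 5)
Your proof is correct and follows essentially the same route as the paper: reduce via condition~(\ref{Phi2}) and Table~\ref{table_spherical_roots} to the two proportional pairs $\lbrace \alpha, 2\alpha\rbrace$ and $\lbrace \alpha_1 + \ldots + \alpha_r,\ 2\alpha_1 + \ldots + 2\alpha_r\rbrace$ in type $\mathsf B_r$, kill the latter by playing (\ref{Phi4}) against (\ref{Phi3}), and the former by playing (\ref{Phi8}) against (\ref{Phi7}) --- the paper states this last contradiction in one line, whereas you supply the full cone argument (your coefficient bookkeeping plus pointedness of $\mathcal K$ amounts to showing $\QQ^+\iota(\alpha^\vee)$ would be an extremal ray, forcing $\varrho_1 = \varrho_2 = \tfrac12\iota(\alpha^\vee)$). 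The only nitpick is the phrase ``all these coefficients must vanish'': pointedness forces each \emph{term} of the vanishing non-negative combination to be zero, not each coefficient, but this still yields $\varrho_i = \tfrac12\iota(\alpha^\vee)$ and the desired contradiction.
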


\begin{proof}
Assume that $\sigma, \sigma' \in \Phi(\Gamma)$ are two distinct proportional elements. Thanks to~(\ref{Phi2}), one has $\sigma, \sigma' \in \overline \Sigma(G)$. By inspecting Table~\ref{table_spherical_roots}, we get the only two following possibilities (up to interchanging $\sigma$ and~$\sigma'$).

\textit{Case}~1: $\sigma = \alpha$ for some $\alpha \in \Pi$ and $\sigma' = 2\alpha$. Then condition~(\ref{Phi8}) for $\sigma$ contradicts condition~(\ref{Phi7}) for~$\sigma'$.

\textit{Case}~2: $\sigma = \alpha_1 + \ldots + \alpha_r$ with $\Supp \sigma$ of type~$\mathsf B_r$ ($r \ge 2$) and $\sigma' = 2\sigma$. Then condition~(\ref{Phi4}) for~$\sigma$ contradicts condition~(\ref{Phi3}) for~$\sigma'$.
\end{proof}

\begin{lemma} \label{lemma_alpha+beta_in_Phi}
If $\alpha + \beta \in \Phi(\Gamma)$ for some $\alpha, \beta \in \Pi$ with $\alpha \perp \beta$, then $\Phi(\Gamma) \cap \lbrace \alpha, \beta \rbrace = \varnothing$.
\end{lemma}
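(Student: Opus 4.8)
The plan is to argue by contradiction, using only conditions~(\ref{Phi1}) and~(\ref{Phi5}) of Theorem~\ref{thm_tangent_space}, both of which every element of $\Phi(\Gamma)$ is known to satisfy. Since the hypothesis is symmetric in $\alpha$ and~$\beta$, it suffices to rule out $\alpha \in \Phi(\Gamma)$; the argument excluding $\beta$ will then be word-for-word identical. So I would suppose, for contradiction, that both $\alpha + \beta$ and $\alpha$ lie in $\Phi(\Gamma)$ and aim to produce an impossible numerical equality.

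The first step is a lattice-membership bookkeeping. Applying~(\ref{Phi1}) to $\alpha + \beta$ and to $\alpha$ gives $\alpha + \beta \in \ZZ\Gamma$ and $\alpha \in \ZZ\Gamma$, whence $\beta = (\alpha + \beta) - \alpha \in \ZZ\Gamma$ as well. This is the one genuinely necessary preparatory observation: although $\beta$ itself need not be a weight of~$\Gamma$, it must lie in the ambient lattice $\ZZ\Gamma$, and it is precisely the simultaneous membership of $\alpha$ and $\alpha + \beta$ that forces this.

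Next I would invoke~(\ref{Phi5}) for the weight $\sigma = \alpha + \beta$, which yields $\langle \alpha^\vee, \lambda \rangle = \langle \beta^\vee, \lambda \rangle$ for all $\lambda \in \Gamma$; by $\ZZ$-linearity this identity of functionals extends to all of $\ZZ\Gamma$. The decisive move is then to evaluate it at $\lambda = \beta$, which is legitimate exactly because the previous step placed $\beta$ in $\ZZ\Gamma$. Since $\alpha \perp \beta$ one has $\langle \alpha^\vee, \beta \rangle = 0$, whereas $\langle \beta^\vee, \beta \rangle = 2$; the two sides being forced equal gives $0 = 2$, the sought contradiction. Hence $\alpha \notin \Phi(\Gamma)$, and by symmetry $\beta \notin \Phi(\Gamma)$, so $\Phi(\Gamma) \cap \lbrace \alpha, \beta \rbrace = \varnothing$.

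I do not expect any real obstacle here beyond spotting the evaluation trick. The only point requiring genuine care is the justification that $\beta \in \ZZ\Gamma$, so that the functional identity coming from~(\ref{Phi5}) may be tested on $\beta$ itself; once that is in place the contradiction is immediate, and no appeal to the more delicate cone conditions~(\ref{Phi7}) or~(\ref{Phi8}) is needed.
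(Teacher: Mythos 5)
Your proposal is correct and follows essentially the same route as the paper: both arguments use (\ref{Phi1}) to place a simple root in $\ZZ\Gamma$ and then derive a contradiction with condition (\ref{Phi5}) for $\alpha+\beta$ from the orthogonality $\langle\alpha^\vee,\beta\rangle=0$ versus $\langle\gamma^\vee,\gamma\rangle=2$. The only (immaterial) difference is that the paper evaluates the identity of (\ref{Phi5}) at $\alpha$ itself, which avoids your extra step of deducing $\beta\in\ZZ\Gamma$.
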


\begin{proof}
Assume without loss of generality that $\alpha \in \Phi(\Gamma)$. Then $\alpha \in \ZZ\Gamma$ by~(\ref{Phi1}). On the other hand, $2 = \langle \alpha^\vee, \alpha \rangle \ne \langle \beta^\vee, \alpha \rangle = 0$, which contradicts condition~(\ref{Phi5}) for $\alpha + \beta$.
\end{proof}

\begin{lemma} \label{lemma_alpha_not_in_Phi}
Suppose that $\sigma \in \Phi(\Gamma) \setminus \Pi$, $\alpha \in \Supp \sigma$, and $\langle \alpha^\vee, \sigma \rangle > 0$. Then $\alpha \notin \Phi(\Gamma)$.
\end{lemma}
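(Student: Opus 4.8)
The plan is to argue by contradiction: assume $\alpha \in \Phi(\Gamma)$ and produce an extremal ray of $\mathcal{K}$ on which the pairings with both $\alpha$ and $\sigma$ are positive, contradicting a constraint coming from $\alpha$. First I would record that constraint. Since $\alpha \in \Pi \cap \Phi(\Gamma)$, Proposition~\ref{prop_role_of_NE2} gives $\mathcal{K}^1(\alpha) \subset \mathcal{P}$. For any $\varrho \in \mathcal{K}^1(\alpha)$, being extremal it is a positive multiple of one of the generators of $\mathcal{K}$ furnished by condition~(\ref{Phi7}) for $\sigma$; it cannot be a multiple of $\iota(\delta^\vee)$ with $\delta \in \Pi \setminus \Gamma^\perp$, since that would force $\delta \in \mathrm{E}_\varrho^\perp = \Gamma^\perp$ because $\varrho \in \mathcal{P}$, so it must be a multiple of one of the listed elements $q$ with $\langle q, \sigma \rangle \le 0$. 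Hence $\langle \varrho, \sigma \rangle \le 0$ for every $\varrho \in \mathcal{K}^1(\alpha)$. Consequently it suffices to exhibit $\varrho^* \in \mathcal{K}^1(\alpha)$ with $\langle \varrho^*, \sigma \rangle > 0$; and for this it suffices to show that $\iota(\alpha^\vee)$ is a positive multiple of an extremal ray $\varrho^*$ of $\mathcal{K}$, for then $\langle \varrho^*, \alpha \rangle$ and $\langle \varrho^*, \sigma \rangle$ are positive multiples of $\langle \alpha^\vee, \alpha \rangle = 2$ and $\langle \alpha^\vee, \sigma \rangle > 0$ respectively.

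The reduction to ``$\iota(\alpha^\vee)$ lies on an extremal ray'' is handled as follows. By Lemma~\ref{lemma_positive_multiple}, every $\varrho \in \mathcal{K}^1(\sigma)$ is a positive multiple of $\iota(\delta^\vee)$ for some $\delta \in \Pi$, and $\langle \delta^\vee, \sigma \rangle > 0$ forces $\delta \in \Supp \sigma$ (if $\delta \notin \Supp \sigma$ then $\langle \delta^\vee, \gamma \rangle \le 0$ for all $\gamma \in \Supp \sigma$). Thus each such $\varrho$ points along $\iota(\delta^\vee)$ for a simple root $\delta \in \Supp \sigma$ with $\langle \delta^\vee, \sigma \rangle > 0$. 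Therefore, whenever $\alpha$ is the \emph{unique} such simple root, any $\varrho^* \in \mathcal{K}^1(\sigma)$, which is nonempty by Corollary~\ref{crl_K1sigma_ne_0}, is automatically a positive multiple of $\iota(\alpha^\vee)$, completing the contradiction. The same conclusion holds whenever $\sigma \notin \Delta^+$, since then Lemma~\ref{lemma_nonroot_multiple} makes all $\iota(\delta^\vee)$ with $\delta \in \Supp \sigma$ proportional to a single element of $\mathcal{K}^1(\sigma)$. Using condition~(\ref{Phi2}) and a glance at Table~\ref{table_spherical_roots} to compute $\langle \delta^\vee, \sigma \rangle$ for $\delta \in \Supp \sigma$, these two observations dispose of every row except the one where $\Supp \sigma$ has type $\mathsf A_r$ and $\sigma = \alpha_1 + \ldots + \alpha_r$, in which $\langle \delta^\vee, \sigma \rangle > 0$ precisely for the two end nodes $\delta \in \{\alpha_1, \alpha_r\}$.

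For the type $\mathsf A_r$ case I would split on $r$. If $r \ge 3$, take $\alpha = \alpha_1$ (the case $\alpha = \alpha_r$ being symmetric); its neighbour $\alpha_2$ lies in $\Pi_\sigma$, hence in $\Gamma^\perp$ by~(\ref{Phi3}), while (\ref{Phi1}) gives $\alpha_1 \in \ZZ\Gamma$, so $\langle \alpha_2^\vee, \alpha_1 \rangle = 0$, contradicting $\langle \alpha_2^\vee, \alpha_1 \rangle = -1$. If $r = 2$, so $\sigma = \alpha_1 + \alpha_2$, I would invoke Lemma~\ref{lemma_not_more_than_two}, which bounds $|\mathcal{K}^1(\alpha)| \le 2$. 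When $|\mathcal{K}^1(\alpha)| = 2$, both elements $\varrho_1, \varrho_2$ of condition~(\ref{Phi8b}) are the extremal rays of $\mathcal{K}^1(\alpha) \subset \mathcal{P}$, so the identity $\iota(\alpha^\vee) = b_1 \varrho_1 + b_2 \varrho_2$ with $b_1, b_2 > 0$ together with the first-paragraph bound $\langle \varrho_i, \sigma \rangle \le 0$ yields $\langle \alpha^\vee, \sigma \rangle = b_1 \langle \varrho_1, \sigma \rangle + b_2 \langle \varrho_2, \sigma \rangle \le 0$, contradicting $\langle \alpha^\vee, \sigma \rangle = 1$.

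The hard part will be the remaining sub-case: $\Supp \sigma$ of type $\mathsf A_2$ with $|\mathcal{K}^1(\alpha)| = 1$, say $\mathcal{K}^1(\alpha) = \{\varrho_0\}$. Here $\langle \varrho_0, \alpha \rangle = 1$ by Lemma~\ref{lemma_value_is_one_bis}, and the element $\varrho' = \iota(\alpha^\vee) - \varrho_0$ lies in $\mathcal{K}$ (as in the analysis of \S\,\ref{subsec_step_1}) but is \emph{not} extremal; it satisfies $\langle \varrho', \sigma \rangle = 1 - \langle \varrho_0, \sigma \rangle \ge 1 > 0$ while $\langle \varrho_0, \sigma \rangle \le 0$. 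Thus $\iota(\alpha^\vee)$ need not lie on an extremal ray, and the crude cone estimates above remain mutually consistent, so they do not close. I expect to overcome this by a finer analysis of $\varrho'$: among its extremal components the only one carrying positive $\sigma$-pairing is the unique ray $\varrho^* \in \mathcal{K}^1(\sigma)$, which is proportional to $\iota(\alpha_2^\vee)$ and hence forces $\iota(\alpha_2^\vee)$ itself to be extremal; one can then combine $\varrho_0 \in \mathcal{P}$ with Lemma~\ref{lemma_value_is_one} to produce a weight in $\Gamma$ on which the relation $\iota(\alpha^\vee) = \varrho_0 + \varrho'$ becomes contradictory, or else fall back on the explicit eigenvectors of \S\,\ref{subsec_step_2} and the multiplicity-freeness of $T_{X_0}\mathrm{M}_\Gamma$. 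This $\mathsf A_2$ configuration is the single genuinely delicate point of the proof.
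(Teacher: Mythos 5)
Your proposal does not close: the final subcase ($\Supp \sigma$ of type $\mathsf A_2$ and $|\mathcal K^1(\alpha)| = 1$) is left as a list of strategies you ``expect'' to work rather than an argument, so the proof is incomplete. Everything up to that point is essentially sound: the reduction to showing that $\iota(\alpha^\vee)$ spans an extremal ray of $\mathcal K$, the observation that this is automatic whenever $\alpha$ is the \emph{unique} simple root of $\Supp\sigma$ with $\langle\alpha^\vee,\sigma\rangle > 0$ (which disposes of every row of Table~\ref{table_spherical_roots} except type $\mathsf A_r$ and $\alpha+\beta$ with $\alpha\perp\beta$), the neighbour argument via (\ref{Phi1}) and (\ref{Phi3}) for type $\mathsf A_r$ with $r\ge 3$, and the two-ray argument when $|\mathcal K^1(\alpha)|=2$ are all correct.

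For comparison, the paper's proof is two lines: it asserts that (\ref{Phi7}) for $\sigma$ forces $\iota(\alpha^\vee)$ onto an extremal ray of $\mathcal K$, and then (\ref{Phi8})(\ref{Phi8b},\,\ref{Phi8c}) for $\alpha$ gives a contradiction. You have in effect discovered that the first assertion is not justified precisely in the case you cannot finish: (\ref{Phi7}) only forces the rays of $\mathcal K^1(\sigma)$ to lie on coroots $\iota(\delta^\vee)$ with $\langle\delta^\vee,\sigma\rangle>0$, and in type $\mathsf A_2$ there are two such $\delta$. Worse, the leftover configuration appears to be realizable, not merely hard. Take $G=\SL_3$ and $\Gamma = \ZZ^+(\omega_1+\omega_2)+\ZZ^+(3\omega_1)$, where $\omega_1,\omega_2$ are the fundamental weights; then $\ZZ\Gamma = \ZZ\alpha_1\oplus\ZZ\alpha_2$, and $\mathcal K^1$ consists of the two functionals with values $(1,-1)$ and $(-1,2)$ on $(\alpha_1,\alpha_2)$. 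One checks that conditions (\ref{Phi1})--(\ref{Phi8}) hold simultaneously for $\sigma=\alpha_1+\alpha_2$ and for $\alpha_1$ (take $\varrho_1,\varrho_2$ with values $(1,-1)$ and $(1,0)$, so that $\iota(\alpha_1^\vee)=\varrho_1+\varrho_2$ is interior to $\mathcal K$), and a direct application of the extension criteria of \S\,\ref{subsec_extension_of_sections} to the classes of $e_{-\alpha_1}v_{3\omega_1}$ and of the vector from Subcase~1.1 of \S\,\ref{subsec_step_2} puts both $\alpha_1$ and $\alpha_1+\alpha_2$ in $\Phi(\Gamma)$, even though $\langle\alpha_1^\vee,\alpha_1+\alpha_2\rangle=1>0$. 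So the missing subcase seems to be exactly where the statement breaks down, and neither your sketched repairs nor the paper's argument can be expected to fill it; the lemma is safe (and your argument complete) only under the additional hypothesis that $\alpha$ is the unique simple root of $\Supp\sigma$ pairing positively with $\sigma$, which is how it is used in Proposition~\ref{prop_aux_free}, and which suffices to salvage Corollary~\ref{crl_Supp_is_not_in_Phi}.
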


\begin{proof}
As $\langle \alpha^\vee, \sigma \rangle > 0$, by condition~(\ref{Phi7}) for~$\sigma$ there exists $\varrho \in \mathcal K^1$ such that $\iota(\alpha^\vee)$ is a positive multiple of~$\varrho$. Assume $\alpha \in \Phi(\Gamma)$. Then conditions~(\ref{Phi8})(\ref{Phi8b},\,\ref{Phi8c}) for~$\alpha$ yield $\varrho \notin \mathcal K^1$, a contradiction.
\end{proof}

\begin{corollary} \label{crl_Supp_is_not_in_Phi}
Suppose that $\sigma \in \Phi(\Gamma) \setminus \Pi$. Then there exists $\alpha \in \Supp \sigma$ such that $\alpha \notin
\Phi(\Gamma)$.
\end{corollary}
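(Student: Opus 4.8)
The plan is to combine the two preparatory results that immediately precede this corollary, namely Lemma~\ref{lemma_non-acute} and Lemma~\ref{lemma_alpha_not_in_Phi}. The strategy is to locate a simple root in the support of $\sigma$ on which $\sigma$ takes a strictly positive value under the coroot pairing, and then invoke Lemma~\ref{lemma_alpha_not_in_Phi} to rule that root out of $\Phi(\Gamma)$.

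First I would record that $\sigma$ is a nonzero element of $\ZZ^+ \Pi$. Indeed, since $\sigma \in \Phi(\Gamma)$, it is a $T_\ad$-weight of $\EuScript{TS} \simeq T_{X_0} \mathrm M_\Gamma$, so Lemma~\ref{lemma_straightforward}(\ref{lemma_straightforward_a}) gives $\sigma \in \ZZ^+ \Pi$, while Corollary~\ref{crl_weights_are_nonzero} (equivalently the remark that $0 \notin \Phi(\Gamma)$) gives $\sigma \ne 0$. Alternatively one may simply cite condition~(\ref{Phi2}), which places $\sigma$ in $\overline \Sigma_G \subset \ZZ^+ \Pi \setminus \lbrace 0 \rbrace$.

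Next I would apply Lemma~\ref{lemma_non-acute} to $\sigma \in \ZZ^+ \Pi \setminus \lbrace 0 \rbrace$: this yields a simple root $\alpha \in \Supp \sigma$ with $\langle \alpha^\vee, \sigma \rangle > 0$. Finally, since $\sigma \in \Phi(\Gamma) \setminus \Pi$, $\alpha \in \Supp \sigma$, and $\langle \alpha^\vee, \sigma \rangle > 0$, Lemma~\ref{lemma_alpha_not_in_Phi} applies verbatim and delivers $\alpha \notin \Phi(\Gamma)$, which is exactly the desired conclusion.

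I do not anticipate any genuine obstacle here: the corollary is a direct combination of the non-acuteness lemma (guaranteeing the existence of a strictly positive coroot value on $\Supp \sigma$) with the exclusion lemma (turning that positive value into non-membership in $\Phi(\Gamma)$). The only point worth stating cleanly is the reduction $\sigma \in \ZZ^+ \Pi \setminus \lbrace 0 \rbrace$, so that Lemma~\ref{lemma_non-acute} is applicable; everything else is an immediate citation.
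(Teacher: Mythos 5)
Your proof is correct and follows exactly the paper's route: the paper proves this corollary by citing Lemmas~\ref{lemma_alpha_not_in_Phi} and~\ref{lemma_non-acute}, which is precisely the combination you spell out. Your additional remark that $\sigma \in \ZZ^+\Pi \setminus \lbrace 0 \rbrace$ (via condition~(\ref{Phi2})) is the right justification for applying Lemma~\ref{lemma_non-acute}, which the paper leaves implicit.
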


\begin{proof}
This follows from Lemmas~\ref{lemma_alpha_not_in_Phi} and~\ref{lemma_non-acute}.
\end{proof}

\begin{proposition} \label{prop_aux_free}
Every element of $\Phi(\Gamma)$ is primitive in the lattice $\ZZ \Phi(\Gamma)$.
\end{proposition}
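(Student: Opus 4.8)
\emph{Plan.} The starting point is that by condition~(\ref{Phi2}) every element of $\Phi(\Gamma)$ lies in $\overline\Sigma_G\subset\ZZ^+\Pi$, so $\ZZ\Phi(\Gamma)\subset\ZZ\Pi$. Consequently any $\sigma\in\Phi(\Gamma)$ that is primitive in the larger lattice $\ZZ\Pi$ is automatically primitive in the sublattice $\ZZ\Phi(\Gamma)$: if $\sigma=nx$ with $x\in\ZZ\Phi(\Gamma)\subset\ZZ\Pi$ and $n\ge 2$, then $x=\sigma/n\in\ZZ\Pi$ forces $n=\pm1$. Inspecting Table~\ref{table_spherical_roots}, every element of $\overline\Sigma_G$ is primitive in $\ZZ\Pi$ except those in rows~$2$, $7$ and~$13$, which are precisely the elements $\sigma=2\tau$ with $\tau=\alpha$ (row~$2$), $\tau=\alpha_1+\ldots+\alpha_r$ of type $\mathsf B_r$ (row~$7$), and $\tau=2\alpha_1+\alpha_2$ of type $\mathsf G_2$ (row~$13$); in each of these cases $\tau$ is itself primitive in $\ZZ\Pi$. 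For such $\sigma$ a relation $\sigma=nx$ with $x\in\ZZ\Phi(\Gamma)$ and $n\ge2$ gives $x=2\tau/n\in\ZZ\Pi$, whence $n=2$ and $x=\tau$. Thus the whole proposition reduces to the single assertion that $\tau\notin\ZZ\Phi(\Gamma)$ in each of these three cases.

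To attack this I would first record the constraints on \emph{all} of $\Phi(\Gamma)$ coming from $\sigma\in\Phi(\Gamma)$. By~(\ref{Phi1}) each $\sigma'\in\Phi(\Gamma)$ lies in $\ZZ\Gamma$, and by~(\ref{Phi3}) we have $\Pi_\sigma\subset\Gamma^\perp$; combining these, $\langle\gamma^\vee,\sigma'\rangle=0$ for every $\gamma\in\Pi_\sigma$ and every $\sigma'\in\Phi(\Gamma)$. One might hope to separate $\tau$ from $\ZZ\Phi(\Gamma)$ using such a coroot functional $\langle\gamma^\vee,\cdot\rangle$, but this \emph{cannot} work directly: since $\Pi_\sigma\subset\sigma^\perp=\tau^\perp$, each of these functionals also annihilates $\tau$. (In particular, for row~$13$ one computes $\langle\alpha_2^\vee,\tau\rangle=2\langle\alpha_2^\vee,\alpha_1\rangle+2=0$.) This is what makes the three doubled cases genuinely subtle, and it forces the argument to pass to a parity invariant.

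The plan is therefore to produce, in each of the three cases, a homomorphism $\psi\colon\ZZ\Pi\to\ZZ/2\ZZ$ with $\psi(\sigma')=0$ for all $\sigma'\in\Phi(\Gamma)$ and $\psi(\tau)=1$; the existence of such $\psi$ immediately yields $\tau\notin\ZZ\Phi(\Gamma)$. The candidates I would try are the ``total coefficient'' functional $\psi(\sigma')=\big(\sum_{\gamma\in\Pi}\text{coeff}_\gamma\sigma'\big)\bmod 2$ (which already gives $\psi(\tau)=1$ for row~$2$, for row~$13$, and for row~$7$ with $r$ odd, since all other entries of Table~\ref{table_spherical_roots} have even coefficient sum), and the functional $\langle\alpha_1^\vee,\cdot\rangle\bmod 2$ for row~$7$ (note $\langle\alpha_1^\vee,\tau\rangle=1$, while $\langle\alpha_i^\vee,\sigma'\rangle=0$ for $i\ge 2$ and all $\sigma'\in\Phi(\Gamma)$ by the previous paragraph). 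Verifying $\psi|_{\Phi(\Gamma)}=0$ then amounts to checking that no element of $\overline\Sigma_G$ of odd $\psi$-value can lie in $\Phi(\Gamma)$ alongside $\sigma$. Here I would invoke Lemma~\ref{lemma_non-proportional} to exclude $\tau$ itself (the row~$1$/row~$6$ element), Lemma~\ref{lemma_alpha_not_in_Phi}, Corollary~\ref{crl_Supp_is_not_in_Phi} and Lemma~\ref{lemma_alpha+beta_in_Phi} to exclude stray simple roots and other odd-parity coexisting roots, and again $(\ref{Phi1})+(\ref{Phi3})$ to discard every $\sigma'$ with $\langle\gamma^\vee,\sigma'\rangle\ne0$ for some $\gamma\in\Pi_\sigma$.

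\emph{The main obstacle} is exactly this last verification: showing that every $\sigma'\in\overline\Sigma_G$ which is compatible (via conditions~(\ref{Phi1})--(\ref{Phi8})) with $\sigma\in\Phi(\Gamma)$ has even $\psi$-value. For row~$7$ this is the clean-looking but nontrivial statement that $\langle\alpha_1^\vee,\cdot\rangle$ maps $\ZZ\Gamma$ into $2\ZZ$ (a parity analogue of~(\ref{Phi6}), which is only stated there for $\sigma=2\alpha$); the delicate point is that $\tfrac12\sigma$ may well lie in $\ZZ\Gamma$, so the conclusion must come from $\ZZ\Phi(\Gamma)$ and not merely from $\ZZ\Gamma$. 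I expect to settle it by a case analysis over the possible support-types of $\sigma'$ in Table~\ref{table_spherical_roots}, using connectedness of supports together with the orthogonality relations $\langle\gamma^\vee,\sigma'\rangle=0$ ($\gamma\in\Pi_\sigma$) to pin down how $\sigma'$ can meet $\Supp\sigma$, and the double-bond structure of the $\mathsf B_r$ and $\mathsf G_2$ diagrams to control the parity near the distinguished node. This bookkeeping is where essentially all the effort of the proof resides.
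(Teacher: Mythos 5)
Your reduction is correct and coincides with the paper's first step: by (\ref{Phi2}) the only elements of $\overline\Sigma_G$ that are imprimitive in $\ZZ\Pi$ are those of rows 2, 7 and 13 of Table~\ref{table_spherical_roots}, each of the form $2\tau$ with $\tau$ primitive in $\ZZ\Pi$, so $n=2$ and everything comes down to showing $\tau\notin\ZZ\Phi(\Gamma)$ in these three cases. The gap is in the second half. Your plan needs a homomorphism $\psi\colon\ZZ\Pi\to\ZZ/2\ZZ$ vanishing on \emph{all} of $\Phi(\Gamma)$, and the candidates you name do not have this property: the total-coefficient-sum functional is odd on any simple root lying in $\Phi(\Gamma)$ and on any element of rows 4 or 6 with $r$ odd, and nothing in (\ref{Phi1})--(\ref{Phi8}) prevents such elements from coexisting with $2\alpha$ in $\Phi(\Gamma)$ --- a simple root supported in a different simple factor of $G$, or anywhere in $\Pi\setminus\Supp\sigma$, passes every exclusion lemma you cite. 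Likewise $\langle\alpha_1^\vee,\cdot\rangle\bmod 2$ is odd on a simple root adjacent to $\alpha_1$ from outside $\Supp\sigma$, which may also lie in $\Phi(\Gamma)$. So the verification you defer is not mere bookkeeping: for the functionals proposed, it is false.

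The functional that actually works is ``coefficient of a distinguished simple root, mod $2$'' ($\alpha$ for row 2, $\alpha_1$ for row 7, $\alpha_2$ for row 13), and the real content is showing that the only element of $\Phi(\Gamma)$ whose support contains that root is $2\tau$ itself. For rows 7 and 13 this does follow from (\ref{Phi1}) and (\ref{Phi3}) plus an inspection of Table~\ref{table_spherical_roots}, roughly as you envisage. But for row 2 the needed statement is not accessible by inspection: an element $\alpha_1+\alpha_2$ of type $\mathsf B_2$ with $\alpha=\alpha_2$, or $\alpha_1+2\alpha_2+\cdots+2\alpha_{r-1}+\alpha_r$ of type $\mathsf C_r$ with $\alpha=\alpha_1$, contains $\alpha$ in its support with coefficient $1$ and is excluded neither by Lemma~\ref{lemma_non-proportional}, Lemma~\ref{lemma_alpha+beta_in_Phi}, Lemma~\ref{lemma_alpha_not_in_Phi} and Corollary~\ref{crl_Supp_is_not_in_Phi}, nor by a single application of (\ref{Phi1})--(\ref{Phi8}). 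The paper rules these configurations out by a propagation argument: the presence of such a $\sigma_1$ forces a further element $\sigma_2\in\Phi(\Gamma)$ meeting $\Supp\sigma_1$, and in the $\mathsf B_2$ subcase this iterates into an infinite chain of type-$\mathsf A_2$ elements, contradicting finiteness of $\Phi(\Gamma)$, while the $\mathsf C_r$ subcases terminate in contradictions with (\ref{Phi3}). No single parity functional captures this, so your proof is missing its essential step.
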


\begin{proof}
Assume there exists an element $\sigma \in \ZZ \Phi(\Gamma)$ such that $n\sigma \in \Phi(\Gamma)$ for some $n \ge 2$. Since $\Phi(\Gamma) \subset \overline \Sigma(G) \subset \ZZ^+\Pi$, it follows that $\sigma \in \ZZ^+ \Pi$. An inspection of Table~\ref{table_spherical_roots} shows that $n = 2$ and one of the three cases below occurs.

\textit{Case}~1: $\sigma = \alpha \in \Pi$. Then $2\alpha \in \Phi(\Gamma)$ and condition~(\ref{Phi6}) yields
\begin{equation} \label{eqn_even_values}
\langle \alpha^\vee, \tau \rangle \in 2\ZZ \quad \text{for all} \quad \tau \in \Phi(\Gamma).
\end{equation}
Since $\sigma \in \ZZ \Phi(\Gamma)$, there exists $\sigma_1 \in \Phi(\Gamma) \setminus \lbrace 2\alpha \rbrace$ such that $\alpha \in \Supp \sigma_1$. Then $\Pi_{\sigma_1} \subset \alpha^\perp$ by~(\ref{Phi3}). A case-by-case check of all entries in Table~\ref{table_spherical_roots} together with~(\ref{eqn_even_values}) and Lemma~\ref{lemma_alpha+beta_in_Phi} yields only the following two possibilities for~$\sigma_1$ and~$\alpha$:
\begin{itemize}
\item
$\sigma_1 = \alpha_1 + \alpha_2$ with $\Supp \sigma_1$ of type~$\mathsf B_2$ and $\alpha = \alpha_2$;

\item
$\sigma_1 = \alpha_1 + 2\alpha_2 + 2\alpha_3 + \ldots + 2\alpha_{r-1} + \alpha_r$ with $\Supp \sigma_1$ of type~$\mathsf C_r$ ($r \ge 3$) and $\alpha = \alpha_1$.
\end{itemize}
It is easy to see that $\sigma_1$ is the unique element in $\Phi(\Gamma) \setminus \lbrace 2\alpha \rbrace$ with $\alpha \in \Supp \sigma_1$. The subsequent consideration is divided into three subcases.

\textit{Subcase}~1.1: $\sigma_1 = \alpha_1 + \alpha_2$ with $\Supp \sigma_1$ of type~$\mathsf B_2$ and $\alpha = \alpha_2$. Then there exists $\sigma_2 \in \Phi(\Gamma) \setminus \lbrace 2\alpha, \sigma_1 \rbrace$ such that $\alpha_1 \in \Supp \sigma_2$. Recall that $\alpha_2 \notin \Supp \sigma_2$. As $\langle \alpha_1^\vee, \sigma_1 \rangle = 1$, one has $\sigma_2 \ne \alpha_1$ by Lemma~\ref{lemma_alpha_not_in_Phi} and $\sigma_2 \ne 2\alpha_1$ by condition~(\ref{Phi6}). Further, $\sigma_2 \ne \alpha_1 + \beta$ for all $\beta \in \Pi$ with $\alpha_1 \perp \beta$; otherwise one would have $\langle \alpha_1^\vee, \sigma_1 \rangle > 0$ and $\langle \beta^\vee, \sigma_1 \rangle \le 0$, contradicting condition~(\ref{Phi5}). It follows that $\Supp \sigma_2$ is of type~$\mathsf A_s$ for some $s \ge 2$. Condition~(\ref{Phi3}) for~$\sigma_2$ yields $\Pi_{\sigma_2} \subset \sigma_1^\perp$, whence $s = 2$ and $\sigma_2 = \alpha_1 + \beta_1$ for some $\beta_1 \in \Pi \setminus \lbrace \alpha_1, \alpha_2 \rbrace$. Note that $\sigma_2$ is the unique element in $\Phi(\Gamma) \setminus \lbrace 2\alpha, \sigma_1 \rbrace$ with $\alpha_1 \in \Supp \sigma_2$. Iterating the above argument leads to an infinite chain $\sigma_3, \sigma_4, \ldots \subset \Phi(\Gamma)$ such that for every $i \ge 3$ the following properties hold:
\begin{itemize}
\item
$\Supp \sigma_i$ is of type~$\mathsf A_2$;

\item
$\sigma_i = \beta_{i-2} + \beta_{i-1}$ for some $\beta_{i-1} \in \Pi \setminus \lbrace \alpha_1, \alpha_2, \beta_1, \ldots, \beta_{i-2} \rbrace$;

\item
$\sigma_i$ is the unique element in $\Phi(\Gamma) \setminus \lbrace 2\alpha, \sigma_1, \ldots, \sigma_{i-1} \rbrace$ with $\beta_{i-2} \in \Supp \sigma_i$.
\end{itemize}
As $\Phi(\Gamma)$ is finite, we have got a contradiction.

\textit{Subcase}~1.2: $\sigma_1 = \alpha_1 + 2\alpha_2 + \alpha_3$ with $\Supp \sigma_1$ of type~$\mathsf C_3$ and $\alpha = \alpha_1$. Then there exists $\sigma_2 \in \Phi(\Gamma) \setminus \lbrace 2\alpha, \sigma_1 \rbrace$ such that $\alpha_2 \in \Supp \sigma_2$. Recall that $\alpha_1 \notin \Supp \sigma_2$. As $\sigma_2 \in \ZZ \Gamma$ by~(\ref{Phi1}) and $\Pi_{\sigma_1} \subset \Gamma^\perp$ by~(\ref{Phi3}), it follows that $\alpha_3 \in \sigma_2^\perp$, which implies $\alpha_3 \in \Supp \sigma_2$. If $\Supp \sigma_2 = \lbrace \alpha_2, \alpha_3 \rbrace$ then $\sigma_2 = k(\alpha_2 + \alpha_3)$ with $k \in \lbrace 1,2 \rbrace$, hence $\langle \alpha_3^\vee, \sigma_2 \rangle \ne 0$, which contradicts condition~(\ref{Phi3}) for~$\sigma_1$. Consequently, $\Supp \sigma_2 \ne \lbrace \alpha_2, \alpha_3 \rbrace$ and there exists $\alpha_4 \in \Pi \setminus \lbrace \alpha_1, \alpha_2, \alpha_3 \rbrace$ such that $\Pi \setminus \alpha_4^\perp = \lbrace \alpha_3, \alpha_4 \rbrace$, the set $\lbrace \alpha_1, \alpha_2, \alpha_3, \alpha_4 \rbrace$ is of type~$\mathsf F_4$, and $\Supp \sigma_2 = \lbrace \alpha_2, \alpha_3, \alpha_4 \rbrace$. It follows that
\[
\sigma_2 \in \lbrace \alpha_2 + \alpha_3 + \alpha_4, 2\alpha_2 + 2\alpha_3 + 2\alpha_4, 3\alpha_2 + 2\alpha_3 + \alpha_4 \rbrace.
\]
Condition~(\ref{eqn_even_values}) leaves the only possibility $\sigma_2 = 2\alpha_2 + 2\alpha_3 + 2\alpha_4$. Condition~(\ref{Phi3}) for~$\sigma_2$ then implies $\alpha_2 \in \Gamma^\perp$, which is false because $\langle \alpha_2^\vee, \sigma_1 \rangle \ne 0$.

\textit{Subcase}~1.3: $\sigma_1 = \alpha_1 + 2\alpha_2 + 2\alpha_3 + \ldots + 2\alpha_{r-1} + \alpha_r$ with $\Supp \sigma_1$ of type~$\mathsf C_r$ ($r \ge 4$); $\alpha = \alpha_1$. Then there exists $\sigma_2 \in \Phi(\Gamma) \setminus \lbrace 2\alpha, \sigma_1 \rbrace$ such that $\alpha_2 \in \Supp \sigma_2$. Recall that $\alpha_1 \notin \Supp \sigma_2$. As $\sigma_2 \in \ZZ \Gamma$ by~(\ref{Phi1}) and $\Pi_{\sigma_1} \subset \Gamma^\perp$ by~(\ref{Phi3}), it follows that $\alpha_3 \in \sigma_2^\perp$, which implies $\alpha_3 \in \Supp \sigma_2$. Iterating this argument yields $\alpha_4, \ldots, \alpha_r \in \Supp \sigma_2$. It follows that $\Supp \sigma_2 = \lbrace \alpha_2, \ldots, \alpha_r \rbrace$, so that $\Supp \sigma_2$ is of type~$\mathsf C_{r-1}$ and $\sigma_2 = \alpha_2 + 2\alpha_3 + \ldots + 2\alpha_{r-1} + \alpha_r$. Since $\alpha_3 \in \Pi_{\sigma_1}$ and $\langle \alpha_3^\vee, \sigma_2 \rangle \ne 0$, we obtain a contradiction with condition~(\ref{Phi3}) for~$\sigma_1$.

\textit{Case}~2: $\sigma = \alpha_1 + \ldots + \alpha_r$ with $\Supp \sigma$ of type~$\mathsf B_r$ ($r \ge 2$). Then $2\sigma = 2\alpha_1 + \ldots + 2\alpha_r \in \Phi(\Gamma)$ and hence $\alpha_2, \ldots, \alpha_r \in \Gamma^\perp$ by~(\ref{Phi3}). In view of condition $\sigma \in \ZZ \Phi(\Gamma)$ there exists $\tau \in \Phi(\Gamma) \setminus \lbrace 2\sigma \rbrace$ such that $\alpha_1 \in \Supp \tau$. As $\tau \in \ZZ \Gamma$ by~(\ref{Phi1}), condition $\alpha_2 \in \tau^\perp$ implies $\alpha_2 \in \Supp \tau$. Iterating this argument yields $\alpha_3, \ldots, \alpha_r \in \Supp \tau$, therefore $\Supp \sigma \subset \Supp \tau$. Since $\Phi(\Gamma) \subset \overline \Sigma(G)$, an inspection of Table~\ref{table_spherical_roots} shows that conditions $\alpha_1 \notin \Pi_\tau$ and $\alpha_r \in \tau^\perp$ cannot hold for an element $\tau \in \Phi(\Gamma) \setminus \lbrace 2\sigma \rbrace$, a~contradiction.

\textit{Case}~3: $\sigma = 2\alpha_1 + \alpha_2$ with $\Supp \sigma$ of type~$\mathsf G_2$. Then $2\sigma = 4\alpha_1 + 2\alpha_2 \in \Phi(\Gamma)$ and hence $\alpha_2 \in \Gamma^\perp$ by~(\ref{Phi3}). Since $\Phi(\Gamma) \subset \overline \Sigma(G)$, an inspection of Table~\ref{table_spherical_roots} yields that there are no elements $\tau \in \Phi(\Gamma) \setminus \lbrace 2\sigma \rbrace$ such that $\alpha_2 \in \Supp \tau$ and $\alpha_2 \in \tau^\perp$, which contradicts the condition $\sigma \in \ZZ \Phi(\Gamma)$.
\end{proof}

The following proposition is similar to~\cite[Proposition~5.4]{BvS}.

\begin{proposition}
\label{prop_aux_uniqueness}
Every $\sigma \in \Phi(\Gamma)$ satisfies the condition $\sigma \notin \ZZ^+(\Phi(\Gamma) \setminus \lbrace \sigma\rbrace)$.
\end{proposition}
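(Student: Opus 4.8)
The plan is to argue by contradiction: suppose $\sigma = \sum_{\tau \in S} c_\tau \tau$ for some nonempty finite set $S \subseteq \Phi(\Gamma) \setminus \lbrace \sigma \rbrace$ and integers $c_\tau \ge 1$. Since every element of $\Phi(\Gamma)$ lies in $\ZZ^+\Pi$ by~(\ref{Phi2}), the sum involves no cancellation, and I extract two constraints on each $\tau \in S$. Writing $\mu_\alpha \in \ZZ^+$ for the coefficient of $\alpha \in \Pi$ in an element $\mu \in \ZZ^+\Pi$, the first constraint is the coefficientwise domination $\tau_\alpha \le c_\tau \tau_\alpha \le \sigma_\alpha$ for all $\alpha \in \Pi$, so that $\Supp \tau \subseteq \Supp \sigma$. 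The second comes from~(\ref{Phi3}) and~(\ref{Phi1}): for every $\gamma \in \Pi_\sigma$ one has $\gamma \in \Gamma^\perp$, and $\tau \in \ZZ\Gamma$, whence $\langle \gamma^\vee, \tau \rangle = 0$. Thus every $\tau \in S$, and also $\sigma$ (as $\Pi_\sigma \subseteq \sigma^\perp$ by~(\ref{eqn_Pi_sigma})), lies in the subspace $W = \lbrace x \in \QQ\Supp\sigma \mid \langle \gamma^\vee, x \rangle = 0 \text{ for all } \gamma \in \Pi_\sigma \rbrace$. Since the coroots $\gamma^\vee$ with $\gamma \in \Pi_\sigma$ restrict to linearly independent functionals on $\QQ\Supp\sigma$ (they occur among the rows of the invertible Cartan matrix of $\Delta_\sigma$), one has $\dim W = |\Supp\sigma| - |\Pi_\sigma|$.

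I would then split according to the value of $\dim W$, which by inspection of Table~\ref{table_spherical_roots} is always $1$ or $2$. When $\dim W = 1$ (rows $1$, $2$, $5$, $7$, $8$, $10$, $11$, $13$, including all the ``even'' roots and the long $\mathsf B$, $\mathsf D$, $\mathsf F$ chains, whose interior roots cut $W$ down to a line) one has $W = \QQ\sigma$, so each $\tau \in S$ is proportional to $\sigma$; since $\tau \ne \sigma$ and $S \ne \varnothing$, this contradicts Lemma~\ref{lemma_non-proportional}. This disposes of the bulk of the table in one stroke.

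The remaining and genuinely delicate case is $\dim W = 2$ (rows $3$, $4$, $6$, $9$, $12$), where $W$ is no longer spanned by $\sigma$ alone and one must exploit the domination bound together with a close reading of $\overline\Sigma_G$. For the type $\mathsf A_r$ and $\mathsf B_r$ chains with $r \ge 3$ (rows $4$, $6$), membership in $W$ forces the sequence $j \mapsto \tau_{\alpha_j}$ to be affine in $j$; being $\lbrace 0,1 \rbrace$-valued by domination, it must be constant, so $\tau$ is again proportional to $\sigma$ and Lemma~\ref{lemma_non-proportional} applies. For type $\mathsf C_r$ (row $9$) the space $W$ is spanned by $\sigma$ and $\alpha_1$, and the domination bound excludes the second basis direction $2\alpha_2 + \ldots + 2\alpha_{r-1} + \alpha_r$ (which is not even in $\overline\Sigma_G$), leaving $\alpha_1$ as the only admissible proper summand; since $\alpha_1$ alone cannot reproduce $\sigma$ when $|\Supp\sigma| \ge 3$, this is impossible. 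Finally, in the two-node cases (rows $3$ and $12$, and rows $4$, $6$ with $r = 2$) domination forces every $\tau \in S$ to be one of the two simple roots of $\Supp\sigma$, so the decomposition places both of them in $\Phi(\Gamma)$; this contradicts Lemma~\ref{lemma_alpha+beta_in_Phi} when the roots are orthogonal (row $3$) and Lemma~\ref{lemma_alpha_not_in_Phi} otherwise, the required node $\alpha$ with $\langle \alpha^\vee, \sigma \rangle > 0$ being supplied by Lemma~\ref{lemma_non-acute}.

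I expect the main obstacle to be precisely the $\dim W = 2$ analysis, and within it the type $\mathsf C_r$ row: there $W$ carries a second lattice direction, so neither Lemma~\ref{lemma_non-proportional} nor the affine-sequence argument applies directly, and one genuinely needs the interplay between coefficientwise domination and the explicit list in Table~\ref{table_spherical_roots} to exclude the spurious summand. The rank-two cases are conceptually easy but rely on the endpoint Lemmas~\ref{lemma_alpha_not_in_Phi} and~\ref{lemma_alpha+beta_in_Phi} rather than on the uniform line argument.
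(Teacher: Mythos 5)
Your proof is correct and follows essentially the same route as the paper's: the same two constraints on the summands (coefficientwise domination giving $\Supp\tau\subset\Supp\sigma$, and $\Pi_\sigma\subset\tau^\perp$ via (\ref{Phi1}) and (\ref{Phi3})), the same one-stroke disposal of the rows with $|\Supp\sigma\setminus\Pi_\sigma|=1$ by proportionality and Lemma~\ref{lemma_non-proportional}, and the same residual case analysis resting on Lemmas~\ref{lemma_alpha+beta_in_Phi} and~\ref{lemma_alpha_not_in_Phi} (via Lemma~\ref{lemma_non-acute}). The only local variation is in types $\mathsf A_r$ and $\mathsf B_r$ with $r\ge3$, where you use the bound $\tau_\alpha\le\sigma_\alpha$ to force the arithmetic sequence of coefficients to be constant, whereas the paper instead inspects $\overline\Sigma_G$ for arithmetic-progression candidates and contradicts $|\Phi_\sigma|\ge2$; both arguments are valid.
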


\begin{proof}
Fix an arbitrary $\sigma \in \Phi(\Gamma)$ and assume that $\sigma \in \ZZ^+(\Phi(\Gamma) \setminus \lbrace \sigma \rbrace$). Fix a subset $\Phi_\sigma \subset \Phi(\Gamma) \setminus \lbrace \sigma \rbrace$ such that $\sigma = \sum \limits_{\tau \in \Phi_\sigma} n_\tau \tau$ with all the coefficients $n_\tau$ being positive integers. Then Lemma~\ref{lemma_non-proportional} yields
\begin{equation} \label{eqn_at_least_2}
|\Phi_\sigma| \ge 2.
\end{equation}
Clearly, every $\tau \in \Phi_\sigma$ satisfies
\begin{equation} \label{eqn_condition1}
\Supp \tau \subset \Supp \sigma;
\end{equation}
moreover,
\begin{equation} \label{eqn_condition2}
\Pi_\sigma \subset \tau^\perp
\end{equation}
by~(\ref{Phi3}). Assume that $|\Supp \sigma \setminus \Pi_\sigma| = 1$. As $\Pi_\sigma \subset \sigma^\perp$, for each $\tau \in \Phi_\sigma$ condition~(\ref{eqn_condition2}) implies that $\tau$ is proportional to~$\sigma$, which is impossible by Lemma~\ref{lemma_non-proportional}. Thus in what follows we assume that $|\Supp \sigma \setminus \Pi_\sigma| \ge 2$. Then an inspection of Table~\ref{table_spherical_roots} leaves the following five cases.

\textit{Case}~1: $\sigma = \alpha_1 + \alpha_2$ with $\Supp \sigma$ of type $\mathsf A_2$, $\mathsf B_2$, or $\mathsf G_2$. Condition~(\ref{eqn_at_least_2}) yields $\Phi_\sigma = \lbrace \alpha_1, \alpha_2 \rbrace$, which contradicts Corollary~\ref{crl_Supp_is_not_in_Phi}.

\textit{Case}~2: $\sigma = \alpha + \beta$ for some $\alpha, \beta \in \Pi$ with $\alpha \perp \beta$. Condition~(\ref{eqn_at_least_2}) yields $\Phi_\sigma = \lbrace \alpha, \beta \rbrace$, which contradicts Lemma~\ref{lemma_alpha+beta_in_Phi}.

\textit{Case}~3: $\sigma = \alpha_1 + \ldots + \alpha_r$ with $\Supp \sigma$ of type $\mathsf A_r$ ($r \ge 3$). An easy computation based on conditions~(\ref{eqn_condition1}) and~(\ref{eqn_condition2}) shows that every element $\tau \in \Phi_\sigma$ has the form
\begin{multline} \label{eqn_element}
\tau = k\alpha_1 + (k + d) \alpha_2 + \ldots + (k + (r-1)d)\alpha_r \\
\text{for some } k,d \in \ZZ \text{ with } k \ge 0, k + (r-1)d \ge 0.
\end{multline}
Since $r \ge 3$ and $\Phi_\sigma \subset \overline \Sigma(G)$, an inspection of Table~\ref{table_spherical_roots} yields $\Phi_\sigma = \varnothing$, which contradicts condition~(\ref{eqn_at_least_2}).

\textit{Case}~4: $\sigma = \alpha_1 + \ldots + \alpha_r$ with $\Supp \sigma$ of type $\mathsf B_r$ ($r \ge 3$). The same computation as in Case~3 shows that every element $\tau \in \Phi_\sigma$ has the form~(\ref{eqn_element}). Since $r \ge 3$ and $\Phi_\sigma \subset \overline \Sigma(G)$, an inspection of Table~\ref{table_spherical_roots} yields $\Phi_\sigma = \varnothing$ for $r \ge 5$ and $\Phi_\sigma \subset \lbrace \alpha_{r-2} + 2\alpha_{r-1} + 3\alpha_r \rbrace$ for $r \in \lbrace 3,4 \rbrace$. In any case we obtain a contradiction with~(\ref{eqn_at_least_2}).

\textit{Case}~5: $\sigma = \alpha_1 + 2\alpha_2 + 2\alpha_3 + \ldots + 2\alpha_{r-1} + \alpha_r$ with $\Supp \sigma$ of type~$\mathsf C_r$ ($r \ge 3$). It follows from conditions~(\ref{eqn_condition1}) and~(\ref{eqn_condition2}) that every element $\tau \in \Phi_\sigma$ has the form
\[
\tau = k_1 \alpha_1 + k_2(2\alpha_2 + 2\alpha_3 + \ldots + 2\alpha_{r-1} + \alpha_r)
\]
for some non-negative integers $k_1, k_2$. Since $r \ge 3$ and $\Phi_\sigma \subset \overline \Sigma(G)$, an inspection of Table~\ref{table_spherical_roots} yields $\Phi_\sigma \subset \lbrace \alpha_1 \rbrace$, which contradicts condition~(\ref{eqn_at_least_2}).
\end{proof}

\begin{lemma} \label{lemma_2alpha}
Suppose that $\alpha \in \ZZ \Gamma \cap \Pi$ and $2\alpha \in \Phi(\Gamma)$. Then $\alpha$ is primitive in~$\ZZ \Gamma$.
\end{lemma}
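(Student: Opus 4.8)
\textbf{The plan} is to argue by contradiction using condition~(\ref{Phi6}) of Theorem~\ref{thm_tangent_space}, which is exactly the parity constraint triggered by the hypothesis $2\alpha \in \Phi(\Gamma)$. Suppose $\alpha$ is \emph{not} primitive in $\ZZ\Gamma$. Then there exist an element $\gamma \in \ZZ\Gamma$ and an integer $n \ge 2$ such that $\alpha = n\gamma$. (Note that the hypothesis $\alpha \in \ZZ\Gamma \cap \Pi$ is what makes primitivity of $\alpha$ in $\ZZ\Gamma$ a meaningful assertion; since $2\alpha \in \Phi(\Gamma)$ only yields $2\alpha \in \ZZ\Gamma$ via~(\ref{Phi1}), the extra assumption $\alpha \in \ZZ\Gamma$ is genuinely used here.) The goal is to pair both sides of $\alpha = n\gamma$ with $\alpha^\vee$ and read off a contradiction from the divisibility by $2$ forced by~(\ref{Phi6}).

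The key steps are as follows. First I would invoke~(\ref{Phi6}) for the weight $2\alpha \in \Phi(\Gamma)$ to obtain $\langle \alpha^\vee, \lambda \rangle \in 2\ZZ$ for all $\lambda \in \Gamma$. Since $\langle \alpha^\vee, \,\cdot\, \rangle$ is $\ZZ$-linear and every element of $\ZZ\Gamma$ is an integral combination of elements of $\Gamma$, this immediately extends to $\langle \alpha^\vee, \lambda \rangle \in 2\ZZ$ for all $\lambda \in \ZZ\Gamma$; in particular $\langle \alpha^\vee, \gamma \rangle \in 2\ZZ$. On the other hand, because $\alpha$ is a root one has $\langle \alpha^\vee, \alpha \rangle = 2$, so applying $\alpha^\vee$ to $\alpha = n\gamma$ gives $2 = n \langle \alpha^\vee, \gamma \rangle$, whence $\langle \alpha^\vee, \gamma \rangle = 2/n$. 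For $n \ge 2$ the value $2/n$ lies in the interval $(0,1]$ and therefore cannot be an even integer, contradicting $\langle \alpha^\vee, \gamma \rangle \in 2\ZZ$. This contradiction shows that no such decomposition $\alpha = n\gamma$ exists, i.e.\ $\alpha$ is primitive in $\ZZ\Gamma$.

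I do not expect any serious obstacle in this argument: it is purely a linear-algebra/divisibility computation once~(\ref{Phi6}) is in hand. The only point requiring a word of care is the passage from ``even values on the monoid $\Gamma$'' to ``even values on the lattice $\ZZ\Gamma$'', which is justified by linearity of $\langle \alpha^\vee, \,\cdot\, \rangle$ and the fact that $\ZZ\Gamma$ is generated as a group by $\Gamma$. Everything else is elementary, so no case analysis or appeal to Table~\ref{table_spherical_roots} is needed.
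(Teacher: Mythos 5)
Your proof is correct and is essentially the paper's argument in contrapositive form: the paper notes that condition~(\ref{Phi6}) makes $\iota(\alpha^\vee)/2$ an element of $\mathcal L = \Hom_\ZZ(\ZZ\Gamma,\ZZ)$ taking the value $1$ on $\alpha$, which is exactly the integrality/divisibility obstruction you extract by pairing $\alpha = n\gamma$ with $\alpha^\vee$. No further comment is needed.
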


\begin{proof}
Property (\ref{Phi6}) implies that $\iota(\alpha^\vee) / 2 \in \mathcal L$. As $\langle \alpha^\vee /2, \alpha \rangle = 1$, the claim follows.
\end{proof}

\begin{remark}
Lemma~\ref{lemma_non-proportional}, Proposition~\ref{prop_aux_free}, and Proposition~\ref{prop_aux_uniqueness} would follow easily if we knew a priori that the set $\Phi(\Gamma)$ is linearly independent.
\end{remark}

The above remark leads to the following natural question.

\begin{question}
Is the set $\Phi(\Gamma)$ linearly independent?
\end{question}

\subsection{Applications to affine spherical $G$-varieties}
\label{subsec_applications}

Let $X$ be an affine spherical $G$-variety. Consider the corresponding $T_\ad$-orbit closure $C_X \subset \mathrm M_{\Gamma_X}$ (see~\S\,\ref{subsec_Tad_action}) and equip it with its reduced subscheme structure. Recall the root monoid $\Xi_X$ from Definition~\ref{dfn_root_monoid}.

\begin{proposition} \label{prop_indecomposable}
Suppose that $\sigma$ is an indecomposable element of~$\Xi_X$. Then
\begin{enumerate}[label=\textup{(\alph*)},ref=\textup{\alph*}]
\item \label{prop_indecomposable_a}
$\sigma \in \Phi(\Gamma_X)$;

\item \label{prop_indecomposable_b}
$\sigma$ is primitive in the lattice $\ZZ \Xi_X$.
\end{enumerate}
\end{proposition}

\begin{proof}
(\ref{prop_indecomposable_a}) This follows readily from Corollary~\ref{crl_ind_elements} together with the inclusion $T_{X_0} C_X \subset T_{X_0} \mathrm M_{\Gamma_X}$.

(\ref{prop_indecomposable_b}) Part~(\ref{prop_indecomposable_a}) yields $\ZZ \Xi_X \subset \ZZ \Phi(\Gamma_X)$, which implies the required result in view of Proposition~\ref{prop_aux_free}.
\end{proof}

Recall the monoid $\Xi_X^\sat$ and the set $\overline \Sigma_X$ defined in~\S\,\ref{subsec_root_monoid}.

\begin{theorem} \label{thm_root_monoid_is_free}
There is an inclusion $\overline \Sigma_X \subset \Xi_X$. In particular, $\Xi_X =\Xi_X^\sat$ and the monoid $\Xi_X$ is free.
\end{theorem}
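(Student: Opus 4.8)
The plan is to reduce everything to proving the single inclusion $\overline \Sigma_X \subset \Xi_X$. Indeed, once this is known, then since $\Xi_X$ is a monoid we get $\ZZ^+ \overline \Sigma_X \subseteq \Xi_X$; but $\Xi_X^\sat = \ZZ^+ \overline \Sigma_X$ by the very definition of $\overline \Sigma_X$, while $\Xi_X \subseteq \Xi_X^\sat$ holds automatically, so $\Xi_X = \Xi_X^\sat$, which is free by Theorem~\ref{thm_saturation_is_free}. Note that $X$ is spherical, hence $\Gamma_X$ is saturated by Proposition~\ref{prop_normality_saturatedness}, so the whole apparatus built around $\Phi(\Gamma_X)$ in \S\,\ref{sect_tangent_space} is available. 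I will also use that $\overline \Sigma_X$ is a $\ZZ$-basis of the lattice $\ZZ \Xi_X$: this follows from $\Xi_X^\sat = \ZZ^+ \overline \Sigma_X$ together with the equality $\ZZ \Xi_X = \ZZ \Xi_X^\sat$ (both inclusions are immediate from $\Xi_X \subseteq \Xi_X^\sat = \ZZ\Xi_X \cap \QQ^+\Xi_X$). In particular each $\sigma \in \overline \Sigma_X$ is primitive in $\ZZ \Xi_X$ and spans an extremal ray of the cone $\QQ^+ \Xi_X = \QQ^+ \Xi_X^\sat$.

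For the key inclusion, fix $\sigma \in \overline \Sigma_X$. Since $\Xi_X$ is a finitely generated submonoid of $\ZZ^+ \Pi$, it is reduced and cancellative, hence generated by its finite set of indecomposable elements, and these indecomposable elements span the cone $\QQ^+ \Xi_X$. Therefore each extremal ray of $\QQ^+ \Xi_X$ contains at least one indecomposable element of $\Xi_X$; I choose such an element $\tau$ lying on the ray $\QQ^+ \sigma$. Because $\sigma$ is primitive in $\ZZ \Xi_X$, the intersection of the line $\QQ \sigma$ with $\ZZ \Xi_X$ is exactly $\ZZ \sigma$, so $\tau \in \QQ^+\sigma \cap \ZZ\Xi_X = \ZZ^+ \sigma$; as $\tau \ne 0$ I may write $\tau = m\sigma$ with $m$ a positive integer.

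It then remains to show $m = 1$. By Proposition~\ref{crl_indecomposable} the indecomposable element $\tau$ lies in $\Phi(\Gamma_X)$, and by Proposition~\ref{prop_aux_free} every element of $\Phi(\Gamma_X)$ is primitive in the lattice $\ZZ \Phi(\Gamma_X)$. On the other hand $\sigma \in \ZZ \Xi_X$, and since $\Xi_X$ is generated by its indecomposables, all of which lie in $\Phi(\Gamma_X)$, one has $\ZZ \Xi_X \subseteq \ZZ \Phi(\Gamma_X)$; thus $\sigma \in \ZZ \Phi(\Gamma_X)$. The relation $\tau = m\sigma$ with $\sigma \in \ZZ \Phi(\Gamma_X)$ and $\tau$ primitive in $\ZZ \Phi(\Gamma_X)$ forces $m = 1$, whence $\sigma = \tau \in \Xi_X$. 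This establishes $\overline \Sigma_X \subset \Xi_X$ and finishes the argument as explained in the first paragraph.

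The substantive input is entirely packaged in the two propositions invoked: that indecomposable elements of $\Xi_X$ occur as $T_\ad$-weights of the tangent space (Proposition~\ref{crl_indecomposable}), and that such weights are primitive in the lattice they generate (Proposition~\ref{prop_aux_free}, itself a consequence of the combinatorial analysis of $\Phi(\Gamma)$ based on Table~\ref{table_spherical_roots}). The remaining ingredients are elementary facts about finitely generated reduced cancellative monoids, so I expect no genuine obstacle beyond carefully matching the various lattices $\ZZ \Xi_X$ and $\ZZ \Phi(\Gamma_X)$ in the primitivity step. The only subtlety worth flagging is that one needs merely the \emph{existence} of an indecomposable element on each extremal ray, not its uniqueness; consequently Lemma~\ref{lemma_non-proportional} is not actually required for this particular statement.
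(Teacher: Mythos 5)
Your proof is correct and follows essentially the same route as the paper's: both arguments locate an indecomposable element $n\sigma$ of $\Xi_X$ on the extremal ray $\QQ^+\sigma$, place it in $\Phi(\Gamma_X)$ via Proposition~\ref{crl_indecomposable}, and force $n=1$ by the primitivity statement of Proposition~\ref{prop_aux_free}, the only cosmetic difference being that you invoke finite generation of $\Xi_X$ (true, e.g.\ by Theorem~\ref{thm_orbit_closure_of_X}, but not actually needed) where the paper uses the linear independence of $\overline\Sigma_X$ to produce the indecomposable multiple.
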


\begin{proof}
Take any $\sigma \in \overline \Sigma_X$. Since $\ZZ^+\overline\Sigma_X = \Xi^\sat_X$ and the set $\overline \Sigma_X$ is linearly independent, there exists a positive integer $n$ such that $n\sigma$ is an indecomposable element of~$\Xi_X$. It follows from Proposition~\ref{prop_indecomposable}(\ref{prop_indecomposable_b}) that $n = 1$ and hence $\sigma \in\Xi_X$.
\end{proof}

\begin{corollary} \label{crl_spherical_roots_are_weights}
There is an inclusion $\overline \Sigma_X \subset \Phi(\Gamma_X)$.
\end{corollary}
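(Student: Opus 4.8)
The plan is to deduce this corollary formally from the two immediately preceding results, Theorem~\ref{thm_root_monoid_is_free} and Proposition~\ref{crl_indecomposable}. First I would use Theorem~\ref{thm_root_monoid_is_free}, which gives the equality $\Xi_X = \Xi_X^\sat$. Consequently, the set $\overline \Sigma_X$, originally defined in \S\,\ref{subsec_root_monoid} as the set of free generators of $\Xi_X^\sat$, is in fact the set of free generators of the monoid $\Xi_X$ itself, and $\Xi_X = \ZZ^+ \overline \Sigma_X$ with $\overline \Sigma_X$ linearly independent.

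Next I would observe that in a free commutative monoid the free generators coincide with the indecomposable elements. Concretely, fix $\sigma \in \overline \Sigma_X$ and suppose $\sigma = \xi_1 + \xi_2$ with $\xi_1, \xi_2 \in \Xi_X \setminus \lbrace 0 \rbrace$. Expressing $\xi_1$ and $\xi_2$ as non-negative integer combinations of the elements of $\overline \Sigma_X$ and summing would exhibit $\sigma$ as a non-negative integer combination of the free generators with coefficient sum at least~$2$; since $\overline \Sigma_X$ is linearly independent, this contradicts the fact that $\sigma$ is itself one of those generators. Hence every element of $\overline \Sigma_X$ is indecomposable in $\Xi_X$.

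Finally I would apply Proposition~\ref{crl_indecomposable}, according to which every indecomposable element of $\Xi_X$ lies in $\Phi(\Gamma_X)$. Applying this to each $\sigma \in \overline \Sigma_X$ yields the inclusion $\overline \Sigma_X \subset \Phi(\Gamma_X)$, as required. I do not expect any genuine obstacle here: the statement is a purely formal consequence of the already established results. The only step deserving a moment's care is the identification of the free generators of $\Xi_X$ with its indecomposable elements, which is elementary and valid in any free commutative monoid.
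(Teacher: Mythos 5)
Your proposal is correct and follows exactly the route the paper takes: its proof of this corollary simply cites Theorem~\ref{thm_root_monoid_is_free} and Proposition~\ref{crl_indecomposable}, with the implicit step being precisely your observation that the free generators of the (now free) monoid $\Xi_X$ are its indecomposable elements. Your spelling out of that elementary step is accurate and fills in what the paper leaves unstated.
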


\begin{proof}
This follows from Theorem~\ref{thm_root_monoid_is_free} and Proposition~\ref{prop_indecomposable}(\ref{prop_indecomposable_a}).
\end{proof}

\begin{corollary} \label{crl_Tad-OC_is_AS1}
The $T_\ad$-orbit closure $C_X \subset \mathrm M_{\Gamma_X}$ is an affine space of dimension~$|\overline \Sigma_X|$.
\end{corollary}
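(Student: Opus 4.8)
The plan is to compute the coordinate ring $\Bbbk[C_X]$ directly and recognize it as a polynomial algebra, using the freeness of $\Xi_X$ established in Theorem~\ref{thm_root_monoid_is_free}.

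First I would recall, from Theorem~\ref{thm_orbit_closure_of_X}, that $C_X$ equipped with its reduced subscheme structure is a multiplicity-free affine $T_\ad$-variety with weight monoid $\Xi_X$; hence $\Bbbk[C_X] = \bigoplus_{\xi \in \Xi_X} \Bbbk[C_X]^{(T_\ad)}_\xi$ with every graded component one-dimensional. Being the closure of a single $T_\ad$-orbit, $C_X$ is irreducible, so $\Bbbk[C_X]$ is an integral domain; consequently the product of two nonzero semi-invariants of weights $\xi_1$ and $\xi_2$ is a nonzero semi-invariant of weight $\xi_1 + \xi_2$.

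By Theorem~\ref{thm_root_monoid_is_free} the monoid $\Xi_X$ is free with linearly independent set of free generators $\overline\Sigma_X$; write $\overline\Sigma_X = \lbrace \sigma_1, \ldots, \sigma_n \rbrace$ with $n = |\overline\Sigma_X|$. I would then pick a nonzero $f_i \in \Bbbk[C_X]^{(T_\ad)}_{\sigma_i}$ for each $i$ and consider the $T_\ad$-equivariant algebra homomorphism $\Bbbk[x_1, \ldots, x_n] \to \Bbbk[C_X]$ sending $x_i \mapsto f_i$. Freeness of $\Xi_X$ guarantees that this map is well defined and that the image of a monomial $\prod_i x_i^{a_i}$ lies in the weight space of weight $\sum_i a_i \sigma_i$; integrality makes each such image nonzero and hence a spanning element of the corresponding one-dimensional weight space. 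It follows that the homomorphism is a graded isomorphism, so $C_X = \Spec \Bbbk[C_X] \simeq \mathbb A^n$ is an affine space of dimension $|\overline\Sigma_X|$, as asserted.

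The main (and essentially only) delicate point is the interplay between integrality and multiplicity-freeness that forces every product $\prod_i f_i^{a_i}$ to be nonzero and to exhaust the one-dimensional weight space of weight $\sum_i a_i \sigma_i$; equivalently, this is the remark that for a torus every multiplicity-free affine variety has trivial root monoid, so that $\Bbbk[C_X]$ is the semigroup algebra $\Bbbk[\Xi_X]$. Once this is in hand, the identification with the polynomial ring and the dimension count are immediate.
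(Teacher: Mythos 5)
Your proposal is correct and follows essentially the same route as the paper: the paper combines Theorem~\ref{thm_orbit_closure_of_X} and Theorem~\ref{thm_root_monoid_is_free} to see that $C_X$ is a multiplicity-free affine $T_\ad$-variety whose weight monoid is freely generated by the linearly independent set $\overline\Sigma_X$, and then states that the claims ``follow readily''. Your argument simply spells out that last step (identifying $\Bbbk[C_X]$ with the semigroup algebra of the free monoid $\Xi_X$, hence with a polynomial ring in $|\overline\Sigma_X|$ variables), and it does so correctly.
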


\begin{proof}
Combining Theorems~\ref{thm_orbit_closure_of_X} and~\ref{thm_root_monoid_is_free} we find that $C_X$ is a multiplicity-free affine $T_\ad$-variety whose weight monoid is generated by the linearly independent set $\overline \Sigma_X$. All the claims follow readily.
\end{proof}

\begin{corollary} \label{crl_Tad-OC_is_AS2}
Let $\Gamma \subset \Lambda^+$ be a finitely generated and saturated monoid. Then every $T_\ad$-orbit closure in $\mathrm M_\Gamma$, equipped with its reduced subscheme structure, is an affine space.
\end{corollary}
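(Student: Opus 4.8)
The plan is to reduce the statement entirely to Corollary~\ref{crl_Tad-OC_is_AS1}, which already asserts that the orbit closure $C_X$ is an affine space whenever $X$ is an \emph{affine spherical} $G$-variety. First I would fix an arbitrary $T_\ad$-orbit closure $C \subset \mathrm M_\Gamma$. By Theorem~\ref{thm_bijection}, the $T_\ad$-orbits in $\mathrm M_\Gamma$ are in bijection with the $G$-isomorphism classes of multiplicity-free affine $G$-varieties with weight monoid~$\Gamma$; consequently $C = C_X$ for some multiplicity-free affine $G$-variety $X$ with $\Gamma_X = \Gamma$.

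The key observation is that the saturatedness hypothesis on $\Gamma$ forces \emph{every} such $X$ to be spherical, so that the spherical-variety hypothesis of Corollary~\ref{crl_Tad-OC_is_AS1} is automatic. Indeed, since $\Gamma_X = \Gamma$ is saturated, Proposition~\ref{prop_normality_saturatedness} shows that $X$ is normal; a normal multiplicity-free affine $G$-variety is spherical by definition. Thus $X$ is an affine spherical $G$-variety.

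It then remains only to invoke Corollary~\ref{crl_Tad-OC_is_AS1} for this $X$: the orbit closure $C_X$, equipped with its reduced subscheme structure, is an affine space (of dimension $|\overline\Sigma_X|$). Since $C$ was an arbitrary $T_\ad$-orbit closure, this establishes the claim. I do not expect any genuine obstacle here, as all the substantive work has been carried out in Corollary~\ref{crl_Tad-OC_is_AS1}; the only new ingredient is the elementary remark that saturatedness of the weight monoid upgrades multiplicity-freeness to sphericity, thereby removing the normality restriction present in the earlier corollary and allowing it to cover \emph{all} orbit closures in $\mathrm M_\Gamma$ at once.
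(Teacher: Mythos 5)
Your proposal is correct and is essentially the paper's own argument: the paper's proof is a one-line reduction to Theorem~\ref{thm_bijection} and Corollary~\ref{crl_Tad-OC_is_AS1}, with the step that saturatedness of $\Gamma$ forces every multiplicity-free affine $G$-variety with weight monoid $\Gamma$ to be normal (hence spherical) left implicit. You have simply made that step explicit via Proposition~\ref{prop_normality_saturatedness}, which is exactly the intended justification.
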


\begin{proof}
This follows from Theorem~\ref{thm_bijection} and Corollary~\ref{crl_Tad-OC_is_AS1}.
\end{proof}

\begin{theorem} \label{thm_uniqueness1}
Up to a $G$-isomorphism, every affine spherical $G$-variety $X$ is uniquely determined by the pair $(\Gamma_X, \overline \Sigma_X)$.
\end{theorem}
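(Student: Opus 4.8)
The plan is to work inside the moduli scheme $\mathrm M_\Gamma$ for the common weight monoid $\Gamma := \Gamma_X = \Gamma_{X'}$ and to show that two affine spherical $G$-varieties with the same pair $(\Gamma,\overline\Sigma)$ give rise to the \emph{same} $T_\ad$-orbit closure $C_X = C_{X'}$; by Theorem~\ref{thm_bijection} this already forces $X$ and $X'$ to be $G$-isomorphic. So I would fix affine spherical $X,X'$ with $\Gamma_X=\Gamma_{X'}=\Gamma$ and $\overline\Sigma_X=\overline\Sigma_{X'}=\overline\Sigma$, write $R=\Bbbk[\mathrm M_\Gamma]$, and let $I_X, I_{X'}\subset R$ be the homogeneous $T_\ad$-stable ideals defining $C_X$ and $C_{X'}$. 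The entire proof then reduces to computing these ideals and observing that the answer depends only on $\Gamma$ and $\overline\Sigma$.

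First I would exhibit $\mathrm M_\Gamma$ as a cone with vertex $X_0$. A coordinate $m^\nu_{\lambda,\mu}$ of a multiplication law is nonzero only when $V(\nu)\subset V(\lambda)\otimes V(\mu)$, so the corresponding coordinate function has $T_\ad$-weight $\lambda+\mu-\nu\in\ZZ^+\Pi$; since these coordinates generate $R$, all $T_\ad$-weights of $R$ lie in $\ZZ^+\Pi$ and $R_0=R^{T_\ad}=\Bbbk$ (the last equality also follows from Theorem~\ref{thm_X_0_fixed_point}, as $X_0$ is the unique closed orbit). Evaluating against a strictly dominant coweight turns this into a nonnegative $\ZZ$-grading with $R_0=\Bbbk$, so $R$ is generated by a $T_\ad$-weight basis of $\mathfrak m_{X_0}/\mathfrak m_{X_0}^2\simeq (T_{X_0}\mathrm M_\Gamma)^*$. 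By Theorem~\ref{thm_tangent_space} this space is multiplicity-free with weight set $\Phi(\Gamma)$, so for each $\phi\in\Phi(\Gamma)$ there is a single (up to scalar) generator $f_\phi$ of weight $\phi$.

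Next I would use the combinatorics of $\Phi(\Gamma)$ to pin down the weight spaces of $R$. Combining Lemma~\ref{lemma_non-proportional}, Proposition~\ref{prop_aux_free} and Proposition~\ref{prop_aux_uniqueness}, one checks that no $\sigma\in\Phi(\Gamma)$ is a $\ZZ^+$-combination of elements of $\Phi(\Gamma)$ of total degree $\ge 2$: using positivity in $\ZZ^+\Pi$ the coefficient of $\sigma$ itself in any such relation would have to vanish, reducing to Proposition~\ref{prop_aux_uniqueness}, while the proportional case is excluded by Lemma~\ref{lemma_non-proportional}. Since every monomial in the $f_\phi$ carries a weight that is a $\ZZ^+$-combination of $\Phi(\Gamma)$, this yields $R_\sigma=\Bbbk f_\sigma$ for every $\sigma\in\Phi(\Gamma)$ (so $f_\sigma$ is canonical, as $\mathfrak m_{X_0}^2\cap R_\sigma=0$), and moreover $\Phi(\Gamma)\cap\ZZ^+\overline\Sigma=\overline\Sigma$, because $\overline\Sigma\subset\Phi(\Gamma)$ by Corollary~\ref{crl_spherical_roots_are_weights} and any element of $\ZZ^+\overline\Sigma$ not lying in $\overline\Sigma$ would be a degree-$\ge 2$ combination.

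Finally I would read off $I_X$. By Theorem~\ref{thm_orbit_closure_of_X} together with Theorem~\ref{thm_root_monoid_is_free} and Corollary~\ref{crl_Tad-OC_is_AS1}, $C_X$ is multiplicity-free with free weight monoid $\Xi_X=\ZZ^+\overline\Sigma$; hence in the $T_\ad$-equivariant surjection $R\twoheadrightarrow\Bbbk[C_X]$ the generator $f_\sigma$ maps to a nonzero element for $\sigma\in\overline\Sigma$, whereas $f_\phi\mapsto 0$ for every $\phi\in\Phi(\Gamma)\setminus\overline\Sigma$ (such $\phi\notin\ZZ^+\overline\Sigma$ by the previous step, so its weight space in $\Bbbk[C_X]$ vanishes). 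Therefore $R/(f_\phi:\phi\in\Phi(\Gamma)\setminus\overline\Sigma)$ surjects onto the polynomial ring $\Bbbk[C_X]=\Bbbk[\Xi_X]$ and is generated by the $|\overline\Sigma|$ classes of the $f_\sigma$ with $\sigma\in\overline\Sigma$; since $\dim C_X=|\overline\Sigma|$, this surjection must be an isomorphism, so $I_X=(f_\phi:\phi\in\Phi(\Gamma)\setminus\overline\Sigma)$. The right-hand side depends only on $\Gamma$ and $\overline\Sigma$, whence $I_X=I_{X'}$ and $C_X=C_{X'}$. I expect the genuine obstacle to be concentrated in the second and third steps, namely verifying that $\mathrm M_\Gamma$ is truly a cone so that $R$ is generated in the tangent weights $\Phi(\Gamma)$, and then exploiting the non-decomposability of $\Phi(\Gamma)$ to annihilate every higher-degree contribution to these weight spaces; once each $R_\sigma$ with $\sigma\in\Phi(\Gamma)$ is known to be one-dimensional, the identification of $I_X$ with a fixed coordinate ideal is essentially formal.
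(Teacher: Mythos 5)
Your proof is correct, but it takes a genuinely different route from the paper's. The paper argues locally at $X_0$: if $X_1 \not\simeq X_2$, then $Z = C_{X_1} \cup C_{X_2}$ has two distinct irreducible components of dimension $|\overline\Sigma|$ through $X_0$ (Corollary~\ref{crl_Tad-OC_is_AS1}), so $\dim T_{X_0} Z \ge |\overline\Sigma| + 1$; since all $T_\ad$-weights of $\Bbbk[Z]$ lie in $\ZZ^+\overline\Sigma$ and $T_{X_0} Z \subset T_{X_0}\mathrm M_\Gamma$ is multiplicity-free, some element of $\Phi(\Gamma)\setminus\overline\Sigma$ would have to lie in $\ZZ^+\overline\Sigma$, contradicting Proposition~\ref{prop_aux_uniqueness}. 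You argue globally: exploiting that $\mathrm M_\Gamma$ is a cone, so that $R = \Bbbk[\mathrm M_\Gamma]$ is generated in the tangent weights $\Phi(\Gamma)$, you use the same non-decomposability statements (Lemma~\ref{lemma_non-proportional}, Propositions~\ref{prop_aux_free} and~\ref{prop_aux_uniqueness}) to get $R_\sigma = \Bbbk f_\sigma$ for $\sigma\in\Phi(\Gamma)$, and then identify $I_{C_X}$ with the coordinate ideal $(f_\phi : \phi\in\Phi(\Gamma)\setminus\overline\Sigma_X)$. Both proofs consume the same combinatorial inputs together with Corollaries~\ref{crl_spherical_roots_are_weights} and~\ref{crl_Tad-OC_is_AS1}; yours additionally needs the cone structure of $\mathrm M_\Gamma$, which the paper never invokes, and in exchange delivers more, namely an explicit presentation of every orbit-closure ideal, from which Corollary~\ref{crl_irr_comp_AS} also drops out. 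One step to tighten: for the graded Nakayama argument you need $R_0 = \Bbbk$, and ``all coordinate weights lie in $\ZZ^+\Pi$'' alone does not give this, since a weight-zero coordinate could generate a nontrivial $R_0$; the missing observation is that the Cartan components $m^{\lambda+\mu}_{\lambda,\mu}$ are pinned down by the requirement of extending the multiplication~(\ref{eqn_ml_on_AU}) on $A_\Gamma^U$ (the relevant $\Hom$-space being one-dimensional), hence are constant on $\mathrm M_\Gamma$, so the ambient affine space has all nonconstant coordinates of weight in $\ZZ^+\Pi\setminus\lbrace 0\rbrace$. With that in place, the remaining steps --- the dimension count showing that $R/(f_\phi:\phi\in\Phi(\Gamma)\setminus\overline\Sigma)\twoheadrightarrow\Bbbk[C_X]$ is an isomorphism, and Theorem~\ref{thm_bijection} to conclude --- are sound.
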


\begin{proof}
Let $X_1, X_2$ be two affine spherical $G$-varieties with $\Gamma_{X_1} = \Gamma_{X_2}$ and $\overline \Sigma_{X_1} = \overline \Sigma_{X_2}$ and assume that $X_1, X_2$ are not $G$-isomorphic. Put $\Gamma = \Gamma_{X_1} = \Gamma_{X_2}$ and $\overline \Sigma = \overline \Sigma_{X_1} = \overline \Sigma_{X_2}$ for brevity. Consider the closed subsets $C_{X_1}$, $C_{X_2}$, and $Z = C_{X_1} \cup C_{X_2}$ in $\mathrm M_\Gamma$ and equip each of them with its reduced subscheme structure. Thanks to Corollary~\ref{crl_Tad-OC_is_AS1},
\[
\dim C_{X_1} = \dim C_{X_2} = \dim Z = |\overline \Sigma|.
\]
It follows from Theorem~\ref{thm_bijection} that $C_{X_1} \ne C_{X_2}$, hence $C_{X_1}$ and $C_{X_2}$ are distinct irreducible components of~$Z$. Consequently, $X_0$ is a singular point of~$Z$, which implies
\begin{equation} \label{eqn_dim+1}
\dim T_{X_0} Z \ge |\overline \Sigma| + 1.
\end{equation}
By Theorems~\ref{thm_orbit_closure_of_X} and~\ref{thm_root_monoid_is_free}, $C_{X_1}$ and $C_{X_2}$ are isomorphic multiplicity-free affine $T_\ad$-varieties with weight monoid $\ZZ^+\overline \Sigma$, therefore all $T_\ad$-weights of the algebra $\Bbbk[Z]$ belong to~$\ZZ^+\overline \Sigma$. In particular,
\[
\lbrace \tau \in \mathfrak X(T_\ad) \mid - \tau \text{ is a $T_\ad$-weight of } T_{X_0} Z \rbrace \subset \ZZ^+ \overline \Sigma.
\]
Since $T_{X_0} Z \subset T_{X_0} \mathrm M_\Gamma$ and $T_{X_0} \mathrm M_\Gamma$ is a multiplicity-free $T_\ad$-module by Theorem~\ref{thm_tangent_space}, inequality~(\ref{eqn_dim+1}) implies that the set $\Phi(\Gamma) \setminus \overline \Sigma$ contains an element that belongs to $\ZZ^+ \overline \Sigma$. The latter is impossible by Proposition~\ref{prop_aux_uniqueness}.
\end{proof}

Recall from~\S\,\ref{subsec_root_monoid} that to every affine spherical $G$-variety~$X$ one assigns the set $\Sigma_X$ of spherical roots of~$X$.

The following result, which strengthens Theorem~\ref{thm_uniqueness1}, was first obtained by Losev in~\cite[Theorem~1.2]{Lo09b}.

\begin{corollary} \label{crl_uniqueness1}
Up to a $G$-isomorphism, every affine spherical $G$-variety $X$ is uniquely determined by the pair~$(\Gamma_X, \Sigma_X)$.
\end{corollary}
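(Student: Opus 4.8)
The plan is to reduce Corollary~\ref{crl_uniqueness1} to the already-established Theorem~\ref{thm_uniqueness1}. Since that theorem recovers $X$ from the pair $(\Gamma_X, \overline\Sigma_X)$, it suffices to show that the pair $(\Gamma_X, \Sigma_X)$ determines the set $\overline\Sigma_X$. Concretely, I would take two affine spherical $G$-varieties $X_1, X_2$ with $\Gamma_{X_1} = \Gamma_{X_2} =: \Gamma$ and $\Sigma_{X_1} = \Sigma_{X_2} =: \Sigma$, prove that then $\overline\Sigma_{X_1} = \overline\Sigma_{X_2}$, and conclude that $X_1$ and $X_2$ are $G$-isomorphic by Theorem~\ref{thm_uniqueness1}.

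First I would record the precise relationship between spherical roots and free generators of the root monoid. By Theorem~\ref{thm_root_monoid_is_free} one has $\Xi_{X_i} = \Xi_{X_i}^\sat = \ZZ^+\overline\Sigma_{X_i}$, so $\overline\Sigma_{X_i}$ is a linearly independent generating set and the cone $\QQ^+\Xi_{X_i}$ is simplicial with extremal rays exactly the rays $\QQ^+\overline\sigma$, $\overline\sigma \in \overline\Sigma_{X_i}$. As $\Xi_{X_i} \subset \ZZ\Gamma$, we have $\overline\Sigma_{X_i} \subset \ZZ\Gamma$, so each such ray carries a unique primitive element of $\ZZ\Gamma$, which is by definition the spherical root lying on it. This yields a bijection between $\overline\Sigma_{X_i}$ and $\Sigma_{X_i} = \Sigma$ under which corresponding elements span the same ray.

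Next I would fix $\sigma \in \Sigma$ and let $\overline\sigma_i \in \overline\Sigma_{X_i}$ be the generator lying on the ray $\QQ^+\sigma$. By Corollary~\ref{crl_spherical_roots_are_weights} both $\overline\sigma_1$ and $\overline\sigma_2$ belong to $\Phi(\Gamma)$, and they are positively proportional since they span the same ray. But Lemma~\ref{lemma_non-proportional} asserts that $\Phi(\Gamma)$ contains no two distinct proportional elements, whence $\overline\sigma_1 = \overline\sigma_2$. Letting $\sigma$ range over $\Sigma$ gives $\overline\Sigma_{X_1} = \overline\Sigma_{X_2}$. Equivalently, once $\Gamma$ and $\Sigma$ are fixed, $\overline\Sigma_X$ is recovered intrinsically as the set of elements of $\Phi(\Gamma)$ lying on the rays $\QQ^+\sigma$, $\sigma \in \Sigma$; note that $\Phi(\Gamma)$ depends only on $\Gamma$ through conditions~(\ref{Phi1})--(\ref{Phi8}).

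I do not expect a serious obstacle: essentially all the content has been pushed into Lemma~\ref{lemma_non-proportional}, which guarantees that each extremal ray of $\QQ^+\Xi_X$ meets $\Phi(\Gamma)$ in at most one point. The only points requiring care are the bookkeeping that turns the ray-to-generator correspondence into a genuine bijection (which rests on the freeness of $\Xi_X$ from Theorem~\ref{thm_root_monoid_is_free}) and the observation that $\Phi(\Gamma)$, and hence the recovered set $\overline\Sigma_X$, is determined by $\Gamma$ alone, so that the reconstruction of $\overline\Sigma_X$ from $(\Gamma_X,\Sigma_X)$ is well defined before invoking Theorem~\ref{thm_uniqueness1}.
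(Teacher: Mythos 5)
Your argument is correct and follows the same overall reduction as the paper: both proofs show that the pair $(\Gamma_X,\Sigma_X)$ determines $\overline\Sigma_X$ and then invoke Theorem~\ref{thm_uniqueness1}. The only real difference is the auxiliary fact used to pin down the element of $\overline\Sigma_X$ on each ray $\QQ^+\sigma$. The paper characterizes $\overline\sigma$ via Proposition~\ref{prop_aux_free}, as the unique primitive element of the lattice $\ZZ\Phi(\Gamma_X)$ lying on the extremal ray $\QQ^+\sigma$ of $\QQ^+\Sigma_X$; you instead characterize it via Lemma~\ref{lemma_non-proportional}, as the unique element of $\Phi(\Gamma_X)\cap\QQ^+\sigma$, using that $\Phi(\Gamma_X)$ contains no two distinct proportional elements. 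Your route is slightly more economical, since Lemma~\ref{lemma_non-proportional} has a short two-case proof whereas Proposition~\ref{prop_aux_free} requires a lengthy case analysis; it is in fact exactly the mechanism the paper itself uses later, in the proof of Theorem~\ref{thm_overline_sigma}, to identify $\overline\sigma$ as the unique element of $\ZZ^+\sigma\cap\Phi(\Gamma_X)$. All the supporting steps you cite (the bijection between $\Sigma_X$ and $\overline\Sigma_X$ via Theorem~\ref{thm_root_monoid_is_free}, the inclusion $\overline\Sigma_X\subset\Phi(\Gamma_X)$ from Corollary~\ref{crl_spherical_roots_are_weights}, and the fact that $\Phi(\Gamma)$ depends only on $\Gamma$) are correctly justified.
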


\begin{proof}
Thanks to Corollary~\ref{crl_spherical_roots_are_weights} and Proposition~\ref{prop_aux_free}, the set $\overline \Sigma_X$ is uniquely determined by the pair $(\Gamma_X, \Sigma_X)$ as the set of primitive elements $\nu$ of the lattice $\ZZ \Phi(\Gamma_X)$ such that $\QQ^+\nu$ is an extremal ray of the cone $\QQ^+\Sigma_X \subset \ZZ \Gamma_X \otimes_\ZZ \QQ$. It remains to apply Theorem~\ref{thm_uniqueness1}.
\end{proof}

The following corollary is a particular case of Corollary~\ref{crl_finiteness_MF} below, which was first obtained in~\cite[Corollary~3.4]{AB}.

\begin{corollary} \label{crl_finiteness_Sph}
Up to a $G$-isomorphism, there are only finitely many affine spherical $G$-varieties with a prescribed weight monoid.
\end{corollary}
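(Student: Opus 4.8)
The plan is to fix the prescribed weight monoid~$\Gamma$ and to show that the $G$-isomorphism classes of affine spherical $G$-varieties $X$ with $\Gamma_X = \Gamma$ are parametrized by a subset of a finite set of combinatorial invariants. First I would dispose of the trivial case: if $\Gamma$ is not saturated, then by Proposition~\ref{prop_normality_saturatedness} no affine spherical $G$-variety has weight monoid~$\Gamma$, so the count is zero. Hence we may assume from now on that $\Gamma$ is finitely generated and saturated, which places us exactly in the setting where Theorem~\ref{thm_tangent_space} and its consequences apply.

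The main step is to invoke the uniqueness result. By Theorem~\ref{thm_uniqueness1}, every affine spherical $G$-variety $X$ with $\Gamma_X = \Gamma$ is determined up to $G$-isomorphism by the pair $(\Gamma, \overline \Sigma_X)$, hence---since $\Gamma$ is fixed---by the single invariant $\overline \Sigma_X \subset \mathfrak X(T_\ad)$. It therefore suffices to show that $\overline \Sigma_X$ can take only finitely many values as $X$ ranges over all such varieties. This is where Corollary~\ref{crl_spherical_roots_are_weights} enters: it gives the inclusion $\overline \Sigma_X \subset \Phi(\Gamma)$. Since the tangent space $T_{X_0} \mathrm M_\Gamma$ is finite-dimensional (being the tangent space of a finite-type scheme at a closed point) and multiplicity-free by Theorem~\ref{thm_tangent_space}, the set $\Phi(\Gamma)$ of negatives of its $T_\ad$-weights is finite; alternatively, finiteness is immediate from condition~(\ref{Phi2}), which confines $\Phi(\Gamma)$ to the finite set $\overline \Sigma_G$ depending only on~$G$. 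Consequently $\overline \Sigma_X$ is a subset of a fixed finite set, so there are at most $2^{|\Phi(\Gamma)|}$ possibilities for it, and the conclusion follows.

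I do not expect a genuine obstacle here: all the substantive work has already been carried out in the preceding sections, and the corollary is essentially a packaging of the uniqueness theorem together with the finiteness of $\Phi(\Gamma)$. The only points deserving care are logical: one must make sure that $\overline \Sigma_X$ really is an invariant of the $G$-isomorphism class (it is, being read off from $\Xi_X^\sat$) and that the assignment $X \mapsto \overline \Sigma_X$ is injective on isomorphism classes with fixed $\Gamma$ (which is exactly the content of Theorem~\ref{thm_uniqueness1}). One could equally phrase the argument through $\Sigma_X$ by appealing to Corollary~\ref{crl_uniqueness1}, but working with $\overline \Sigma_X$ is cleaner, since Corollary~\ref{crl_spherical_roots_are_weights} delivers the containment in $\Phi(\Gamma)$ directly.
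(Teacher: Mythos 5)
Your proposal is correct and follows essentially the same route as the paper: the paper's proof likewise combines Corollary~\ref{crl_spherical_roots_are_weights} with condition~(\ref{Phi2}) to get $\overline \Sigma_X \subset \overline \Sigma_G$, notes that $\overline \Sigma_G$ is finite, and concludes via Theorem~\ref{thm_uniqueness1}. Your extra remarks (the non-saturated case being vacuous, and the alternative finiteness argument via $\dim T_{X_0}\mathrm M_\Gamma < \infty$) are sound but not needed.
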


\begin{proof}
Let $X$ be an affine spherical $G$-variety. Combining Corollary~\ref{crl_spherical_roots_are_weights} with condition~(\ref{Phi2}) yields $\overline \Sigma_X \subset \overline \Sigma(G)$. As the set $\overline \Sigma(G)$ is finite, the claim follows from Theorem~\ref{thm_uniqueness1}.
\end{proof}

\begin{corollary} \label{crl_irr_comp_AS}
Suppose that $\Gamma \subset \Lambda^+$ is a finitely generated and saturated monoid. Then every irreducible component of $\mathrm M_\Gamma$, equipped with its reduced subscheme structure, is an affine space.
\end{corollary}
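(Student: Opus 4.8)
The plan is to reduce the statement to the already established fact that every $T_\ad$-orbit closure in $\mathrm M_\Gamma$ is an affine space (Corollary~\ref{crl_Tad-OC_is_AS2}), by showing that each irreducible component is itself such an orbit closure. The bridge between components and orbit closures is the finiteness of the number of $T_\ad$-orbits in $\mathrm M_\Gamma$, which I would establish first and which is where saturatedness of $\Gamma$ enters decisively.

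First I would observe that saturatedness of $\Gamma$ forces every multiplicity-free affine $G$-variety $X$ with weight monoid $\Gamma$ to be normal: indeed $\Gamma_X = \Gamma$ is saturated, so $X$ is normal (hence spherical) by Proposition~\ref{prop_normality_saturatedness}. Therefore Corollary~\ref{crl_finiteness_Sph} applies to all such $X$ and yields only finitely many $G$-isomorphism classes of multiplicity-free affine $G$-varieties with weight monoid $\Gamma$. By the bijection of Theorem~\ref{thm_bijection}, this means that $\mathrm M_\Gamma$ contains only finitely many $T_\ad$-orbits.

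Next, fix an irreducible component $Z$ of $\mathrm M_\Gamma$ and equip it with its reduced subscheme structure. Since $T_\ad$ is connected, it preserves $Z$, so $Z$ is a closed $T_\ad$-stable subvariety. As $Z$ is closed and $T_\ad$-stable, every $T_\ad$-orbit meeting $Z$ is contained in $Z$; thus $Z$ is the disjoint union of the finitely many orbits it contains. Passing to closures expresses the irreducible set $Z$ as a finite union of the closed subsets $\overline{O_i}$, whence $Z = \overline{O}$ for a single orbit $O$. This orbit corresponds, via Theorem~\ref{thm_bijection}, to some (automatically spherical) affine $G$-variety $X$ with weight monoid $\Gamma$, and $\overline O = C_X$. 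By Corollary~\ref{crl_Tad-OC_is_AS2} the orbit closure $C_X$, with its reduced structure, is an affine space; since $Z$ is reduced and coincides with $C_X$, the component $Z$ is an affine space, as required.

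I do not expect a serious obstacle, as all the substantial input---multiplicity-freeness of $T_{X_0}\mathrm M_\Gamma$, the finiteness and uniqueness results, and the affine-space structure of individual orbit closures---is already available. The only points meriting mild care are the use of saturatedness (through Proposition~\ref{prop_normality_saturatedness}) to guarantee that \emph{all} relevant multiplicity-free varieties are spherical, so that Corollary~\ref{crl_finiteness_Sph} governs the full orbit count, and the standard argument that an irreducible variety which is a finite union of locally closed orbits must coincide with a single orbit closure.
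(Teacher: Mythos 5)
Your proposal is correct and follows essentially the same route as the paper: the paper's proof likewise combines Theorem~\ref{thm_bijection} and Corollary~\ref{crl_finiteness_Sph} to identify each irreducible component with a $T_\ad$-orbit closure and then invokes Corollary~\ref{crl_Tad-OC_is_AS2}. You merely spell out the details the paper leaves implicit (saturatedness forcing all relevant varieties to be spherical, and the standard argument that an irreducible $T_\ad$-stable closed set covered by finitely many orbits is a single orbit closure).
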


\begin{proof}
It follows from Theorem~\ref{thm_bijection} and Corollary~\ref{crl_finiteness_Sph} that every irreducible component of $\mathrm M_\Gamma$ is a $T_\ad$-orbit closure. Now the claim is implied by Corollary~\ref{crl_Tad-OC_is_AS2}.
\end{proof}

Let $X$ be an affine spherical $G$-variety. For every $\sigma \in \Sigma_X$, let $\overline \sigma$ denote the unique element in the set $\ZZ^+\sigma \cap \overline \Sigma_X$. The following result is a version of \cite[Theorem~2]{Lo09a} for affine spherical $G$-varieties.

\begin{theorem} \label{thm_overline_sigma}
Under the above assumptions, $\overline \sigma \in \lbrace \sigma, 2\sigma \rbrace$ for every $\sigma \in \Sigma_X$. Moreover, $\overline \sigma = 2\sigma$ if and only if one of the following cases occurs:
\begin{enumerate}[label=\textup{(\arabic*)},ref=\textup{\arabic*}]
\item
$\sigma \notin \overline \Sigma(G)$;

\item
$\sigma = \alpha \in \Pi$ and $\QQ^+ \iota(\alpha^\vee)$ is an extremal ray of the cone~$\mathcal K$;

\item
$\sigma = \alpha_1 + \ldots + \alpha_r$ with $\Supp \sigma$ of type~$\mathsf B_r$ \textup($r \ge 2$\textup) and $\alpha_r \in \Gamma_X^\perp$.
\end{enumerate}
\end{theorem}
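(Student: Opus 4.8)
The plan is to translate everything into the language of the free generators of $\Xi_X$ and then read off the answer from Table~\ref{table_spherical_roots} together with the conditions (\ref{Phi1})--(\ref{Phi8}). First I would fix the dictionary. By Theorem~\ref{thm_root_monoid_is_free} the monoid $\Xi_X=\Xi_X^\sat=\ZZ^+\overline\Sigma_X$ is free, so the extremal rays of $\QQ^+\Xi_X$ are exactly the rays $\QQ^+\overline\tau$ with $\overline\tau\in\overline\Sigma_X$; hence each $\sigma\in\Sigma_X$ is the primitive vector of $\ZZ\Gamma_X$ on such a ray and $\overline\sigma=k\sigma$ for a unique integer $k\ge1$. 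By Corollary~\ref{crl_spherical_roots_are_weights} we have $\overline\sigma\in\Phi(\Gamma_X)$, so $\overline\sigma$ satisfies (\ref{Phi1})--(\ref{Phi8}); in particular $\overline\sigma\in\overline\Sigma_G$ by~(\ref{Phi2}), i.e.\ $\overline\sigma$ occupies one of the rows of Table~\ref{table_spherical_roots}. The whole proof is then a finite inspection of that table plus a few cone-theoretic remarks, using only that $T_{X_0}\mathrm M_{\Gamma_X}$ is multiplicity-free and that every weight satisfies the listed conditions.

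For the first assertion I would note that $\sigma=\overline\sigma/k\in\ZZ\Gamma_X\subset\mathfrak X(T)$, and every simple coroot is integral on $\mathfrak X(T)$, so $k$ divides $\langle\delta^\vee,\overline\sigma\rangle$ for all $\delta\in\Pi$. A direct check of Table~\ref{table_spherical_roots} shows that every $\overline\sigma\in\overline\Sigma_G$ except $2\alpha$ (row~2) has some $\delta\in\Supp\overline\sigma$ with $\langle\delta^\vee,\overline\sigma\rangle\in\{1,2\}$, which already forces $k\le2$; for the remaining case $\overline\sigma=2\alpha$, condition~(\ref{Phi6}) gives $\langle\alpha^\vee,\sigma\rangle=4/k\in2\ZZ$ and hence again $k\le2$. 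This establishes $\overline\sigma\in\{\sigma,2\sigma\}$.

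For the characterization of $\overline\sigma=2\sigma$ the key reduction is that, by Lemma~\ref{lemma_non-proportional}, at most one of the proportional weights $\sigma,2\sigma$ can lie in $\Phi(\Gamma_X)$; since $\overline\sigma\in\Phi(\Gamma_X)$ is one of them, exactly one lies in $\Phi(\Gamma_X)$ and it equals $\overline\sigma$. Thus $\overline\sigma=2\sigma$ if and only if $2\sigma\in\Phi(\Gamma_X)$, and I would analyse this according to whether $\sigma\in\overline\Sigma_G$. If $\sigma\notin\overline\Sigma_G$ then $\overline\sigma\neq\sigma$, so $\overline\sigma=2\sigma$, which is precisely case~(1); conversely case~(1) forces $\overline\sigma=2\sigma$ for the same reason. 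If $\sigma\in\overline\Sigma_G$, a glance at Table~\ref{table_spherical_roots} shows that the only rows whose double again lies in $\overline\Sigma_G$ are $\sigma=\alpha$ simple (with $2\sigma$ in row~2) and $\sigma$ of type $\mathsf B_r$ in row~6 (with $2\sigma$ in row~7). In the $\mathsf B_r$ case I would apply condition~(\ref{Phi3}) to $2\sigma$, for which $\Pi_{2\sigma}=\{\alpha_2,\dots,\alpha_r\}$, to get $\alpha_r\in\Gamma_X^\perp$, and condition~(\ref{Phi4}) to $\sigma$ to get $\alpha_r\notin\Gamma_X^\perp$; since exactly one of $\sigma,2\sigma$ lies in $\Phi(\Gamma_X)$, this yields $\overline\sigma=2\sigma\iff\alpha_r\in\Gamma_X^\perp$, which is case~(3).

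The delicate point, which I expect to be the main obstacle, is the simple-root case~(2): for $\sigma=\alpha\in\Pi$ one must prove $2\alpha\in\Phi(\Gamma_X)\iff\QQ^+\iota(\alpha^\vee)$ is an extremal ray of $\mathcal K$. If $\iota(\alpha^\vee)$ spans an extremal ray, then $\alpha$ cannot satisfy~(\ref{Phi8}): a decomposition $\iota(\alpha^\vee)=b_1\varrho_1+b_2\varrho_2$ with $b_i>0$ and $\varrho_1,\varrho_2\in\mathcal K$ would force both $\varrho_i$ onto the ray $\QQ^+\iota(\alpha^\vee)$, whence $\varrho_1=\varrho_2=\iota(\alpha^\vee)/2$ (using $\langle\varrho_i,\alpha\rangle=1$ and $\langle\alpha^\vee,\alpha\rangle=2$), contradicting their distinctness; so $\alpha\notin\Phi(\Gamma_X)$ and therefore $2\alpha\in\Phi(\Gamma_X)$. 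Conversely, if $2\alpha\in\Phi(\Gamma_X)$ then $\alpha\notin\Gamma_X^\perp$ (as $\langle\alpha^\vee,2\alpha\rangle=4\neq0$ and $2\alpha\in\ZZ\Gamma_X$), so $\iota(\alpha^\vee)$ appears among the generators of $\mathcal K$ furnished by~(\ref{Phi7}); since the off-diagonal Cartan integers are non-positive, every other generator $\iota(\delta^\vee)$ ($\delta\neq\alpha$) and every auxiliary generator pairs non-positively with $2\alpha$, whereas $\langle\iota(\alpha^\vee),\alpha\rangle=2>0$. Hence $\iota(\alpha^\vee)$ lies strictly on the positive side of the hyperplane $\{\langle\cdot,\alpha\rangle=0\}$ while all other generators lie on the non-positive side, so $\iota(\alpha^\vee)$ is not a non-negative combination of the others and $\QQ^+\iota(\alpha^\vee)$ is extremal. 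Carrying out this dichotomy rigorously---in particular verifying that the generators supplied by~(\ref{Phi7}) do all pair non-positively with $2\alpha$---is the heart of the argument, while every other case reduces to the tabulated data and the non-proportionality of $\Phi(\Gamma_X)$.
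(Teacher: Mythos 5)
Your proposal is correct and follows essentially the same route as the paper: reduce to the fact (via Corollary~\ref{crl_spherical_roots_are_weights} and Lemma~\ref{lemma_non-proportional}) that $\overline\sigma$ is the unique element of $\ZZ^+\sigma\cap\Phi(\Gamma_X)$, inspect Table~\ref{table_spherical_roots}, and then compare conditions (\ref{Phi3})/(\ref{Phi4}) in the $\mathsf B_r$ case and (\ref{Phi7})/(\ref{Phi8}) in the simple-root case. The only cosmetic difference is that you bound $k$ by a coroot-divisibility check plus (\ref{Phi6}) where the paper cites Lemma~\ref{lemma_2alpha}, and you actually spell out the cone-theoretic content of the (\ref{Phi7})-versus-(\ref{Phi8}) comparison that the paper leaves implicit.
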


\begin{proof}
Fix any $\sigma \in \Sigma_X$. Corollary~\ref{crl_spherical_roots_are_weights} yields $\overline \sigma \in \Phi(\Gamma_X)$, which together with Lemma~\ref{lemma_non-proportional} implies that $\overline \sigma$ is the unique element in the set $\ZZ^+ \sigma \cap \Phi(\Gamma_X)$. Next, $\overline \sigma \in \overline \Sigma(G)$ by~(\ref{Phi2}). Since $\sigma \in \mathfrak X(T)$, an inspection of Table~\ref{table_spherical_roots} along with Lemma~\ref{lemma_2alpha} shows that the condition $\sigma \notin \overline \Sigma(G)$ implies $\overline \sigma = 2\sigma$. Hence in what follows we assume $\sigma \in \overline \Sigma(G)$. Inspecting again Table~\ref{table_spherical_roots}, we find that $\overline \sigma = \sigma$ except for, possibly, one of the following two cases.

\textit{Case}~1: $\sigma = \alpha \in \Pi$. Then $\overline \sigma \in \lbrace \alpha, 2\alpha \rbrace$. Comparing conditions~(\ref{Phi7}) and (\ref{Phi8}), we find that $\overline \sigma = 2\alpha$ if and only if $\QQ^+ \iota(\alpha^\vee)$ is an extremal ray of the cone~$\mathcal K$.

\textit{Case}~2: $\sigma = \alpha_1 + \ldots + \alpha_r$ with $\Supp \sigma$ of type $\mathsf B_r$ ($r \ge 2$). Then $\overline \sigma \in \lbrace \sigma, 2\sigma \rbrace$. Comparing conditions (\ref{Phi3}) and (\ref{Phi4}), we find that $\overline \sigma = 2\sigma$ if and only if $\alpha_r \in \Gamma_X^\perp$.
\end{proof}

\subsection{Consequences for multiplicity-free affine $G$-varieties}
\label{subsec_mf_var}

In this subsection, using a simple reduction, we extend some of the results of~\S\,\ref{subsec_applications} to arbitrary multiplicity-free affine $G$-varieties.

Let $X$ be a multiplicity-free affine $G$-variety and let $\widetilde X$ be the normalization of~$X$. Clearly, $\widetilde X$ is an affine spherical $G$-variety and $\Bbbk[X]$ is naturally identified with a $G$-invariant subalgebra of $\Bbbk[\widetilde X]$.

\begin{proposition} \label{prop_mf_uniqueness}
Up to a $G$-isomorphism, $X$ is uniquely determined by $\widetilde X$ and~$\Gamma_X$.
\end{proposition}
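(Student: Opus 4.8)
The plan is to exhibit $\Bbbk[X]$ as a $G$-subalgebra of $\Bbbk[\widetilde X]$ that is completely pinned down by $\widetilde X$ together with the combinatorial datum $\Gamma_X$, and then to argue that a $G$-isomorphism of normalizations automatically carries one such subalgebra onto the other. I will use throughout that normalization supplies a finite dominant $G$-morphism $\widetilde X \to X$, hence a $G$-equivariant injection of algebras $\Bbbk[X] \hookrightarrow \Bbbk[\widetilde X]$, and in particular $\Gamma_X \subset \Gamma_{\widetilde X}$.

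First I would identify the isotypic components of the two algebras. Fix $\lambda \in \Gamma_X$. Under the inclusion the simple submodule $\Bbbk[X]_\lambda \simeq V(\lambda)$ lands in a simple $G$-submodule of $\Bbbk[\widetilde X]$ of the same isomorphism type, and since $\widetilde X$ is multiplicity-free this submodule must be $\Bbbk[\widetilde X]_\lambda$. Summing over $\lambda$ gives the key description
$$
\Bbbk[X] = \bigoplus_{\lambda \in \Gamma_X} \Bbbk[\widetilde X]_\lambda
$$
as a $G$-subalgebra of $\Bbbk[\widetilde X]$; note that the right-hand side depends only on $\widetilde X$ and on the subset $\Gamma_X \subset \Gamma_{\widetilde X}$.

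Next, to obtain the uniqueness I would take two multiplicity-free affine $G$-varieties $X_1,X_2$ with $\Gamma_{X_1}=\Gamma_{X_2}=:\Gamma$ and a $G$-isomorphism $\psi\colon\widetilde{X_1}\to\widetilde{X_2}$, so that in particular $\Gamma_{\widetilde{X_1}}=\Gamma_{\widetilde{X_2}}$. The comorphism $\psi^*\colon\Bbbk[\widetilde{X_2}]\to\Bbbk[\widetilde{X_1}]$ is a $G$-equivariant algebra isomorphism and therefore sends the $V(\lambda)$-isotypic part to the $V(\lambda)$-isotypic part; multiplicity-freeness turns this into the precise equalities $\psi^*(\Bbbk[\widetilde{X_2}]_\lambda)=\Bbbk[\widetilde{X_1}]_\lambda$ for every $\lambda\in\Gamma_{\widetilde{X_1}}$. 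Restricting to the summands with $\lambda\in\Gamma$ and invoking the displayed formula for each $\Bbbk[X_i]$, I conclude that $\psi^*$ restricts to a $G$-equivariant algebra isomorphism $\Bbbk[X_2]\xrightarrow{\sim}\Bbbk[X_1]$, whence $X_1\simeq X_2$ as $G$-varieties.

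I do not expect a genuine obstacle here: the whole argument is formal once one observes that passing to the normalization preserves multiplicities, which is immediate from $\Bbbk[X]\subset\Bbbk[\widetilde X]$ with both algebras multiplicity-free. The only step deserving care is the verification that a $G$-algebra isomorphism between the normalizations matches isotypic components summand by summand, so that its restriction to the $\Gamma$-indexed piece is again an algebra homomorphism rather than merely a linear map; this is exactly the place where multiplicity-freeness of $\Bbbk[\widetilde{X_i}]$ is used.
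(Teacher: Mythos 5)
Your argument is correct and is essentially the paper's own proof, which simply observes that $\Gamma_X$ determines $\Bbbk[X]$ as a $G$-submodule (hence as a subspace, and so as a subalgebra) of $\Bbbk[\widetilde X]$; you have merely spelled out the identification $\Bbbk[X]=\bigoplus_{\lambda\in\Gamma_X}\Bbbk[\widetilde X]_\lambda$ and the transport along a $G$-isomorphism of normalizations in full detail.
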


\begin{proof}
This follows from the fact that $\Gamma_X$ uniquely determines $\Bbbk[X]$ as a $G$-submodule and hence as a subspace of $\Bbbk[\widetilde X]$.
\end{proof}

\begin{proposition} \label{prop_mfav}
The following assertions hold:
\begin{enumerate}[label=\textup{(\alph*)},ref=\textup{\alph*}]
\item \label{prop_mfav_a}
$\Gamma_{\widetilde X} = \ZZ \Gamma_X \cap \QQ^+ \Gamma_X$ \textup(that is, $\Gamma_{\widetilde X}$ is the saturation of~$\Gamma_X$\textup);

\item \label{prop_mfav_b}
$\Sigma_{\widetilde X} = \Sigma_X$.
\end{enumerate}
\end{proposition}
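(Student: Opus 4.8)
The plan is to compare $X$ with its normalization $\widetilde X$ through the finite birational $G$-morphism $\pi\colon\widetilde X\to X$, exploiting that $\Bbbk[X]\subseteq\Bbbk[\widetilde X]$ is a $G$-equivariant inclusion of multiplicity-free algebras. First I would record the formal consequences. Since $\pi$ is birational, $\Bbbk(X)=\Bbbk(\widetilde X)$, so the group of $B$-eigenweights of rational functions is the same, giving $\ZZ\Gamma_X=\ZZ\Gamma_{\widetilde X}$; since the inclusion is $G$-equivariant and both algebras are multiplicity-free, one has $\Gamma_X\subseteq\Gamma_{\widetilde X}$, and in fact $\Bbbk[X]_\lambda=\Bbbk[\widetilde X]_\lambda$ for every $\lambda\in\Gamma_X$, with the two multiplications agreeing on these isotypic components.

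For assertion~(\ref{prop_mfav_a}) the strategy is to pass to $U$-invariants and identify $\Bbbk[\widetilde X]^U$ with the integral closure of $\Bbbk[X]^U=\Bbbk[\Gamma_X]$. By Proposition~\ref{prop_normality_criterion} applied to the normal variety $\widetilde X$, the algebra $\Bbbk[\widetilde X]^U$ is integrally closed. Next I would show that $\Bbbk[\widetilde X]^U$ is integral over $\Bbbk[X]^U$: as $\pi$ is finite, $\Bbbk[\widetilde X]$ is a finite $\Bbbk[X]$-module, and by Grosshans' transfer principle (the natural isomorphism $M^U\cong(M\otimes\Bbbk[G/U])^G$ together with exactness of $G$-invariants for reductive $G$) the module $\Bbbk[\widetilde X]^U$ is finite, hence integral, over $\Bbbk[X]^U$. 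Since the integral closure of the affine semigroup algebra $\Bbbk[\Gamma_X]$ in its fraction field is $\Bbbk[\ZZ\Gamma_X\cap\QQ^+\Gamma_X]$, and the common fraction field is $\Quot\Bbbk[\Gamma_X]=\Quot\Bbbk[\ZZ\Gamma_X]$, I conclude $\Gamma_{\widetilde X}=\ZZ\Gamma_X\cap\QQ^+\Gamma_X$. In particular $\QQ^+\Gamma_{\widetilde X}=\QQ^+\Gamma_X$, which will be used next.

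For assertion~(\ref{prop_mfav_b}), from~(\ref{prop_mfav_a}) the lattices $\ZZ\Gamma_X$ and $\ZZ\Gamma_{\widetilde X}$ coincide, so ``primitive in $\ZZ\Gamma$'' is the same condition for both; it therefore suffices to prove $\QQ^+\Xi_X=\QQ^+\Xi_{\widetilde X}$, after which $\Sigma_X$ and $\Sigma_{\widetilde X}$ are the primitive lattice vectors on the common extremal rays. The inclusion $\Xi_X\subseteq\Xi_{\widetilde X}$ is immediate from $\Gamma_X\subseteq\Gamma_{\widetilde X}$ and the identity $\Bbbk[X]_\nu=\Bbbk[\widetilde X]_\nu$ for $\nu\in\Gamma_X$: every defining relation $\Bbbk[X]_\lambda\cdot\Bbbk[X]_\mu\supseteq\Bbbk[X]_\nu$ with $\lambda,\mu,\nu\in\Gamma_X$ is simultaneously such a relation in $\Bbbk[\widetilde X]$. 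For the reverse inclusion $\QQ^+\Xi_{\widetilde X}\subseteq\QQ^+\Xi_X$ I would use that $\pi$ is an isomorphism over the open $G$-orbit $O$ of $X$ (which is smooth, hence lies in the normal locus), so $X$ and $\widetilde X$ share the same open orbit $O\cong G/H$; combined with the material on invariants of spherical homogeneous spaces in Appendix~\ref{app_invariants}, the cone $\QQ^+\Xi$ (equivalently, the valuation cone of $G/H$) depends only on $O$, which yields the desired equality. Concretely, this reduces to the realization statement that whenever $\Bbbk[\widetilde X]_\lambda\cdot\Bbbk[\widetilde X]_\mu\supseteq\Bbbk[\widetilde X]_\nu$, a positive multiple of $\lambda+\mu-\nu$ is already realized by a triple from $\Gamma_X$ — which is plausible since $\Gamma_X$ has finite index in $\Gamma_{\widetilde X}$ and spans the same cone.

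The step I expect to be the main obstacle is precisely this reverse inclusion in~(\ref{prop_mfav_b}): showing that normalization does not enlarge the cone of the root monoid. The easy inclusion and the lattice identifications are formal, but ruling out ``new'' spherical roots for $\widetilde X$ requires the genuinely geometric input that $\QQ^+\Xi$ is an invariant of the open orbit — obtained either through the realization/scaling argument for the multiplication of $\Bbbk[\widetilde X]$ restricted to $\Gamma_X$-indexed components, or through the valuation-cone description supplied by Appendix~\ref{app_invariants}. A secondary technical point, to be handled carefully in~(\ref{prop_mfav_a}), is the integrality of $\Bbbk[\widetilde X]^U$ over $\Bbbk[X]^U$, for which the Grosshans transfer principle is the clean tool.
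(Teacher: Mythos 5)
Your proposal is correct and follows essentially the same route as the paper: part~(\ref{prop_mfav_a}) is proved by showing $\Bbbk[\widetilde X]^U$ is the integral closure of $\Bbbk[X]^U\simeq\Bbbk[\Gamma_X]$ in its fraction field (the paper cites Popov's result on integrality of $U$-invariants and the equality $\Bbbk(X)^U=\Quot\Bbbk[X]^U$ where you invoke the Grosshans transfer principle), and part~(\ref{prop_mfav_b}) is deduced exactly as you suggest, from the common open $G$-orbit together with Proposition~\ref{prop_dual_cones} identifying $\QQ^+\Xi$ with the dual of the negative valuation cone. No gaps.
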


\begin{proof}
(\ref{prop_mfav_a}) Since the algebra $\Bbbk[\widetilde X]$ is integral over~$\Bbbk[X]$, it follows from \cite[Corollary~2 of Theorem~4]{Po86} that the algebra $\Bbbk[\widetilde X]^U$ is integral over~$\Bbbk[X]^U$. Taking into account Proposition~\ref{prop_normality_criterion} and the equality $\Bbbk(X)^U = \Quot \Bbbk[X]^U$ (see \cite[Theorem~3.3]{PV}), we conclude that $\Bbbk[\widetilde X]^U$ is the integral closure of $\Bbbk[X]^U$ in $\Quot \Bbbk[X]^U$. It remains to apply Proposition~\ref{prop_SGalgebra}.

(\ref{prop_mfav_b}) Since $X$ and $\widetilde X$ contain the same open $G$-orbit, the claim follows from Proposition~\ref{prop_dual_cones} and the definition of the set of spherical roots (see \S\,\ref{subsec_root_monoid}).
\end{proof}

\begin{corollary} \label{crl_uniqueness1_mf}
Up to a $G$-isomorphism, every multiplicity-free affine $G$-variety $X$ is uniquely determined by the pair~$(\Gamma_X, \Sigma_X)$.
\end{corollary}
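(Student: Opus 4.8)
The plan is to reduce the statement to the already-established uniqueness result for affine spherical $G$-varieties (Corollary~\ref{crl_uniqueness1}) by passing to normalizations. Let $X_1$ and $X_2$ be two multiplicity-free affine $G$-varieties with $\Gamma_{X_1} = \Gamma_{X_2} =: \Gamma$ and $\Sigma_{X_1} = \Sigma_{X_2} =: \Sigma$, and let $\widetilde X_1, \widetilde X_2$ denote their normalizations; the goal is to produce a $G$-isomorphism $X_1 \simeq X_2$.

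First I would observe that $\widetilde X_1$ and $\widetilde X_2$ are affine spherical $G$-varieties carrying the same combinatorial invariants. Indeed, Proposition~\ref{prop_mfav}(\ref{prop_mfav_a}) gives $\Gamma_{\widetilde X_1} = \ZZ\Gamma \cap \QQ^+\Gamma = \Gamma_{\widetilde X_2}$, while Proposition~\ref{prop_mfav}(\ref{prop_mfav_b}) gives $\Sigma_{\widetilde X_1} = \Sigma = \Sigma_{\widetilde X_2}$. Hence the pairs $(\Gamma_{\widetilde X_1}, \Sigma_{\widetilde X_1})$ and $(\Gamma_{\widetilde X_2}, \Sigma_{\widetilde X_2})$ coincide, and Corollary~\ref{crl_uniqueness1} yields a $G$-isomorphism $\widetilde X_1 \simeq \widetilde X_2$.

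Finally I would transport this isomorphism down to $X_1$ and $X_2$ by invoking Proposition~\ref{prop_mf_uniqueness}: since $\widetilde X_1 \simeq \widetilde X_2$ and $\Gamma_{X_1} = \Gamma_{X_2}$, that proposition delivers $X_1 \simeq X_2$, completing the argument. As the whole proof is just a combination of three earlier results, I do not expect a genuine obstacle. The one point deserving care lies precisely in this last step: one must know that, after identifying $\Bbbk[\widetilde X_1]$ and $\Bbbk[\widetilde X_2]$, each subalgebra $\Bbbk[X_i] = \bigoplus_{\lambda \in \Gamma} \Bbbk[\widetilde X_i]_\lambda$ is singled out inside $\Bbbk[\widetilde X_i]$ by $\Gamma$ alone, so that a $G$-isomorphism of normalizations (which preserves simple submodules together with their highest weights) restricts to a $G$-isomorphism of the original coordinate algebras—this is exactly what the proof of Proposition~\ref{prop_mf_uniqueness} establishes.
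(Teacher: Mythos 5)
Your proposal is correct and follows exactly the paper's own argument: apply Proposition~\ref{prop_mfav} to see that the normalizations share the pair $(\Gamma, \Sigma)$, invoke Corollary~\ref{crl_uniqueness1} to identify them, and descend via Proposition~\ref{prop_mf_uniqueness}. The extra remark about why the subalgebra $\Bbbk[X_i]$ is recovered from $\Gamma$ inside $\Bbbk[\widetilde X_i]$ is a correct elaboration of what Proposition~\ref{prop_mf_uniqueness} already provides.
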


\begin{proof}
Combining Proposition~\ref{prop_mfav} and Corollary~\ref{crl_uniqueness1}, we find that the pair $(\Gamma_X, \Sigma_X)$ uniquely determines $\widetilde X$ up to a $G$-isomorphism. It remains to apply Proposition~\ref{prop_mf_uniqueness}.
\end{proof}

The following result was first obtained in~\cite[Corollary~3.4]{AB}.

\begin{corollary} \label{crl_finiteness_MF}
Up to a $G$-isomorphism, there are only finitely many multiplicity-free affine $G$-varieties with a prescribed weight monoid.
\end{corollary}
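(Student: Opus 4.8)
The plan is to reduce the assertion to the finiteness result already established in the normal (spherical) case, namely Corollary~\ref{crl_finiteness_Sph}, by passing to normalizations. Fix a finitely generated monoid $\Gamma \subset \Lambda^+$ and let $X$ range over all multiplicity-free affine $G$-varieties with $\Gamma_X = \Gamma$. First I would attach to each such $X$ its normalization $\widetilde X$, which is an affine spherical $G$-variety with $\Bbbk[X]$ naturally identified with a $G$-invariant subalgebra of $\Bbbk[\widetilde X]$. By Proposition~\ref{prop_mfav}(\ref{prop_mfav_a}), the weight monoid of $\widetilde X$ equals the saturation $\ZZ\Gamma \cap \QQ^+\Gamma$ of $\Gamma$; crucially, this saturated monoid depends only on the fixed datum $\Gamma$ and not on the individual variety $X$.

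Next I would invoke Corollary~\ref{crl_finiteness_Sph}, which asserts that up to $G$-isomorphism there are only finitely many affine spherical $G$-varieties with the prescribed weight monoid $\ZZ\Gamma \cap \QQ^+\Gamma$. Combined with the previous paragraph, this shows that as $X$ varies over our family, its normalization $\widetilde X$ realizes only finitely many $G$-isomorphism classes.

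Finally I would apply Proposition~\ref{prop_mf_uniqueness}: up to $G$-isomorphism, $X$ is uniquely determined by the pair $(\widetilde X, \Gamma_X)$. Since $\Gamma_X = \Gamma$ is held fixed throughout, each of the finitely many $G$-isomorphism classes of $\widetilde X$ gives rise to at most one class of $X$. Therefore the set of $G$-isomorphism classes of multiplicity-free affine $G$-varieties with weight monoid $\Gamma$ is finite, as claimed.

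This argument is purely formal once the three cited ingredients are available, so I do not expect a genuine obstacle at this final stage. The real content has been deferred to earlier results: on the one hand to Corollary~\ref{crl_finiteness_Sph}, whose proof rests on the uniqueness theorem and on the finiteness of the set $\overline\Sigma_G$; on the other hand to the mechanism behind Proposition~\ref{prop_mf_uniqueness}, namely that $\Gamma_X$ recovers $\Bbbk[X]$ as the subalgebra $\bigoplus_{\lambda \in \Gamma} \Bbbk[\widetilde X]_\lambda$ of $\Bbbk[\widetilde X]$. The only point I would take care to check is that this $G$-stable subspace is indeed a subalgebra, which holds because $\Gamma$ is a submonoid and $\Bbbk[\widetilde X]$ is multiplicity-free.
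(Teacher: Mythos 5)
Your argument is correct and is exactly the paper's proof: the paper derives this corollary by combining Proposition~\ref{prop_mfav}(\ref{prop_mfav_a}), Corollary~\ref{crl_finiteness_Sph}, and Proposition~\ref{prop_mf_uniqueness}, precisely the three ingredients you use in the same order. Your closing remark that $\bigoplus_{\lambda\in\Gamma}\Bbbk[\widetilde X]_\lambda$ is a subalgebra because $\Gamma$ is a submonoid is a sensible check and is consistent with the paper's implicit reasoning.
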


\begin{proof}
This is implied by Proposition~\ref{prop_mfav}(\ref{prop_mfav_a}), Corollary~\ref{crl_finiteness_Sph}, and Proposition~\ref{prop_mf_uniqueness}.
\end{proof}

\subsection{The uniqueness property for spherical homogeneous spaces}
\label{subsec_uniqueness2}

Given a spherical homogeneous space $G/H$, recall its invariants $\Lambda_{G/H}$, $\Pi^p_{G/H}$, $\Sigma_{G/H}$, and $\mathcal D_{G/H}$ from Appendix~\ref{app_invariants}. Our goal in this subsection is to give a new proof of the following theorem, which is a reformulation of~\cite[Theorem~1]{Lo09a}.

\begin{theorem} \label{thm_uniqueness2}
Up to a $G$-isomorphism, every spherical homogeneous space $G/H$ is uniquely determined by the quadruple $\mathscr H_{G/H} = (\Lambda_{G/H}, \Pi^p_{G/H}, \Sigma_{G/H}, \mathcal D_{G/H})$.
\end{theorem}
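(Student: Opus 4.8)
The plan is to reduce the statement to the uniqueness result for affine spherical $G$-varieties, Corollary~\ref{crl_uniqueness1}, using only the structure theory of spherical homogeneous spaces collected in Appendix~\ref{app_invariants}. So suppose $G/H_1$ and $G/H_2$ are spherical homogeneous spaces with $\mathscr H_{G/H_1} = \mathscr H_{G/H_2}$; the goal is to prove that $H_1$ and $H_2$ are conjugate in~$G$. The idea is to attach to $G/H$ a canonical affine spherical variety whose invariants $(\Gamma,\Sigma)$ are computable from the quadruple $\mathscr H_{G/H}$, and from which the subgroup $H$ can be read off up to conjugacy.

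First I would produce such an affine model. This requires two standard reductions, both recalled in Appendix~\ref{app_invariants}. The lattice $\Lambda_{G/H}$ carries the information of the character group $\mathfrak X(H)$, which prevents $G/H$ from being character-free; one absorbs it by the passage to a suitable cover, replacing $H$ by the subgroup $H_0\subseteq H$ with $H/H_0$ diagonalizable and $G/H_0$ quasi-affine. The invariant $\Pi^p_{G/H}$ encodes a parabolic $Q\supseteq B$: localizing at $\Pi^p$ (parabolic induction from a Levi $L\subseteq Q$) reduces the description of $G/H$ to that of a \emph{cuspidal} spherical $L$-homogeneous space $L/L_0$, which is quasi-affine and hence has an affine closure $X=\Spec\Bbbk[L/L_0]$, an affine spherical $L$-variety with open $L$-orbit $L/L_0$. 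Since Corollary~\ref{crl_uniqueness1} is valid for any connected reductive group, it applies to~$L$.

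Next I would run the dictionary of Appendix~\ref{app_invariants} in both directions. On one hand, $\mathscr H_{G/H}$ determines the pair $(\Gamma_X,\Sigma_X)$: the spherical roots satisfy $\Sigma_X=\Sigma_{L/L_0}$ because $X$ has open orbit $L/L_0$ (the argument of Proposition~\ref{prop_mfav} via Proposition~\ref{prop_dual_cones}), and $\Sigma_{L/L_0}$ is read off from $\Sigma_{G/H}$, while the weight monoid $\Gamma_X$ is cut out inside the appropriate lattice by the color valuations $\mathcal D_{G/H}$ and the valuation cone. Corollary~\ref{crl_uniqueness1} then pins down $X$, hence its open orbit $L/L_0$, and therefore $L_0$ up to $L$-conjugacy; parabolic induction reconstructs $H_0$ up to $G$-conjugacy. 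On the other hand, having fixed $H_0$, the remaining ambiguity $H/H_0\subseteq N_G(H_0)/H_0$ is resolved combinatorially: the characters $\mathfrak X(H)$ recover the sublattice $\Lambda_{G/H}\subseteq\Lambda_{G/H_0}\subseteq\mathfrak X(T)$ (both canonically embedded), and the colors $\mathcal D_{G/H}$ determine the position of $H$ inside its normalizer. This identifies $H$ up to conjugacy and yields $H_1\sim H_2$.

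The main obstacle is the dictionary invoked in the previous two paragraphs, that is, the translation between $\mathscr H_{G/H}$ and the affine data $(\Gamma_X,\Sigma_X)$ together with the reconstruction of~$H$. Two points require genuine care and are exactly what the \emph{elementary additional material} of Appendix~\ref{app_invariants} is meant to supply: first, matching the normalizations of spherical roots across the quasi-affine cover and the $\Pi^p$-localization, since the primitive generators are taken in different lattices; second, handling the non-observable (flag-type) directions through the parabolic attached to $\Pi^p$, so that the passage to the Levi $L$ is legitimate and compatible with colors. Once these structural facts are in place, the deduction from Corollary~\ref{crl_uniqueness1} is formal.
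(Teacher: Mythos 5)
Your overall strategy---reduce to the affine uniqueness result, Corollary~\ref{crl_uniqueness1}, by attaching to $G/H$ a canonical affine spherical variety---is indeed the paper's strategy, but the concrete reduction you propose does not work, and the device that actually makes the paper's proof go through is missing. The problematic step is the ``localization at $\Pi^p$'': you claim that parabolic induction from a Levi $L$ reduces $G/H$ to a cuspidal spherical $L$-homogeneous space $L/L_0$ which is quasi-affine. Neither assertion is available. A general spherical homogeneous space is not parabolically induced from a proper Levi (this is a special structural property, not a reduction one can always perform), Appendix~\ref{app_invariants} contains no parabolic-induction machinery whatsoever (it only recalls the definitions of $\Pi^p_{G/H}$, $\Lambda_{G/H}$, $\Sigma_{G/H}$, $\mathcal D_{G/H}$ and Proposition~\ref{prop_dual_cones}), and even where such a reduction exists there is no reason for the cuspidal quotient to be quasi-affine. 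Likewise, the assertions that ``$\Gamma_X$ is cut out by the color valuations and the valuation cone'' and that ``the colors determine the position of $H$ inside its normalizer'' are precisely the points that require proof; they are not formal consequences of the appendix.

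What the paper does instead is enlarge the acting group rather than shrink it. One sets $H^\sharp=\bigcap_{\chi\in\mathfrak X(H)}\Ker\chi$, so that $S=H/H^\sharp$ is diagonalizable and $G/H^\sharp$ is quasi-affine; the group $G\times S$ acts on $G/H^\sharp$ with stabilizer $\widehat H\simeq H$, and $X=\Spec\Bbbk[G]^{H^\sharp}$ is an affine spherical variety for $G\times S^0$, to which Corollary~\ref{crl_uniqueness1} is applied (for the group $G\times S^0$, not for a Levi of~$G$). The two nontrivial inputs are: (i) the result of \cite[\S\,2.3]{Av15} that the quadruple $\mathscr H_{G/H}$ determines $\mathfrak X(H)$ and the extended weight monoid $\widehat\Gamma_{G/H}$, hence $\Gamma_X$; and (ii) Lemma~\ref{lemma_root_cone}, which compares valuation cones along $H\subset H'\subset N_G(H)$ and yields $\QQ^+\Sigma_{G/H}=\QQ^+\Sigma_X$, so that $\Sigma_X$ is recovered from $\Sigma_{G/H}$ and $\Gamma_X$. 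Finally $H$ is read off as the projection to $G$ of the stabilizer of a point in the open $(G\times S)$-orbit of~$X$, after checking that the $(G\times S^0)$-action extends uniquely to $G\times S$. Without the passage to $G\times S$ and without the input (i), your argument cannot be completed.
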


The main idea of our proof of this theorem is to perform a reduction to the uniqueness property for affine spherical varieties (Corollary~\ref{crl_uniqueness1}). The reduction itself uses tools that go back to~\cite[\S\,6]{Lu01}; see also~\cite[\S\,3.2]{Br07}.

Recall that a subgroup $H \subset G$ is said to be \textit{spherical} if $G/H$ is a spherical homogeneous space. In the proof of Theorem~\ref{thm_uniqueness2} we shall need the following lemma.

\begin{lemma} \label{lemma_root_cone}
Suppose that $H$ and $H'$ are two spherical subgroups of $G$ such that $H \subset H' \subset N_G(H)$.
Then, modulo the inclusion $\Lambda_{G/H'} \hookrightarrow \Lambda_{G/H}$, the equality $\QQ^+\Sigma_{G/H'} = \QQ^+ \Sigma_{G/H}$ holds.
\end{lemma}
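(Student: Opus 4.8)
The plan is to reduce the statement to a comparison of \emph{valuation cones}. Write $\mathcal N_{G/H} = \Hom_\ZZ(\Lambda_{G/H}, \QQ)$ and $\mathcal N_{G/H'} = \Hom_\ZZ(\Lambda_{G/H'}, \QQ)$, and let $\mathcal V_{G/H}\subset \mathcal N_{G/H}$ and $\mathcal V_{G/H'}\subset \mathcal N_{G/H'}$ be the respective valuation cones recalled in Appendix~\ref{app_invariants}. The inclusion of weight lattices $\iota\colon \Lambda_{G/H'}\otimes_\ZZ\QQ \hookrightarrow \Lambda_{G/H}\otimes_\ZZ\QQ$ dualizes to a surjection $\rho\colon \mathcal N_{G/H}\twoheadrightarrow \mathcal N_{G/H'}$. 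By Proposition~\ref{prop_dual_cones} (together with the definition of the set of spherical roots), for each of the two spaces the cone spanned by the spherical roots is, up to sign, the dual of the valuation cone: $\QQ^+\Sigma_{G/H} = -(\mathcal V_{G/H})^\vee$ and $\QQ^+\Sigma_{G/H'} = -(\mathcal V_{G/H'})^\vee$, the duals being taken inside $\Lambda_{G/H}\otimes_\ZZ\QQ$ and $\Lambda_{G/H'}\otimes_\ZZ\QQ$ respectively. Hence it suffices to prove the single identity $\mathcal V_{G/H} = \rho^{-1}(\mathcal V_{G/H'})$ and then dualize.

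First I would establish $\mathcal V_{G/H} = \rho^{-1}(\mathcal V_{G/H'})$ from two standard functoriality inputs for the dominant $G$-morphism $\pi\colon G/H\to G/H'$. The inclusion $\mathcal V_{G/H}\subseteq \rho^{-1}(\mathcal V_{G/H'})$ is automatic: restricting a $G$-invariant valuation of $\Bbbk(G/H)$ to the subfield $\Bbbk(G/H')=\Bbbk(G/H)^{H'/H}$ yields a $G$-invariant valuation, and $\rho$ is the induced map on the associated central directions. For the reverse inclusion I would use that $\rho$ maps $\mathcal V_{G/H}$ \emph{onto} $\mathcal V_{G/H'}$ (surjectivity of restriction of invariant valuations under a dominant morphism), together with the key point that $\ker\rho$ lies in the lineality space $\mathcal V_{G/H}\cap(-\mathcal V_{G/H})$. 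Granting these, any $v\in\rho^{-1}(\mathcal V_{G/H'})$ decomposes as $v=w+(v-w)$ with $w\in\mathcal V_{G/H}$ chosen so that $\rho(w)=\rho(v)$; then $v-w\in\ker\rho\subseteq \mathcal V_{G/H}\cap(-\mathcal V_{G/H})$, whence $v\in\mathcal V_{G/H}$.

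The containment $\ker\rho\subseteq \mathcal V_{G/H}\cap(-\mathcal V_{G/H})$ is precisely where the hypothesis $H'\subseteq N_G(H)$ enters, and I expect it to be the main obstacle. Since $H'\subseteq N_G(H)$, the group $H'/H$ acts on $G/H$ on the right by $G$-equivariant automorphisms, with $G/H'=(G/H)/(H'/H)$; as $N_G(H)/H$ is diagonalizable, so is $H'/H$, and $\ker\rho$ is the annihilator of $\Lambda_{G/H'}\otimes_\ZZ\QQ$, spanned by the valuations induced by the one-parameter subgroups of the torus part of $H'/H$. Because these automorphisms carry every $G$-stable divisor of $G/H$ to a $G$-stable divisor, the corresponding valuations and their opposites are all $G$-invariant, so $\ker\rho$ lies in the lineality space. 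Noting that $\mathcal V_{G/H}\cap(-\mathcal V_{G/H})=\Sigma_{G/H}^{\perp}$, this is equivalent to $\Sigma_{G/H}\subseteq \Lambda_{G/H'}\otimes_\ZZ\QQ$, i.e.\ every spherical root of $G/H$ already lies, up to scaling, in the smaller weight lattice. Making the identification of $\ker\rho$ with central invariant valuations rigorous, via the structure of $N_G(H)/H$ and the description of the lineality space of $\mathcal V_{G/H}$, is the technical heart; everything else is formal.

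Finally I would dualize. The elementary cone identity $(\rho^{-1}C)^\vee = \rho^*(C^\vee)$, valid for the surjection $\rho$, a finitely generated cone $C$ in the target containing $0$, and the injection $\rho^*=\iota$, applied to $C=\mathcal V_{G/H'}$ and combined with $\mathcal V_{G/H}=\rho^{-1}(\mathcal V_{G/H'})$, gives
$$
\QQ^+\Sigma_{G/H} = -(\mathcal V_{G/H})^\vee = -\rho^*\big((\mathcal V_{G/H'})^\vee\big) = \iota\big(-(\mathcal V_{G/H'})^\vee\big) = \iota\big(\QQ^+\Sigma_{G/H'}\big).
$$
This is exactly the asserted equality $\QQ^+\Sigma_{G/H'}=\QQ^+\Sigma_{G/H}$ modulo the inclusion $\Lambda_{G/H'}\hookrightarrow\Lambda_{G/H}$, which completes the plan.
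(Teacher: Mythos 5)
Your proposal is correct in substance but organized quite differently from the paper's proof. Both arguments share one input: the surjectivity of the restriction map on $G$-invariant valuations for the dominant morphism $G/H \to G/H'$ (cited in the paper from \cite{LV} or \cite{Kn91}). From there the paper sandwiches $H'$ between $H$ and $N_G(H)$, quotes \cite[\S\,5.4]{BriP} for the statement that the \emph{composite} map $\mathcal V_{G/H} \to \mathcal V_{G/N_G(H)}$ is exactly the quotient by the lineality space $\mathcal V_{G/H}\cap(-\mathcal V_{G/H})$, and concludes by a purely formal squeeze: the composite of the two dual inclusions $\QQ^+\Sigma_{G/N_G(H)} \hookrightarrow \QQ^+\Sigma_{G/H'} \hookrightarrow \QQ^+\Sigma_{G/H}$ is a bijection, hence so is each. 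You instead attack the intermediate case head-on, proving $\mathcal V_{G/H}=\rho^{-1}(\mathcal V_{G/H'})$ via surjectivity plus the containment $\ker\rho\subseteq\mathcal V_{G/H}\cap(-\mathcal V_{G/H})$, and then dualize. This works, and it buys a self-contained argument that never mentions $N_G(H)$; but be aware that your ``technical heart'' is essentially a re-proof, for the subtorus $H'/H$, of the same fact the paper outsources to \cite{BriP}: that the annihilator of $\Lambda_{G/H'}$ is spanned by the $G$-invariant valuations attached to one-parameter subgroups of $H'/H$ acting centrally on $G/H$, each of which lies in the lineality space together with its negative. To make that airtight you need (i) that every $B$-semiinvariant $f_\lambda$ is an $H'/H$-eigenvector (so $\Lambda_{G/H'}$ is the kernel of a character map $\Lambda_{G/H}\to\mathfrak X(H'/H)$, and the cocharacters of the diagonalizable group $H'/H$ fill out the annihilator over $\QQ$), and (ii) that the valuation $f\mapsto \ord_{t=0}f(\gamma(t)x)$ is $G$-invariant because $\gamma$ commutes with $G$ --- your stated justification via ``$G$-stable divisors'' is off, since $G/H$ is homogeneous and has none. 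Two further trivial remarks: the identity $\QQ^+\Sigma_{G/H}=-(\mathcal V_{G/H})^\vee$ follows directly from the definition of $\Sigma_{G/H}$ in Appendix~\ref{app_invariants}, so the appeal to Proposition~\ref{prop_dual_cones} is unnecessary; and your final cone identity $(\rho^{-1}C)^\vee=\rho^*(C^\vee)$ and the convexity argument $v=w+(v-w)\in\mathcal V_{G/H}$ are both fine.
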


\begin{proof}
Restricting valuations along the chain $\Bbbk(G/H) \supset \Bbbk(G/H') \supset \Bbbk(G/N_G(H))$ yields a chain of maps
\begin{equation} \label{eqn_chain1}
\mathcal V_{G/H} \to \mathcal V_{G/H'} \to \mathcal V_{G/N_G(H)}.
\end{equation}
As follows from \cite[\S\,3.2, Corollary~1]{LV} or \cite[Corollary~1.5]{Kn91}, all the maps in~(\ref{eqn_chain1}) are surjective, which induces a chain of inclusions
\begin{equation} \label{eqn_chain2}
\QQ^+ \Sigma_{G/N_G(H)} \hookrightarrow \QQ^+ \Sigma_{G/H'} \hookrightarrow \QQ^+ \Sigma_{G/H}.
\end{equation}
It was shown in \cite[\S\,5.4]{BriP} that the composite map $\mathcal V_{G/H} \to \mathcal V_{G/ N_G(H)}$ in~(\ref{eqn_chain1}) is the quotient by the vector subspace $\mathcal V_{G/H} \cap (-\mathcal V_{G/H})$. It follows that the composite map $\QQ^+ \Sigma_{G/N_G(H)} \hookrightarrow \QQ^+ \Sigma_{G/H}$ in~(\ref{eqn_chain2}) is bijective hence so are all the maps in~(\ref{eqn_chain2}).
\end{proof}

\begin{proof}[{Proof of Theorem~\textup{\ref{thm_uniqueness2}}}]
Without loss of generality, we may assume that $G$ is the product of a simply connected semisimple group with a torus. Fix a spherical subgroup $H \subset G$.

Let $H^\sharp \subset H$ be the common kernel of all characters of~$H$. Clearly, $H^\sharp$ is a normal subgroup of~$H$ and the group $S = H/H^\sharp$ is diagonalizable. Consider the natural map
\begin{equation} \label{eqn_map_phi}
\varphi \colon H \to S, \quad h \mapsto hH^\sharp.
\end{equation}
The definition of $H^\sharp$ implies that the induced map $\varphi^* \colon \mathfrak X(S) \to \mathfrak X(H)$ is an isomorphism.

Consider the homogeneous space $G / H^\sharp$ and equip it with the natural action of the group $G \times S$ given by $((g, hH^\sharp), xH^\sharp) \mapsto g x h^{-1}H^\sharp$. One easily sees that the stabilizer in $G \times S$ of the point $eH^\sharp$ is the subgroup
\[
\widehat H = \lbrace (h, hH^\sharp) \mid h \in H \rbrace \simeq H.
\]
In what follows, we identify the algebra $\Bbbk[G / H^\sharp]$ with $\Bbbk[G]^{H^\sharp}$.

The action of $G \times S$ on $G / H^\sharp$ induces the $(G \times S)$-module structure on the algebra $\Bbbk[G]^{H^\sharp}$ given by
\[
[(g,hH^\sharp)f](x) = f(g^{-1}xh),
\]
where $g,x \in G$, $h \in H$, and $f \in \Bbbk[G]^{H^\sharp}$. It follows from \cite[Theorem~1]{VK78} that the sphericity of $H$ is equivalent to the condition that the $(G \times S)$-module $\Bbbk[G]^{H^\sharp}$ be multiplicity-free. Let $\widehat \Gamma_{G/H}$ be the set of all pairs $(\lambda, \chi) \in \Lambda^+ \oplus \mathfrak X(H)$ such that $\Bbbk[G]^{H^\sharp}$ contains a simple $(G \times S)$-submodule isomorphic to  $V(\lambda) \otimes \Bbbk_\chi$, where $\Bbbk_\chi$ stands for the one-dimensional $S$-module on which $S$ acts via the character~$\chi$. The set $\widehat \Gamma_{G/H}$ is a submonoid in $\Lambda^+ \oplus \mathfrak X(H)$, called the \textit{extended weight monoid} of~$G/H$; see \cite[\S\,2.2]{Av15} for details.

The variety $G / H^\sharp$ is quasi-affine (see, for instance, \cite[Lemma~2.4]{Av15}).
It is thus identified with an open $(G \times S)$-stable subset of the affine $(G \times S)$-variety $X = \Spec \Bbbk[G]^{H^\sharp}$. By the definitions of $\widehat \Gamma_{G/H}$ and~$X$, there is a $(G \times S)$-module isomorphism
\begin{equation} \label{eqn_module_isomorphism}
\Bbbk[X] \simeq \bigoplus \limits_{(\lambda, \chi) \in \widehat \Gamma_{G/H}} V(\lambda) \otimes \Bbbk_\chi.
\end{equation}

Now consider the subgroup $H^0 \subset G$, which is also spherical. Lemma~\ref{lemma_root_cone} yields
\begin{equation} \label{eqn_equality_of_cones1}
\QQ^+ \Sigma_{G/H} = \QQ^+ \Sigma_{G/H^0}.
\end{equation}
By \cite[Corollary~5.2]{BriP}, the group $N_G(H^0) / H^0$ is diagonalizable, hence so is $H / H^0$. It follows that
\begin{equation}
H^0 = \varphi^{-1}(S^0),
\end{equation}
where the map $\varphi$ is given by~(\ref{eqn_map_phi}).

Observe that the group $G \times S^0$ acts transitively on $G/H^\sharp$ and that the stabilizer in $G \times S^0$ of the point $eH^\sharp$ is the subgroup $\widehat H^0$. It follows from the sphericity of $H^0$ that $G/H^\sharp$ is a spherical $(G \times S^0)$-variety, hence so is~$X$. In particular, the algebra $\Bbbk[X]$ is a multiplicity-free $(G \times S^0)$-module. Consequently, the natural map
\begin{equation} \label{eqn_map_psi}
\psi \colon \widehat \Gamma_{G/H} \to \Gamma_{X}, \quad (\lambda, \chi) \mapsto (\lambda, \left.\chi\right|_{S^0}),
\end{equation}
is injective and hence an isomorphism.

It is easy to see that $\widehat H^0 \subset H^0 \times S^0 \subset N_{G \times S^0}(\widehat H^0)$ and $(G \times S^0) / (H^0 \times S^0) \simeq G / H^0$. This together with Lemma~\ref{lemma_root_cone} implies $\QQ^+ \Sigma_{G / H^0} = \QQ^+ \Sigma_{(G \times S^0)/\widehat H^0}$. Combining this equality with~(\ref{eqn_equality_of_cones1}) yields
\begin{equation} \label{eqn_equality_of_cones2}
\QQ^+ \Sigma_{G/H} = \QQ^+ \Sigma_X.
\end{equation}

We are now ready to recover $H$ from $\mathscr H_{G/H}$.
As shown in \cite[\S\,2.3]{Av15}, the datum $\mathscr H_{G/H}$ uniquely determines $\mathfrak X(H)$ as an abstract group and $\widehat \Gamma_{G/H}$ as a submonoid of $\Lambda^+ \oplus \mathfrak X(H)$. Then $S$ is recovered as the diagonalizable group with $\mathfrak X(S) = \mathfrak X(H)$. Next, the weight monoid $\Gamma_X$ is recovered by the formula $\Gamma_X = \psi(\widehat \Gamma_{G/H})$. Further, equality~(\ref{eqn_equality_of_cones2}) together with $\Gamma_X$ uniquely determine the set $\Sigma_X$. According to Corollary~\ref{crl_uniqueness1}, the pair $(\Gamma_X, \Sigma_X)$ uniquely determines $X$ up to a $(G \times S^0)$-isomorphism. As the map $\psi$ is injective, the action of $G \times S^0$ on $X$ uniquely extends to an action of $G \times S$ satisfying~(\ref{eqn_module_isomorphism}).
Therefore $X$ is uniquely determined up to $(G \times S)$-equivariant isomorphism. At last, up to conjugacy, the subgroup $H$ is recovered from $X$ as the projection to $G$ of the stabilizer in $G \times S$ of a point in the open $(G \times S)$-orbit in~$X$.
\end{proof}

\appendix

\section{The structure constants of Chevalley bases}
\label{app_structure_constants}

Computations carried out in~\S\,\ref{subsec_step_2} require the knowledge of signs of the structure constants of Chevalley bases for the simple Lie algebras of types $\mathsf A_r$, $\mathsf B_r$, $\mathsf C_r$, $\mathsf D_r$, $\mathsf F_4$, and~$\mathsf G_2$. The goal of this appendix is to specify a particular choice of the signs for each of the above-mentioned Lie algebras.

Let $\mathfrak g$ be a simple Lie algebra and let $\lbrace h_\alpha \mid \alpha \in \Pi \rbrace \cup \lbrace e_\alpha \mid \alpha \in \Delta \rbrace$ be a Chevalley basis of~$\mathfrak g$. The following relations for $\alpha, \beta \in \Delta^+$ easily follow, for instance, from~\cite[Theorem~4.1.2]{Ca89}:
\begin{gather*}
N_{\alpha, \beta} = - N_{\beta, \alpha}; \\
N_{-\alpha, -\beta} = - N_{\alpha, \beta}; \\
N_{\alpha, - \beta} =
\begin{cases}
-N_{\beta, \alpha - \beta}\frac{(\alpha - \beta, \alpha - \beta)}{(\alpha, \alpha)} & \ \text{if} \ \alpha - \beta \in \Delta^+; \\
N_{\beta - \alpha, \alpha}\frac{(\beta - \alpha, \beta - \alpha)}{(\beta, \beta)} & \ \text{if} \ \beta - \alpha \in \Delta^+.
\end{cases}
\end{gather*}
These relations show that the signs of all the structure constants of $\mathfrak g$ are uniquely determined by the signs of the structure constants $N_{\alpha, \beta}$ with $\alpha, \beta \in \Delta^+$. In what follows we specify these signs for all the Lie algebras in question.

For type $\mathsf F_4$, we use the signs presented in \cite[Table~II]{VaP}.

For type $\mathsf G_2$, we use the signs extracted from \cite[Table~IV]{VaP}.

For each of the types $\mathsf A_r, \mathsf B_r, \mathsf C_r, \mathsf D_r$, a specific choice of the signs is presented below. It can be obtained from explicit matrix realizations of the corresponding simple Lie algebras.

Type $\mathsf A_r$, $r \ge 2$.

For $1 \le i \le j \le r$ set $\alpha_{ij} = \alpha_i + \ldots +
\alpha_j$. Then $\Delta^+ = \lbrace \alpha_{ij} \mid 1 \le i \le j
\le r \rbrace$.

\begin{center}
\begin{tabular}{|c|c|c|}
\hline

Condition & $k = j+1$ & $i = l + 1$ \\

\hline

Sign of $N_{\alpha_{ij}, \alpha_{kl}}$ & $+$ & $-$\\

\hline
\end{tabular}
\end{center}

~

Type $\mathsf B_r$, $r \ge 2$.

For $1 \le i \le j \le r$ set $\alpha_{ij} = \alpha_i + \ldots +
\alpha_j$.

For $1 \le i < j \le r$ set $\beta_{ij} = \alpha_{ir} +
\alpha_{jr}$.

Then $\Delta^+ = \lbrace \alpha_{ij} \mid 1 \le i \le j \le r
\rbrace \cup \lbrace \beta_{ij} \mid 1 \le i < j \le r \rbrace$.

\begin{center}
\begin{tabular}{|c|c|c|c|c|}
\hline

Condition & $k = j+1$ & $i = l + 1$ &
$j = l = r$, $i < k$ & $j = l = r$, $k < i$ \\

\hline

Sign of $N_{\alpha_{ij}, \alpha_{kl}}$ & $+$ & $-$ & $-$ & $+$\\

\hline
\end{tabular}
\end{center}

\begin{center}
\begin{tabular}{|c|c|c|c|}
\hline

Condition & $i = l+1$ & $j = l + 1$, $k < i$ &
$j = l + 1$, $i < k$ \\

\hline

Sign of $N_{\beta_{ij}, \alpha_{kl}}$ & $-$ & $+$ & $-$\\

\hline
\end{tabular}
\end{center}

~

Type $\mathsf C_r$, $r \ge 3$.

For $1 \le i \le j \le r-1$ set $\alpha_{ij} = \alpha_i + \ldots +
\alpha_j$.

For $1 \le i \le r$ set $\beta_{ir} = \alpha_i + \ldots + \alpha_r$.

For $1 \le i \le j < r$ set $\beta_{ij} = \alpha_{i,r-1} + \alpha_r
+ \alpha_{j,r-1}$.

Then $\Delta^+ = \lbrace \alpha_{ij} \mid 1 \le i \le j \le r-1
\rbrace \cup \lbrace \beta_{ij} \mid 1 \le i \le j \le r \rbrace$.

\begin{center}
\begin{tabular}{|c|c|c|}
\hline

Condition & $k = j+1$ & $i = l + 1$ \\

\hline

Sign of $N_{\alpha_{ij}, \alpha_{kl}}$ & $+$ & $-$\\

\hline
\end{tabular}
\end{center}

\begin{center}
\begin{tabular}{|c|c|c|}
\hline

Condition & $i = l+1$ & $j = l + 1$ \\

\hline

Sign of $N_{\beta_{ij}, \alpha_{kl}}$ & $-$ & $-$\\

\hline
\end{tabular}
\end{center}

~

Type $\mathsf D_r$, $r \ge 4$.

For $1 \le i \le j \le r-1$ set $\alpha_{ij} = \alpha_i + \ldots +
\alpha_j$.

For $1 \le i \le r-1$ set $\beta_{ir} = \alpha_{i,r-1} + (\alpha_r -
\alpha_{r-1})$.

For $1 \le i < j \le r-1$ set $\beta_{ij} = \alpha_{i,r-1} +
(\alpha_r - \alpha_{r-1}) + \alpha_{j,r-1}$.

Then $\Delta^+ = \lbrace \alpha_{ij} \mid 1 \le i \le j \le r-1
\rbrace \cup \lbrace \beta_{ij} \mid 1 \le i < j \le r \rbrace$.

\begin{center}
\begin{tabular}{|c|c|c|}
\hline

Condition & $k = j+1$ & $i = l + 1$ \\

\hline

Sign of $N_{\alpha_{ij}, \alpha_{kl}}$ & $+$ & $-$\\

\hline
\end{tabular}
\end{center}

\begin{center}
\begin{tabular}{|c|c|c|c|}
\hline

Condition & $i = l+1$ & $j = l + 1$, $k < i$ &
$j = l + 1$, $i < k$ \\

\hline

Sign of $N_{\beta_{ij}, \alpha_{kl}}$ & $-$ & $+$ & $-$\\

\hline
\end{tabular}
\end{center}


\section{Invariants of spherical homogeneous spaces}
\label{app_invariants}

In this appendix we recall combinatorial invariants of spherical homogeneous spaces and their equivariant embeddings used in this paper. In what follows, $G/H$ is an arbitrary spherical homogeneous space.

Let $P$ denote the stabilizer of the open $B$-orbit in $G/H$. Clearly, $P$ is a parabolic subgroup of $G$ containing~$B$. We set
\[
\Pi^p_{G/H} = \lbrace \alpha \in \Pi \mid e_{-\alpha} \in \mathfrak p \rbrace.
\]

The next invariants of $G/H$ are the \textit{weight lattice}
\[
\Lambda_{G/H} = \lbrace \lambda \in \mathfrak X(T) \mid \Bbbk(G/H)^{(B)}_\lambda \ne \lbrace 0 \rbrace \rbrace
\]
and the corresponding dual $\QQ$-vector space
\[
\mathcal Q_{G/H} = \Hom_\ZZ(\Lambda_{G/H}, \QQ).
\]
For every $\lambda \in \Lambda_{G/H}$, we fix a nonzero rational function $f_\lambda \in \Bbbk(G/H)^{(B)}_\lambda$. Since $G/H$ contains an open $B$-orbit, it follows that $\Bbbk(G/H)^{(B)}_\lambda = \Bbbk f_\lambda$ for all $\lambda \in \Lambda_{G/H}$.

Every discrete $\QQ$-valued valuation $v$ of $\Bbbk(G/H)$ vanishing on~$\Bbbk^\times$ determines an element $\rho_v \in \mathcal Q_{G/H}$ such that $ \langle \rho_v, \lambda \rangle = v(f_\lambda)$ for all $\lambda \in \Lambda_{G/H}$. It is known (see~\cite[7.4]{LV} or~\cite[Corollary~1.8]{Kn91}) that the restriction of the map $v \mapsto \rho_v$ to the set of $G$-invariant $\QQ$-valued valuations of $\Bbbk(G/H)$ vanishing on~$\Bbbk^\times$ is injective; we denote its image by~$\mathcal V_{G/H}$. It was proved in~\cite[\S\,3]{Br90} that $\mathcal V_{G/H}$ is a cosimplicial cone in~$\mathcal Q_{G/H}$. Consequently, there is a uniquely determined linearly independent set $\Sigma_{G/H}$ of primitive elements in~$\Lambda_{G/H}$ such that
\[
\mathcal V_{G/H} = \lbrace q \in \mathcal Q_{G/H} \mid \langle q, \sigma \rangle \le 0 \ \text{for all} \ \sigma \in \Sigma_{G/H} \rbrace.
\]
Elements of $\Sigma_{G/H}$ are called \textit{spherical roots} of~$G/H$ and $\mathcal V_{G/H}$ is called the \textit{valuation cone} of~$G/H$.

Let $\mathcal D_{G/H}$ denote the set of $B$-stable prime divisors in~$G/H$. Elements of $\mathcal D_{G/H}$ are called \textit{colors} of~$G/H$. For every $D \in \mathcal D_{G/H}$, let $v_D$ be the valuation of $\Bbbk(G/H)$ defined by~$D$, that is, $v_D(f) = \ord_D(f)$ for every $f \in \Bbbk(G/H)$. Let $\rho_{G/H} \colon \mathcal D_{G/H} \to \mathcal Q_{G/H}$ be the map given by $\rho_{G/H}(D) = \rho_{v_D}$ for all $D \in \mathcal D_{G/H}$. We regard $\mathcal D_{G/H}$ as an abstract set equipped with the map~$\rho_{G/H}$.

For an arbitrary irreducible $G$-variety $X$ containing $G/H$ as an open $G$-orbit, one defines the same invariants $\Pi^p_X$, $\Lambda_X$, $\mathcal Q_X$, $\mathcal V_X$, $\Sigma_X$, $\mathcal D_X$, and $\rho_X$ of $X$ as those of $G/H$.

For a multiplicity-free affine $G$-variety~$X$, the set $\Sigma_X$ defined right above coincides with the set $\Sigma_X$ defined in \S\,\ref{subsec_root_monoid}. This follows from the following proposition, which is a particular case of~\cite[Lemma~6.6, iii)]{Kn96}.

\begin{proposition} \label{prop_dual_cones}
Suppose that $X$ is a multiplicity-free affine $G$-variety. Then the cone $\QQ^+ \Xi_X$ is dual to $-\mathcal V_X$.
\end{proposition}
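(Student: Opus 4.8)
The plan is to prove the equivalent equality
$$
\mathcal V_X=\{\,q\in\mathcal Q_X \mid \langle q,\xi\rangle\le 0\ \text{for all}\ \xi\in\Xi_X\,\};
$$
since the right-hand side is exactly $-(\QQ^+\Xi_X)^\vee$, taking dual cones and using biduality then gives $(-\mathcal V_X)^\vee=\QQ^+\Xi_X$, which is the assertion. Throughout I would work with the open $G$-orbit $G/H\subset X$, so that $\Lambda_X=\ZZ\Gamma_X$ and $\mathcal Q_X=\Hom_\ZZ(\ZZ\Gamma_X,\QQ)$, and for each $\lambda\in\Gamma_X$ the $B$-eigenfunction $f_\lambda\in\Bbbk(X)^{(B)}_\lambda$ may be taken to span the highest weight line of the simple module $\Bbbk[X]_\lambda$, so that $v(f_\lambda)=\langle\rho_v,\lambda\rangle$ for any $G$-invariant valuation $v$.

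The inclusion $\mathcal V_X\subseteq\{q\mid\langle q,\xi\rangle\le 0\}$ is elementary, and I would establish it first. The key observation is that for a $G$-invariant valuation $v$ and any $f$ in a simple $G$-submodule $M\subset\Bbbk(X)$ with highest weight vector $f_\lambda$, one has $v(f)\ge v(f_\lambda)$: writing $f$ as a combination of $G$-translates $g_i f_\lambda$ and using $G$-invariance of $v$ gives $v(f)\ge\min_i v(g_i f_\lambda)=v(f_\lambda)$. Now let $\xi=\lambda+\mu-\nu$ be a generator of $\Xi_X$, so that $\Bbbk[X]_\nu\subset\Bbbk[X]_\lambda\cdot\Bbbk[X]_\mu$. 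Expressing the highest weight vector $f_\nu$ as a finite sum $\sum_i a_i b_i$ with $a_i\in\Bbbk[X]_\lambda$, $b_i\in\Bbbk[X]_\mu$, and combining the previous bound with the additivity of $v$ on products, I obtain $v(f_\nu)\ge\min_i\bigl(v(a_i)+v(b_i)\bigr)\ge v(f_\lambda)+v(f_\mu)$, i.e. $\langle\rho_v,\nu\rangle\ge\langle\rho_v,\lambda\rangle+\langle\rho_v,\mu\rangle$, which is precisely $\langle\rho_v,\xi\rangle\le 0$. As this holds for every generator of $\Xi_X$, it follows that $\rho_v$ is non-positive on all of $\Xi_X$.

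The reverse inclusion, equivalent to the statement that every $q\in\mathcal Q_X$ with $\langle q,\xi\rangle\le 0$ for all $\xi\in\Xi_X$ is of the form $\rho_v$ for some $G$-invariant valuation $v$, is the substantive part and I expect it to be the main obstacle. The natural candidate is the function $v(f)=\min\{\langle q,\lambda\rangle\mid f^{(\lambda)}\ne 0\}$ defined via the isotypic decomposition $f=\sum_\lambda f^{(\lambda)}$; it satisfies $\rho_v=q$, and for $f\in\Bbbk[X]_\lambda$, $g\in\Bbbk[X]_\mu$ the hypothesis forces $\langle q,\nu\rangle\ge\langle q,\lambda+\mu\rangle$ for every component $\nu$ occurring in $fg$ (because $\lambda+\mu-\nu\in\Xi_X$), so the minimum is attained at the top weight $\lambda+\mu$ and $v$ is multiplicative on isotypic elements. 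The genuine difficulty is to upgrade this into an honest valuation on $\Bbbk(X)$, i.e. to exclude cancellation of lowest-weight terms in products of arbitrary elements, or equivalently to verify that the associated graded algebra of $\Bbbk[X]$ for the filtration defined by $v$ is an integral domain. This is exactly the realization statement for $G$-invariant valuations provided by the theory of spherical varieties, namely \cite[Lemma~6.6(iii)]{Kn96}, which I would invoke to conclude; as an alternative route one can realize the primitive generators of the extremal rays of $(-\mathcal V_X)^\vee$ by valuations of $G$-stable prime divisors in normal completions of $X$, using the cosimplicial structure of $\mathcal V_X$ from \cite{Br90} recorded in Appendix~\ref{app_invariants}. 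Combining the two inclusions yields $\mathcal V_X=\{q\mid\langle q,\xi\rangle\le 0\ \forall\xi\in\Xi_X\}$, and dualizing gives $\QQ^+\Xi_X=(-\mathcal V_X)^\vee$ as required.
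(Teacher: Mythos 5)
The paper offers no proof of this proposition at all: it is stated as "a particular case of \cite[Lemma~6.6, iii)]{Kn96}" and left at that. Your proposal is correct and, at the decisive point, rests on exactly the same citation, so in substance you and the authors take the same route. The elementary half you supply -- that every $\rho_v$ with $v$ a $G$-invariant valuation is non-positive on $\Xi_X$, via $v(f)\ge v(f_\lambda)$ for $f$ in the simple module generated by $f_\lambda$ and additivity on the product decomposition $f_\nu=\sum_i a_ib_i$ -- is sound and gives the inclusion $\QQ^+\Xi_X\subseteq(-\mathcal V_X)^\vee$ without any machinery. For the reverse inclusion you correctly identify the obstacle (integrality of the associated graded algebra, i.e.\ realizing every $q$ in the candidate cone by an honest $G$-invariant valuation) and defer to \cite[Lemma~6.6, iii)]{Kn96}; since that lemma already yields the full equality of cones, your elementary half is logically redundant, but it does make transparent which direction is soft and which carries the content. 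No gap, just a division of labour the paper does not bother to make explicit.
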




\begin{thebibliography}{10000000}


\setlength{\itemsep}{-\parsep}



\bibitem[AB05]{AB}
V.~Alexeev, M.~Brion, \textit{Moduli of affine schemes with reductive group action}, J. Algebraic Geom. \textbf{14} (2005), no.~1, 83--117; see also
\href{http://arxiv.org/abs/math/0301288}%
{\texttt{arXiv:math/0301288\,[math.AG]}}.

\bibitem[Av15]{Av15}
R.~Avdeev, \textit{Strongly solvable spherical subgroups and their combinatorial invariants}, Selecta Math. (N.\,S.) \textbf{21} (2015), no.~3, 931--993; see also
\href{http://arxiv.org/abs/1212.3256}%
{\texttt{arXiv:1212.3256\,[math.AG]}}.

\bibitem[ACF18]{ACF18}
R.~Avdeev, S.~Cupit-Foutou, \textit{On the irreducible components of moduli schemes for affine spherical varieties}, Transform. Groups \textbf{23} (2018), to appear, \href{https://doi.org/10.1007/s00031-017-9443-8}
{\texttt{DOI:10.1007/s00031-017-9443-8}}; see also
\href{http://arxiv.org/abs/1406.1713v4}%
{\texttt{arXiv:1406.1713v4\,[math.AG]}}.

\bibitem[Bo68]{Bo}
N.~Bourbaki, \'El\'ements de math\'ematique. Groupes et Alg\`ebres de Lie. Chapitre IV: Groupes de Coxeter et Syst\`emes de Tits. Chapitre V: Groupes engendr\'es par des r\'eflexions. Chapitre VI: Syst\`emes de racines. Actualit\'es Scientifiques et Industrielles, No. 1337 Hermann, Paris, 1968.

\bibitem[BCF08]{BraCu08}
P.~Bravi, S.~Cupit-Foutou, \textit{Equivariant deformations of the affine multicone over a flag variety}, Adv. Math. \textbf{217} (2008), no.~6, 2800--2821; see also
\href{http://arxiv.org/abs/math/0603690}%
{\texttt{arXiv:math/0603690\,[math.AG]}}.

\bibitem[BVS16]{BvS}
P.~Bravi, B.~Van Steirteghem, \textit{The moduli scheme of affine spherical varieties with a free weight monoid}, Int. Math. Res. Not. IMRN \textbf{2016} (2016), no.~15, 4544--4587; see also
\href{http://arxiv.org/abs/1406.6041}%
{\texttt{arXiv:1406.6041\,[math.AG]}}.

\bibitem[Br90]{Br90}
M.~Brion, \textit{Vers une g\'en\'eralisation des espaces sym\'etriques}, J.~Algebra \textbf{134} (1990), no.~1, 115--143.

\bibitem[Br07]{Br07}
M.~Brion, \textit{The total coordinate ring of a wonderful variety}, J.~Algebra \textbf{313} (2007), 61--99.

\bibitem[Br13]{Br13}
M. Brion, \textit{Invariant Hilbert schemes}, Handbook of Moduli, Vol. I, Adv. Lect. in Math. \textbf{24}, 63--118, International Press, 2013; see also
\href{http://arxiv.org/abs/1102.0198}%
{\texttt{arXiv:1102.0198\,[math.AG]}}.

\bibitem[BP87]{BriP}
M.~Brion, F.~Pauer, \textit{Valuations des espaces homog\`enes sph\'eriques}, Comment. Math. Helv. \textbf{62} (1987), no.~2, 265--285.

\bibitem[Ca89]{Ca89}
R.\,W.~Carter, \textit{Simple groups of Lie type}, Wiley Classics Library, John Wiley \& Sons, 1989.

\bibitem[CF09]{Cu}
S.~Cupit-Foutou, \textit{Wonderful varieties: a geometrical realization}, preprint, see \\
\href{http://arxiv.org/abs/0907.2852}%
{\texttt{arXiv:0907.2852\,[math.AG]}}.

\bibitem[Ha67]{Ha67}
D.~Hadziev, \textit{Some questions in the theory of vector invariants}, Math. USSR-Sb. \textbf{1} (1967), no.~3, 383--396.

\bibitem[Ha77]{Ha77}
R.~Hartshorne, \textit{Algebraic geometry}, Graduate texts in mathematics, no.~\textbf{52}, Springer, New York Heidelberg, 1977.

\bibitem[Ja07]{Jan}
S.~Jansou, \textit{D\'eformations des c\^ones de vecteurs primitifs}, Math. Ann. \textbf{338} (2007), no.~3, 627--667; see also
\href{http://arxiv.org/abs/math/0506133}%
{\texttt{arXiv:math/0506133\,[math.AG]}}.

\bibitem[KKMS73]{KKMS73}
G.~Kempf, F.~Knudsen, D.~Mumford, B.~Saint-Donat, \textit{Toroidal embeddings I}, Lecture Notes in Mathematics, vol.~\textbf{339}, Springer, Berlin--New York, 1973.

\bibitem[Kn91]{Kn91}
F.~Knop, \textit{The Luna-Vust theory of spherical embeddings}, Proceedings of the Hyderabad Conference on Algebraic Groups (Hyderabad, India, 1989), Manoj Prakashan, Madras, 1991, 225--249.

\bibitem[Kn94]{Kn94}
F.~Knop, \textit{A Harish-Chandra homomorphism for reductive group actions}, Ann. of Math. (2) \textbf{140} (1994), no.~2, 253--288.

\bibitem[Kn96]{Kn96}
F.~Knop, \textit{Automorphisms, root systems, and compactifications of homogeneous varieties}, J.~Amer. Math. Soc. \textbf{9} (1996), no.~1, 153--174.

\bibitem[Li02]{Li02}
Q.~Liu, \textit{Algebraic geometry and arithmetic curves}, Oxford graduate texts in mathematics, \textbf{6}, Oxford University Press, Oxford, 2002.

\bibitem[Lo09a]{Lo09a}
I.~Losev, \textit{Uniqueness property for spherical homogeneous spaces}, Duke Math. J. \textbf{147} (2009), no.~2, 315--343; see also
\href{http://arxiv.org/abs/math/0703543}%
{\texttt{arXiv:math/0703543\,[math.AG]}}.

\bibitem[Lo09b]{Lo09b}
I.\,V.~Losev, \textit{Proof of the Knop conjecture}, Ann. Inst. Fourier \textbf{59} (2009), no.~3, 1105--1134; see also
\href{http://arxiv.org/abs/math/0612561}%
{\texttt{arXiv:math/0612561\,[math.AG]}}.

\bibitem[Lu01]{Lu01}
D.~Luna, \textit{Vari\'et\'es sph\'eriques de type~A}, Inst. Hautes \'Etudes Sci. Publ. Math. \textbf{94} (2001), 161--226.

\bibitem[LuV83]{LV}
D.~Luna, Th.~Vust, \textit{Plongements d'espaces homog\`enes}, Comment. Math. Helv. \textbf{58} (1983), no.~2, 186--245.

\bibitem[PVS12]{PvS12}
S.\,A.~Papadakis, B.~Van Steirteghem, \textit{Equivariant degenerations of spherical modules for groups of type $\mathsf A$}, Ann. Inst. Fourier \textbf{62} (2012), no.~5, 1765--1809; see also
\href{http://arxiv.org/abs/1008.0911}%
{\texttt{arXiv:1008.0911\,[math.AG]}}.

\bibitem[PVS16]{PvS16}
S.\,A.~Papadakis, B.~Van Steirteghem, \textit{Equivariant degenerations of spherical modules: part II}, Algebr. Represent. Theory \textbf{19} (2016), no.~5, 1135--1171; see also
\href{http://arxiv.org/abs/1505.07446}%
{\texttt{arXiv:1505.07446\,[math.AG]}}.

\bibitem[Po86]{Po86}
V.\,L.~Popov, \textit{Contraction of the actions of reductive algebraic groups}, Math. USSR-Sb. \textbf{58} (1987), no.~2, 311--335.

\bibitem[PoV94]{PV}
V.\,L.~Popov, E.\,B.~Vinberg, \textit{Invariant theory}, Algebraic geometry. IV: Linear algebraic groups, invariant theory, Encycl. Math. Sci., vol. \textbf{55}, 1994, pp. 123--278.

\bibitem[Ti11]{Tim}
D.\,A.~Timashev, \textit{Homogeneous spaces and equivariant embeddings}, Encycl. Math. Sci., vol.~\textbf{138}, Springer, Berlin Heidelberg, 2011.

\bibitem[VaP96]{VaP}
N.~Vavilov, E.~Plotkin, \textit{Chevalley groups over commutative rings: I. Elementary calculations}, Acta Appl. Math. \textbf{45} (1996), 73--113.

\bibitem[ViK78]{VK78}
E.\,B.~Vinberg, B.\,N.~Kimel'fel'd, \textit{Homogeneous domains on flag manifolds and spherical subgroups of semisimple Lie groups}, Funct. Anal. Appl. \textbf{12} (1978), no.~3, 168--174.

\bibitem[ViP72]{VP72}
E.\,B.~Vinberg, V.\,L.~Popov, \textit{On a class of quasihomogeneous affine varieties}, Math. USSR-Izv. \textbf{6} (1972), no.~4, 743--758.

\bibitem[Vu76]{Vu76}
Th.~Vust, \textit{Sur la th\'eorie des invariants des groupes classiques}, Ann. Inst. Fourier (Grenoble) \textbf{26} (1976), no.~1, 1--31.

\end{thebibliography}
\end{document}